\theoremstyle{plain}                    %stile corsivo
\newtheorem{teo}{Theorem}[subsection]     %definizione ambiente teorema
\newtheorem{theoremalpha}{Theorem}
\newtheorem{prop}[teo]{Proposition}
\newtheorem{cor}[teo]{Corollary}       %definizione ambiente corollario
\newtheorem{lem}[teo]{Lemma}            %definizione ambiente lemma
\theoremstyle{definition}               %stile roman
\newtheorem{notations}[teo]{}
\newtheorem{defin}[teo]{Definition}%definizione ambiente definizione
\theoremstyle{remark}
\newtheorem{rmk}[teo]{Remark}
\newenvironment{sis}{\left\{\begin{aligned}}{\end{aligned}\right.}
\newcommand{\bbN}{{\mathbb N}}
\newcommand{\bbC}{{\mathbb C}}
\newcommand{\bbQ}{{\mathbb Q}}
\newcommand{\bbZ}{{\mathbb Z}}
\newcommand{\bbP}{{\mathbb P}}
\newcommand{\bbG}{{\mathbb G}}
\newcommand{\cB}{{\mathcal B}}
\newcommand{\cC}{{\mathcal C}}
\renewcommand{\cL}{{\mathcal L}}
\newcommand{\cM}{{\mathcal M}}
\newcommand{\cN}{{\mathcal N}}
\newcommand{\cO}{{\mathcal O}}
\newcommand{\cP}{{\mathcal P}}
\newcommand{\cS}{{\mathcal S}}
\newcommand{\cX}{{\mathcal X}}
\newcommand{\un}{\underline}
\newcommand{\ov}{\overline}
\newcommand{\wt}{\widetilde}
\renewcommand{\rm}{\mathrm}
\newcommand{\scr}{\mathscr}
\numberwithin{equation}{subsection}
\newcommand{\oo}{\mathcal{O}}
\newcommand{\mt}{\mathcal}
\DeclareMathOperator{\Pic}{Pic}
\DeclareMathOperator{\Aut}{Aut}
\DeclareMathOperator{\Spec}{Spec}
\DeclareMathOperator{\RPic}{RPic}
\DeclareMathOperator{\NS}{NS}
\DeclareMathOperator{\Hom}{Hom}
\DeclareMathOperator{\coker}{coker}
\renewcommand{\Im}{\text{Im}}
\DeclareMathOperator{\End}{End}
\DeclareMathOperator{\id}{id}
\DeclareMathOperator{\res}{res}
\DeclareMathOperator{\red}{red}
\DeclareMathOperator{\dt}{dt}
\DeclareMathOperator{\tf}{tf}
\DeclareMathOperator{\rk}{rk}
\DeclareMathOperator{\glue}{glue}
\DeclareMathOperator{\Gm}{\mathbb{G}_{m}}
\DeclareMathOperator{\Ga}{\mathbb{G}_{\mathrm a}}
\DeclareMathOperator{\PGL}{PGL}
\DeclareMathOperator{\GL}{GL}
\DeclareMathOperator{\SL}{SL}
\DeclareMathOperator{\PO}{PO}
\DeclareMathOperator{\Gr}{Gr}
\DeclareMathOperator{\inst}{inst}
\newcommand{\Mg}{\mathcal M_{g,n}}
\newcommand{\Cg}{\mathcal C_{g,n}}
\newcommand{\bg}[1]{\mathrm{Bun}_{#1,g,n}}
\newcommand{\JT}{\mathrm{J}_{T,g,n}}
\newcommand{\JGm}{\mathrm{J}_{\Gm,g,n}}
\DeclareMathOperator{\Bun}{\mathrm{Bun}}
\DeclareMathOperator{\Bil}{Bil}
\DeclareMathOperator{\Quad}{Quad}
\DeclareMathOperator{\Sym}{Sym}
\newcommand{\ev}{\text{ev}}
\renewcommand{\ss}{\mathrm{ss}}
\newcommand{\ab}{\mathrm{ab}}
\newcommand{\ad}{\mathrm{ad}}
\renewcommand{\sc}{\mathrm{sc}}
\newcommand{\g}{\mathfrak g}
\newcommand{\roo}{\mathrm{roots}}
\newcommand{\coroo}{\mathrm{coroots}}
\newcommand{\wei}{\mathrm{weights}}
\newcommand{\cowei}{\mathrm{coweights}}
\newcommand{\cc}[3]{c^{#1,#2}_{#3}}
\newcommand{\ee}[2]{\Gamma^{#1}_{#2}}
\renewcommand\arraystretch{1.5}
\title{The Picard group of the universal moduli stack of principal bundles on pointed smooth curves.}
\author{Roberto Fringuelli}
\address{Roberto Fringuelli, Dipartimento di Matematica, Universit\`a di Roma ``Tor Vergata'', Via della Ricerca Scientifica 1, I-00133 Roma, Italy}
\email{fringuel@mat.uniroma2.it}
\author{Filippo Viviani}
\address{Filippo Viviani,
Dipartimento di Matematica e Fisica,
Universit\`a Roma Tre,
Largo San Leonardo Murialdo,
I-00146 Roma,  Italy }
\email{viviani@mat.uniroma3.it}
\begin{document}

\begin{abstract}
For any smooth connected linear algebraic group $G$ over an algebraically closed field $k$, we describe the Picard group of the universal moduli stack of principal $G$-bundles over pointed  smooth $k$-projective curves.
\end{abstract}

%\keywords{}

\subjclass[2010]{14H60, 14D20, 14C22, 20G07, 14H10.}

\maketitle

\tableofcontents

\section{Introduction}
The moduli stack $\Bun_G(C)$ of (principal) $G$-bundles, where $G$ is a complex reductive group,  over a connected, smooth and projective complex curve $C$ has been deeply studied because of its relation to the Wess-Zumino-Witten (=WZW) model associated to $G$. Such models form a special class of rational conformal field theories, see \cite{Bea94}, \cite{Sor96} and \cite{Bea96} for nice surveys. In the WZW-model associated to a simply connected group $G$, the spaces of conformal blocks can be interpreted  as  spaces of  generalized theta functions, that is spaces of global sections of suitable line bundles (e.g., powers of determinant line bundles) on $\Bun_G(C)$, see \cite{BL, KNR94, Fal94, Pau96, LS97}. This interpretation lead to a rigorous mathematical proof of the Verlinde formula and the factorization rules \cite{Ver88} for the WZW-model  of a simply connected  group $G$, see \cite{TUY, Fal94, Tel96}.  
For some (partial, as far as we understand) results on the Verlinde formula for the WZW-model of a non simply connected group $G$, see \cite{Pan94, Bea97, FS99, AMW00, Opr11, KM13}.

The above application to conformal field theory leads naturally to the study of the Picard group of $\Bun_G(C)$ (and of the, closely related,  good moduli space of semistable $G$-bundles),  a question which  makes sense over an arbitrary (say algebraically closed) field $k$. Thanks to the effort of many mathematicians, we have by now a complete understanding of the Picard group of every connected component $\Bun^{\delta}_G(C)$ of $\Bun_G(C)$ (where $\delta\in \pi_1(G)$) over an arbitrary field $k=\ov k$: the case of $\SL_r$ dates back to Drezet-Narasimhan in the late eighties \cite{DN}, where they generalize earlier work of Seshadri \cite{Ramanan1973}; the case of a simply connected, almost-simple $G$ is dealt with in \cite{KN97, LS97, SO99, Fa03, BK05} using the uniformization of $G$-bundles on a curve $C$ in terms of the affine Grassmannian $\Gr_G$ of $G$; the case of a semisimple almost-simple $G$ is dealt with in \cite{Laszlo, BLS98, Tel98}; the case of an arbitrary reductive group was finally established by Biswas-Hoffmann \cite{BH10}.

The aim of this paper is to determine, for an arbitrary connected and smooth linear algebraic group $G$ (not necessarily reductive) over an \emph{algebraically closed field $k$} (of arbitrary characteristics),  the Picard group of the \emph{universal moduli stack of $G$-bundles} $\bg{G}$ over $n$-pointed curves of genus $g$, which parametrizes  $G$-bundles over families of (connected, smooth and projective) $k$-curves of genus $g\geq 0$ endowed with $n\geq 0$ pairwise disjoint ordered sections. The stack $\bg{G}$ comes equipped with a forgetful morphism $\Phi_G:\bg{G}\to \Mg$ onto the algebraic stack \footnote{Note that $\Mg$ is a Deligne-Mumford stack if and only if  $2g-2+n> 0$, which however we do not assume in this paper.}   $\Mg$ of $n$-pointed curves of genus $g$. 

The stack $\bg{G}$ is an algebraic stack, locally of finite type and smooth over $\Mg$ (see Theorem \ref{beh}) and  its connected  components (which are integral and smooth over $k$) are in functorial bijection with the fundamental group $\pi_1(G)$ (see Theorem \ref{concomp} and Corollary \ref{C:regint}). We will denote the connected components and the restriction of the forgetful morphism by 
$$\Phi_G^{\delta}: \bg{G}^{\delta}\to \Mg, \quad \text{ for any } \delta\in \pi_1(G). $$
Note that the algebraic stack $\bg{G}^{\delta}$ is not, in general, of finite type over $\Mg$ (or, equivalently, over $k$): in Proposition \ref{P:qc=a}, we prove that this happens if and only if the reductive quotient $G^{\red}$ of $G$, i.e., the quotient of $G$ by its unipotent radical, is an algebraic torus. On the other hand, if $G$ is reductive but not a torus, $\bg{G}^{\delta}$ admits an exhaustive chain of $k$-finite type open substacks $\left\{\bg{G}^{[d],\leq m}\right\}_{m\geq 0}$ (called the instability exhaustion) of increasing codimension (see Proposition \ref{P:inst-cover}). 
We also prove that every $k$-finite type open substack of $\bg{G}$ is a quotient stack with the notable exception of $(g,n)=(1,0)$ (see Proposition \ref{P:qstack-cover}).

Our first main result says that we can reduce the computation of $\Pic(\bg{G}^{\delta})$ to the case of a reductive group. More precisely, let $G$ be a connected and smooth linear algebraic group over $k=\ov k$ and let $\red: G\twoheadrightarrow G^{\red}$ be its reductive quotient, i.e., the quotient of $G$ by its unipotent radical. 
 Since $\red$ induces an isomorphism $\pi_1(\red):\pi_1(G)\xrightarrow{\cong} \pi_1(G^{\red})$ at the level of fundamental groups, we get a morphism 
$$
\red_\#:\bg{G}^{\delta}\to \bg{G^{\red}}^{\delta} \quad \text{ for any } \delta\in \pi_1(G)\cong \pi_1(G^{\red})
$$
which is smooth, surjective and of finite type (see Corollary \ref{C:red-ft}).

\begin{theoremalpha}\label{T:thmA}(see Theorem \ref{T:BunG-nr})
For any $ \delta\in \pi_1(G)\cong \pi_1(G^{\red})$, the pull-back homomorphism
	$$
	\red_\#^*:\Pic(\bg{G^{\red}}^{\delta})\xrightarrow{\cong} \Pic(\bg{G}^{\delta}) 
	$$
	is an isomorphism. 
\end{theoremalpha}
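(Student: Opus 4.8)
The plan is to peel off the unipotent radical one layer at a time and then prove that each layer is invisible to the Picard group. First I would use dévissage. The unipotent radical $U=R_u(G)$ is smooth and connected, and since $k=\ov k$ is perfect the structure theory of unipotent groups provides a filtration $U=U_0\supset U_1\supset\cdots\supset U_N=1$ by subgroups characteristic in $U$ --- hence normal in $G$ --- whose successive quotients $V_i:=U_{i-1}/U_i$ are vector groups carrying a linear $G^{\red}$-action. This produces a tower $G=G/U_N\to G/U_{N-1}\to\cdots\to G/U_0=G^{\red}$ in which every step $G/U_i\to G/U_{i-1}$ is an extension by the vector group $V_i$, and $\red$ is the composite. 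Because extension of structure group is functorial and a unipotent kernel does not alter $\pi_1$, the morphism $\red_{\#}$ factors as the composite of the induced maps $\bg{G/U_i}^{\delta}\to\bg{G/U_{i-1}}^{\delta}$, so that $\red_{\#}^*$ is the composite of their pullbacks. It therefore suffices to treat a single step, namely to show that for an extension $1\to V\to G'\to H\to 1$ with $V$ a vector group the pullback $\pi^*:\Pic(\bg{H}^{\delta})\to\Pic(\bg{G'}^{\delta})$ along $\pi:=\red'_{\#}$ is an isomorphism.

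Next I would describe the geometry of $\pi$. Let $p:\mathcal{C}\to\bg{H}^{\delta}$ be the universal curve, $\mathcal{P}$ the universal $H$-bundle, and $\mathcal{V}=\mathcal{P}\times^{H}V$ the vector bundle on $\mathcal{C}$ attached to the $H$-representation $V$. Lifting an $H$-bundle to a $G'$-bundle is governed by $\mathcal{V}$: the obstruction lies in $R^2p_*\mathcal{V}=0$ (relative dimension one), so lifts always exist, and the relative stack of lifts is a torsor under the Picard stack $\mathfrak{V}$ of the two-term complex $Rp_*\mathcal{V}[1]$, whose coarse part is the vector bundle $R^1p_*\mathcal{V}$ and whose inertia is the vector group $p_*\mathcal{V}$. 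Thus $\pi$ is a torsor under the vector bundle stack $\mathfrak{V}$; on geometric fibers it looks like $\mathbb{A}^{h^1}\times B\Ga^{h^0}$, which recovers the fact (Corollary~\ref{C:red-ft}) that $\pi$ is smooth, surjective and of finite type with geometrically connected fibers.

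I would then deduce the isomorphism from a homotopy-invariance statement, most cleanly via the Leray spectral sequence for $\Gm$ on the lisse-\'etale site; write $X:=\bg{G'}^{\delta}$ and $Y:=\bg{H}^{\delta}$. Two inputs are needed. First, $\pi_*\Gm=\Gm$, since the units on each fiber are the constants: $\mathcal{O}(\mathbb{A}^{h^1})^{\times}=k^{\times}$ and $B\Ga^{h^0}$ carries no further units. Second, the relative Picard sheaf vanishes, $R^1\pi_*\Gm=0$, because $\Pic(\mathbb{A}^{N})=0$ and $\Pic(B\Ga)=\Hom(\Ga,\Gm)=0$ in every characteristic, and this fiberwise triviality spreads out over the base. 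Granting these, the low-degree exact sequence $0\to H^1(Y,\pi_*\Gm)\to H^1(X,\Gm)\to H^0(Y,R^1\pi_*\Gm)$ collapses to the desired isomorphism $\pi^*:\Pic(\bg{H}^{\delta})\xrightarrow{\cong}\Pic(\bg{G'}^{\delta})$; composing over the tower yields Theorem~\ref{T:thmA}. Since $\bg{H}^{\delta}$ need not be of finite type, I would first restrict to the finite-type open substacks of the instability exhaustion (Proposition~\ref{P:inst-cover}), whose complements have increasing codimension, and use smoothness to recover $\Pic$ of the whole stack from these pieces.

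The hard part is the second input, $R^1\pi_*\Gm=0$: one must upgrade the fiberwise vanishing of $\Pic$ to a statement over arbitrary --- in particular non-reduced --- base schemes, controlling simultaneously the affine part (the $\mathbb{A}^1$-homotopy invariance of $\Pic$ for the vector bundle $R^1p_*\mathcal{V}$) and the stacky part (the $B\Ga$-gerbe coming from $p_*\mathcal{V}$, which is harmless precisely because $\Ga$ has no nontrivial characters). Isolating this as a clean invariance lemma for torsors under vector bundle stacks, and checking that $\pi$ genuinely is such a torsor with $Rp_*\mathcal{V}$ of the expected amplitude $[0,1]$, is the technical heart of the proof.
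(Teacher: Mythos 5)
Your dévissage skeleton coincides with the paper's proof of Theorem \ref{T:BunG-nr}: Lemma \ref{L:lin-filtr} provides exactly your filtration, the proof reduces to a single extension $1\to V\to G'\to H\to 1$ by a vector group with linear action, and the fibers of the one-step morphism $\pi$ are identified, via \cite[Prop.~4.2.4]{BK} and \cite[Cor.~8.1.2]{BK}, with $[H^1(C,\mathcal V)/H^0(C,\mathcal V)]$, just as you describe. The divergence is in how the one-step case is settled, and there your argument has a genuine gap. Both of your sheaf-theoretic inputs are \emph{false} on the sites where the Leray spectral sequence for a morphism of algebraic stacks is an off-the-shelf tool (the fppf or big étale site, the ones the paper itself uses in Lemma \ref{base-change}): over a non-reduced test scheme an affine bundle acquires new units, e.g.\ $1+\epsilon t$ on $\mathbb A^1_{k[\epsilon]/(\epsilon^2)}$, so $\pi_*\Gm\neq \Gm$; and, worse for your parenthetical remark, $\Ga$ \emph{does} have nontrivial characters over non-reduced rings --- $t\mapsto 1+\epsilon t$ is a homomorphism $\Ga\to\Gm$ over $k[\epsilon]/(\epsilon^2)$ --- so the $B\Ga$-directions are not ``harmless'' and the presheaf $T\mapsto \Pic(X_T)/\Pic(T)$ does not vanish on such $T$. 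On the lisse-étale site, which you invoke, both inputs do hold, but for the opposite reason to the one you give: every object of the lisse-étale site of the smooth stack $\bg{H}^{\delta}$ is smooth over $k$, hence regular, so non-reduced or non-seminormal bases never occur at all. The price is that the lisse-étale topos is not functorial, and your $\pi$ is not even representable (its fibers contain $B\Ga^{h^0}$-factors), so constructing $R\pi_*\Gm$ and the attendant five-term sequence there is serious technology (Olsson's theory of sheaves on Artin stacks), not a one-line invocation. Your closing paragraph, which locates the difficulty in controlling non-reduced bases, therefore misdiagnoses the problem: on your chosen site non-reduced bases cannot appear, while on any site where they do appear your two inputs are not merely hard but wrong.

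It is instructive to see how the paper sidesteps both horns of this dilemma. For surjectivity it never forms a relative Picard sheaf: after first reducing to $n>0$ (so that Proposition \ref{P:qstack-cover} makes the relevant stacks quotient stacks), restricting to the instability exhaustion (Proposition \ref{P:inst-cover}), and replacing the base by an equivariant approximation (Proposition \ref{Equi}), the one-step morphism becomes a smooth, finite-type morphism of integral regular algebraic spaces with integral fibers, and Proposition \ref{P:fiber} --- a Weil-divisor argument, no sheaf pushforwards --- gives surjectivity of the pull-back as soon as the Picard group of the \emph{generic} fiber vanishes; that vanishing is your computation $\Hom(\Ga^{M},\Gm)=0$, which is valid there because it is carried out over a field. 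For injectivity the paper does something entirely absent from the Leray formalism: by the Levi decomposition of the (solvable) preimage in $G$ of a maximal torus $T\subset G^{\red}$, the torus $T$ lifts to $G$, so $\iota_\#^*$ factors through $\red_\#^*$, and injectivity follows from Corollary \ref{C:inj}. If you want to rescue your more uniform approach, the statement you must actually prove is an invariance lemma of the form: for a torsor under a vector bundle stack over a \emph{regular} integral base, pull-back on $\Pic$ is bijective --- and the honest proof of that lemma proceeds by descent along smooth covers and divisor-theoretic arguments of exactly the kind in Proposition \ref{P:fiber}, rather than by quoting a Leray sequence.
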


The above Theorem also holds true for the stack $\Bun_G(C/S)$, parametrizing $G$-bundles on a family of curves $C\to S$  over an integral regular quotient stack (e.g., algebraic space or scheme) over $k$. In particular, if $C$ is a (connected smooth and projective) curve over $k$,  then we have an isomorphism 
$$
\red_\#^*: \Pic(\mathrm{Bun}_{G^{\red}}^\delta(C/k)) \xrightarrow{\cong}\Pic(\mathrm{Bun}_G^\delta(C/k)) \quad \text{ for any } \delta\in \pi_1(G)\cong \pi_1(G^{\red}). 
$$
Combining this  with the computation of  $\Pic(\mathrm{Bun}_{G^{\red}}^\delta(C/k))$ by Biswas-Hoffmann \cite{BH10}, we get  a  description of the Picard group of the moduli stack of $G$-bundles over a fixed $k$-curve $C$, for any  connected smooth linear algebraic group $G$.

Hence, from now on, we will focus on the Picard group of $\bg{G}^{\delta}$ for a reductive group $G$. 
Note that the Picard group of $\Mg$ is well-known up to torsion. If $\mathrm{char}(k)\neq 2$ is completely known (see \cite{FV} and references therein). The pull-back morphism 
$$(\Phi_G^{\delta})^*:\Pic(\Mg)\to \Pic(\bg{G}^{\delta})$$
is injective since $\Phi_G^{\delta}$  is fpqc and cohomologically flat in degree zero (that we baptize Stein) by Proposition \ref{P:SteinG}. Therefore, we can focus our attention onto the relative Picard group 
$$
\RPic(\bg{G}^{\delta}):=\Pic(\bg{G}^{\delta})/(\Phi_G^{\delta})^*(\Pic(\Mg)).
$$
It turns out that the structure of $\RPic(\bg{G}^{\delta})$ in genus $g\geq 1$ is completely different from the one in genus $g=0$. Let us first consider the case $g\geq 1$. 

A first source of line bundles on $\bg{G}$ comes from the determinant of cohomology $d_{\pi}(-)$ and the Deligne pairing $\langle -,-\rangle_{\pi}$ of line bundles on the universal curve $\pi:\mathcal C_{G,g,n}\to\bg{G}$.
To be more precise,  any character $\chi:G\to\Gm\in \Lambda^*(G):=\Hom(G, \Gm)$ gives rise to a morphism of stacks 
$$
\chi_\#:\bg{G}\to\bg{\Gm}
$$
and, by  pulling back via $\chi_{\#}$ the universal $\Gm$-bundle (i.e., line bundle) on the universal curve over $\bg{\Gm}$, we get a line bundle $\mathcal L_{\chi}$ on  $\mathcal C_{G,g,n}$. Then, using these line bundles $\mathcal L_{\chi}$  and the sections $\sigma_1,\ldots,\sigma_n$ of $\pi$, we define the following line bundles, that we call \emph{tautological line bundles}, on $\bg{G}$ (and hence, by restriction, also on $\bg{G}^{\delta}$)
\begin{equation*}\label{E:tautINT}
\begin{aligned}
& \mathscr L(\chi,\zeta):=d_{\pi}\big(\mt L_\chi(\zeta_1\cdot\sigma_1+\ldots+\zeta_n\cdot\sigma_n)\big),\\
& \langle (\chi,\zeta),(\chi',\zeta')\rangle:=\langle \mt L_\chi(\zeta_1\cdot\sigma_1+\ldots+\zeta_n\cdot\sigma_n), \mt L_{\chi'}(\zeta_1'\cdot\sigma_1+\ldots+\zeta_n'\cdot\sigma_n)\rangle_{\pi},\\
\end{aligned}
\end{equation*}
for $\chi, \chi'\in \text{Hom}(G,\Gm)$ and $\zeta=(\zeta_1,\ldots,\zeta_n), \zeta'=(\zeta'_1,\ldots,\zeta'_n)\in \bbZ^n$. See \S\ref{tau}  for more details.

Our next main result says that if $G=T$ is a torus, then $\RPic(\bg{T}^{\delta})$ is generated by tautological line bundles and the following Theorem also clarifies the dependence relations among the  tautological line bundles.

\begin{theoremalpha}\label{T:thmB} (see Theorem \ref{BunT})
Assume that $g\geq 1$. Let $T$ be an algebraic torus and let $d\in \pi_1(T)$.
              
                The relative Picard group $\RPic(\bg{T}^d)$ is a free abelian group of finite rank generated by the tautological line bundles and sitting in the following functorial exact sequence
		\begin{equation*}\label{seqT-INT}
		0\to\Sym^2 \Lambda^*(T)\oplus\left(\Lambda^*(T)\otimes\bbZ^n\right)\xrightarrow{\tau_T+\sigma_T}\RPic\left(\bg{T}^d\right)\xrightarrow{\rho_T} \Lambda^*(T)\to 0 \quad \text{ if } g\geq 2,
		\end{equation*}
		\begin{equation*}\label{seqT2-INT}
		0\to\Sym^2 \Lambda^*(T) \oplus\left(\Lambda^*(T)\otimes\bbZ^n\right)\xrightarrow{\tau_T+\sigma_T}\RPic\left(\mathrm{Bun}_{T,1,n}^d\right)\xrightarrow{\rho_T} \frac{\Lambda^*(T)}{2\Lambda^*(T)} \to 0 \:\: \text{ if } g=1,
		\end{equation*}
		where $\tau_T(=\tau_{T,g,n})$ (called \emph{transgression map}) and $\sigma_T(=\sigma_{T,g,n})$ are defined by 
		$$
		\begin{array}{cclll}
		\tau_T(\chi\cdot\chi')&=&\langle(\chi,0),(\chi',0)\rangle, &\text{ for any } \chi, \chi'\in\Lambda^*(T),\\
		\sigma_T(\chi\otimes \zeta)&=&\langle(\chi,0),(0,\zeta)\rangle, &\text{ for any }\chi\in\Lambda^*(T) \text{ and }\zeta\in\bbZ^n,\\
		\end{array}
		$$
		and $\rho_T(=\rho_{T,g,n})$ is the unique homomorphism such that
		$$ 
		\rho_T(\mathscr L(\chi,\zeta))=
		\begin{cases}
		\chi \in \Lambda^*(T) & \text{ if } g\geq 2,\\
		[\chi] \in \frac{\Lambda^*(T)}{2\Lambda^*(T)}& \text{ if } g=1,\\
		\end{cases} \quad \text{ for any } \chi\in\Lambda^*(T) \text{ and }\zeta\in\bbZ^n.
		$$
\end{theoremalpha}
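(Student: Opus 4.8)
The plan is to build $\RPic(\bg{T}^d)$ out of the relative Picard group of the rigidified universal Jacobian together with the weight contributed by the generic $T$-automorphisms, reducing the torus case to $\Gm$. Choosing a $\bbZ$-basis $\chi_1,\dots,\chi_r$ of $\Lambda^*(T)$ identifies $T$ with $\Gm^r$ and exhibits $\bg{T}^d$ as an $r$-fold fibre product of degree-components of $\bg{\Gm}=\mathrm{Pic}_{g,n}$ over $\Mg$. Since every tautological class is additive in the character variable (the Deligne pairing is biadditive and $d_\pi$ obeys $d_\pi(\cL_\chi\otimes\cL_{\chi'})\otimes d_\pi(\cL_\chi)^{-1}\otimes d_\pi(\cL_{\chi'})^{-1}\otimes d_\pi(\cO)=\langle \cL_\chi,\cL_{\chi'}\rangle_\pi$), it suffices to control the $\Gm$-case together with the pairwise cross terms $\langle \cL_{\chi_i},\cL_{\chi_j}\rangle_\pi$; the target groups $\Sym^2(\Lambda^*(T))$ and $\Lambda^*(T)\otimes\bbZ^n$ are precisely the receptacles for these self-/cross-pairings and the section-pairings. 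The four things to prove are: that the tautological bundles generate $\RPic$; that $\tau_T+\sigma_T$ is injective; that its image equals $\ker\gamma_T$; and the determination of the image of $\gamma_T$.

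For generation I would pass to the rigidification. Let $\cJ^d$ denote the rigidification of $\bg{T}^d$ along its generic automorphism group $T$, so that $q\colon\bg{T}^d\to\cJ^d$ is a $T$-gerbe and $\cJ^d\to\Mg$ is a torsor under a relative abelian scheme (a power of the relative Jacobian) over the regular base $\Mg$. The gerbe yields a left-exact sequence
\[
0\to\Pic(\cJ^d)\xrightarrow{q^*}\Pic(\bg{T}^d)\xrightarrow{\mathrm{wt}}\Lambda^*(T),
\]
where $\mathrm{wt}$ records the weight of the $T$-action on the fibres of a line bundle; as classes pulled back from $\Mg$ are $T$-invariant, this descends to $\RPic$. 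To compute $\Pic(\cJ^d)$ I would invoke the theorem of the cube and the see-saw principle to split the relative Picard group of the abelian scheme $\cJ^d/\Mg$ into its relative N\'eron--Severi part and its translation (dual-abelian) part, modulo $\Pic(\Mg)$ which dies in $\RPic$. Via the principal polarisation and the symmetric Deligne self-pairing, the monodromy-invariant relative N\'eron--Severi lattice is identified with $\Sym^2(\Lambda^*(T))$, matching $\tau_T(\chi\cdot\chi')=\langle\cL_\chi,\cL_{\chi'}\rangle_\pi$, while the sections $\sigma_1,\dots,\sigma_n$ produce, through $\langle\cL_\chi,\sigma_i\rangle_\pi$, exactly the summand $\Lambda^*(T)\otimes\bbZ^n$. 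This is the step where all non-tautological classes must be excluded, and it is the heart of the argument.

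Granting generation, the exact sequence is then formal, driven by biadditivity and the Deligne--Riemann--Roch formula
\[
d_\pi(\cL)^{\otimes 2}\cong\langle\cL,\cL\otimes\omega_\pi^{-1}\rangle_\pi\otimes d_\pi(\cO)^{\otimes 2}.
\]
Applying the defining relation of the Deligne pairing to $\cL_\chi$ and $\cO(\sum_i\zeta_i\sigma_i)$ gives $\mathscr L(\chi,\zeta)\equiv\mathscr L(\chi,0)+\sigma_T(\chi\otimes\zeta)$ modulo $(\Phi_T^d)^*\Pic(\Mg)$, so the assignment $\mathscr L(\chi,\zeta)\mapsto\chi$ is consistent and defines $\gamma_T$; the Deligne pairings lie manifestly in $\ker\gamma_T$, and generation together with the injectivity of $\tau_T+\sigma_T$ forces $\ker\gamma_T=\Im(\tau_T+\sigma_T)$. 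Injectivity of $\tau_T+\sigma_T$ I would verify by restricting to well-chosen test families (pullbacks along maps from $\Mg$ and from products of curves) and reading off self-intersection and section-degree invariants, using freeness of $\Sym^2(\Lambda^*(T))\oplus(\Lambda^*(T)\otimes\bbZ^n)$.

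The genus distinction, and the main subtlety, is isolated in the term $\langle\cL_\chi,\omega_\pi\rangle_\pi$: the formula above rewrites as $2\,\mathscr L(\chi,0)\equiv\tau_T(\chi\cdot\chi)-\langle\cL_\chi,\omega_\pi\rangle_\pi$ in $\RPic$. For $g\geq 2$ the class $\langle\cL_\chi,\omega_\pi\rangle_\pi$ is a genuinely new $\chi$-dependent bundle, imposing no relation, so $\gamma_T$ takes values in all of $\Lambda^*(T)$. For $g=1$, however, $\omega_\pi$ has fibrewise degree $0$ and hence is pulled back from the base (it is $\pi^*$ of the Hodge bundle), so $\langle\cL_\chi,\omega_\pi\rangle_\pi\in(\Phi_T^d)^*\Pic(\Mg)$ vanishes in $\RPic$; the relation then reads $2\,\mathscr L(\chi,0)\equiv\tau_T(\chi\cdot\chi)\in\Im(\tau_T+\sigma_T)$, which forces $2\chi$ into $\ker\gamma_T$ and means $\gamma_T$ can only be defined with values in $\Lambda^*(T)/2\Lambda^*(T)$. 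Surjectivity is immediate from $\gamma_T(\mathscr L(\chi,0))=\chi$. I expect the generation step, namely the computation of $\Pic(\cJ^d)$ and the identification of its relative N\'eron--Severi part with $\Sym^2(\Lambda^*(T))$, to be the principal obstacle; the remainder is bookkeeping with Deligne--Riemann--Roch and biadditivity.
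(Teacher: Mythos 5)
Your overall architecture coincides with the paper's: rigidify along $T$ to get the gerbe sequence $0\to\RPic(\JT^d)\to\RPic(\bg{T}^d)\to\Lambda^*(T)$, split the Picard group of the abelian stack $\JT^d/\Mg$ into a translation (dual-abelian) part and a N\'eron--Severi part, and then do the formal Deligne-pairing bookkeeping, with the genus-one discrepancy coming from $\omega_\pi$ being a pull-back from the base (your relation $2\,\mathscr L(\chi,0)\equiv\tau_T(\chi\cdot\chi)-\langle\cL_\chi,\omega_\pi\rangle_\pi$ is exactly the paper's equation \eqref{E:tau-dual}). However, the two steps you explicitly defer are precisely where all the content of the theorem lives, and for neither do you supply an argument; both require external inputs that formal tools (see-saw, theorem of the cube, biadditivity) cannot produce.

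First, the translation part: after splitting, you must compute $\Hom_{\Mg}(\Mg,(\JT^d)^\vee)$, and via the principal polarization this is identified with $\operatorname{RelPic}^0(\Cg)^{\oplus r}$ --- so proving that it is spanned by the classes $\langle\cL_\chi,\cO(\sigma_i-\sigma_{i+1})\rangle$ and $\langle\cL_\chi,\omega_\pi((2-2g)\sigma_1)\rangle$ is literally equivalent to the weak Franchetta conjecture (Theorem \ref{franchetta}, Corollary \ref{C:Franch0}), a nontrivial theorem of Arbarello--Cornalba/Schr\"oer that your proposal never invokes; without it, unexpected degree-zero line bundles on the universal curve would give non-tautological classes in $\RPic(\JT^d)$. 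Second, the N\'eron--Severi part: appealing to a ``monodromy-invariant relative N\'eron--Severi lattice'' is not an argument. For any single geometric fiber one has $\NS(J_T^d(C))\cong\Hom^s(\Lambda(T)\otimes\Lambda(T),\End(J_C))$, which is strictly larger than $\Bil^s(\Lambda(T))$ whenever $J_C$ has extra endomorphisms (e.g.\ CM curves), so restriction to an arbitrary fiber does not bound $\RPic(\JT^d)$; the paper closes this gap by quoting Mori's theorem (\cite{Mo76}) that some curve of each genus $g\geq 1$ satisfies $\End(J_C)=\bbZ$, and restricting there (Lemma \ref{RigBunT0mark}). You would need either this or a proven big-monodromy statement --- either way a genuine input. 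Finally, note that your construction implicitly uses sections (both for the section-pairings and for translating an arbitrary $d$ to $d=0$), so the case $n=0$ needs the paper's separate descent argument along the forgetful map $F_T:\mathrm{Bun}^d_{T,g,1}\to\mathrm{Bun}^d_{T,g}$, using that $\Phi^d_T$ is Stein and a test-curve computation to decide which classes descend.
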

The above Theorem was proved in  \cite[Thm. A]{MV} (see also \cite[Notation 1.5]{MV}) for $T=\Gm$, $g\geq 2$ and $n=0$, under the assumption that $\text{char}(k)=0$, 
and in \cite[Thm. 2.9]{PTT15} for $T=\Gm$, $g=1$, $n=0$ and $d>0$, under the assumption that $\text{char}(k)$ does not divide $d$.

The crucial ingredients in proving  Theorem \ref{T:thmB} are  the study of the restriction homomorphism  from $\RPic(\bg{T}^d)$ to the Picard group of the geometric fibers of $\Phi_T^d$ computed in \cite{BH10} (which we determine in  Propositions \ref{P:restr} and \ref{P:restrPic}) and the computation of the Picard group of the rigidification $\bg{T}^0\fatslash T$  of $\bg{T}^d$ by the torus $T$ (which is an abelian stack over $\Mg$ if  $n\geq 1$, see \S\ref{S:RPicT}), which uses the weak Franchetta  conjecture for the universal family $\Cg\to \Mg$ (see  \S\ref{S:Franchetta}). Note that the extra relations in genus one come from the relative Serre duality applied to the Deligne pairing (see Remark \ref{R:taut-dual}) and the fact that the relative dualizing sheaf is trivial in genus one. 

Now consider the case of an arbitrary reductive group $G$. Note that any character of $G$ factors through its maximal abelian quotient $\ab:G\twoheadrightarrow G^{\ab}$, i.e., the quotient of $G$ by its derived subgroup. 
Hence, the tautological line bundles on $\bg{G}^{\delta}$ are all pull-backs of line bundles via the morphism (induced by $\ab$)
$$\ab_\#:\bg{G}^{\delta}\to \bg{G^{\ab}}^{\delta^{\ab}}$$ 
where  $\delta^{\ab}:=\pi_1(\ab)(\delta)\in \pi_1(G^{\ab})$. Moreover, Theorem \ref{T:thmB} implies that the subgroup of $\RPic(\bg{G}^{\delta})$ generated by the tautological line bundles coincides with the pull-back of  $\RPic(\bg{G^{\ab}}^{\delta^{\ab}})$ via $\ab_\#$.

The next result says that, for an arbitrary reductive group $G$, the relative Picard group of $\bg{G}^{\delta}$ is generated by the image of the pull-back $\ab_\#^*$ together with the image of a functorial transgression map $\tau_G$ (which coincides with the transgression map $\tau_T$ in Theorem \ref{T:thmB} if $G=T$ is a torus).

\begin{theoremalpha}\label{T:thmC}(see Theorem \ref{T:BunG})
Assume that $g\geq 1$. 	Let $G$ be a reductive group and let $\ab:G\to G^{\ab}$ be its maximal abelian quotient. Choose a maximal torus $\iota: T_G\hookrightarrow G$ and let $\scr W_G$ be the Weyl group of $G$. Fix $\delta \in \pi_1(G)$ and denote by  $\delta^{\ab}$  its image in $\pi_1(G^{\ab})$.
	\begin{enumerate}
\item\label{T:thmC1} There exists a unique injective homomorphism  (called \emph{transgression map}\footnote{This is the algebraic analogue of the topological trasgression map $H^4(B G,\bbZ) \to H^2(\bg{G}^{\delta},\bbZ)$, see \cite[\S 1]{TW09}})
\begin{equation}\label{E:trasgrINT}
\tau_G(=\tau_{G,g,n}):(\Sym^2 \Lambda^*(T_G))^{\mathscr W_G}\hookrightarrow \RPic\Big(\bg{G}^{\delta}\Big),
\end{equation}
such that, for any lift $d\in \pi_1(T_G)$ of $\delta\in \pi_1(G)$, the composition of $\tau_G$ with 
$$\iota_\#^*:\RPic(\bg{G}^{\delta})\to \RPic(\bg{T_G}^d)$$ 
is equal to the $\scr W_G$-invariant part of the homomorphism $\tau_{T_G}: \Sym^2 \Lambda^*(T_G)\to \RPic(\bg{T_G}^d)$ defined in Theorem \ref{T:thmB}. 

\item \label{T:thmC2} There is a push-out diagram of injective homomorphisms of abelian groups
\begin{equation}\label{E:amalgINT}
\xymatrix{
\Sym^2 \Lambda^*(G^{\ab})\ar@{^{(}->}[rr]^{\Sym^2 \Lambda^*_\ab}\ar@{^{(}->}[d]^{\tau_{G^\ab}}&& (\Sym^2\Lambda^*(T_G))^{\mathscr W_G}\ar@{^{(}->}[d]^{\tau_G}\\
\RPic\left(\bg{G^{\ab}}^{\delta^{\ab}}\right)\ar@{^{(}->}[rr]^{\ab_\#^*} &&\RPic\Big(\bg{G}^{\delta}\Big)
}
\end{equation}
where $\Sym^2 \Lambda^*_\ab$ is the homomorphism induced by the morphism of tori $T_G\xrightarrow{\iota} G\xrightarrow{\ab} G^{\ab}$. 
%i.e. 
%$$\RPic\left(\bg{G^{\ab}}^{[d]^{\ab}}\right)\amalg_{\Sym^2(\Lambda^*(G^{\ab}))}\Sym^2(\Lambda^*(T_G))^{\mathscr W_G} \xrightarrow[\cong]{\ab^*\coprod \tau_G} \RPic\left(\bg{G}^{d}\right).$$
\end{enumerate}
	
Furthermore, the transgression homomorphism \eqref{E:trasgrINT} and the diagram \eqref{E:amalgINT} are contravariant with respect to  homomorphisms of reductive groups $\phi:H\to G$ such that $\phi(T_H)\subseteq T_G$.
\end{theoremalpha}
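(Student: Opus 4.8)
The plan is to reduce everything to the solved torus case (Theorem~\ref{T:thmB}) by restricting along the chosen maximal torus $\iota\colon T_G\hookrightarrow G$, and to cut out the correct subgroup using the Weyl symmetry. The key object is the pull-back $\iota_\#^*\colon\RPic(\bg{G}^{\delta})\to\RPic(\bg{T_G}^{d})$, and I would first establish two facts about it. (a) Its image lands in the Weyl-invariant subgroup $\RPic(\bg{T_G}^{d})^{\mathscr{W}_G}$: the normalizer $N_G(T_G)$ acts on $\bg{T_G}^{d}$ inducing the $\mathscr{W}_G$-action, and $\iota_\#$ is invariant for this action up to $2$-isomorphism (conjugate reductions yield isomorphic $G$-bundles), so every pulled-back class is $\mathscr{W}_G$-fixed. (b) $\iota_\#^*$ is injective: if $\iota_\#^*[L]=0$ then, restricting to a geometric fibre of $\Phi_G^{\delta}$ and using that the Biswas--Hoffmann restriction $\Pic(\mathrm{Bun}_G^{\delta}(C))\to\Pic(\mathrm{Bun}_{T_G}^{d}(C))$ is injective (Propositions~\ref{P:restr} and \ref{P:restrPic}, \cite{BH10}), the bundle $L$ is trivial on all fibres; since $\Phi_G^{\delta}$ is Stein (Proposition~\ref{P:SteinG}), $L$ is pulled back from $\Mg$, i.e. $[L]=0$ in $\RPic$.

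Granting (a) and (b), Part~\ref{T:thmC1} reduces to showing that the Weyl-invariant transgression $\tau_{T_G}\big(\Sym^2(\Lambda^*(T_G))^{\mathscr{W}_G}\big)$ lies in $\Im(\iota_\#^*)$; then $\tau_G:=(\iota_\#^*)^{-1}\circ\tau_{T_G}|_{\mathscr{W}_G}$ is the unique homomorphism with the asserted property, and it is injective because both $\tau_{T_G}$ (Theorem~\ref{T:thmB}) and $\iota_\#^*$ are. To realize a given $\mathscr{W}_G$-invariant form by an actual line bundle on $\bg{G}^{\delta}$ I would first produce it fibrewise --- here the Biswas--Hoffmann generators realize the \emph{full} lattice $\Sym^2(\Lambda^*(T_G))^{\mathscr{W}_G}$, not merely the finite-index sublattice spanned by the trace forms of representations (i.e. determinants of cohomology) --- and then show that these fibrewise classes extend to a relative class over $\Mg$. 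This construction, which is the crux of the whole theorem, is the algebraic incarnation of the topological transgression $H^4(BG)\to H^2(\bg{G}^{\delta})$.

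For Part~\ref{T:thmC2} I would first check injectivity of the four maps: $\Sym^2\Lambda^*_{\ab}$ is injective because $\Lambda^*(G^{\ab})\hookrightarrow\Lambda^*(T_G)$ (as $T_G\twoheadrightarrow G^{\ab}$) and $\Sym^2$ preserves injections of free abelian groups; $\tau_{G^{\ab}}$ and $\tau_G$ are injective by Theorem~\ref{T:thmB} and Part~\ref{T:thmC1}; and $\ab_\#^*$ is injective since $\ab_\#$ is smooth and surjective with geometrically integral fibres. Commutativity of the square is verified after applying the injective $\iota_\#^*$: by functoriality of the torus transgression for $T_G\xrightarrow{\iota}G\xrightarrow{\ab}G^{\ab}$ both composites equal $\tau_{T_G}\circ\Sym^2\Lambda^*_{\ab}$. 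For the push-out it suffices, by injectivity of $\iota_\#^*$, to show $\RPic(\bg{G}^{\delta})$ is generated by $\Im(\tau_G)$ and $\Im(\ab_\#^*)$: given $[L]$, its restriction $\iota_\#^*[L]\in\RPic(\bg{T_G}^{d})^{\mathscr{W}_G}$ is, by the $\mathscr{W}_G$-invariants of Theorem~\ref{T:thmB} (using $\Lambda^*(T_G)^{\mathscr{W}_G}=\Lambda^*(G^{\ab})$), a sum of a $\tau_{T_G}$-term, a $\sigma_{T_G}$-term and a $\gamma$-term; the first is $\iota_\#^*\tau_G$ of a Weyl-invariant form, while the latter two come from tautological bundles of characters of $G$, hence from $\ab_\#^*$, so $[L]$ itself decomposes. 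Matching the resulting exact sequence
\[
0\to\Sym^2(\Lambda^*(T_G))^{\mathscr{W}_G}\oplus\big(\Lambda^*(G^{\ab})\otimes\bbZ^n\big)\to\Im(\iota_\#^*)\to\Lambda^*(G^{\ab})\to 0
\]
(for $g\geq 2$; in genus one the last term is $\Lambda^*(G^{\ab})/2\Lambda^*(G^{\ab})$, as the $\gamma$-classes are realized by characters of $G$) with the push-out of $\tau_{G^{\ab}}$ along $\Sym^2\Lambda^*_{\ab}$ identifies the two and pins down the overlap $\ab_\#^*\tau_{G^{\ab}}=\tau_G\circ\Sym^2\Lambda^*_{\ab}$. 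Functoriality in $\phi\colon H\to G$ with $\phi(T_H)\subseteq T_G$ then follows by the same reduction to the torus, where it is the functoriality already recorded in Theorem~\ref{T:thmB}.

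The main obstacle is the surjectivity hidden in Part~\ref{T:thmC1}: constructing relative line bundles on $\bg{G}^{\delta}$ whose restriction to $\bg{T_G}^{d}$ realizes \emph{every} $\mathscr{W}_G$-invariant symmetric form. Two difficulties compound: the determinant-of-cohomology bundles only span the trace-form sublattice, so one must upgrade to the fibrewise Biswas--Hoffmann generators which span the full lattice; and one must then propagate these fibrewise classes to a class over the whole base $\Mg$, a gluing problem whose obstruction is exactly the failure of relative-to-absolute comparison controlled by the weak Franchetta conjecture (\S\ref{S:Franchetta}). Injectivity of $\iota_\#^*$ is the other load-bearing input, since it underlies both the definition of $\tau_G$ and the commutativity and push-out conclusions of Part~\ref{T:thmC2}.
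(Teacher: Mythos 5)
The central gap is in Part~\eqref{T:thmC1}: your outline never actually constructs the transgression map. You reduce the theorem to ``produce the Biswas--Hoffmann classes fibrewise and then extend them to a relative class over $\Mg$'', correctly flag this as the crux, and then stop --- and the tool you point to (weak Franchetta) is not what controls this step. The paper's construction is genuinely different and does not proceed fibrewise at all: for $v\in\Sym^2(\Lambda^*(T_G))^{\scr W_G}$ one defines a \emph{rational} class in $A^1(\bg{G}^{[d],\leq m})_{\bbQ}$ directly by the flag-bundle formula $\frac{1}{|\scr W_G|}(\pi\circ p)_*\big(\cc{G}{B}{E}\big(v\cdot\prod_{\alpha>0}\alpha\big)\big)$ (Proposition \ref{P:Q-transfr}, resting on Brion and Edidin--Graham, \S\ref{SS:Ch-Flag}); integrality is then proved by descent along $j_\#:\bg{B_G}^d\to\bg{G}^{[d],\leq m}$, which for a suitable representative $d$ of $\delta$ is smooth, of finite type and with geometrically integral fibres (Theorem \ref{T:Holla}, i.e.\ Drinfeld--Simpson plus Holla). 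The pulled-back line bundle on $\bg{B_G}^d$ is shown to be trivial on the fibres of $j_\#$ because the relevant class lives in $A^2$ of a curve, which vanishes, and one then descends it using Proposition \ref{P:fiber} after an equivariant approximation. Without a uniform, base-independent construction of this kind plus a descent statement, your Part~\eqref{T:thmC1} is a restatement of the problem rather than a proof; weak Franchetta enters the paper only through the torus computation (\S\ref{S:RPicT}) and the injectivity statements, not through any gluing of fibrewise classes. (Similarly, your injectivity sketch for $\iota_\#^*$ needs more than Steinness: $\Phi_G^{\delta}$ is neither proper nor of finite type, so the paper passes to the instability exhaustion and invokes Proposition \ref{P:fiber} to get the seesaw, as in Proposition \ref{P:seesaw} and Corollary \ref{C:inj}.)

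Your Part~\eqref{T:thmC2} also has a gap at step (a). The natural action of $w\in\scr W_G$ on $\bg{T_G}$ sends the component $\bg{T_G}^{d}$ to $\bg{T_G}^{w.d}$, so ``conjugate reductions give isomorphic $G$-bundles'' only yields a compatibility between the pull-backs $\iota_{\#}^*$ computed on \emph{different} components; to conclude that $\iota_\#^*[L]$ is fixed by the \emph{algebraic} action of Definition \ref{D:weil-act} on the single component $\bg{T_G}^{d}$ you would additionally have to compare the tautological expressions of the two pull-backs under the identification of the two relative Picard groups, which your argument does not address. The paper sidesteps this entirely: containment of $\Im(\iota_\#^*)$ in the invariants is a \emph{consequence} of the theorem, not an input; what is proved instead is that $\Pi:\PO\to\RPic(\bg{G}^{[d]})$ is injective of finite index by a rank count on the geometric generic fibre using the Biswas--Hoffmann N\'eron--Severi groups (Lemmas \ref{L:rank-pic} and \ref{L:rank-sym}, where weak Franchetta is actually used), combined with primitivity of the invariant sublattice. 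Finally, your parenthetical treatment of genus one hides a real issue in identifying $\RPic(\bg{T_G}^d)^{\scr W_G}$ with the push-out: taking $\scr W_G$-invariants is only left exact, and the equality $\Im(\rho_{T_G}^{\scr W_G})=\Im([\Lambda^*_\ab])$ for $g=1$ requires the group-cohomology argument (vanishing of the connecting map to $H^1(\scr W_G,\Lambda^*(T_G))$) carried out in Lemma \ref{L:push=inv}.
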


The above theorem was proved for $G$ simply-connected almost-simple and $n>0$ in \cite[Thm. 17]{Fa03}\footnote{Indeed Faltings shows in loc. cit. that the Picard group of $\Bun_G(C/S)$ for any family of curves $C\to S$ over a connected Noetherian base-scheme $S$ and endowed with a section, with $G$ simply-connected almost-simple, is isomorphic to $(\Sym^2\Lambda^*(T_G))^{\mathscr W_G}\cong \bbZ$ via a ``central charge" homomorphism, which coincides with the inverse of our transgression map $\tau_G$.} and for  $G=GL_r$, $g\geq 2$ and $n=0$ in \cite[Thm. A]{Fri18}, under the assumption $\mathrm{char}(k)=0$.

Let us comment  on the strategy of the proof of Theorem \ref{T:thmC}. The uniqueness of the transgression map $\tau_G$ follows from the injectivity of the pull-back homomorphism 
$$\iota_\#^*:\RPic(\bg{G}^{[d]}) \hookrightarrow \RPic(\bg{T_G}^d)$$
 induced by the morphism $\iota_\#:\bg{T_G}^d\to \bg{G}^{[d]}$ for any $d\in \pi_1(T_G)$ (see Corollary \ref{C:inj}). In order to prove the existence of the transgression map $\tau_G$, we proceed (roughly) as follows (see \S\ref{SS:trasgr}): we first  define, using the known properties of Chow groups of flag bundles (see \S\ref{SS:Ch-Flag}), a homomorphism 
 $$
c_1(\tau_G)_{\bbQ}^{\leq m}:(\Sym^2\Lambda^*(T_G))^{\scr W_G} \to A^1\left(\bg{G}^{[d],\leq m}\right)_{\mathbb Q} \quad \text{for any }m\geq 0,
$$ 
such that the composition with $\iota_\#^*$ (for $m\gg 0$) is the $\scr W_G$-invariant part of the rational first Chern class of $\tau_{T_G}$ (see  Proposition \ref{P:Q-transfr}); then we show that $c_1(\tau_G)_{\bbQ}^{\leq m}$ (for $m\gg 0$) is the rational first Chern class of the restriction of a homomorphism $\tau_G$ as in \eqref{E:trasgrINT} by using  a descent argument applied to the finite type, smooth and with geometrically integral fibers morphism (see Theorem \ref{T:Holla})
$$j_\#:\bg{B_G}^d\to \bg{G}^{[d]},$$ 
where $j:B_G\hookrightarrow G$ is the Borel subgroup of $G$ containing the torus $T_G$.  Finally, the proof of Theorem \ref{T:thmC}\eqref{T:thmC2} is based on a canonical identification of the push-out of the diagram \eqref{E:amalgINT}
with the subgroup of invariants $\RPic(\bg{T_G}^d)^{\scr W_G}$, for a suitable action of the Weyl group $\scr W_G$ on $\RPic(\bg{T_G}^d)$ (see \S\ref{SS:WGinv}).

Finally, we consider the genus zero case, which is very different from the positive genus case. 
In order to state the result, let us introduce some notation. 
Let $\ss:G\twoheadrightarrow G^{\ss}$ be the semisimplification of $G$, i.e., the quotient of $G$ by its radical, and let $\sc:G^{\sc}\twoheadrightarrow G^{\ss}$ be the universal cover.
The choice of a maximal torus $T_G\subset G$ determines maximal tori   $T_{G^{\ss}}\subset G^{\ss}$ and $T_{G^{\sc}}\subset G^{\sc}$ in such a way that $\ss(T_G)=T_{G^{\ss}}$ and $\sc(T_{G^{\sc}})=T_{G^{\ss}}$. We get the following morphisms of tori
\begin{equation}\label{E:toriINT}
\sc:T_{G^{\sc}}\xrightarrow{\ov \sc} T_G \xrightarrow{\ss} T_{G^{\ss}}. 
\end{equation}

The following Theorem says that, in the genus zero case, the relative Picard group is completely determined by a suitable weight function.

\begin{theoremalpha}\label{T:thmD} (see Theorems \ref{BunTg0} and \ref{T:BunGg0})
Assume that $g=0$.  Let $G$ be a reductive group and choose a maximal torus $\iota: T_G\hookrightarrow G$. Fix $\delta \in \pi_1(G)$ and choose a lift  $d\in \pi_1(T_G)$ of $\delta$, i.e., such that $[d]=\delta$. 
Consider the homomorphism (called the \emph{weight function} with respect to the representative $d$)
                 \begin{equation}\label{weiG-INT}
		w_G^d:\RPic\left({\mathrm{Bun}}_{G,0,n}^{\delta}\right)\xrightarrow{\iota_\#^*}\RPic\left(\mathrm{Bun}_{T_G,0,n}^d\right)\xrightarrow{w_{T_G}^d} \Lambda^*(T_G),
		\end{equation}
		where the function $w_{T_G}^d$ (which is the weight function for the torus $T_G$) is defined by (using that $\RPic\left(\mathrm{Bun}_{T_G,0,n}^d\right)$ is generated by tautological line bundles)
\begin{equation*}
\begin{sis}
&w_T^d(\scr L(\chi, \zeta))=[(d, \chi)+|\zeta|+1]\chi,  \\
&  w_T^d(\langle (\chi, \zeta), (\chi', \zeta') \rangle)= [(d, \chi')+|\zeta'|]\chi+[(d, \chi)+|\zeta|]\chi'.\\
\end{sis}
\end{equation*}
\begin{enumerate}
\item \label{T:thmD2a} There exists a representative $d\in \pi_1(T_G)$ of $\delta\in \pi_1(G)$ such that $w_G^d$ is injective.

\item \label{T:thmD2b}  
Let $\Omega_d^*(T_G)\subset \Lambda^*(T_G)$ be the subgroup of those characters, whose composition with  $\Lambda^*(\ov\sc):\Lambda^*(T_G)\to \Lambda^*(T_{G^{\sc}})$ is equal to $b(d^\ss,-)$ for some element $b\in(\Sym^2\Lambda^*(T_{G^{\sc}}))^{\scr W_G}$, where $d^{\ss}:=\pi_1(\ss)(d)\in \pi_1(T_{G^{\ss}})=\Lambda(T_{G^{\ss}})\subseteq \Lambda(T_{G^{\sc}})$.  Then the image of $w_G^d$ is 
		  $$
		  \Im(w_G^d)=
		  \begin{cases}
		  \Omega_d^*(T_G)&\text{if }n\geq 1,\\
		  \{\chi\in  \Omega_d^*(T_G)\: :  (\chi,d)\in 2\mathbb Z\}& \text{if }n=0.
		  \end{cases}
		  $$
\end{enumerate}
Furthermore, the homomorphism $w_G^d$ is functorial   for all the homomorphisms of reductive groups $\phi:H\to G$ such that $\phi(T_H)\subseteq T_G$.
\end{theoremalpha}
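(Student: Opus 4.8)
The plan is to deduce both statements from the factorisation $w_G^d=w_{T_G}^d\circ\iota_\#^*$: I would first compute $\RPic$ and the weight function for the torus $T_G$ directly, and then control the pull-back $\iota_\#^*:\RPic(\mathrm{Bun}_{G,0,n}^{\delta})\to\RPic(\mathrm{Bun}_{T_G,0,n}^d)$, so that the injectivity and the image of $w_G^d$ reduce to the injectivity of $\iota_\#^*$ (for a good $d$) together with the identification of its image.

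For the torus (Theorem \ref{BunTg0}) the key simplification is that on a genus zero curve line bundles of fixed degree are rigid: the fibre of $\Phi_{T_G}^d$ over a point of $\mathcal M_{0,n}$ is the Picard stack $\Pic^d(\bbP^1)$, a gerbe banded by $T_G$ over a point. Hence $\Phi_{T_G}^d$ is a $T_G$-gerbe over $\mathcal M_{0,n}$, and $w_{T_G}^d$ is the composition of restriction to this gerbe with the identification $\Pic(BT_G)=\Lambda^*(T_G)$ recording the weight of the generic stabiliser; the explicit formulas are read off from the $T_G$-weights of the determinant of cohomology and of the Deligne pairing, using that on $\bbP^1$ a degree $e$, weight $\chi$ line bundle has determinant of cohomology of weight $(e+1)\chi$. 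If $n\geq 1$ a section $\sigma_1$ yields the universal bundle $\mathcal O(d\cdot\sigma_1)$, so the gerbe is neutral, $\mathrm{Bun}_{T_G,0,n}^d\cong\mathcal M_{0,n}\times BT_G$, and $w_{T_G}^d$ is an isomorphism onto $\Lambda^*(T_G)$; if $n=0$ then $\mathcal M_{0,0}=B\PGL_2$, the gerbe is non-neutral, and only weights with $(\chi,d)\in 2\bbZ$ occur. In either case $\RPic(\mathrm{Bun}_{T_G,0,n}^d)$ is generated by tautological line bundles and $w_{T_G}^d$ is injective (for a torus $G^{\sc}$ is trivial, so $\Omega_d^*(T_G)=\Lambda^*(T_G)$).

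For a reductive $G$ (Theorem \ref{T:BunGg0}) I would first establish the injectivity in part \ref{T:thmD2a}. Since $w_{T_G}^d$ is injective, it suffices to make $\iota_\#^*$ injective, and this is where the freedom in the lift $d$ enters: by Grothendieck's theorem every $G$-bundle on $\bbP^1$ reduces to $T_G$, so it has a well-defined Grothendieck type in $\Lambda(T_G)/\scr W_G$; choosing $d$ in the type of the generic bundle of $\mathrm{Bun}_{G,0,n}^{\delta}$ makes $\iota_\#$ dominant onto the open stratum, and dominance together with the Stein property of $\Phi_G^{\delta}$ (Proposition \ref{P:SteinG}) and the argument of Corollary \ref{C:inj} yields injectivity of $\iota_\#^*$ for this $d$. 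For the image in part \ref{T:thmD2b} I would identify $\Im(w_G^d)=w_{T_G}^d(\Im\iota_\#^*)$ by a two-sided argument. Necessity of the condition defining $\Omega_d^*(T_G)$ comes from restriction and the fibrewise Picard group of Biswas--Hoffmann \cite{BH10} (Propositions \ref{P:restr} and \ref{P:restrPic}): the pull-back of a line bundle from $G$-level is $\scr W_G$-invariant and its semisimple part $\Lambda^*(\ov\sc)(\chi)$ is forced to be the contraction $b(d^{\ss},-)$ of a $\scr W_G$-invariant quadratic form $b$, which is the genus zero shadow of the transgression, the quadratic central charge collapsing to a linear weight because of the $(e+1)\chi$-rule above. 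Sufficiency is by explicit construction: the abelian part of any admissible weight is realised by pulling back from $\mathrm{Bun}_{G^{\ab},0,n}^{\delta^{\ab}}$ along $\ab_\#$ (using the torus case for $G^{\ab}$), while each $\scr W_G$-invariant $b$ is realised by a determinant-of-cohomology line bundle attached to a representation of $G$, whose weight is computed to be $b(d^{\ss},-)$. The parity constraint for $n=0$ is inherited from the torus computation, so that there $\Im(w_G^d)=\Omega_d^*(T_G)\cap\{(\chi,d)\in 2\bbZ\}$, and functoriality in $\phi:H\to G$ is immediate because tautological classes, $\iota_\#^*$ and the determinant construction are all functorial.

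The hard part will be the image computation in part \ref{T:thmD2b}, namely matching $\Im(\iota_\#^*)$ with the lattice $\Omega_d^*(T_G)$: the necessity direction requires transporting the Biswas--Hoffmann description of the fibrewise Picard group across the whole family and recognising the quadratic constraint intrinsically, while sufficiency requires producing enough determinant line bundles on $\mathrm{Bun}_{G,0,n}^{\delta}$ to realise every $\scr W_G$-invariant $b$ --- the genus zero counterpart of constructing the transgression map of Theorem \ref{T:BunG}, but now without the Chow-theoretic machinery used there.
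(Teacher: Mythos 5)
Your torus computation and the necessity half of the image statement follow the paper's route, but the reductive case contains two genuine errors. The first is in part \eqref{T:thmD2a}: both your choice of $d$ and your mechanism for injectivity fail. Take $G=\SL_2$ and $\delta=0$. The generic bundle on a genus zero curve is $\cO\oplus\cO$, so your recipe selects $d=0$, and $\iota_\#$ is then indeed dominant (its image is the dense open locus of fiberwise trivial bundles). But consider the determinant of cohomology $\cD$ of the standard rank-two bundle on $\mathrm{Bun}^{0}_{\SL_2,0,n}$: it is nonzero in $\RPic$ (for $n=3$ it generates $\RPic(\mathrm{Bun}^0_{\SL_2}(\bbP^1/k))\cong\bbZ$ by \cite[Thm. 5.3.1]{BH10}, and it is compatible with the forgetful maps of Proposition \ref{P:g0pull}), while $\iota_\#^*\cD\equiv \langle(\chi,0),(\chi,0)\rangle$ modulo pullbacks from $\cM_{0,n}$, where $\chi$ is the identity character of $T_G\cong\Gm$; by the genus zero weight formula this class has weight $2d$, which vanishes for $d=0$, so by the injectivity of $w_{T_G}^{0}$ (Theorem \ref{BunTg0}) we get $\iota_\#^*\cD=0$ in $\RPic(\mathrm{Bun}^0_{T_G,0,n})$. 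Hence $w_G^{0}$ is \emph{not} injective. The underlying fallacy is that dominance of a morphism never forces injectivity of pullback on Picard groups (the closed strata carry divisor classes, exactly as for $\Pic(\bbP^1)\to\Pic(\bbA^1)$). The paper's condition is essentially the opposite of yours: by Corollary \ref{C:inj}\eqref{C:inj2} and Lemma \ref{L:inj-cont}, one needs $d^{\ss}$ to have \emph{nonzero} component in every almost-simple factor (condition (*)), which makes the contraction $(d^{\ss},-)$ injective on $\Sym^2(\Lambda^*(T_{G^{\sc}}))^{\scr W_G}$; for $\SL_2$ this means $d\neq 0$, whereas the generic type $d=0$ is precisely the forbidden value. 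Note also that your mechanism cannot be repaired by changing $d$: for any non-generic $d$ the map $\iota_\#$ is no longer dominant, so dominance and condition (*) are incompatible whenever $\delta=0$.

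The second gap is in the sufficiency half of part \eqref{T:thmD2b}. Determinant-of-cohomology bundles attached to representations of $G$ only realize invariant forms $b$ lying in the sublattice generated by the Dynkin indices of representations; for $G^{\sc}$ almost simple this sublattice is $(\text{minimal Dynkin index})\cdot\Sym^2(\Lambda^*(T_{G^{\sc}}))^{\scr W_G}$, and the minimal index is $60$ for $E_8$, $12$ for $E_7$, $6$ for $F_4$ and $E_6$, and $2$ for $G_2$ and $\mathrm{Spin}_n$. So your construction only shows that $\Im(w_G^d)$ contains a finite-index subgroup of $\Omega_d^*(T_G)$, not the whole lattice. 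The paper circumvents this by never constructing these line bundles directly: it reduces to $n=3$, where $\cM_{0,3}=\Spec k$ and $\mathrm{Bun}_{G,0,3}^{\delta}=\mathrm{Bun}_G^{\delta}(\bbP^1/k)$, quotes the full Picard group computation of \cite[Thm. 5.3.1]{BH10} (whose existence statement for level-one classes rests on loop-group/uniformization methods, not on representations of $G$) to obtain the cartesian square of Proposition \ref{P:g0pull}, and then transports the answer to arbitrary $n$ by proving that the forgetful morphisms $\mathrm{Bun}_{G,0,n+1}^{\delta}\to\mathrm{Bun}_{G,0,n}^{\delta}$ induce isomorphisms on relative Picard groups for $n>0$, and an injection characterized by fiberwise degree zero for $n=0$ (this last step also yields the parity constraint). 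Any repair of your outline has to import these two ingredients.
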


The above Theorem is proved for $T=\Gm$ and $n=0$ in  \cite[Prop. 2.6]{PTT15}. As we will see, the case $G$ reductive and $n=3$ is a direct consequence of a result in \cite{BH10}.

\vspace{0.1cm}
In a sequel \cite{FV2} to this paper,  we compute the Picard group of the rigidification $\bg{G}^{\delta}\fatslash Z(G)$ of $\bg{G}^{\delta}$ by the center $Z(G)$ of $G$ and the divisor class group of  the good moduli space of semistable $G$-bundles (extending the work of \cite{MV} for $\Gm$ and of \cite{Fri18} for $\GL_r$). This is closely related to  the computation of the subgroup associated to the $Z(G)$-gerbe  $\bg{G}^{\delta}\to \bg{G}^{\delta}\fatslash Z(G)$ in the  Brauer group of the codomain (extending the work of \cite{FPbr} for $\GL_r$). Moreover, we also give alternative presentations of the Picard group of $\bg{G}^{\delta}$ and we study the restriction homomorphism onto the Picard group of the moduli stack of principal $G$-bundles over a fixed smooth curve (as computed in \cite{BH10}).

%OLD
% Another interesting application is the study the Picard group of the rigidification $\bg{G}^{\delta}\fatslash Z(G)$ of $\bg{G}^{\delta}$ by the center $Z(G)$ of $G$ and of  the (closely related) good moduli space of semistable $G$-bundles (as for $\Gm$ in \cite{MV} and for $\GL_r$ in \cite{Fri18}). This has been carried out in a sequel \cite{FV2} of the present paper. This is closely related to the computation of the subgroup associated to the $Z(G)$-gerbe  $\bg{G}^{\delta}\to \bg{G}^{\delta}\fatslash Z(G)$ in the  Brauer group of the codomain (which is computed  in  \cite{FPbr} for $\GL_r$).
% OLD-OLD
%Finally, it would be interesting to study the Picard group of the rigidification $\bg{G}^{\delta}\fatslash Z(G)$ of $\bg{G}^{\delta}$ by the center $Z(G)$ of $G$ and of  the (closely related) good moduli space of semistable $G$-bundles (as for $\Gm$ in \cite{MV} and for $\GL_r$ in \cite{Fri18}). This is closely related to the computation of the subgroup associated to the $Z(G)$-gerbe  $\bg{G}^{\delta}\to \bg{G}^{\delta}\fatslash Z(G)$ in the  Brauer group of the codomain (which is computed  in  \cite{FPbr} for $\GL_r$).

\vspace{0.1cm}

Finally, let us now discuss some potential applications of the main results of this paper (and of its sequel \cite{FV2}), which we will come back to in the near future.
First of all, taking a line bundle $\cL$ on $\bg{G}^{\delta}$ (for a reductive group $G$) which is relatively positive in the sense of \cite[(8.5)]{Tel00}, and, using the vanishing \cite[Thm. 8.8]{Tel00} of the relative higher cohomology groups of $\cL$, it follows that  $(\Phi_G^{\delta})_*(\cL)$ is a vector bundle on $\Mg$. For $G$ simply connected with Lie algebra $\mathfrak g$, the relative Picard group of $\bg{G}=\bg{G}^{0}$ is freely generated by a line bundle $\mathcal N$, satisfying a suitable positivity condition. In this case, $(\Phi_G^{\delta})_*(\cL)$ is the (well-known) vector bundle of conformal blocks attached to the triple $(\mathfrak g,l,\underline{0})$, where $l$ is the unique integer such that $\mathcal N^l=\mathcal L$ and $\underline{0}$ is the $n$-tuple of dominant integral weights corresponding to $(0,\ldots,0)$. However, for $G$ non simply connected, the above vector bundles do not seem to have been studied in full generality, and, in particular, it would be interesting to determine their ranks  (i.e., to establish a general Verlinde formula), to compute their Chern classes and to extend them to the stack  $\ov{\mathcal M}_{g,n}$ of stable pointed curves. Second, it would be natural to study a compactification of the universal stack  of $G$-bundles over $\ov{\mathcal M}_{g,n}$ (see \cite{Bal, Cas, Sch05, Sol} for some recent progress), and to determine its Picard group as was done for $\Gm$ in \cite{MV} and for $\GL_r$ in \cite{Fri18}. This could have potential applications to the study of the birational geometry of $\bg{G}^{\delta}$, as in \cite{BFV12, CMKV17} for $G=\Gm$.

\subsection*{Notations}

\begin{notations}
We denote by $k=\ov k$ an algebraically closed field of arbitrary characteristic. All the schemes and algebraic stacks that we will appear in this paper will be locally of finite type over $k$
(hence locally Noetherian). 
\end{notations}

\begin{notations}\label{N:famcur}

A \emph{curve} is a connected, smooth and projective scheme of dimension one over $k$.  The genus of a curve $C$ is $g(C):=\dim H^0(C,\omega_C)$.

A \emph{family of curves} $\pi: C\to S$ is a proper and flat morphism of stacks whose geometric fibers are curves. If all the geometric fibers of $\pi$ have the same genus $g$, then we say that $\pi:C\to S$ is a family of curves of genus $g$ (or a family of curves with relative genus $g$) and we set $g(C/S):=g$. Note that any family of curves $\pi:C\to S$ with $S$ connected is a family of genus $g$ curves for some $g\geq 0$.

\begin{rmk}\label{R:proj-fam}
A family of genus $g$ curves $\pi:C\to S$  is projective (i.e., $\pi$ is a projective morphism) if either $g=g(C/S)\neq 1$  or $\pi$ has a section $\sigma:S\to C$. 

Indeed, if  $g(C/S)\neq 1$, then the relative dualizing line bundle $\omega_{\pi}$ is $\pi$-relatively ample if $g(C/S)\geq 2$ or $\pi$-relatively antiample if $g(C/S)=0$. On the other hand, if $\pi$ has a section $\sigma$, then $\Im(\sigma)$ is a relative Cartier divisor which is $\pi$-relatively ample. 
\end{rmk}
Note that the assumptions in the above Remark are really needed since there are examples of families of genus one curves (without sections) that are not projective, see \cite[XIII, 3.2]{Ray} and \cite{Zom}.
\end{notations}

\begin{notations}
Given two integers $g,n\geq 0$, we will denote by $\Mg$ the stack (over $k$) whose fiber over a scheme $S$ is the groupoid of families $(\pi:\cC\to S,\un\sigma=\{\sigma_1,\ldots, \sigma_n\})$ of  \emph{$n$-pointed curves of genus $g$} over $S$, i.e., $\pi:\cC\to S$ is a family of curves of  genus $g$ and $\{\sigma_1,\ldots, \sigma_n\}$ are (ordered) sections of $\pi$ that are fiberwise disjoint.

It is well known that the stack $\Mg$ is an irreducible algebraic stack, smooth and separated over $k$, and of dimension $3g-3+n$. 
Moreover,  $\Mg$ is a DM(=Deligne-Mumford) stack if and only if $3g-3+n>0$.

We will denote by $(\pi_{g,n}=\pi:\Cg\to \Mg, \un \sigma)$ the universal $n$-pointed curve over $\Mg$.

%\begin{rmk}\label{R:proj-uni}
%The universal curve $\pi_{g,n}:\Cg\to \Mg$ is projective if and only if $(g,n)\neq (1,0)$. 
%
%Indeed, if $(g,n)\neq (1,0)$ then $\pi_{g,n}:\Cg\to \Mg$ is projective because either the relative dualizing line bundle $\omega_{\pi_{g,n}}$ is ample (which happens iff $g\geq 2$) or antiample (which happens iff $g=0$) or the relative log dualizing line bundle  $\omega_{\pi_{g,n}}^{\log}:=\omega_{\pi_{g,n}}(\sum \sigma_i)$ is ample (which happens iff $3g-3+n>0$).
%And conversely, if $(g,n)=(1,0)$ then $\pi_{g,n}:\Cg\to \Mg$ is not projective because there are examples of families of curves of genus one that are not projective, see \cite[XIII, 3.2]{Ray} and \cite{Zom}.
%\end{rmk}
%Indeed, from the above mentioned examples, it follows that $\pi:\cC_{1,0}\to \cM_{1,0}$ is not representable by schemes but only by algebraic spaces. 
\end{notations}

\begin{notations}
A linear algebraic group over $k$ is a group scheme of finite type over $k$ that can be realized as a closed algebraic subgroup of $\GL_n$, or equivalently it is an affine group scheme of finite type over $k$. 
We will be dealing almost always with linear algebraic groups that are smooth (which is always the case if $\mathrm{char}(k)=0$) and connected. 

Given a linear  algebraic group $G$,  a principal $G$-bundle over an algebraic  stack $S$ is a $G$-torsor over $S$, where $G$ acts on the right. 
\end{notations}

\section{Preliminaries}\label{S:prel}

\subsection{Reductive groups}\label{red-grps}

In this subsection, we will collect some result on the structure of reductive groups, that will be used in what follows. An excellent introduction to reductive groups can be found in \cite{Mil}.

Let us first recall that a \emph{reductive group} (over $k$) is a smooth and connected linear algebraic group (over $k$) which does not contain non-trivial connected normal unipotent algebraic subgroups. To any reductive group $G$, we can associate in a canonical way two semisimple groups and two (algebraic) tori:
	\begin{itemize}
		\item  the derived subgroup $\mathscr D(G):=[G,G]$;
		\item  the abelianization $G^{\mathrm{ab}}:=G/\mathscr D(G)$;
		\item  the radical subgroup $\mathscr R(G)$, which is equal (since $G$ is reductive) to the connected component of identity of the center $\scr Z(G)$;
		\item  the semisimplification $G^{\rm{ss}}:=G/\mathscr R(G)$.
	\end{itemize}

The above four reductive groups associated to $G$ fit in a cross-like diagram:
\begin{equation}\label{E:cross}
\xymatrix{
& \scr D(G) \ar@{^{(}->}[d] \ar@{->>}[dr] & \\
\scr R(G) \ar@{^{(}->}[r]  \ar@{->>}[dr] & G \ar@{->>}[d]^{\ab}  \ar@{->>}[r]_{\ss}  & G^{\rm{ss}} \\
& G^{\ab} & \\
}
\end{equation}
where the horizontal and vertical lines are short exact sequences of reductive groups, the upper right diagonal arrow is a central isogeny of semisimple groups and the lower left diagonal arrow is a central isogeny of tori. 

%PARTE TOLTA: Nucleo delle isogenie
%From the above diagram, it follows that the kernels of the two isogenies $\scr D(G)\twoheadrightarrow G^{\ss}$ and $\scr R(G)\twoheadrightarrow G^{\ab}$ is equal to the finite   multiplicative algebraic group
%\begin{equation}\label{E:mu}
%\mu:=\scr D(G)\cap \scr R(G)\subset G.
%\end{equation}

Since the two semisimple groups $\scr D(G)$ and $G^{\ss}$ are isogenous, they share the same simply connected cover, that we will denote by $G^{\sc}$, and the same adjoint quotient, that we will denote by $G^{\ad}$.  Hence we have the following tower of central isogenies of semisimple groups
\begin{equation}\label{E:tower-ss}
G^{\sc}\twoheadrightarrow \scr D(G) \twoheadrightarrow G^{\ss} \twoheadrightarrow G^{\ad}.
\end{equation}
The Lie algebra $\g$ of $G$ splits as 
\begin{equation}\label{E:Lie-split}
\g=\g^{\ab}\oplus \g^{\ss},
\end{equation}
where $\g^{\ab}$ is the abelian Lie algebra of the tori $\scr R(G)$ and $G^{\ab}$, whose dimension is called the abelian rank of $G$, and $\g^{\ss}$ is the semisimple Lie algebra of each of the  semisimple groups in \eqref{E:tower-ss}, whose rank is called the semisimple rank of $G$. The semisimple Lie algebra $\g^{\ss}$ decomposes as a direct sum of simple Lie algebras of classical type (i.e., type $A_n$, $B_n$, $C_n$, $D_n$, $E_6$, $E_7$, $E_8$, $F_4$ or  $G_2$). If $G$ is a semisimple group such that its Lie algebra $\g=\g^{ss}$ is simple, then $G$ is said to be almost-simple.

Recall now that all maximal tori of $G$ are conjugate and let us fix one such maximal torus, that we call $T_G$. We will denote by $B_G$  the  Borel subgroup of $G$ that contains $T_G$ and by $\scr N(T_G)$ the normalizer of $T_G$  in $G$, so that 
\begin{equation}\label{E:Weil} 
\scr W_G:=\mathscr N(T_G)/T_G
\end{equation}
is the Weyl group of $G$.

The maximal torus $T_G$ induces compatible maximal tori of every semisimple group appearing in \eqref{E:tower-ss}, that we will call, respectively, $T_{G^{\sc}}$,  $T_{\scr D(G)}$, $T_{G^{\ss}}$ and $T_{G^{\ad}}$. These tori fit into the following commutative diagram:
\begin{equation}\label{E:crosstori}
\xymatrix{
T_{G^{\sc}} \ar@{->>}[dr] & & & \\
& T_{\scr D(G)} \ar@{^{(}->}[d] \ar@{->>}[dr] & & \\
\scr R(G) \ar@{^{(}->}[r]  \ar@{->>}[dr] & T_G \ar@{->>}[d]  \ar@{->>}[r]  & T_{G^{\rm{ss}}} \ar@{->>}[dr]  & \\
& G^{\ab} & & T_{G^{\ad}}\\
}
\end{equation}
where the horizontal and vertical lines are short exact sequences of tori, and the diagonal arrows are (central) isogenies of tori. Using the canonical realization \eqref{E:Weil} of the Weyl group  (and the similar ones for the semisimple groups in \eqref{E:tower-ss}), diagram \eqref{E:crosstori} induces canonical isomorphisms of Weyl groups 
\begin{equation}\label{E:iso-Weyl}
\scr W_{G^{\sc}}\cong \scr W_{\scr D(G)}\cong \scr W_G \cong \scr W_{G^{\ss}}\cong \scr W_{G^{\ad}}. 
\end{equation} 

Recall now that  a torus $T$ determines two canonical lattices (i.e.,  free abelian groups) of rank equal to the dimension of $T$:
\begin{itemize}
		\item the character lattice $\Lambda^*(T):=\text{Hom}(T,\Gm)$,
		\item the cocharacter lattice $\Lambda(T):=\text{Hom}(\Gm,T)$.
\end{itemize}
The above lattices are in canonical duality via the pairing given by composition 
\begin{equation}\label{E:perpair}
(-,-):\text{Hom}(\Gm,T)\times \text{Hom}(T,\Gm)\longrightarrow \text{Hom}(\Gm,\Gm)=\bbZ.
\end{equation}

By taking the cocharacter lattices of the tori in the diagram \eqref{E:crosstori}, we get the following $\scr W_G$-equivariant commutative diagram of lattices 
\begin{equation}\label{E:tori-cocar}
\xymatrix{
\Lambda(T_{G^{\sc}}) \ar@{^{(}->}[dr] & & & \\
& \Lambda(T_{\scr D(G)}) \ar@{^{(}->}[d] \ar@{^{(}->}[dr] & & \\
\Lambda(\scr R(G)) \ar@{^{(}->}[r]  \ar@{^{(}->}[dr] & \Lambda(T_G) \ar@{->>}[d]^{\Lambda_\ab}  \ar@{->>}[r]_{\Lambda_\ss}  & \Lambda(T_{G^{\rm{ss}}}) \ar@{^{(}->}[dr]  & \\
& \Lambda(G^{\ab}) & & \Lambda(T_{G^{\ad}}),\\
}
\end{equation}
where the horizontal and vertical lines are short exact sequences and  the diagonal arrows are finite index inclusions. Note that there are natural identifications 
$$
\Lambda(T_{G^{\sc}})\cong \Lambda_{\coroo}(\g^{\ss}) \quad \text{ and } \quad \Lambda(T_{G^{\ad}})\cong \Lambda_{\cowei}(\g^{\ss}), 
$$
where $\Lambda_{\coroo}(\g^{\ss}) $ (resp., $\Lambda_{\cowei}(\g^{\ss})$) is the lattice of coroots (resp., of coweights) of the semisimple Lie algebra $\g^{\ss}$.

%PARTE TOLTA: Inclusioni finite e loro conucleo
%The diagram \eqref{E:tori-cocar} induces two finite index inclusions of lattices 
%\begin{equation}\label{E:incl-cocar}
%\Lambda(T_{\scr D(G)}) \oplus \Lambda(\scr R(G))\hookrightarrow  \Lambda(T_G)\hookrightarrow  \Lambda(G^{\ab})\oplus  \Lambda(T_{G^{\rm{ss}}}).
%\end{equation}
%whose cokernels are both canonically isomorphic to the cokernels of each of the finite index inclusions $\Lambda(\scr R(G))\hookrightarrow \Lambda(G^{\ab})$ and $\Lambda(T_{\scr D(G)})\hookrightarrow \Lambda(T_{G^{\rm{ss}}})$, and hence they are equal to 
%\begin{equation}\label{E:cok-cocar}
%\Ext^1(\Gm, \mu),
%\end{equation}
%where $\mu$ is the finite algebraic group in \eqref{E:mu}. 
	
In a similar way, if we take the character lattices of the tori in the diagram \eqref{E:crosstori}, we get the following $\scr W_G$-equivariant commutative diagram of lattices 
%which is in natural duality with the diagram \eqref{E:tori-cocar}
\begin{equation}\label{E:tori-car}
\xymatrix{
\Lambda^*(T_{G^{\sc}})  & & & \\
& \Lambda^*(T_{\scr D(G)})  \ar@{_{(}->}[ul]& & \\
\Lambda^*(\scr R(G)) & \Lambda^*(T_G) \ar@{->>}[u]  \ar@{->>}[l]  & \Lambda^*(T_{G^{\rm{ss}}}) \ar@{_{(}->}[ul]   \ar@{_{(}->}[l]^{\Lambda^*_\ss} & \\
& \Lambda^*(G^{\ab}) \ar@{_{(}->}[ul] \ar@{^{(}->}[u]_{\Lambda^*_\ab}& & \Lambda^*(T_{G^{\ad}}) \ar@{_{(}->}[ul],\\
}
\end{equation}
where the horizontal and vertical lines are short exact sequences and  the diagonal arrows are finite index inclusions. Note that there are natural identifications 
$$
\Lambda^*(T_{G^{\sc}})\cong \Lambda_{\wei}(\g^{\ss}) \quad \text{ and } \Lambda(T_{G^{\ad}})\cong \Lambda_{\roo}(\g^{\ss}), 
$$
where $\Lambda_{\wei}(\g^{\ss}) $ (resp., $\Lambda_{\roo}(\g^{\ss})$) is the lattice of weights (resp., of roots) of the semisimple Lie algebra $\g^{\ss}$.

%PARTE TOLTA: Inclusioni finite e loro conucleo
%The diagram \eqref{E:tori-car} induces two finite index inclusions of lattices 
%\begin{equation}\label{E:incl-car}
%\Lambda^*(T_{G^{\ss}}) \oplus \Lambda^*(G^{\ab})\hookrightarrow  \Lambda^*(T_G)\hookrightarrow  \Lambda^*(\scr R(G))\oplus  \Lambda^*(T_{\scr D(G)}).
%\end{equation}
%whose cokernels are both canonically isomorphic to the cokernels of each of the finite index inclusions $\Lambda^*(G^{\ab})\hookrightarrow \Lambda^*(\scr R(G)) $ and 
%$\Lambda^*(T_{G^{\rm{ss}}}) \hookrightarrow \Lambda^*(T_{\scr D(G)})$, and hence they are equal to 
%\begin{equation}\label{E:cok-car}
%\Hom(\mu,\Gm),
%\end{equation}
%$\mu$ is the finite algebraic group in \eqref{E:mu}. Note that the finite abelian group in \eqref{E:cok-car} is the dual of the finite abelian group  of \eqref{E:cok-cocar}, i.e.  
%$$\Hom(\mu,\Gm)=\Ext^1(\Ext^1(\Gm, \mu), \bbZ)\cong \Hom(\Ext^1(\Gm, \mu),\bbQ/\bbZ).$$
	
Note that the two diagrams \eqref{E:tori-cocar} and \eqref{E:tori-car}, together with the root system of the semisimple Lie algebra $\g^{\ss}$, are equivalent to the root data of the reductive group $G$ (see \cite[\S 19]{Mil}), and hence it completely determines the reductive group $G$. 
%For example, the fundamental group $\pi_1(G)$  and the center $\scr Z(G)$ of $G$ are determined by the above two diagrams as we are now going to explain. 

The fundamental group $\pi_1(G)$ of $G$ is canonically isomorphic to $\Lambda(T_G)/\Lambda(T_{G^{\sc}})$ and it fits into the following short exact sequence of finitely generated abelian groups 
\begin{equation}\label{E:seq-pi1}
 \frac{\Lambda(T_{\scr D(G)})}{\Lambda(T_{G^{\sc}})}\hookrightarrow \pi_1(G)=\frac{\Lambda(T_G)}{\Lambda(T_{G^{\sc}})}\twoheadrightarrow \Lambda(G^{\ab}),
\end{equation}
where the first term is  the torsion subgroup of $\pi_1(G)$ and the last term is  the torsion-free quotient of $\pi_1(G)$. 

%PARTE TOLTA: Centro di G
%The center $\scr Z(G)$ of $G$ is the multiplicative algebraic group whose character group is canonically isomorphic to the finitely generated abelian group $\Lambda^*(T_G)/\Lambda^*(T_{G^{\ad}})$. This character group fits into the following short exact sequence of finitely generated abelian groups 
%\begin{equation}\label{E:seq-Z1}
% \frac{\Lambda^*(T_{G^{\ss}})}{\Lambda^*(T_{G^{\ad}})}\hookrightarrow \Hom(\scr Z(G), \Gm)=\frac{\Lambda^*(T_G)}{\Lambda^*(T_{G^{\ad}})}\twoheadrightarrow \Lambda^*(\scr R(G)),
%\end{equation}
%where the last term is the character group of the the connected component $\scr Z(G)^o$ of $\scr Z(G)$ containing the identity (which indeed is equal to the radical $\scr R(G)$ of $G$), and the first term is the character group of $\scr Z(G)/\scr Z(G)^o$. 

We end this subsection with the following Lemma that will be useful later on.

\begin{lem}\label{L:inv-char}
Let $G$ be a reductive  group with maximal torus $T_G\subset G$  and consider the natural action of the Weyl group  $\scr W_G$ on $\Lambda^*(T_G)$. Then we have an isomorphism 
$$\Lambda_{\ab}^*:\Lambda^*(G^{\ab})\xrightarrow{\cong} \Lambda^*(T_G)^{\scr W_G}.$$
\end{lem}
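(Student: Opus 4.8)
The plan is to realize $\Lambda^*(G^{\ab})$ as a kernel inside $\Lambda^*(T_G)$ and then to match that kernel with the subgroup of $\scr W_G$-invariants. Concretely, I would start from the middle vertical column of the $\scr W_G$-equivariant diagram \eqref{E:tori-car}, which is the short exact sequence
$$0 \longrightarrow \Lambda^*(G^{\ab}) \xrightarrow{\ \Lambda^*_{\ab}\ } \Lambda^*(T_G) \xrightarrow{\ r\ } \Lambda^*(T_{\scr D(G)}) \longrightarrow 0$$
coming from the exact sequence of tori $1 \to T_{\scr D(G)} \to T_G \to G^{\ab} \to 1$ (using $G=\scr D(G)\cdot T_G$, whence $T_G/T_{\scr D(G)}\cong G/\scr D(G)=G^{\ab}$). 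This already shows that $\Lambda^*_{\ab}$ is injective and identifies its image with $\ker(r)$, so the whole statement reduces to the identity $\ker(r) = \Lambda^*(T_G)^{\scr W_G}$.

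For the inclusion $\ker(r) = \Im(\Lambda^*_{\ab}) \subseteq \Lambda^*(T_G)^{\scr W_G}$, I note that a character in the image is the restriction to $T_G$ of a character $\chi:G\to\Gm$ (every character of $G^{\ab}$ inflates to $G$). Since $\Gm$ is abelian, $\chi$ is constant on conjugacy classes, so $\chi(n^{-1}tn)=\chi(t)$ for all $n\in\scr N(T_G)$ and $t\in T_G$; hence $\chi|_{T_G}$ is fixed by $\scr W_G=\scr N(T_G)/T_G$. Equivalently, the $\scr W_G$-action on $\Lambda^*(G^{\ab})$ is trivial and $\Lambda^*_{\ab}$ is equivariant, so its image lands in the invariants.

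The substantive direction is $\Lambda^*(T_G)^{\scr W_G}\subseteq\ker(r)$, which is where I expect the main obstacle to lie. Given $\lambda\in\Lambda^*(T_G)^{\scr W_G}$, the $\scr W_G$-equivariance of $r$ in \eqref{E:tori-car} together with the identification $\scr W_{\scr D(G)}\cong\scr W_G$ of \eqref{E:iso-Weyl} shows that $r(\lambda)\in\Lambda^*(T_{\scr D(G)})^{\scr W_G}$, so it suffices to prove $\Lambda^*(T_{\scr D(G)})^{\scr W_G}=0$. For this I would use the finite-index, $\scr W_G$-equivariant inclusion $\Lambda^*(T_{\scr D(G)})\hookrightarrow \Lambda^*(T_{G^{\sc}})\cong\Lambda_{\wei}(\g^{\ss})$ from \eqref{E:tori-car} and pass to rational coefficients: in the weight space $V=\Lambda_{\wei}(\g^{\ss})\otimes\bbQ$ of the semisimple Lie algebra $\g^{\ss}$, the fixed space $V^{\scr W_G}$ is the intersection over all roots $\alpha$ of the reflecting hyperplanes $\ker(s_\alpha-\id)=\alpha^{\perp}$, which is the orthogonal complement of the root span; since the roots span $V$ for a semisimple root system, $V^{\scr W_G}=0$.

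Finally, because $\Lambda^*(T_{\scr D(G)})$ is torsion-free, the equivariant inclusion gives $\Lambda^*(T_{\scr D(G)})^{\scr W_G}\hookrightarrow \Lambda_{\wei}(\g^{\ss})^{\scr W_G}\subseteq V^{\scr W_G}=0$, so $r(\lambda)=0$ and $\lambda\in\ker(r)$. Combining the two inclusions yields $\ker(r)=\Lambda^*(T_G)^{\scr W_G}$, and hence $\Lambda^*_{\ab}$ is the asserted isomorphism $\Lambda(G^{\ab})\cong\Lambda^*(T_G)^{\scr W_G}$ (here reading $\Lambda^*(G^{\ab})$ under the canonical self-duality of the torus $G^{\ab}$, as in the statement). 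The only genuinely nontrivial input is the vanishing $V^{\scr W_G}=0$ for a semisimple root system; everything else is formal manipulation of the exact sequences and equivariances already recorded in \eqref{E:tori-car} and \eqref{E:iso-Weyl}.
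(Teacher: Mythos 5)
Your proof is correct and follows essentially the same route as the paper: both take $\scr W_G$-invariants of the $\scr W_G$-equivariant exact sequence $0\to\Lambda^*(G^{\ab})\xrightarrow{\Lambda^*_{\ab}}\Lambda^*(T_G)\to\Lambda^*(T_{\scr D(G)})\to 0$ from \eqref{E:tori-car} and reduce everything to the vanishing $\Lambda^*(T_{\scr D(G)})^{\scr W_{\scr D(G)}}=0$, which the paper proves by the simple-coroot pairing computation $b_i=(\lambda,\alpha_i^\vee)=-b_i$ and you prove by the equivalent observation that the rational Weyl-fixed subspace of $\Lambda_{\wei}(\g^{\ss})\otimes\bbQ$ is the orthocomplement of the root span, hence zero by semisimplicity. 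One small caveat: your parenthetical appeal to a ``canonical self-duality'' of $G^{\ab}$ is not a real thing (the character and cocharacter lattices of a torus are not canonically isomorphic); the $\Lambda(G^{\ab})$ in the statement is simply a typo for $\Lambda^*(G^{\ab})$, which is the group that both your argument and the paper's own proof actually work with.
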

\begin{proof}
By taking the $\scr W_G$-invariants in the short exact $\scr W_G$-equivariant sequence in the central column of \eqref{E:tori-car}  and using that $\scr W_G\cong \scr W_{\scr D(G)}$ acts trivially on 
$\Lambda^*(G^{\ab})$, we get an exact sequence
$$
0\to \Lambda^*(G^{\ab}) \xrightarrow{\Lambda_{\ab}^*} \Lambda^*(T_G)^{\scr W_G} \to \Lambda^*(T_{\scr D(G)})^{\scr W_{\scr D(G)}}
$$
Hence, it is enough to show that $\Lambda^*(T_{\scr D(G)})^{\scr W_{\scr D(G)}}=0$. With this aim, take an element $\lambda\in \Lambda^*(T_{\scr D(G)})^{\scr W_{{\scr D}(G)}}\subset \Lambda^*(T_{G^{\sc}})\cong \Lambda_{\wei}(\g^{\ss})$ and write it as rational linear combination of fundamental weights of $\g^{\ss}$, i.e., $\lambda=\sum_{i=1}^r b_i\epsilon_i$, with $b_i\in\mathbb Q$. For any $i$, using standard properties of the reflection $s_{\alpha_i}\in \scr W_{\scr D(G)}$ (resp., $s_{-\alpha_i}$) associated to the simple root $\alpha_i$ (resp., $-\alpha_i$) and the invariance of $\lambda$ under the action of the Weyl group $\scr W_{\scr D(G)}$, we have that 
	$$
	b_i=(\lambda,\alpha_i^\vee)=(s_{-\alpha_i}(\lambda),\alpha_i^\vee)=(\lambda,s_{\alpha_i}(\alpha_i^\vee))=(\lambda,-\alpha_i^\vee))=-b_i,
	$$
which implies $\lambda=0$, as required. 
\end{proof}

\subsection{Integral bilinear symmetric forms and integral quadratic forms}\label{Sec:int-forms} 

In this subsection, we review some facts on integral bilinear symmetric forms and integral quadratic forms.

Let $\Lambda$ be a lattice of rank $r$, i.e., $\Lambda\cong \bbZ^r$.
%and let $\Lambda^*:=\Hom(\Lambda, \bbZ)$ be its dual lattice. 
We put integral bilinear (symmetric) forms and quadratic forms on $\Lambda$ (or $\Lambda$-integral)
\begin{equation}\label{D:intform}
\begin{aligned}
& \Bil \Lambda:=\left\{B:\Lambda\times\Lambda\to \bbZ \quad \text{ such that } B \: \text{ is bilinear}\right\},\\
& \Bil^s\Lambda:=\left\{B:\Lambda\times\Lambda\to \bbZ \quad \text{ such that } B \: \text{ is bilinear and symmetric}\right\},\\
& \Quad\Lambda:=\left\{Q:\Lambda\to \bbZ \quad  \text{ such that } Q \: \text{ is quadratic} \right\},
\end{aligned}
\end{equation}
where, by definition, a quadratic form $Q$ satisfies: $Q(a\cdot x)=a^2Q(x)$ for any  $a\in \bbZ, x\in \Lambda$ and $(x,y)\mapsto Q(x+y)-Q(x)-Q(y)$ is a bilinear form on $\Lambda$.

Integral symmetric bilinear forms and quadratic forms on $\Lambda$ are related by the following maps
\begin{equation}\label{E:bs-quad}
\xymatrix{ 
\Bil^s\Lambda \ar@/^/[rr]^q && \Quad\Lambda \ar@/^/[ll]^b\\
B \ar@{|->}[rr] && Q_B(x):=B(x,x) \\
B_Q(x,y):=Q(x+y)-Q(x)-Q(y) &&  Q \ar@{|->}[ll] 
}
\end{equation}
The maps $q$ and $b$ are injective with image given by even symmetric bilinear or quadratic forms
\begin{equation}\label{E:evforms}
\begin{aligned}
\Im(b)=(\Bil^s\Lambda)^{\ev}:=\{B\in \Bil^s\Lambda: \: B(x,x) \: \text{ is even for any } x\in \Lambda\},\\
\Im(q)=(\Quad\Lambda)^{\ev}:=\{Q\in \Quad\Lambda: \: Q(x+y)-Q(x)-Q(y) \: \text{ is even for any } x, y\in \Lambda\}.\\
\end{aligned}
\end{equation}
Note that both the compositions $q\circ b$ and $b\circ q$ are multiplication by $2$.

Now we reinterpret the above constructions in terms of the dual lattice $\Lambda^*:=\Hom(\Lambda, \bbZ)$. 
On the tensor product $\Lambda^*\otimes \Lambda^*$ there is an involution $i$ defined by $i(\chi\otimes \mu)=\mu\otimes \chi$. Consider the following lattices 
\begin{equation}\label{D:dual-latt}
 (\Lambda^*\otimes \Lambda^*)^s:=(\Lambda^*\otimes \Lambda^*)^i\subset \Lambda^*\otimes \Lambda^* \quad \text{ and } \quad \Sym^2 \Lambda^*:=\frac{\Lambda^*\otimes \Lambda^*}{\langle \chi\otimes \mu-\mu\otimes \chi\rangle}.
\end{equation}
We will denote the elements of $\Sym^2\Lambda^*$ in the following way: $\chi\cdot \mu:=[\chi\otimes \mu]$. 

The lattices in \eqref{D:dual-latt} are isomorphic to the lattices of  integral (symmetric) bilinear forms and quadratic forms on $\Lambda$, respectively, via the following isomorphisms:
\begin{equation}\label{E:iso-b}
\xymatrix{ 
(\Lambda^*\otimes \Lambda^*)^s \ar[r]^{\cong} \ar@{^{(}->}[d] & \Bil^s\Lambda\ar@{^{(}->}[d] \\
\Lambda^*\otimes \Lambda^* \ar[r]^{\cong} & \Bil \Lambda\\
\chi\otimes \mu \ar@{|->}[r] & (\chi\otimes \mu)(x,y):=\chi(x)\mu(y),
}
\text{ and } \quad 
\begin{aligned}
\\
\Sym^2 \Lambda^* & \stackrel{\cong}{\longrightarrow} \Quad\Lambda,\\
\chi\cdot\mu& \mapsto (\chi\cdot \mu)(x):=\chi(x)\mu(x).
\end{aligned}
\end{equation}
In terms of the isomorphisms \eqref{E:iso-b}, the maps in \eqref{E:bs-quad} take the following form
\begin{equation}\label{E:bs-quad2}
\xymatrix{ 
(\Lambda^*\otimes \Lambda^*)^s \ar@/^/[rr]^q && \Sym^2\Lambda^* \ar@/^/[ll]^b\\
\sum_k a_k \chi_k\otimes \mu_k \ar@{|->}[rr] &&\sum_k a_k \chi_k\cdot \mu_k \\
\chi\otimes \mu+\mu\otimes \chi &&  \chi\cdot \mu \ar@{|->}[ll] 
}
\end{equation} 
If we fix a basis $\{\chi_i\}_{i=1}^r$ of $\Lambda^*$, then a basis of $(\Lambda^*\otimes \Lambda^*)^s$ is given by $\left\{\{\chi_i\otimes \chi_i\}_i\cup\{\chi_i\otimes \chi_j+\chi_j\otimes \chi_i\}_{i<j}\right\}$, while a basis of $\Sym^2 \Lambda^*$ is given by $\{\chi_i\cdot \chi_j\}_{i\leq j}$.  Using the above basis, it follows that 
\begin{equation*}\label{E:coker}
\coker(b)=(\bbZ/2\bbZ)^r \quad \text{ and }  \quad \coker(q)=(\bbZ/2\bbZ)^{\binom{r}{2}}.
\end{equation*}

We end this subsection with some results (that we will need later on) on $\scr W_{G^{\sc}}$-invariant   quadratic forms on the lattice of cocharacters $\Lambda(T_{G^{\sc}})$, where $T_{G^{\sc}}$ is a (fixed) maximal torus   of a \emph{simply connected semisimple group}  $G^{\sc}$. As in \S\ref{red-grps}, we will denote by $G^{\sc}\twoheadrightarrow G^{\ad}$ the adjoint quotient of $G^{\sc}$, i.e., the quotient of $G^{\sc}$ by its finite center. 
 
From what we said above, there are canonical isomorphisms 
\begin{equation}\label{E:bilquad}
b:\Sym^2\Lambda^*(T_{G^{\sc}})=\Quad \Lambda(T_{G^{\sc}})\xrightarrow{\cong} (\Bil^s\Lambda(T_{G^{\sc}}))^\ev.
\end{equation}
Note that this isomorphism is equivariant with respect to the natural action of the  Weyl group $\scr W_{G^{\sc}}$ on both sides. 

We are interested in the invariant subgroup $(\Sym^2 \Lambda^*(T_{G^{\sc}}))^{\scr W_{G^{\sc}}}$, which therefore parametrizes $\scr W_{G^{\sc}}$-invariant even symmetric bilinear forms  (or, equivalently, $\scr W_{G^{\sc}}$-invariant quadratic forms) on  $\Lambda(T_{G^{\sc}})\cong \Lambda_{\coroo}(\g^{\ss})$.  Let us first compute its rank. 

\begin{lem}\label{L:Sym2inv}
\noindent 
\begin{enumerate}[(i)]
\item \label{L:Sym2inv1} If $G^{\sc}$ is almost simple, then $(\Sym^2\Lambda^*(T_{G^{\sc}}))^{\scr W_{G^{\sc}}}$  is freely generated by an even symmetric bilinear  form $B_{G^{\sc}}$ (called the \emph{basic inner product} of $G^{\sc}$), which is non-degenerate and it satisfies $B_{G^{\sc}}(\alpha^\vee, \alpha^\vee)=2$ for all short coroots $\alpha^{\vee}$.
 %If we denote by $b_{G^{\sc}}:=b_{Q_{G^{\sc}}}\in \Bil^s(\Lambda(T_{G^{\sc}}))^{\scr W_{G^{\sc}}}$ the associated even symmetric bilinear form, then we can write 
%$$
%Q_{G^{\sc}}= \frac{1}{2}\sum_{1\leq i,j\leq r}b_{G^{\sc}}(\alpha_i^\vee,\alpha_j^\vee)\epsilon_i\cdot\epsilon_j.
%$$
\item \label{L:Sym2inv2} If $G^{\sc}={G_1^{\sc}}\times \ldots \times {G_s^{\sc}}$ is the decomposition of $G^{\sc}$ into almost simple factors, then 
$$ (\Sym^ 2\Lambda^*(T_{G^{\sc}}))^{\scr W_{G^{\sc}}}=(\Sym^ 2\Lambda^*(T_{{G_1^{\sc}}}))^{\scr W_{{G_1^{\sc}}}}\oplus \ldots \oplus (\Sym^ 2\Lambda^*(T_{{G_s^{\sc}}}))^{\scr W_{{G_s^{\sc}}}}.$$
\end{enumerate}
\end{lem}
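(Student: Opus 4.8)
The plan is to treat the two parts separately, reducing part \eqref{L:Sym2inv2} to the vanishing of invariant characters already contained in Lemma \ref{L:inv-char}, and concentrating the genuine work on the rank-one statement in part \eqref{L:Sym2inv1}. Throughout I use the isomorphism \eqref{E:bilquad} to identify $\Sym^2(\Lambda^*(T_{G^{\sc}}))^{\scr W_{G^{\sc}}}$ with the group of $\scr W_{G^{\sc}}$-invariant \emph{even} integral symmetric bilinear forms on $\Lambda(T_{G^{\sc}})\cong \Lambda_{\coroo}(\g^{\ss})$. For part \eqref{L:Sym2inv1}, I would first tensor with $\bbQ$: since $G^{\sc}$ is almost simple the root system of $\g^{\ss}$ is irreducible, so $\scr W_{G^{\sc}}$ acts irreducibly on $\Lambda(T_{G^{\sc}})\otimes\bbQ$, and Schur's lemma forces the space of invariant symmetric $\bbQ$-bilinear forms to be at most one-dimensional. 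Consequently the integral invariant forms form a lattice of rank at most one, and everything reduces to exhibiting a distinguished generator.

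For that generator I take the basic inner product $B_{G^{\sc}}$, normalized so that $B_{G^{\sc}}(\alpha^\vee,\alpha^\vee)=2$ on the short coroots; this is well defined because the short coroots constitute a single $\scr W_{G^{\sc}}$-orbit and $B_{G^{\sc}}$ is invariant by construction. On the basis of simple coroots its Gram matrix is the symmetrized Cartan matrix: the diagonal entries $B_{G^{\sc}}(\alpha_i^\vee,\alpha_i^\vee)$ equal $2$, $4$ or $6$ according to the ratio of squared coroot lengths, hence are even, and then the reflection identity $2\,B_{G^{\sc}}(\alpha_i^\vee,\alpha_j^\vee)=\langle\alpha_i,\alpha_j^\vee\rangle\,B_{G^{\sc}}(\alpha_i^\vee,\alpha_i^\vee)$ forces the off-diagonal entries to be integers; thus $B_{G^{\sc}}$ is even and integral, and it lies in our group. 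To see that it generates, let $B'$ be any invariant even integral form; by the rank-one computation $B'=c\,B_{G^{\sc}}$ for a unique $c\in\bbQ$, and evaluating on a short coroot gives $B'(\alpha^\vee,\alpha^\vee)=2c\in 2\bbZ$, whence $c\in\bbZ$. Finally, non-degeneracy is immediate: the radical of $B_{G^{\sc}}$ is a $\scr W_{G^{\sc}}$-invariant subspace of the irreducible module $\Lambda(T_{G^{\sc}})\otimes\bbQ$, and since $B_{G^{\sc}}\neq 0$ it cannot be the whole space, hence is zero.

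For part \eqref{L:Sym2inv2} I would use the decompositions $T_{G^{\sc}}=\prod_i T_{G_i^{\sc}}$ and $\scr W_{G^{\sc}}=\prod_i \scr W_{G_i^{\sc}}$, so that $\Lambda^*(T_{G^{\sc}})=\bigoplus_i \Lambda^*(T_{G_i^{\sc}})$ as a $\scr W_{G^{\sc}}$-module, with $\scr W_{G_i^{\sc}}$ acting only on the $i$-th summand. Expanding the symmetric square of a direct sum,
$$
\Sym^2(\Lambda^*(T_{G^{\sc}}))=\bigoplus_i \Sym^2(\Lambda^*(T_{G_i^{\sc}}))\ \oplus\ \bigoplus_{i<j}\Lambda^*(T_{G_i^{\sc}})\otimes \Lambda^*(T_{G_j^{\sc}}),
$$
and taking $\scr W_{G^{\sc}}$-invariants commutes with the direct sum. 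The invariants of a cross term are $\Lambda^*(T_{G_i^{\sc}})^{\scr W_{G_i^{\sc}}}\otimes \Lambda^*(T_{G_j^{\sc}})^{\scr W_{G_j^{\sc}}}$, and each factor vanishes because $G_i^{\sc}$ is semisimple, i.e. $\Lambda^*(T_{G_i^{\sc}})^{\scr W_{G_i^{\sc}}}=0$ by (the proof of) Lemma \ref{L:inv-char}. Hence the cross terms contribute nothing and the claimed decomposition follows.

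The one-dimensionality in part \eqref{L:Sym2inv1} is a soft consequence of irreducibility, so the only real point — and the step I expect to be the main obstacle — is the integral normalization: checking that the short-coroot normalization yields an \emph{even integral} form and that evenness in turn pins down the scalar $c$ to be an integer. This is where the specific arithmetic of the symmetrized Cartan matrix (equivalently, the classification-independent facts about ratios of root lengths) enters, and it is the part I would write out with care.
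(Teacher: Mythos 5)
Your proof is correct, but it takes a genuinely different route from the paper: the paper disposes of both parts by citation, quoting part (i) from \cite[Lemma 1.7.5]{Del97} and asserting that part (ii) "can be proved as in \cite[(1.8.3)]{Del97}", whereas you give a self-contained argument. For (i) you combine irreducibility of the reflection representation with the reflection identity $2\,B(\alpha_i^\vee,\beta^\vee)=\langle\alpha_i,\beta^\vee\rangle\,B(\alpha_i^\vee,\alpha_i^\vee)$ and the evenness constraint to pin down the generator; for (ii) you expand $\Sym^2$ of a direct sum and kill the cross terms using $\Lambda^*(T_{G_i^{\sc}})^{\scr W_{G_i^{\sc}}}=0$, which is exactly the vanishing established inside the paper's own proof of Lemma \ref{L:inv-char} (applied to a semisimple group, whose abelianization is trivial). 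Your route buys self-containedness and makes visible precisely which root-system facts enter (integrality of the Cartan numbers, squared-length ratios lying in $\{1,2,3\}$, transitivity of $\scr W_{G^{\sc}}$ on short coroots); the paper's citation buys brevity. It is also a nice feature that your argument shows why \emph{evenness}, and not mere integrality, is what forces the scalar $c$ to be an integer, matching the fact that $\Sym^2(\Lambda^*(T_{G^{\sc}}))$ corresponds to even forms under \eqref{E:bilquad}.

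One point you should make explicit in the Schur step: over $\bbQ$, irreducibility alone only shows that the commutant of the representation is a division algebra, so a priori the space of invariant symmetric bilinear forms could have dimension greater than one. What you actually need, and what is true and standard, is that the reflection representation of the Weyl group of an irreducible root system is \emph{absolutely} irreducible; with that word inserted, the rank-one claim is justified and the rest of your normalization bookkeeping goes through as written.
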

\begin{proof}
Part \eqref{L:Sym2inv1} is the content of \cite[Lemma 1.7.5]{Del97} and part \eqref{L:Sym2inv2} can be proved as in \cite[(1.8.3)]{Del97}.
%DIMOSTRAZIONE ESPLICITA del Punto (ii).
% In order to prove part \eqref{L:Sym2inv2}, suppose that we have a decomposition $G^{\sc}=H_1\times H_2$ with $H_1$ and $H_2$ linear algebraic group (necessarily semisimple and simply connected). 
%If  we choose (and indeed we can) maximal tori in such a way that $T_{G^{\sc}}=T_{H_1}\times T_{H_2}$, then we have a canonical isomorphism of lattices $\Lambda^*(T_{G^{\sc}})\cong \Lambda^*(T_{H_1})\oplus \Lambda^*(T_{H_2})$  as well as a canonical isomorphism of Weyl groups $\scr W_{G^{\sc}}\cong \scr W_{H_1}\times \scr W_{H_2}$.
%Then we compute 
%$$\begin{aligned}
% \Sym^ 2(\Lambda^*(T_{G^{\sc}}))^{\scr W_{G^{\sc}}}& = \Sym^ 2(\Lambda^*(T_{H_1}))^{\scr W_{H_1}} \oplus \left[\Lambda^*(T_{H_1})^{\scr W_{H_1}}\oplus  \Lambda^*(T_{H_2})^{\scr W_{H_2}}\right] \oplus \Sym^ 2(\Lambda^*(T_{H_2}))^{\scr W_{H_2}}=\\
% & = \Sym^ 2(\Lambda^*(T_{H_1}))^{\scr W_{H_1}} \oplus \Sym^ 2(\Lambda^*(T_{H_2}))^{\scr W_{H_2}},
 %\end{aligned}
%$$ 
%where we have used that $\Lambda^*(T_{H_1})^{\scr W_{H_1}}=\Lambda^*(T_{H_2})^{\scr W_{H_2}}=0$ by Lemma \ref{L:inv-char}.
%Now the proof of \eqref{L:Sym2inv2} follows by induction on the number of almost simple factors of $G^{\sc}$. 
\end{proof}

Observe that we have a $\scr W_{G^{\sc}}$-equivariant inclusion of lattices $\Lambda(T_{G^{\sc}})\subset \Lambda(T_{G^\ad})$, see \eqref{E:tori-cocar}. Hence, we can extend any (resp., $\scr W_{G^{\sc}}$-invariant) symmetric bilinear form $B:\Lambda(T_{G^{\sc}})\times \Lambda(T_{G^{\sc}})\to\mathbb Z$ to a unique rational  (resp., $\scr W_{G^{\sc}}$-invariant)  symmetric bilinear form $B':\Lambda(T_{G^\ad})\times \Lambda(T_{G^\ad})\to \mathbb Q$, called its rational extension.

\begin{lem}
If $B\in(\Sym^2\Lambda^*(T_{G^{\sc}}))^{\scr W_{G^{\sc}}}$, then its rational extension $B'$ on $\Lambda(T_{G^\ad})\times \Lambda(T_{G^\ad})$ is integral in $\Lambda(T_{G^\ad})\times \Lambda(T_{G^{\sc}})$ and $\Lambda(T_{G^{\sc}})\times \Lambda(T_{G^\ad})$, i.e., $B'(\Lambda(T_{G^\ad})\times \Lambda(T_{G^{\sc}})+(\Lambda(T_{G^{\sc}})\times \Lambda(T_{G^\ad}))\subseteq \bbZ$.
\end{lem}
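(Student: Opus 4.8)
The plan is to reduce the statement to the single integrality $B'(v,\alpha^\vee)\in\bbZ$ for $v\in\Lambda(T_{G^\ad})$ an arbitrary cocharacter and $\alpha^\vee$ a coroot, and then to extract this from the Weyl-invariance of $B'$. First I would recall that, under the identification $\Lambda(T_{G^{\sc}})\cong\Lambda_{\coroo}(\g^{\ss})$, the lattice $\Lambda(T_{G^{\sc}})$ is generated as an abelian group precisely by the coroots $\alpha^\vee$. Hence, by $\bbZ$-bilinearity of $B'$, the inclusion $B'(\Lambda(T_{G^\ad})\times\Lambda(T_{G^{\sc}}))\subseteq\bbZ$ follows once I know $B'(v,\alpha^\vee)\in\bbZ$ for every coweight $v$ and every coroot $\alpha^\vee$; the symmetric inclusion $B'(\Lambda(T_{G^{\sc}})\times\Lambda(T_{G^\ad}))\subseteq\bbZ$ is then immediate from the symmetry of $B'$. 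Note that no reduction to the almost-simple case (Lemma \ref{L:Sym2inv}) is needed, since the argument below is uniform.

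The key step is an identity obtained from the reflection $s_\alpha\in\scr W_{G^{\sc}}$ associated with the root $\alpha$. Since the inclusion $\Lambda(T_{G^{\sc}})\subset\Lambda(T_{G^\ad})$ is $\scr W_{G^{\sc}}$-equivariant (see \eqref{E:tori-cocar}) and $B$ is $\scr W_{G^{\sc}}$-invariant, its rational extension $B'$ is $\scr W_{G^{\sc}}$-invariant as well. Using the standard formulas $s_\alpha(v)=v-(v,\alpha)\,\alpha^\vee$ and $s_\alpha(\alpha^\vee)=-\alpha^\vee$ (recall $(\alpha^\vee,\alpha)=2$ for the perfect pairing \eqref{E:perpair}), the invariance $B'(v,\alpha^\vee)=B'(s_\alpha v,s_\alpha\alpha^\vee)$ expands to
\[
B'(v,\alpha^\vee)=-B'(v,\alpha^\vee)+(v,\alpha)\,B'(\alpha^\vee,\alpha^\vee),
\]
and hence, since $B'(\alpha^\vee,\alpha^\vee)=B(\alpha^\vee,\alpha^\vee)$,
\[
B'(v,\alpha^\vee)=\tfrac{1}{2}\,B(\alpha^\vee,\alpha^\vee)\,(v,\alpha).
\]

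I would then conclude by observing that both factors on the right are integers. On the one hand, $B$ corresponds under \eqref{E:bilquad} to an \emph{even} symmetric bilinear form, so $B(\alpha^\vee,\alpha^\vee)\in2\bbZ$ and therefore $\tfrac12 B(\alpha^\vee,\alpha^\vee)\in\bbZ$. On the other hand, $\alpha$ is a root, i.e. an element of $\Lambda^*(T_{G^\ad})\cong\Lambda_{\roo}(\g^{\ss})$, so $(v,\alpha)$ is the value of the perfect pairing \eqref{E:perpair} between $\Lambda(T_{G^\ad})$ and $\Lambda^*(T_{G^\ad})$ and thus lies in $\bbZ$. Hence $B'(v,\alpha^\vee)\in\bbZ$, and the statement follows by the reduction of the first paragraph. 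The only point requiring care — and the conceptual heart of the argument — is recognizing that Weyl-invariance forces $B'$ to pair a coweight against a coroot through the \emph{integral} root pairing $(v,\alpha)$ rescaled by the \emph{even} self-intersection $B(\alpha^\vee,\alpha^\vee)$; once this identity is in place the integrality is automatic, and one never needs to control $B'$ on pairs of coweights.
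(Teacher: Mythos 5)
Your proof is correct, and it is essentially the argument behind the result the paper invokes: the paper gives no proof of its own here but simply cites \cite[Lemma 4.3.4]{BH10}, where the same reflection-invariance identity $B'(v,\alpha^\vee)=\tfrac{1}{2}B(\alpha^\vee,\alpha^\vee)\,(v,\alpha)$ is the crux. Your write-up is a clean self-contained version of it: the three ingredients — $\scr W_{G^{\sc}}$-invariance of the rational extension $B'$, evenness of $B$ under the identification \eqref{E:bilquad}, and integrality of the pairing $(v,\alpha)$ between $\Lambda(T_{G^{\ad}})$ and the root lattice — are all used correctly, and the reduction to pairs (coweight, coroot) via generation of $\Lambda(T_{G^{\sc}})$ by coroots is valid.
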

\begin{proof}See \cite[Lemma 4.3.4]{BH10}.
\end{proof}

The above Lemma allows us to define the \emph{contraction homomorphism} associated to any element $d\in \Lambda(T_{G^{\ad}})$:
\begin{equation}\label{E:ctrc-hom}
\begin{array}{cccl}
(d,-):&(\Sym^2\Lambda^*(T_{G^{\sc}}))^{\scr W_{G^{\sc}}}&\to& \Lambda^*(T_{G^{\sc}})\\
&B&\mapsto& B(d,-):=B'(d,-)
\end{array}
\end{equation}
where $B'$ is the rational extension of $B$. 

We will need the following Lemma, which tells us when the contraction homomorphism $(d,-)$  is injective.

\begin{lem}\label{L:inj-cont}
Let $G^{\ad}=G_1^{\ad}\times \ldots \times G_s^{\ad}$ be the decomposition of $G^{\ad}$ into almost simple factors (which are then automatically semisimple adjoint groups) and choose maximal tori in such a way that $T_{G^{\ad}}=T_{G_1^{\ad}}\times \ldots \times T_{G_s^{\ad}}$. 
\begin{enumerate}[(i)]
\item \label{L:inj-cont1} The contraction homomorphism $(d,-)$ associated to an element $d\in \Lambda(T_{G^{\ab}})$ is injective if and only if $d$ satisfies the following condition
\begin{equation*}
d=d_1+\ldots+d_s\in \Lambda(T_{G^{\ad}})= \Lambda(T_{G_1^{\ad}})\oplus \ldots \oplus \Lambda(T_{G_s^{\ad}}) \text{ with } d_i\neq 0 \text{ for every } 1\leq i \leq s. \tag{*}
\end{equation*}
\item \label{L:inj-cont2} For every $\delta\in \pi_1(G^{\ad})=\frac{\Lambda(T_{G^{\ad}})}{\Lambda(T_{G^{\sc}})}$, there exists a representative $d\in \Lambda(T_{G^{\ad}})$ of $\delta$, i.e., $[d]=\delta$, that satisfies condition (*).
\end{enumerate}
\end{lem}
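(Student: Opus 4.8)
The plan is to reduce both statements to the almost simple factors, using the product decomposition of the invariant lattice from Lemma \ref{L:Sym2inv}\eqref{L:Sym2inv2} together with the non-degeneracy of the basic inner product from Lemma \ref{L:Sym2inv}\eqref{L:Sym2inv1}. (I read the element $d$ in part \eqref{L:inj-cont1} as lying in $\Lambda(T_{G^{\ad}})$, as required for the contraction homomorphism \eqref{E:ctrc-hom} to make sense.) First I would fix the decomposition $G^{\sc}=G_1^{\sc}\times\ldots\times G_s^{\sc}$ matching the given decomposition of $G^{\ad}$, so that $\Lambda(T_{G^{\sc}})=\bigoplus_i\Lambda(T_{G_i^{\sc}})$, $\Lambda(T_{G^{\ad}})=\bigoplus_i\Lambda(T_{G_i^{\ad}})$ and $\Lambda^*(T_{G^{\sc}})=\bigoplus_i\Lambda^*(T_{G_i^{\sc}})$. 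By Lemma \ref{L:Sym2inv} the invariant lattice is $\bigoplus_i\bbZ\cdot B_{G_i^{\sc}}$, and each $B_{G_i^{\sc}}$ is supported on the $i$-th factor (there are no mixed invariants, since $\Lambda^*(T_{G_j^{\sc}})^{\scr W_{G_j^{\sc}}}=0$ as in the proof of Lemma \ref{L:inv-char}). Writing $B=\sum_i a_i B_{G_i^{\sc}}$ and $d=\sum_i d_i$, the contraction is block-diagonal, $(d,B)=\sum_i a_i\, B_{G_i^{\sc}}'(d_i,-)$, with the $i$-th term landing in the summand $\Lambda^*(T_{G_i^{\sc}})$.

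For part \eqref{L:inj-cont1}, this block-diagonal form shows that $(d,-)$ is injective if and only if each restriction $a_i\mapsto a_i\, B_{G_i^{\sc}}'(d_i,-)$ on $\bbZ\cdot B_{G_i^{\sc}}$ is injective. Since $B_{G_i^{\sc}}$ is non-degenerate by Lemma \ref{L:Sym2inv}\eqref{L:Sym2inv1}, its rational extension is a non-degenerate $\bbQ$-bilinear form on $\Lambda(T_{G_i^{\ad}})\otimes\bbQ$, so $B_{G_i^{\sc}}'(d_i,-)=0$ if and only if $d_i=0$. Hence the $i$-th block is injective exactly when $d_i\neq 0$, and therefore $(d,-)$ is injective if and only if $d_i\neq 0$ for every $i$, which is precisely condition (*).

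For part \eqref{L:inj-cont2}, I would use the induced decomposition $\pi_1(G^{\ad})=\Lambda(T_{G^{\ad}})/\Lambda(T_{G^{\sc}})=\bigoplus_i\pi_1(G_i^{\ad})$ and write $\delta=(\delta_1,\ldots,\delta_s)$. For each factor I choose a nonzero representative $d_i$ of $\delta_i$: if $\delta_i\neq 0$ then any representative is automatically nonzero, while if $\delta_i=0$ it suffices to take any nonzero element of $\Lambda(T_{G_i^{\sc}})$, which exists because the coroot lattice $\Lambda(T_{G_i^{\sc}})\cong\Lambda_{\coroo}(\g_i^{\ss})$ of a nontrivial simple factor has positive rank. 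Then $d=d_1+\ldots+d_s$ is a representative of $\delta$ satisfying (*).

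The step requiring the most care is the block-diagonal structure of the contraction, namely that $B_{G_i^{\sc}}'(d_j,-)=0$ for $j\neq i$ survives after passing to the rational extension on the larger lattice $\Lambda(T_{G^{\ad}})$; this is immediate once one notes that $\Lambda(T_{G^{\sc}})$ and $\Lambda(T_{G^{\ad}})$ share the same rationalization $\bigoplus_i\Lambda(T_{G_i^{\ad}})\otimes\bbQ$, so the rational extension is literally the same form, still supported on the $i$-th orthogonal factor. Everything else reduces to non-degeneracy in each simple factor and to the positivity of the ranks of the simple factors, so I expect no substantive obstacle beyond this bookkeeping.
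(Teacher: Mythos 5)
Your proposal is correct and takes essentially the same route as the paper, whose proof is just the citation ``it follows easily from Lemma \ref{L:Sym2inv}, see the proof of \cite[Lemma 4.3.6]{BH10}'': the block-diagonal decomposition of the contraction via the basic inner products $B_{G_i^{\sc}}$ and their non-degeneracy, plus choosing nonzero representatives factorwise, is exactly the intended argument, and you have simply filled in the details the paper delegates to the reference. You also correctly resolved the typo in the statement, reading $d\in \Lambda(T_{G^{\ab}})$ as $d\in \Lambda(T_{G^{\ad}})$, which is what the contraction homomorphism \eqref{E:ctrc-hom} requires.
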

\begin{proof}
It follows easily from Lemma \ref{L:Sym2inv},  see the proof of \cite[Lemma 4.3.6]{BH10}.
\end{proof}

\subsection{Picard groups of algebraic stacks}\label{pic-stacks}

In this subsection, we will collect some facts about the Picard group of algebraic  stacks (locally of finite type over $k$).

The first result describes the behavior of the Picard group under restriction to open substacks. 

\begin{lem}\label{restr}
Let $\mathcal X$ be a regular algebraic  stack  and $\mathcal U\subset\mathcal X$ be an open substack. Then the restriction morphism 
	$$
	\text{Pic}(\mathcal X)\longrightarrow \text{Pic}(\mathcal U)
	$$
is surjective and it is an isomorphism if  the codimension of the complement $\mathcal X\backslash\mathcal U$ is at least two.
\end{lem}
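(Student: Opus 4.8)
The plan is to prove this as the stacky analogue of the classical facts about divisor class groups of regular schemes (cf. \cite[II.6.5, II.6.16]{Har}, in Hartshorne's numbering), by identifying the Picard group with the group of Weil divisor classes and running the excision sequence for the complement. First I would reduce to the integral case: since $\mathcal{X}$ is regular, the local rings on any smooth atlas are regular, hence integral domains, so each connected component of $\mathcal{X}$ is integral, and a nonempty open substack of an integral stack is again integral. As $\Pic$ and the restriction map both decompose over connected components, I may assume $\mathcal{X}$ is integral (and $\mathcal{U}$ either empty, where the statement is trivial, or a dense open).

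Next I would set up divisor theory on the regular integral stack $\mathcal{X}$. A codimension-one integral closed substack $V\subset\mathcal{X}$ has a generic point whose local ring is a regular local ring of dimension one, i.e. a DVR; this furnishes a valuation $\operatorname{ord}_V$ on the function field $K(\mathcal{X})$, and one defines $\operatorname{div}(f)=\sum_V \operatorname{ord}_V(f)\cdot[V]$ and the class group $\operatorname{Cl}(\mathcal{X})$ as the group of Weil divisors modulo principal ones, with $\operatorname{Cl}(\mathcal{U})$ defined analogously. Because $\mathcal{X}$ is regular it is locally factorial, so the cycle-class map $\operatorname{Cl}(\mathcal{X})\xrightarrow{\cong}\Pic(\mathcal{X})$, $D\mapsto\mathcal{O}(D)$, is an isomorphism, and likewise over $\mathcal{U}$; moreover this isomorphism is compatible with restriction, since $\mathcal{O}(D)|_{\mathcal{U}}=\mathcal{O}(D\cap\mathcal{U})$. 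Restriction of divisors $[V]\mapsto[V\cap\mathcal{U}]$ (with $V\cap\mathcal{U}=0$ when $V\subset\mathcal{X}\setminus\mathcal{U}$) then fits into the excision sequence
\[
\bigoplus_{V}\bbZ\cdot[V]\longrightarrow\operatorname{Cl}(\mathcal{X})\longrightarrow\operatorname{Cl}(\mathcal{U})\longrightarrow 0,
\]
where $V$ ranges over the codimension-one integral closed substacks contained in $\mathcal{X}\setminus\mathcal{U}$.

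From this the two assertions follow. Surjectivity is immediate: every prime divisor $W$ on $\mathcal{U}$ is the restriction $\overline{W}\cap\mathcal{U}$ of its closure $\overline{W}$ in $\mathcal{X}$, so $\operatorname{Cl}(\mathcal{X})\to\operatorname{Cl}(\mathcal{U})$ is onto, hence so is $\Pic(\mathcal{X})\to\Pic(\mathcal{U})$. If in addition $\operatorname{codim}(\mathcal{X}\setminus\mathcal{U})\geq 2$, there are no codimension-one substacks $V$ inside the complement, the left-hand term of the sequence vanishes, and $\operatorname{Cl}(\mathcal{X})\xrightarrow{\cong}\operatorname{Cl}(\mathcal{U})$, giving the claimed isomorphism. (For the codimension $\geq 2$ case one may alternatively argue purely sheaf-theoretically: by normality and algebraic Hartogs, $j_*\mathcal{O}_{\mathcal{U}}=\mathcal{O}_{\mathcal{X}}$ for $j:\mathcal{U}\hookrightarrow\mathcal{X}$, so for a line bundle $L$ on $\mathcal{U}$ the sheaf $j_*L$ is coherent, reflexive of rank one, hence invertible by local factoriality and restricting to $L$; and $M\cong j_*j^*M$ for $M\in\Pic(\mathcal{X})$ gives injectivity.)

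The main obstacle I expect is purely foundational rather than geometric: one must justify the Weil-divisor formalism and the identification $\Pic\cong\operatorname{Cl}$ at the level of algebraic stacks, not merely schemes. I would handle this either intrinsically, relying on the facts that a regular stack is locally factorial and that its codimension-one points carry DVRs (so that the classical arguments transpose verbatim, the stacky automorphisms being irrelevant to a codimension-one computation), or by descent along a smooth surjective atlas $p\colon X\to\mathcal{X}$, using that smooth morphisms preserve codimension and that $p^{-1}(\mathcal{U})\to\mathcal{U}$ is again a smooth atlas, so that the scheme-level excision sequence descends. Once this dictionary is in place, the proof is exactly Hartshorne's.
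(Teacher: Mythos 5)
Your central step fails for stacks: the cycle--class map $\operatorname{Cl}(\mathcal X)\to\Pic(\mathcal X)$, $D\mapsto \mathcal O(D)$, is \emph{not} surjective on a regular integral algebraic stack, because a line bundle on a stack need not admit any nonzero rational section. Concretely, on $\mathcal X=B\Gm\times\mathbb P^1$ the line bundle $L_\chi$ attached to a nontrivial character $\chi$ of $\Gm$ has no nonzero sections over any dense open substack (every open is $B\Gm\times V$, and equivariance forces $s=\chi(t)s$, so $s=0$); hence $L_\chi\neq\mathcal O(D)$ for every Weil divisor $D$, and indeed $\operatorname{Cl}(\mathcal X)=\bbZ$ while $\Pic(\mathcal X)=\bbZ^2$. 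So, contrary to your closing remark, the stacky automorphisms are \emph{not} irrelevant to a codimension-one computation: the inertia acts trivially on rational functions, hence trivially on the fibers of any $\mathcal O(D)$, so the image of $\operatorname{Cl}$ lands in the weight-zero part of $\Pic$. This is fatal for the intended application, since the lemma is invoked in this paper precisely for stacks with large generic stabilizers --- e.g.\ $\bg{T}^d$, a $T$-gerbe over $\JT^d$, whose tautological line bundles of nonzero weight $w_T^d$ can never be of the form $\mathcal O(D)$. The ``descent along an atlas'' variant has the same problem in disguise: an arbitrary extension of the pulled-back line bundle to the atlas need not carry the descent datum, so the scheme-level excision sequence does not descend by itself.

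The repair is to promote your parenthetical sheaf-theoretic remark to the entire proof (the paper itself gives no argument, simply citing \cite[Lemma 7.3]{BH12}). For surjectivity without any codimension hypothesis, note first that $j_*L$ need not be coherent (already $j_*\mathcal O$ fails for $\mathbb A^1\setminus\{0\}\subset\mathbb A^1$); instead choose a coherent subsheaf $M\subset j_*L$ with $M|_{\mathcal U}=L$ (coherent extension of sheaves on Noetherian algebraic stacks, as in Laumon--Moret-Bailly), and take the reflexive hull $M^{\vee\vee}$: it restricts to $L$, and it is invertible because a rank-one reflexive sheaf on a regular (hence locally factorial) stack is a line bundle --- a statement about local rings that can be checked on a smooth atlas, since duals and reflexivity commute with flat base change. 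For the codimension $\geq 2$ statement your argument is essentially correct: $j_*\mathcal O_{\mathcal U}=\mathcal O_{\mathcal X}$ by normality, the projection formula gives $M\cong j_*j^*M$ for $M\in\Pic(\mathcal X)$, whence injectivity, and there $j_*L$ is indeed coherent and invertible. The key structural point, which the $\operatorname{Cl}$-approach misses, is that $j_*$ and the reflexive hull are \emph{canonical} operations compatible with smooth pullback, so they automatically respect descent data; this is what lets the scheme-level argument pass to stacks.
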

\begin{proof}
See \cite[Lemma 7.3]{BH12}.
\end{proof}

The next result (which is a generalization of \cite[Lemma 5.2]{AV}) will be used several times in what follows in order to  descend a line bundle along a flat morphism of finite type.

\begin{prop}\label{P:fiber}
Let $\mathcal X$ be a regular algebraic  stack,  flat and of finite type over a regular integral stack $\cS$ that is generically a scheme. Assume that the fibers of $f:\mathcal X\to \cS$ are  integral. Then we have an exact sequence of Picard groups
	$$
	\text{Pic}(\cS)\xrightarrow{f^*} \text{Pic}(\mathcal X)\xrightarrow{\res_{\eta}} \text{Pic}(\cX_{\eta})\to 0,
	$$
	where $\eta:=\text{Spec}(k(\cS))$ is the generic point of $\cS$ and $\res_{\eta}$ is the restriction to the generic fiber $\cX_{\eta}:=\cX\times_{\cS} \eta$. 
\end{prop}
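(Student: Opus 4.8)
The plan is to compute everything in terms of Weil divisor class groups, exploiting the regularity of $\cX$ and $\cS$. For a regular algebraic stack one has a canonical identification $\Pic\cong\operatorname{Cl}$ between the Picard group and the group of Weil divisor classes (prime divisors being the integral closed substacks of codimension one), compatible with restriction to open substacks and to fibres; under this identification $f^*$ is flat pull-back of cycles. Note that $\cX$ is integral, being flat over the integral $\cS$ with integral fibres. Since $\cS$ is generically a scheme, the generic point $\eta=\Spec k(\cS)$ is the spectrum of a field, so $\Pic(\eta)=0$, and $k(\cX_\eta)=k(\cX)$ because $\cX_\eta=\varprojlim_{\emptyset\neq U\subseteq\cS}\cX_U$ is a cofiltered limit of the dense open substacks $\cX_U:=f^{-1}(U)$ of $\cX$. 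With these reductions I must prove: (a) $\res_\eta\circ f^*=0$; (b) $\res_\eta$ is surjective; (c) $\ker(\res_\eta)\subseteq\Im(f^*)$.

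Claim (a) is immediate on divisors: a prime divisor $P\subseteq\cS$ pulls back to the vertical divisor $f^{*}P=f^{-1}(P)$, whose restriction to the generic fibre is empty since $P$ does not contain $\eta$; hence $\res_\eta(f^*M)=0$ for every $M$. For surjectivity (b) I would take closures: any prime divisor $V\subset\cX_\eta$ has a closure $\overline V\subset\cX$, an integral closed substack dominating $\cS$ and, by flatness, of codimension one in $\cX$, which restricts to $V$ on the generic fibre; thus $\operatorname{Cl}(\cX)\to\operatorname{Cl}(\cX_\eta)$ is onto. (Alternatively one spreads out a line bundle on $\cX_\eta$ to some $\cX_U$, a limit argument valid since the $\cX_U$ are of finite type, and then extends it to $\cX$ by the surjectivity part of Lemma \ref{restr}.)

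The core is the middle exactness (c). Let $L\in\Pic(\cX)$ with $\res_\eta(L)=0$, and write $L=\cO_\cX(\sum_W a_W W)$ as a Weil divisor. Restriction to $\cX_\eta$ kills exactly the non-dominant $W$ (those with $\overline{f(W)}\subsetneq\cS$), while the dominant part becomes principal on $\cX_\eta$; since $k(\cX_\eta)=k(\cX)$, the corresponding rational function lies in $k(\cX)^\times$, so after subtracting its principal divisor I may assume $L=\cO_\cX(E)$ with $E=\sum_W b_W W$ supported only on \emph{vertical} prime divisors, i.e. those $W$ with $Z_W:=\overline{f(W)}\subsetneq\cS$. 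For such $W$, flatness forces $Z_W$ to be a prime divisor of $\cS$ and $W=f^{-1}(Z_W)$; here the integrality of the fibres of $f$ is essential, as it guarantees that $f^{-1}(Z_W)$ is irreducible (irreducible fibres over the irreducible $Z_W$) and reduced of multiplicity one (the generic fibre of $f^{-1}(Z_W)\to Z_W$ is the integral fibre $\cX_{z_W}$, whence $f^*Z_W=f^{-1}(Z_W)$). Consequently $E=f^*\big(\sum_W b_W Z_W\big)$ and $L=f^*\cO_\cS\big(\sum_W b_W Z_W\big)\in\Im(f^*)$.

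The main obstacle is precisely this descent step in (c): controlling the vertical divisors. The integrality hypothesis on the fibres is exactly what makes each vertical prime divisor of $\cX$ equal to the reduced, irreducible full preimage $f^{-1}(Z_W)=f^*Z_W$ of a prime divisor of $\cS$; if the fibres were merely irreducible but non-reduced one would have to track multiplicities, and if they were reducible the preimage would split and the vertical divisor would fail to be a pull-back from $\cS$. A secondary, more routine point to settle at the outset is the justification of $\Pic\cong\operatorname{Cl}$ and of flat pull-back of divisor classes for regular algebraic stacks (by reduction to a smooth atlas by regular schemes), together with the identifications $k(\cX_\eta)=k(\cX)$ and the finite-type spreading-out invoked in the surjectivity step.
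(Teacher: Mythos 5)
Your proof is correct and takes essentially the same route as the paper's (which adapts \cite[Lemma 5.2]{AV}): the heart of both arguments is that a vertical prime divisor of $\cX$, by flatness/equidimensionality of $f$ and integrality of its fibers, equals the full preimage $f^{-1}(E_i)$ of a prime divisor $E_i\subset \cS$ with multiplicity one, and $E_i$ is Cartier since $\cS$ is regular, so the vertical part of the divisor of $L$ is a pull-back from $\cS$. The only cosmetic difference is that you do the bookkeeping in Weil divisor class groups and make the divisor vertical by subtracting the principal divisor of a rational function in $k(\cX_{\eta})^\times=k(\cX)^\times$, whereas the paper extends a nowhere-vanishing section of $L|_{\cX_{\eta}}$ to an open substack containing the generic fiber and takes the associated Cartier divisor.
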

\begin{proof}It can be proven using the same arguments in the proof of \cite[Lemma 5.2]{AV}, which deals with the special case $\cS=\Spec R$ with $R$ a unique factorization domain (in which case $\Pic(\cS)=0$).

The proof of the surjectivity of $\res_{\eta}$  is the same as in loc. cit., using the fact that $\cX$ is regular. 

The inclusion $\Im(f^*)\subseteq \ker(\res_{\eta})$ is obvious. Let us sketch a proof of the inclusion $\Im(f^*)\supseteq \ker(\res_{\eta})$, adapting the argument of loc. cit..
Let $L$ be a line bundle on $\mathcal X$ which is trivial on the generic fiber $\cX_{\eta}$. Choose a nowhere vanishing section of $L|_{\cX_{\eta}}$; this will extend to a  section $s$ over some open substack $\mathcal U\subset \mathcal X$ containing the generic fiber. Let $D$ be the Cartier divisor defined by $s$. In particular, we have that $\oo_{\cX}(D)\cong L$.  Write $D=\sum_i n_i D_i$ with $n_i\in \bbZ$ and $D_i$ prime divisors. Since $f$ is equidimensional and dominant (being flat over an integral base) and  the image $E_i:=f(D_i)$ does not contain the generic point $\eta$ (by construction), we conclude that $E_i$ is a prime divisor of $\cS$. Since  the fibers of $f$ are integral, we must have that $D_i=f^{-1}(E_i)$. Each prime divisor $E_i$ is Cartier because $\cS$ is regular and, by what proved above, $f^*\cO_{\cS}(\sum_i n_i E_i)=\cO_{\cX}(\sum_i n_iD_i)=L$, which concludes the proof. 
\end{proof}

The following condition on a morphism of stacks will play an important role in what follows. Recall that a morphism $\pi:\mt X\to \mt S$ of algebraic  stacks is \emph{fpqc}, if it is faithfully flat and, for any point $x\in \mt X$, there exists an open neighborhood $\mt U\subset \mt X$ such that the image $\pi(\mt U)\subset \mt S$ is open and the restriction $\pi_{|\mt U}:\mt U \to \pi(\mt U)$ is quasi-compact (see \cite[Sec. 2.3.2]{Vis05}). For example, any morphism $\pi:\mt X\to \mt S$ that is faithfully flat and locally of finite presentation (i.e., it is fppf) is fpqc. 

\begin{defin}\label{D:Stein}
Let $\pi:\mt X\to\mt S$ be a morphism of algebraic  stacks. We will say that $\pi$ is \textbf{Stein} if it is fpqc  and the natural homomorphism $(\pi_{\mt T})^\#:\oo_{\mt T}\to(\pi_{\mt T})_*\oo_{\mt X_{\mt T}} $ is an isomorphism for any arbitrary base change $\mt T\to\mt S$, where $\pi_{\mt T}:\mt X_{\mt T}=\mt X\times_{\mt S} \mt T\to \mt T$ is the base change of $\pi$  through the  morphism $\mt T\to\mt S$.
\end{defin}

For a Stein morphism $\pi:\mt X\to\mt S$, we have that $\pi_*(\Gm)=\Gm$ and hence the pull-back map  $\pi^*:\Pic(\mt S)\to \Pic(\mt X)$ on Picard groups is injective.  
We will need the following  Lemma on the pull-back of the relative Picard group
\begin{equation}\label{E:RelPic}
\RPic(\mt X/ \mt S):=\Pic(\mt X)/\pi^*(\Pic(\mt S)),
\end{equation}
along an fpqc morphism. 

\begin{lem}\label{base-change} 
Let $\pi:\mt X\to \mt S$ be a Stein morphism of algebraic  stacks. Then, for any fpqc morphism $\mt S'\to \mt S$, the pull-back homomorphism
	$$
	\RPic(\mt X/\mt S)\to\RPic(\mt X_{\mt S'}/\mt S') 
	$$
of relative Picard groups is injective.
\end{lem}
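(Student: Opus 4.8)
The plan is to prove injectivity directly in the form of a descent statement. Write $f:\mathcal S'\to\mathcal S$ for the given fpqc morphism, $p:\mathcal X_{\mathcal S'}=\mathcal X\times_{\mathcal S}\mathcal S'\to\mathcal X$ for the projection, and $\pi':\mathcal X_{\mathcal S'}\to\mathcal S'$ for the base change of $\pi$. The pull-back homomorphism sends $[L]\mapsto[p^*L]$ and kills pull-backs from $\mathcal S$, so injectivity is exactly the assertion: if $L\in\Pic(\mathcal X)$ satisfies $p^*L\cong\pi'^*M'$ for some $M'\in\Pic(\mathcal S')$, then $L\cong\pi^*M$ for some $M\in\Pic(\mathcal S)$. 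The key structural remark I would record first is that $\pi'$ is again Stein: Stein morphisms are fpqc (hence stable under base change) and the isomorphism $\mathcal O_{\mathcal T}\xrightarrow{\cong}(\pi_{\mathcal T})_*\mathcal O$ is required for \emph{all} base changes $\mathcal T\to\mathcal S$, so it survives. In particular, by the projection formula for the line bundle $M'$ together with $\pi'_*\mathcal O=\mathcal O$, one has $\pi'_*\pi'^*M'\cong M'$; note this uses no quasi-compactness, since the projection formula for a line bundle is local on the base.

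Next I would produce the candidate over $\mathcal S'$ and, crucially, show its formation commutes with base change. Pushing $p^*L\cong\pi'^*M'$ forward along $\pi'$ gives a canonical isomorphism $\pi'_*(p^*L)\cong M'$, so $\pi'_*(p^*L)$ is invertible. The essential point is that for \emph{any} $g:\mathcal T\to\mathcal S'$ the formation of this pushforward commutes with $g$. I would \emph{not} invoke cohomology-and-base-change (there is no properness, and $\pi$ is only locally quasi-compact), but instead recompute directly: after base change the bundle is still of the shape $\pi'^*_{\mathcal T}(g^*M')$ and $\pi'_{\mathcal T}$ is still Stein, so the same projection-formula computation yields $g^*M'$ on the nose. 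Thus the universal Stein hypothesis is precisely what makes $\pi'_*(p^*L)$ commute with arbitrary base change, for free.

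I would then descend $M'$ to $\mathcal S$. With $\mathcal S''=\mathcal S'\times_{\mathcal S}\mathcal S'$, projections $f_1,f_2$, and $\mathcal X''=\mathcal X\times_{\mathcal S}\mathcal S''$ with projection $q:\mathcal X''\to\mathcal X$ and structure map $\pi''$, the base-change commutation identifies both $f_1^*M'$ and $f_2^*M'$ canonically with $\pi''_*(q^*L)$, giving a canonical isomorphism $\phi:f_1^*M'\xrightarrow{\cong}f_2^*M'$; transitivity of these canonical identifications over the triple fibre product forces the cocycle condition. By fpqc descent of quasi-coherent sheaves, $(M',\phi)$ descends to $M$ on $\mathcal S$ with a chosen $\theta:f^*M\xrightarrow{\cong}M'$, and since invertibility is fpqc-local and $f$ is faithfully flat, $M$ is a line bundle. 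To finish I would descend an isomorphism along the faithfully flat $p$: over $\mathcal X_{\mathcal S'}$ one has $p^*\pi^*M=\pi'^*f^*M\xrightarrow{\pi'^*\theta}\pi'^*M'\xrightarrow{\cong}p^*L$, and faithfully flat descent of isomorphisms upgrades this to $\pi^*M\cong L$ over $\mathcal X$, whence $[L]=0$ in $\RPic(\mathcal X/\mathcal S)$.

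The step I expect to be the main obstacle is the very last one: verifying that the comparison isomorphism $p^*\pi^*M\cong p^*L$ is compatible with the canonical descent data of its two sides relative to $p$, equivalently that the datum $\phi$ built from $\theta$ matches the descent datum carried by $p^*L$ over $\mathcal X''$. This is pure bookkeeping, but it is exactly where the \emph{universal} nature of the Stein condition pays off, since that is what makes every pushforward in sight commute with base change and hence renders all the comparison isomorphisms canonical and mutually compatible; once this compatibility is checked, both descents are formal.
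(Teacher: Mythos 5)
Your strategy is correct and can be completed, but it is a genuinely different route from the paper's. The paper's proof is a four-line sheaf-theoretic argument: the Leray spectral sequence for the fppf sheaf $\Gm$ along $\pi$ (resp. $\pi_{\mt S'}$), together with $\pi_*\Gm=\Gm$ coming from the Stein hypothesis, embeds $\Pic(\mt X)/\Pic(\mt S)$ into $H^0(\mt S,R^1\pi_*\Gm)$ (and likewise over $\mt S'$), and injectivity then follows because $R^1\pi_*\Gm$ is a sheaf for the fpqc topology, so its sections inject under restriction along the fpqc cover $\mt S'\to\mt S$. You instead carry out the descent by hand, exploiting the \emph{universal} Stein property (which the paper's argument never needs beyond the two base changes $\mt S\to \mt S$ and $\mt S'\to\mt S$) to make $\pi'_*(p^*L)\cong M'$ compatibly with all further base changes, then descending $M'$ to $\mt S$ and the comparison isomorphism along $p$. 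What your route buys is self-containedness: no spectral sequence, no identification $H^1_{\rm fppf}(-,\Gm)=\Pic(-)$, and above all no appeal to the (nontrivial, uncited-in-detail) fact that the fppf higher direct image $R^1\pi_*\Gm$ is an fpqc sheaf; it also exhibits the descended bundle $M$ explicitly. What it costs is the compatibility bookkeeping you flag at the end, which is real but standard.

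One justification does need repair. You assert $\pi'_*\pi'^*M'\cong M'$ because "the projection formula for a line bundle is local on the base"; on algebraic stacks this reasoning fails as stated, since a line bundle need not be Zariski-locally trivial (e.g.\ on $\mathcal B\Gm$ there are no nontrivial open substacks), and a smooth-local argument would require flat base change for $\pi'_*$, which needs quasi-compactness hypotheses you do not have. The correct, fully general argument is categorical: for invertible $N$, the functors $N\otimes(-)$ and $\pi'^*N\otimes(-)$ are equivalences of module categories intertwined by $\pi'^*$, hence also by its right adjoint $\pi'_*$, which gives $\pi'_*(\pi'^*N\otimes\mathcal F)\cong N\otimes\pi'_*\mathcal F$ with no finiteness or local-triviality assumptions. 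This same observation supplies the tool that makes your final "bookkeeping" go through: for any Stein morphism $\rho$, the pullback $\rho^*$ is fully faithful on line bundles (since $\Hom(A,B)=H^0(A^\vee\otimes B)$ and $\rho_*\rho^*(A^\vee\otimes B)\cong A^\vee\otimes B$), so the isomorphism $q_2^*\alpha\circ(q_1^*\alpha)^{-1}$ over $\mt X\times_{\mt S}\mt S'\times_{\mt S}\mt S'$ descends uniquely to the datum $\phi$ on $\mt S'\times_{\mt S}\mt S'$, and the cocycle condition may be checked after applying the faithful pullback along the Stein structure map over the triple product, where it telescopes from the single isomorphism $\alpha:p^*L\cong\pi'^*M'$. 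With that substitution your proof is complete.
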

\begin{proof}
The Leray spectral sequences for the fppf sheaf $\Gm$ with respect to the morphisms $\pi$ and $\pi_{\mt S'}$ give the following commutative diagram of groups:
	$$\xymatrix{
		H^0(\mt S,R^1\pi_{*}\Gm)\ar[r]& H^0(\mt S',R^1(\pi_{\mt S'})_*\Gm)\\
		\Pic(\mt X)/\Pic(\mt S)\ar[r]\ar@{^{(}->}[u]& \Pic(\mt X_{\mt S'})/\Pic(\mt S').\ar@{^{(}->}[u]
	}
	$$
	with injective vertical arrows. 
 The top horizontal arrow  is injective because $\mt S'\to\mt S$ is fpqc and $R^1\pi_{*}\Gm$ is a sheaf for the fpqc topology.  From the above commutative diagram, the bottom horizontal arrow is also injective.
 \end{proof}

Finally, we need to recall some facts about the Picard group of a \textbf{quotient stack}  over $k$, which in this text will always mean an algebraic stack of finite type over $k$ of the form $[X/G]$ with $X$ an algebraic space of finite type over $k$ and $G$ a (smooth) linear algebraic group over $k$. 

Recall that for a quotient stack $\cX=[X/G]$ the Picard group $\Pic(\cX)$ coincides with the equivariant Picard group $\Pic^G(X)$ (i.e., the group of $G$-linearized line bundles on $X$) while the first operational Chow group $A^1(\cX)$ coincides with the first equivariant operational Chow group $A_G^1(X)$ (as defined in \cite[Sec. 2.6]{EG98}).

\begin{prop}\label{Equi}(Edidin-Graham \cite{EG98})
Let $\mt X=\left[ X/G \right]$ be a quotient stack over $k$.
\begin{enumerate}[(i)]
\item \label{Equi1} If $\cX$ is locally factorial (e.g., if it is smooth), then there exists an algebraic space $Y$ of finite type over $k$ (called  an \emph{equivariant approximation} of $[X/G]$) together with a smooth, surjective, finite type morphism $f:Y\to \cX$ that induces an isomorphism
$$
f^*:\Pic(\cX)\xrightarrow{\cong} \Pic(Y).
$$
\item \label{Equi2} If $\cX$ is smooth, the first Chern class 
$$c_1:\Pic(\cX)\to A^1(\cX)$$
is an isomorphism. 
\item \label{Equi3} Assume $p:\mathcal C\to \mathcal X$ is either a representable morphism of locally factorial algebraic stacks  or a smooth morphism of regular stacks, we have the following commutative diagram of groups
$$
\xymatrix{
\Pic(\mathcal X)\ar[d]_{p^*}\ar[r]_{f^*}^\cong&\Pic(Y)\ar[d]\\
\Pic(\mathcal C)\ar[r]^(0.4)\cong&\Pic(\mathcal C\times_{\mathcal X} Y)
}
$$
where $f:Y\to\mathcal X$ is any equivariant approximation of $\mathcal X$.
\end{enumerate}
\end{prop}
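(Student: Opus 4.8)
The plan is to follow the equivariant approximation method of Edidin--Graham \cite{EG98} (building on Bogomolov and Totaro), reducing each of the three statements to a corresponding fact about a smooth algebraic space. The engine is the following construction: since $G$ is a linear algebraic group, for any integer $N$ there is a finite-dimensional representation $V$ of $G$ together with a $G$-invariant open subset $U\subseteq V$ on which $G$ acts freely and whose complement $V\setminus U$ has codimension $\geq N$. Setting $Y:=[(X\times U)/G]$, the free action of $G$ on $U$ (hence on $X\times U$ through the second factor) guarantees that $Y$ is an algebraic space of finite type over $k$, and the projection $f:Y\to\cX=[X/G]$ is smooth, surjective and of finite type, being an open part of the vector bundle $[(X\times V)/G]\to\cX$ associated to the representation $V$. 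Taking $N\geq 2$ will suffice throughout.

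For part \eqref{Equi1}, I would factor $f^*$ as the composition
$$
\Pic(\cX)\xrightarrow{\ \cong\ }\Pic([(X\times V)/G])\xrightarrow{\ \cong\ }\Pic([(X\times U)/G])=\Pic(Y),
$$
where the first isomorphism is homotopy invariance of the Picard group along the vector bundle $[(X\times V)/G]\to\cX$ (valid since $\cX$, and hence the total space, is locally factorial), and the second is the restriction isomorphism of Lemma \ref{restr} (or its evident locally factorial extension), which applies because $[(X\times V)/G]$ is locally factorial and the removed locus $[(X\times(V\setminus U))/G]$ has codimension $\geq 2$.

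For part \eqref{Equi2}, I would identify both sides with the corresponding groups of $Y$. By the very definition of the equivariant operational Chow group (as in \cite[Sec. 2.6]{EG98}), the high-codimension approximation gives $A^1(\cX)=A^1_G(X)\xrightarrow{\cong}A^1(Y)$, while part \eqref{Equi1} gives $\Pic(\cX)\xrightarrow{\cong}\Pic(Y)$, and these two identifications are compatible with the first Chern class. Since $Y$ is a smooth algebraic space, $c_1:\Pic(Y)\xrightarrow{\cong}A^1(Y)$ is the standard isomorphism between the Picard group and the codimension-one Chow group of a smooth space. Chasing through the identifications then yields that $c_1:\Pic(\cX)\to A^1(\cX)$ is an isomorphism.

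For part \eqref{Equi3}, I would base change the approximation along $p:\cC\to\cX$: the fiber product $\cC\times_{\cX}Y$ is the open part of the vector bundle $\cC\times_{\cX}[(X\times V)/G]\to\cC$ obtained by deleting $\cC\times_{\cX}[(X\times(V\setminus U))/G]$, and the same two-step argument (homotopy invariance followed by high-codimension restriction) gives $\Pic(\cC)\xrightarrow{\cong}\Pic(\cC\times_{\cX}Y)$; commutativity of the square is then just functoriality of pullback. I expect the main obstacle to be precisely this last point: one must check that after base changing along $p$ the excised locus still has codimension $\geq 2$ and that the total space and $\cC$ remain in the class where Lemma \ref{restr} and vector-bundle homotopy invariance hold. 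This is exactly what the two alternative hypotheses on $p$ are designed to guarantee — a representable morphism between locally factorial stacks preserves local factoriality and fiber codimensions, while a smooth morphism between regular stacks preserves regularity and (by flatness) relative codimensions — so in either case the argument goes through.
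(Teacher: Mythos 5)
Your overall strategy is the same as the paper's: the Edidin--Graham approximation $Y=(X\times U)/G$, with $f$ factored as an open immersion with codimension $\geq 2$ complement inside the vector bundle $[(X\times V)/G]\to\cX$, followed by a two-step computation of $\Pic$. Your parts \eqref{Equi1} and \eqref{Equi2} are correct in substance (for \eqref{Equi2} you re-derive \cite[Cor.~1]{EG98} from the approximation instead of citing it, which is exactly how that corollary is proved, so nothing is lost), and your argument for part \eqref{Equi3} coincides with the paper's in the second case (smooth morphism of regular stacks). One small simplification: the codimension of the excised locus after base change, which you flag as the main obstacle, is automatic and needs no flatness, since $\cC\times_{\cX}[(X\times(V\setminus U))/G]$ sits inside the bundle $\cC\times_{\cX}[(X\times V)/G]$ as a ``vertical'' locus whose codimension equals that of $V\setminus U$ in $V$.

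The genuine gap is in the first case of part \eqref{Equi3}, where $p$ is representable and the stacks are only \emph{locally factorial}, not regular (and, strictly speaking, the same issue affects your proof of \eqref{Equi1} when $\cX$ is not regular). Your two-step argument on $\cC$ needs homotopy invariance of $\Pic$ for the bundle $\cC\times_{\cX}[(X\times V)/G]\to\cC$ and the codimension-$\geq 2$ restriction isomorphism for its total space; but Lemma \ref{restr} is stated (via \cite[Lemma 7.3]{BH12}) only for \emph{regular} stacks, and the ``evident locally factorial extension'' you invoke is not evident at the stack level --- local factoriality is not a smooth-local notion, so even formulating it for these stacks and propagating it to the bundle and to the fiber product requires an argument, which your sentence that representable morphisms ``preserve local factoriality and fiber codimensions'' does not supply. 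The paper sidesteps this entirely: representability is used not to preserve codimensions but to write $\cC=[C/G]$ for a $G$-equivariant morphism $C\to X$, so that $\cC\times_{\cX}Y\cong (C\times U)/G$ is itself an equivariant approximation of $\cC$, and $\Pic(\cC)\xrightarrow{\cong}\Pic(\cC\times_{\cX}Y)$ is just part \eqref{Equi1} applied to $\cC$; in turn, part \eqref{Equi1} is proved by running both isomorphisms through equivariant Picard groups of \emph{spaces}, $\Pic^G(X)\to\Pic^G(X\times V)\to\Pic^G(X\times U)$, where \cite[Lemma~2]{EG98} applies, rather than through stack-level lemmas. To repair your proof in the locally factorial case, either add this reduction $\cC=[C/G]$ or prove the locally factorial versions of the two stack-level statements you use.
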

\begin{proof}
Part \eqref{Equi2} follows from \cite[Cor. 1]{EG98}.

Part \eqref{Equi1}: pick a representation $V$ of $G$ such that $G$ acts freely on an open subset $U$ of $V$ whose complement has codimension at least  $2$ and set $Y:=X\times U/G$, which is an algebraic space (of finite type over $k$) since $G$ acts freely on $X\times U$. Consider the morphism 
$$f:Y=X\times U/G\xrightarrow{i} [X\times V/G]\xrightarrow{h} [X/G]=\cX,$$
where $i$ is induced by the inclusion $U\hookrightarrow V$ and $h$ is induced by the first projection  $X\times V\to X$. From \cite[Lemma 2]{EG98}, we deduce that the pull-back map $f^*$ induces an isomorphism 
$$f^*:\Pic(\cX)=\Pic^G(X)\xrightarrow[\cong]{h^*} \Pic^G(X\times V)\xrightarrow[\cong]{i^*}  \Pic^G(X\times U)=\Pic(Y).
$$

Part \eqref{Equi3}: Assume first that $p$ is a representable morphism of locally factorial stacks. Then there exists a morphism of $G$-schemes $\pi: C\to X$, such that $\pi/G:[C/G]\to [X/G]$ is exactly the morphism $p$. Then the assertion follows by the construction of $Y$ in the proof of part \eqref{Equi1} and the observation that $\mathcal C\times_{\mathcal X} Y\cong C\times U/G$.

Assume now that $p$ is smooth and $\cC$ and $\mt X$ are regular. By construction $f$ is the composition of an open immersion $i:Y\hookrightarrow \mt W$ of regular stacks whose complement has codimension at least $2$ and a vector bundle $h:\mt W\to \mt X$. So the same happens to $\widetilde f=\widetilde h\circ \widetilde i:\cC\times_{\mt X}Y\to\cC$. Observe that $\widetilde h^*$, resp., $\widetilde i^*$, is an isomorphism because $\widetilde h$ is a vector bundle, resp., by Lemma \ref{restr}. Hence the same holds for $\widetilde f^*$, concluding the proof.
\end{proof}

\subsection{Chow groups of flag bundles}\label{SS:Ch-Flag}
In this section, we collect some facts about Chow groups of flags bundles (i.e bundles of flag varieties). As usual, $G$ is a reductive group, and we fix a  Borel subgroup $B=B_G\subset G$ and a maximal torus $T=T_G\subset B$.

The flag variety $G/B$ is a smooth projective variety of dimension $N:=\text{dim}G-\text{dim}B$. The quotient $G/T$ is an affine bundle on $G/B$. Let $E$ be a $G$-bundle over a scheme $C$. Since $G$ acts on $E$, by taking the quotient with respect to $B$ and $T$, we obtain the flag bundle $E/B\to C$ and its affine bundle $E/T$, respectively. They sit in the following cartesian diagram (on the left) 
\begin{equation}\label{E:diagr-Bred}
\xymatrix{
	\mathcal B T \ar[d]\ar@{}[dr]|\square& \ar[l] E/T\ar[d] & & A^*(\mathcal BT)\ar[r]& A^*(E/T)\\
	\mathcal B B \ar[d] \ar@{}[dr]|\square &\ar[l] E/B\ar[d] &\Longrightarrow & A^*(\mathcal BB)\ar[u]^{\cong}\ar[r] &A^*(E/B)\ar[u]_{\cong}\\
	\mathcal BG&\ar[l] C & & A^*(\mathcal BG) \ar[u]\ar[r] & A^*(C)\ar[u]
}
\end{equation}
which induces a commutative diagram (on the right) at the level of operational Chow rings $A^*(-)$.  The isomorphisms in the right diagram come from the fact that both the vertical maps in the upper square of the left diagram are vector bundles. Furthermore, it is well-known (see e.g., \cite[Lemma 2 and 3]{EG97}) that
$$\Sym^*\Lambda^*(T)\cong A^*(\mathcal BT) \cong A^*(\mathcal BB).$$
Hence, we have a well-defined commutative diagram of homomorphisms of graded rings:
\begin{equation}\label{E:diagr-Bred2}
\xymatrix{
\Sym^*\Lambda^*(T)\ar[rrd]_{\cc{G}{T}{E}}\ar[rr]^{\cc{G}{B}{E}}&& A^*(E/B)\ar[d]^{\cong}\\
& & A^*(E/T)
}
\end{equation}
where the vertical arrow is the isomorphism given by the pull-back along $E/T\to E/B$. The next result, due to Brion and Edidin-Graham, will be fundamental in the computation of the Picard group of our main object.

\begin{prop}\label{P:Chow-flag} Let $E\to C$ be a $G$-bundle over a scheme and let $p:E/B\to C$ be the associated flag bundle. Then for any  $v\in(\Sym^*\Lambda^*(T_G))^{\scr W_G}$, 
\begin{enumerate}[(i)]
\item \label{Chow-flag-i}we have the equality in $A^*(E/B)_{\mathbb Q}$
\begin{equation}\nonumber
p^*p_*\cc{G}{B}{E}\left(v\cdot\prod_{\alpha>0}\alpha\right)=|\scr W_G|\cdot \cc{G}{B}{E}(v),
\end{equation}
where $\{\alpha>0\}\subset \Lambda^*(T_G)$ is the set of positive roots with respect to the Borel subgroup $B$;
\item \label{Chow-flag-ii}  if $E\to C$ is locally trivial for the Zariski topology (e.g., if it admits a $B$-reduction), there exists a unique integral class $\ee{G}{E}(v)\in A^*(C)$, functorial with respect to base changes of $E\to C$, such that:
\begin{enumerate}
\item \label{Chow-flag-iia} we have the equality $p^*\ee{G}{E}(v)=\cc{G}{B}{E}(v)$ in $A^*(E/B)$,
\item\label{Chow-flag-iib}  we have the equality 
$|\scr W_G|\cdot \ee{G}{E}(v)=p_*\cc{G}{B}{E}\left(v\cdot\prod_{\alpha>0}\alpha\right)
$ in $A^*(C)$, 
\end{enumerate}
\end{enumerate}
\end{prop}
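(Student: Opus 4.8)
The plan is to deduce both statements from two geometric inputs about the flag bundle $p\colon E/B\to C$: the \emph{integration formula} $p_*\cc{G}{B}{E}\big(\prod_{\alpha>0}\alpha\big)=|\scr W_G|$ in $A^0(C)$, and the fact that $\cc{G}{B}{E}$ carries a $\scr W_G$-invariant class into the image of $p^*$ (over $\mathbb Q$ for an arbitrary $G$-bundle, and over $\mathbb Z$ once $E\to C$ is Zariski-locally trivial). The remaining ingredients are elementary: $\cc{G}{B}{E}$ is a homomorphism of graded rings, the projection formula for $p$, and the (split) injectivity of $p^*\colon A^*(C)\hookrightarrow A^*(E/B)$, which holds because $p$ is a flag bundle and $A^*(E/B)$ is a free $A^*(C)$-module on the Schubert classes.

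For part (i) I would argue as follows. Because $v\in\Sym^*(\Lambda^*(T_G))^{\scr W_G}$ and $A^*(\mathcal BG)_{\mathbb Q}\cong\Sym^*(\Lambda^*(T_G))^{\scr W_G}_{\mathbb Q}$, the lower commutative square in the right-hand diagram of \eqref{E:diagr-Bred} shows that $\cc{G}{B}{E}(v)=p^*\gamma$ for some $\gamma\in A^*(C)_{\mathbb Q}$. Since $\cc{G}{B}{E}$ is a ring homomorphism, $\cc{G}{B}{E}\big(v\cdot\prod_{\alpha>0}\alpha\big)=p^*\gamma\cdot\cc{G}{B}{E}\big(\prod_{\alpha>0}\alpha\big)$, and the projection formula together with the integration formula yields
\[
p_*\cc{G}{B}{E}\Big(v\cdot\prod_{\alpha>0}\alpha\Big)=\gamma\cdot p_*\cc{G}{B}{E}\Big(\prod_{\alpha>0}\alpha\Big)=|\scr W_G|\,\gamma .
\]
Applying $p^*$ and using $p^*\gamma=\cc{G}{B}{E}(v)$ gives $p^*p_*\cc{G}{B}{E}(v\cdot\prod_{\alpha>0}\alpha)=|\scr W_G|\,\cc{G}{B}{E}(v)$, as required.

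For part (ii), uniqueness of a class $\ee{G}{E}(v)$ satisfying (a) is immediate from the injectivity of $p^*$. For existence, the Zariski-local triviality of $E\to C$ (guaranteed by a $B$-reduction, $B$ being a special group) upgrades the rational descent used above to an integral one: by the characteristic-class formalism of Edidin--Graham \cite{EG98} the invariant class $\cc{G}{B}{E}(v)$ already lies in $\Im(p^*)$, and its unique preimage is the sought class $\ee{G}{E}(v)\in A^*(C)$; concretely a $B$-reduction provides a section $s$ of $p$, and one takes $\ee{G}{E}(v)=s^*\cc{G}{B}{E}(v)$, checking that the projector $p^*s^*$ fixes $\cc{G}{B}{E}(v)$. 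This is exactly (a). Property (b) is then formal: from (a) we get $\cc{G}{B}{E}(v\cdot\prod_{\alpha>0}\alpha)=p^*\ee{G}{E}(v)\cdot\cc{G}{B}{E}(\prod_{\alpha>0}\alpha)$, and applying $p_*$, the projection formula and the integration formula give $p_*\cc{G}{B}{E}(v\cdot\prod_{\alpha>0}\alpha)=|\scr W_G|\,\ee{G}{E}(v)$. Functoriality under base change follows because classifying maps, pullbacks of characteristic classes and the section $s$ are all compatible with base change, and the uniqueness just proved forces the base-changed class to coincide.

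The hard part is establishing the two inputs, and these are precisely what I would quote from the literature. The integration formula is the crux: fiberwise it is the classical fact that $\prod_{\alpha>0}\alpha$ represents $|\scr W_G|$ times the point class in the rational coinvariant algebra $\Sym^*(\Lambda^*(T_G))/\big(\Sym^{>0}(\Lambda^*(T_G))^{\scr W_G}\big)$ of $G/B$ --- equivalently $\partial_{w_0}\big(\prod_{\alpha>0}\alpha\big)=|\scr W_G|$ for the Bernstein--Gelfand--Gelfand operator attached to the longest element $w_0$, using $w\big(\prod_{\alpha>0}\alpha\big)=(-1)^{\ell(w)}\prod_{\alpha>0}\alpha$ --- and one must propagate this over the base through the free $A^*(C)$-module structure of $A^*(E/B)$; this is Brion's pushforward formula $p_*\circ\cc{G}{B}{E}=\ee{G}{E}\circ\partial_{w_0}$. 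The second delicate point is the integral descent in part (ii): that $\cc{G}{B}{E}$ carries $\scr W_G$-invariants into $\Im(p^*)$ over $\mathbb Z$, not merely over $\mathbb Q$, is exactly where Zariski-local triviality (rather than an arbitrary $G$-bundle) is indispensable, and it is supplied by Edidin--Graham's theory of characteristic classes of Zariski-locally trivial torsors \cite{EG98}.
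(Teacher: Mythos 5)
Your proposal is correct in substance, and for parts (i) and (ii)(a) it is essentially the paper's proof: the paper disposes of (i) by citing Brion \cite[Prop. 1.2]{BrionTodd} and of (ii)(a) by citing Edidin--Graham \cite[Thm. 1]{EG97}, and your two ``geometric inputs'' are exactly these results. Your derivation of (i) from the integration formula $p_*\cc{G}{B}{E}\left(\prod_{\alpha>0}\alpha\right)=|\scr W_G|$ together with rational descent of invariants through $A^*(\mathcal BG)_{\bbQ}\cong\Sym^*(\Lambda^*(T_G))^{\scr W_G}_{\bbQ}$ and the projection formula is a repackaging of Brion's push-forward formula, which you quote anyway for the crux, so nothing is gained or lost there. (Also, the reference for the characteristic-class descent in (ii)(a) should be \cite{EG97}, not \cite{EG98}.)

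The genuine divergence is in (ii)(b). You deduce it formally from (ii)(a): write $\cc{G}{B}{E}\left(v\cdot\prod_{\alpha>0}\alpha\right)=p^*\ee{G}{E}(v)\cdot\cc{G}{B}{E}\left(\prod_{\alpha>0}\alpha\right)$, apply $p_*$, the projection formula, and the integration formula. The paper instead forms the integral class $m:=|\scr W_G|\cdot p^*\ee{G}{E}(v)-p^*p_*\cc{G}{B}{E}\left(v\cdot\prod_{\alpha>0}\alpha\right)$, observes via diagram \eqref{E:diagr-Bred2} that $m$ is pulled back from $A^*(\mathcal BB)\cong\Sym^*(\Lambda^*(T))$, which is torsion-free, shows $m$ is torsion by applying (i) to the universal flag bundle $\mathcal BB\to\mathcal BG$, concludes $m=0$, and then uses that $p$ is a Chow envelope (so $p^*$ is injective on integral Chow groups). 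The trade-off: the paper's torsion trick needs only the \emph{rational} statement (i) plus injectivity of $p^*$, whereas your route needs the integration formula to hold \emph{integrally} in $A^0(C)$. That is exactly where your appeal to ``$A^*(E/B)$ is a free $A^*(C)$-module on the Schubert classes'' enters, and it is the one soft spot of your write-up: this freeness is not available integrally for an arbitrary, merely \'etale-locally trivial $G$-bundle, and even under the hypotheses of (ii) it is not elementary --- it requires the cellular-fibration structure provided by a $B$-reduction (writing $E/B=F\times^B(G/B)$, filtered by the $B$-stable Bruhat cells) or the structure results of \cite{EG97} for Zariski-locally trivial bundles. So your argument for (ii)(b) is fillable, but this integral Leray--Hirsch input must be stated as a substantive hypothesis-dependent ingredient rather than listed among the ``elementary'' ones; note also that in part (i), where the bundle is arbitrary, your argument fortunately never actually uses it.
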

\begin{proof}
Point  \eqref{Chow-flag-i}:  see \cite[Proposition 1.2]{BrionTodd}. Point  \eqref{Chow-flag-ii}: the assertion  \eqref{Chow-flag-iia}  is exactly the content of \cite[Theorem 1]{EG97}. Consider the (integral) class in $A^*(E/B)$:
$$m:=|\scr W_G|\cdot p^*\ee{G}{E}(v)-p^*p_*\cc{G}{B}{E}\left(v\cdot\prod_{\alpha>0}\alpha\right)=|\scr W_G|\cdot \cc{G}{B}{E}(v)-p^*p_*\cc{G}{B}{E}\left(v\cdot\prod_{\alpha>0}\alpha\right).$$ 
We claim that $m=0$ in $A^1(E/B)$. Indeed, by diagram \eqref{E:diagr-Bred2}, the class $m$ is the pull-back of a class from $A^*(\mathcal BB)\cong\Sym^*\Lambda^*(T)$ which is torsion-free. Furthermore, by applying \eqref{Chow-flag-i}  to the flag bundle $\mathcal BB\to \mathcal BG$ (see Remark \ref{R:Brion-stack}), we get that $m$ must be a torsion class, hence is zero. By hypothesis, the bundle $p:E/B\to C$ is a Chow envelope. In particular, the pull-back $p^*$ of integral Chow groups is injective. Hence, $m=0$ which implies \eqref{Chow-flag-iib}.
\end{proof}

\begin{rmk}\label{R:Brion-stack}The point $(i)$ of the above proposition is still true if we assume that $C$ is an algebraic stack of finite type over $k$. It is essentially due to the fact that the computation in \cite[Proposition 1.2]{BrionTodd} uses Grothendieck-Riemann-Roch Theorem, which still holds in our setting because $p$ is representable, by diagram \eqref{E:diagr-Bred}.
\end{rmk}
Using the diagram (\ref{E:diagr-Bred2}), we get the analogous result for the $G/T$-bundle $q: E/T\to C$.
\begin{cor}\label{C:Chow-flag} With the assumptions of Proposition \ref{P:Chow-flag}, we have
$$
\frac{1}{|\scr W_G|}q^*p_*\cc{G}{B}{E}\left(v\cdot\prod_{\alpha>0}\alpha\right)=\cc{G}{T}{E}(v)\in A^*(E/T)_{\mathbb Q}\quad \text{ for any } v\in(\Sym^*\Lambda^*(T_G))^{\scr W_G}. 
$$
\end{cor}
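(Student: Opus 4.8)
The plan is to reduce the statement directly to Proposition \ref{P:Chow-flag}\eqref{Chow-flag-i} by factoring the projection $q:E/T\to C$ through the flag bundle. First I would write $\pi:E/T\to E/B$ for the affine bundle realizing $E/T$ as the $B/T$-bundle over $E/B$ (recall that $B/T$, being isomorphic to the unipotent radical of $B$, is an affine space, so that $E/T\to E/B$ is the affine bundle appearing in \eqref{E:diagr-Bred}). Thus $q=p\circ\pi$ and hence $q^*=\pi^*\circ p^*$ on operational Chow groups. The pull-back $\pi^*:A^*(E/B)\to A^*(E/T)$ is precisely the vertical isomorphism in diagram \eqref{E:diagr-Bred2}, and by the commutativity of that diagram it carries $\cc{G}{B}{E}$ to $\cc{G}{T}{E}$; in particular $\pi^*\cc{G}{B}{E}(v)=\cc{G}{T}{E}(v)$ for every $v\in\Sym^*(\Lambda^*(T_G))$.

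With this in hand the computation is immediate. Starting from the identity of Proposition \ref{P:Chow-flag}\eqref{Chow-flag-i},
$$
p^*p_*\cc{G}{B}{E}\Bigl(v\cdot\prod_{\alpha>0}\alpha\Bigr)=|\scr W_G|\cdot\cc{G}{B}{E}(v)\in A^*(E/B)_{\mathbb Q},
$$
I would apply $\pi^*$ to both sides. The left-hand side becomes $\pi^*p^*p_*\cc{G}{B}{E}(v\cdot\prod_{\alpha>0}\alpha)=q^*p_*\cc{G}{B}{E}(v\cdot\prod_{\alpha>0}\alpha)$, using $q^*=\pi^*\circ p^*$, while the right-hand side becomes $|\scr W_G|\cdot\pi^*\cc{G}{B}{E}(v)=|\scr W_G|\cdot\cc{G}{T}{E}(v)$ by the previous paragraph. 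Dividing the resulting equality by $|\scr W_G|$ in $A^*(E/T)_{\mathbb Q}$ yields exactly the asserted formula.

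There is essentially no genuine obstacle here: the entire content is carried by Proposition \ref{P:Chow-flag}\eqref{Chow-flag-i}, and the corollary is a formal consequence obtained by transporting that identity along the affine-bundle isomorphism $\pi^*$. The only points requiring a moment of care are the bookkeeping of the two pull-backs (ensuring one uses $q^*=\pi^*\circ p^*$ and not the reverse composition) and the fact that $\pi^*$ is an isomorphism of Chow rings intertwining $\cc{G}{B}{E}$ and $\cc{G}{T}{E}$, both of which are already recorded in diagram \eqref{E:diagr-Bred2}.
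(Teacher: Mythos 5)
Your proof is correct and is exactly the paper's argument: the paper derives the corollary by composing both sides of Proposition \ref{P:Chow-flag}\eqref{Chow-flag-i} with the vertical isomorphism $A^*(E/B)\xrightarrow{\cong}A^*(E/T)$ from diagram \eqref{E:diagr-Bred}, which is precisely your $\pi^*$, using $q=p\circ\pi$ and the compatibility $\pi^*\cc{G}{B}{E}=\cc{G}{T}{E}$ recorded in \eqref{E:diagr-Bred2}. Your write-up simply makes explicit the bookkeeping the paper leaves implicit.
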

\begin{rmk} Observe that, even if the class $|\scr W_G|^{-1}\prod_{\alpha>0}\alpha$ is not integral, its image in $A^N(G/B)$ defines an integral class, which is  indeed the Poincar\'e dual of the closed orbit $[B/B]\in G/B$ (see \cite{Dem74}). The map $\cc{G}{B}{G}:\Sym^N\Lambda^*(T_G)\to A^N(G/B)\cong\mathbb Z$ in degree $N$ is not surjective for some reductive groups $G$. The order of the cokernel is the so-called \emph{torsion index}. So, for an arbitrary reductive group $G$,  there may not exist an integral class in $\Sym^N \Lambda^*(T_G)$ whose image in $A^N(G/B)$ is the class $[B/B]\in G/B$.
\end{rmk}

\subsection{Weak Franchetta conjecture}\label{S:Franchetta}

The aim of this section is to recall the so called  ``weak Franchetta Conjecture", which  is the computation of the relative Picard of the universal curve $\pi: \mt C_{g,n}\to \mt M_{g,n}$ (over our fixed base field $k=\ov k$):
$$
\operatorname{RelPic}(\mathcal C_{g,n}):=\operatorname{Pic}(\mathcal C_{g,n})/\pi^*\operatorname{Pic}(\mathcal M_{g,n}).
$$

\begin{teo}[weak Franchetta Conjecture]\label{franchetta}
 The group $\operatorname{RelPic}(\mathcal C_{g,n})$ is generated by the relative dualizing line bundle $\omega_{\pi}$ and the line bundles 
$\{\oo(\sigma_1), \ldots, \oo(\sigma_n)\}$ associated to the universal sections $\sigma_1,\ldots,\sigma_n$, subject to the following relations:
\begin{itemize}
\item if $g=1$ then $\omega_{\pi}=0$;
\item if $g=0$ then $\oo(\sigma_1)=\ldots= \oo(\sigma_n)$ and $\omega_{\pi}=\oo(-2\sigma_1)$.
\end{itemize}
\end{teo}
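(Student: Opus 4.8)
The plan is to compute $\operatorname{RelPic}(\mathcal C_{g,n})$ by reducing the statement to the pointed case $n=0$ plus an explicit analysis of how the sections contribute, and to handle the generators and relations separately. First I would recall that a line bundle on $\mathcal C_{g,n}$ is, by the universal property, the datum of a line bundle on each family of $n$-pointed genus $g$ curves, functorial in the family; thus $\operatorname{RelPic}(\mathcal C_{g,n})$ is the group of such "tautological'' divisor classes modulo those pulled back from the base $\mathcal M_{g,n}$. The candidate generators $\omega_\pi, \oo(\sigma_1),\dots,\oo(\sigma_n)$ are manifestly functorial, so the real content is (a) that they \emph{generate}, and (b) the stated relations and the absence of any further relation in the remaining cases.

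For the generation statement, the strategy is to test against a sufficiently rich supply of families. Over a field $k=\ov k$, I would take a very general (or a generic one-parameter) family of pointed curves and use the classical Franchetta-type result: on a general curve the only line bundles that extend to the whole moduli family are the integer combinations of $\omega$ and the point classes $\oo(\sigma_i)$. Concretely, I would restrict a class $L\in\operatorname{Pic}(\mathcal C_{g,n})$ to the generic geometric fiber $C_\eta$, which is a general pointed curve; the group generated by $\omega_{C_\eta}$ and the $\sigma_i(\eta)$ inside $\operatorname{Pic}(C_\eta)$ is a free abelian group whose rank I can read off (it is $n+1$ when $g\ge 2$, and drops in the special cases because of the relations below), and the key input is that a class on the generic fiber extends to all of $\mathcal C_{g,n}$ if and only if it lies in this subgroup — this is exactly the Franchetta phenomenon, provable by a monodromy/Picard-Lefschetz argument showing the relative Néron–Severi/Picard of the generic fiber has no invariants beyond these tautological ones. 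Combined with Lemma \ref{restr} applied to the complement of the boundary and Proposition \ref{P:fiber} (with $f=\pi$, whose fibers are integral curves), the exact sequence
\[
\operatorname{Pic}(\mathcal M_{g,n})\xrightarrow{\pi^*}\operatorname{Pic}(\mathcal C_{g,n})\xrightarrow{\res_\eta}\operatorname{Pic}(\mathcal C_{\eta})\to 0
\]
identifies $\operatorname{RelPic}(\mathcal C_{g,n})$ with the image of $\res_\eta$, which is precisely the tautological subgroup. This gives both generation and the fact that no relations hold beyond those already visible on $C_\eta$.

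The relations are then fiberwise statements. In genus one, $\omega_\pi$ is trivial because the dualizing sheaf of a smooth genus one curve has degree zero and, more strongly, is canonically trivialized functorially over the whole family (the relative $H^0(\omega)$ is a line bundle giving the trivialization), so $\omega_\pi$ already descends to the base and hence vanishes in $\operatorname{RelPic}$. In genus zero, each fiber is $\mathbb P^1$, all degree-one point classes are linearly equivalent on $\mathbb P^1$, so $\oo(\sigma_i)=\oo(\sigma_j)$ up to a base class; to promote this to an equality in $\operatorname{RelPic}$ I would note that the difference $\oo(\sigma_i)-\oo(\sigma_j)$ restricts to $0$ on every fiber, hence by the exact sequence above lies in $\pi^*\operatorname{Pic}(\mathcal M_{0,n})$. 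The relation $\omega_\pi=\oo(-2\sigma_1)$ likewise comes from $\deg\omega_{\mathbb P^1}=-2$ fiberwise together with the descent argument. I would verify these are the only relations by comparing ranks of the tautological subgroup of $\operatorname{Pic}(C_\eta)$ in each case.

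The main obstacle I expect is the rigorous proof of the Franchetta phenomenon itself — i.e. that the restriction $\res_\eta$ has image \emph{exactly} the tautological subgroup, with no extra classes surviving. In characteristic zero this is classical and can be argued via Hodge theory or monodromy of the universal family; over an arbitrary algebraically closed $k$ it is more delicate, and I anticipate the paper invokes an \'etale-monodromy or degeneration argument (reducing to the boundary of $\ov{\mathcal M}_{g,n}$ and inducting on $g$ via the weak Franchetta statement in lower genus), as referenced in \S\ref{S:Franchetta}. Handling the low-genus and small-$n$ edge cases (where $\mathcal M_{g,n}$ may fail to be Deligne–Mumford, e.g. $(g,n)=(1,0)$) carefully, and making sure the descent steps via Proposition \ref{P:fiber} apply despite $\pi$ being only representable by algebraic spaces in that case, is the part requiring the most care.
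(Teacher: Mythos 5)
First, a point of calibration: the paper does not prove Theorem \ref{franchetta} at all. It is imported from the literature --- Arbarello--Cornalba \cite{AC87} for $g\geq 3$ in characteristic zero, Schr\"oer \cite{Sch03} over an arbitrary field, and \cite{FV} for the extension to all pairs $(g,n)$ --- so there is no internal proof to compare yours against; the only question is whether your argument would stand on its own as a proof. It does not, because its central step is exactly the statement being proved.

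Concretely: your reduction via Lemma \ref{restr} and Proposition \ref{P:fiber} is fine, and the relations in genus $0$ and $1$ (the easy direction) are handled correctly. The exact sequence identifies $\operatorname{RelPic}(\mathcal C_{g,n})$ with $\operatorname{Pic}(\mathcal C_\eta)$, the Picard group of the generic fiber viewed as a curve over the function field $k(\mathcal M_{g,n})$. But note that this sequence makes restriction to the generic fiber \emph{surjective}, so your formulation of the key input --- ``a class on the generic fiber extends to all of $\mathcal C_{g,n}$ if and only if it is tautological'' --- is off: every class on $\mathcal C_\eta$ extends. The actual content of the theorem is the computation that $\operatorname{Pic}(\mathcal C_\eta)$ itself contains nothing beyond the tautological classes (equivalently, that the monodromy-invariant part of $\operatorname{Pic}(\mathcal C_{\ov\eta})$ is tautological and descends to $\eta$ without obstruction), and this is precisely the Franchetta phenomenon that you defer --- by your own admission --- as ``the main obstacle.'' Reducing the theorem to itself is not a proof. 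Moreover, the fallback you suggest for excluding further relations --- comparing ranks of the tautological subgroup on a very general pointed fiber --- collapses in positive characteristic: over $k=\overline{\mathbb F}_p$ the field is countable, so ``very general'' is vacuous, and every curve is defined over a finite field, so the Jacobian $J_C(k)$ is a torsion group; hence the classes $\oo(\sigma_i-\sigma_j)$ are torsion on \emph{every} closed fiber and no fiberwise argument can establish freeness. This is exactly why the arbitrary-characteristic proofs of \cite{Sch03} and \cite{FV} require genuine additional input (degeneration to the boundary, or monodromy over the function field), which your proposal anticipates but does not supply.
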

The above result was proved for $g\geq 3$ by Arbarello-Cornalba \cite{AC87} if $\rm{char}(k)=0$ and by Schr\"oer \cite{Sch03} for an arbitrary field $k$. The extension to arbitrary pairs $g,n\geq 0$ can be found in \cite{FV}.

Note also that the result for $\operatorname{RelPic}(\mathcal C_{g,n})_{\bbQ}$ follows (under the assumption $2g-2+n>0$, i.e., when $\Mg$ is a DM stack)  from the computation of $\Pic(\ov \cM_{g,n})_{\bbQ}$ performed by Arbarello-Cornalba \cite{AC98} in characteristic zero and by Moriwaki \cite{Mor01} in positive characteristic.

\begin{rmk}\label{R:M10}
Note that the group $\operatorname{RelPic}(\mathcal C_{1,0})$ is trivial, which gives another proof of the fact that  $\pi:\cC_{1,0}\to \cM_{1,0}$ is not projective, see Remark \ref{R:proj-fam}.
\end{rmk} 

The above Theorem allows us to compute also the group of relative degree-$0$ line bundles on the universal family $\pi:\Cg\to \Mg$:
$$\operatorname{RelPic^0}(\mathcal C_{g,n}):=\{L\in \operatorname{RelPic}(\mathcal C_{g,n})| L \text{ has $\pi$-relative degree }0\}.
$$

\begin{cor}\label{C:Franch0}
The group $\operatorname{RelPic}^0(\mathcal C_{g,n})$ is
\begin{enumerate}[(i)]
\item freely generated by $\omega_{\pi}((2-2g)\sigma_1)$ and $\oo(\sigma_i-\sigma_{i+1})$ for $i=1,\ldots, n-1$, if $n\geq 1$ and $g\geq 2$;
\item freely generated by $\oo(\sigma_i-\sigma_{i+1})$ for $i=1,\ldots, n-1$, if $n\geq 1$ and $g=1$;
\item trivial if either $n=0$ or $g=0$. 
 \end{enumerate}
\end{cor}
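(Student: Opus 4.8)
The plan is to realize $\operatorname{RelPic}^0(\mathcal C_{g,n})$ as the kernel of the \emph{relative degree} homomorphism and then to read off an explicit basis from the presentation supplied by Theorem \ref{franchetta}. Concretely, since the geometric fibers of $\pi$ are connected and the base $\Mg$ is irreducible (hence connected), the relative degree of a line bundle is a well-defined integer, locally constant and therefore constant; it vanishes on every line bundle pulled back from $\Mg$, so it descends to a homomorphism $\deg\colon \operatorname{RelPic}(\mathcal C_{g,n})\to\bbZ$ whose kernel is exactly $\operatorname{RelPic}^0(\mathcal C_{g,n})$. On the Franchetta generators it takes the values $\deg(\omega_\pi)=2g-2$ and $\deg(\oo(\sigma_i))=1$ for $1\le i\le n$. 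Thus the whole statement reduces to computing $\ker(\deg)$ inside the explicit abelian group described in Theorem \ref{franchetta}, treated case by case.

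For case (i), with $g\ge 2$ and $n\ge 1$, Theorem \ref{franchetta} gives that $\operatorname{RelPic}(\mathcal C_{g,n})$ is \emph{free} of rank $n+1$ on $\omega_\pi,\oo(\sigma_1),\dots,\oo(\sigma_n)$ (no relations occur for $g\geq 2$). I would perform the change of generators
$$
b:=\omega_\pi\big((2-2g)\sigma_1\big),\qquad a_i:=\oo(\sigma_i-\sigma_{i+1})\ (1\le i\le n-1),\qquad s:=\oo(\sigma_n),
$$
and check that $\{b,a_1,\dots,a_{n-1},s\}$ is again a $\bbZ$-basis: inverting the relations gives $\oo(\sigma_i)=a_i+a_{i+1}+\dots+a_{n-1}+s$ and $\omega_\pi=b+(2g-2)\oo(\sigma_1)$, so the transition matrix is unitriangular, hence unimodular. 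Since $\deg(b)=\deg(a_i)=0$ while $\deg(s)=1$, an integral combination lies in $\ker(\deg)$ precisely when the coefficient of $s$ vanishes; therefore $\operatorname{RelPic}^0(\mathcal C_{g,n})$ is freely generated by $b=\omega_\pi((2-2g)\sigma_1)$ and the $a_i=\oo(\sigma_i-\sigma_{i+1})$, as claimed.

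The remaining cases are variations on the same theme. For case (ii), $g=1$ and $n\ge1$, the relation $\omega_\pi=0$ leaves $\operatorname{RelPic}(\mathcal C_{1,n})$ free on $\oo(\sigma_1),\dots,\oo(\sigma_n)$, all of degree $1$; the kernel of this ``augmentation'' map is freely generated by the differences $\oo(\sigma_i-\sigma_{i+1})$, seen by the same unitriangular change of basis (now without the generator $b$). For case (iii): when $g=0$ the relations of Theorem \ref{franchetta} collapse the group to the infinite cyclic group generated by $\oo(\sigma_1)$ (of degree $1$) if $n\ge1$, and to $\bbZ\langle\omega_\pi\rangle$ (with $\deg(\omega_\pi)=-2$) if $n=0$, so in either case $\deg$ is injective and $\ker(\deg)=0$; when $n=0$ and $g\ge2$ the group is $\bbZ\langle\omega_\pi\rangle$ with $\deg(\omega_\pi)=2g-2\ne 0$, again giving injective $\deg$; and when $(g,n)=(1,0)$ the group $\operatorname{RelPic}(\mathcal C_{1,0})$ is already trivial by Remark \ref{R:M10}. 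In every subcase of (iii) the kernel vanishes. The only genuinely non-formal point is the verification that the proposed elements form a $\bbZ$-basis (and not merely a $\bbQ$-basis) of the kernel, but this is exactly what the unimodularity of the change of basis guarantees, so I do not anticipate any real obstacle.
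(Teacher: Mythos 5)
Your proof is correct and follows exactly the route the paper intends: the paper states Corollary \ref{C:Franch0} without proof, as an immediate consequence of Theorem \ref{franchetta}, and your argument --- identifying $\operatorname{RelPic}^0(\mathcal C_{g,n})$ as the kernel of the relative-degree homomorphism and computing that kernel via a unimodular change of basis in the presentation given by Theorem \ref{franchetta} --- is precisely the routine verification being omitted. Your case analysis, including the degenerate subcases of (iii) (with $(g,n)=(1,0)$ handled via Remark \ref{R:M10}), is complete and correct.
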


\section{The universal moduli stack $\bg{G}$}\label{S:BunG}

In this section $G$ will be a  \emph{connected (smooth) linear algebraic group}  over $k=\ov k$, i.e., a connected and smooth affine group scheme of finite type over $k$. Further restrictions on $G$, like  reductiveness, will be specified when needed.

We denote by $\bg{G}$ the \emph{universal moduli stack of $G$-bundles over $n$-pointed  curves of genus $g$}. More precisely, for any scheme $S$, $\bg{G}(S)$ is the groupoid of triples $(C\to S,\un{\sigma}, E)$, where  $(\pi:\cC\to S,\un\sigma=\{\sigma_1,\ldots, \sigma_n\})$ is a family of  $n$-pointed curves of genus $g$ over $S$ and $E$ is a $G$-bundle on $C$.
We will denote by  $(\pi:\mathcal C_{G,g,n}\to\bg{G},\un \sigma, \mathcal E)$ the universal family of $G$-bundles.

By definition, we have a forgetful surjective morphism 
\begin{equation}\label{E:PhiG}
\begin{aligned}
\Phi_G(=\Phi_{G,g,n}):\bg{G}& \longrightarrow \Mg\\
(C\to S,\un{\sigma}, E)& \mapsto (C\to S,\un{\sigma})
\end{aligned}
\end{equation}
 onto the moduli stack $\Mg$ of $n$-pointed  curves of genus $g$. Note that the universal $n$-pointed curve $(\mathcal C_{G,g,n}\to\bg{G},\un \sigma)$ over $\bg{G}$ is the pull-back of the universal $n$-pointed curve $(\Cg\to\Mg,\un \sigma)$ over $\Mg$. 
 
 When $n=0$, we will remove the reference to the points from the notation (e.g. $\mathrm{Bun}_{G,g}$ instead of $\mathrm{Bun}_{G,g,0}$, $\mt M_g$ instead of $\Mg$, $(C\to S,E)$ instead of $(C\to S,\un{\sigma},E)$, etc).

For any family of curves $C\to S$, we denote by $\mathrm{Bun}_{G}(C/S)$ the  \emph{moduli stack of $G$-bundles on $C\to S$}. 
More precisely, for any $S$-scheme $T$, $\mathrm{Bun}_{G}(C/S)(T)$ is the groupoid of $G$-bundles on $C_T:=C\times_S T$. By definition, we have  a forgetful surjective morphism 
\begin{equation}\label{E:PhiG-rel}
\Phi_{G}(C/S):\mathrm{Bun}_G(C/S) \longrightarrow S
\end{equation}

The relation between the universal stacks $\bg{G}$ and the relative stacks $\mathrm{Bun}_{G}(C/S)$ goes as follows.
First of all, we have that 
\begin{equation}\label{E:reluniv1}
\mathrm{Bun}_{G}(\Cg/\Mg)=\bg{G}. 
\end{equation}
On the other hand, if the family $C\to S$ has constant relative genus $g=g(C/S)$ then we have that 
\begin{equation}\label{E:reluniv2}
\mathrm{Bun}_{G}(C/S)=S\times_{\mathcal M_{g}}\mathrm{Bun}_{G,g},
\end{equation}
with respect to the modular morphism $S\to \cM_g$ associated to the family $C\to S$.

The geometric properties of the universal stack $\bg{G}$ and of the relative stack $\mathrm{Bun}_{G}(C/S)$ are collected in the following

\begin{teo}[Behrend \cite{BK}, Wang \cite{W}] \label{beh}
Let $\pi:C\to S$ be a family of curves. 
\begin{enumerate}[(i)]
\item \label{beh1} $\mathrm{Bun}_G(C/S)$ is an algebraic stack  locally of finite presentation and smooth over $S$. 
\item \label{beh2} The relative diagonal of $\mathrm{Bun}_G(C/S)\to S$ is affine and finitely presented. 
\end{enumerate}
In particular, this is true for $\bg{G}$ over $\Mg$. 
\end{teo}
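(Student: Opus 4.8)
The plan is to reduce everything to the case $G=\GL_n$ by means of a faithful representation, and then to extract smoothness and the properties of the diagonal from the cohomological dimension of curves together with the affineness of $G$. Throughout, write $\pi:C\to S$ for the given family of curves and $C_T:=C\times_S T$ for a base change, and note first that $\mathrm{Bun}_G(C/S)$ is a stack for the fppf topology because $G$-torsors satisfy fppf descent. First I would fix a closed embedding $G\hookrightarrow \GL_n$ (which exists since $G$ is a linear algebraic group) and invoke Chevalley's theorem, so that the homogeneous space $\GL_n/G$ is a quasi-projective $k$-variety. The algebraicity of $\mathrm{Bun}_{\GL_n}(C/S)$, the stack of rank-$n$ vector bundles, I would take as known: it is the open substack of the stack $\mathrm{Coh}(C/S)$ of $S$-flat coherent sheaves on the fibers consisting of locally free sheaves of rank $n$, and $\mathrm{Coh}(C/S)$ is algebraic and locally of finite presentation over $S$ (e.g. via relative Quot schemes).

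The heart of the argument is the analysis of the forgetful morphism $q:\mathrm{Bun}_G(C/S)\to \mathrm{Bun}_{\GL_n}(C/S)$ induced by $G\hookrightarrow \GL_n$. Given a $T$-point of $\mathrm{Bun}_{\GL_n}(C/S)$, i.e. a $\GL_n$-torsor $P$ on $C_T$, a lift to $\mathrm{Bun}_G(C/S)$ is exactly a reduction of structure group of $P$ to $G$, which is the same datum as a section of the associated fibration $P\times^{\GL_n}(\GL_n/G)\to C_T$, a scheme quasi-projective over $C_T$ since $\GL_n/G$ is. The functor of such sections is the relative Hom space $\underline{\mathrm{Hom}}_{C_T}\big(C_T,\, P\times^{\GL_n}(\GL_n/G)\big)$, representable by an algebraic space locally of finite presentation over $T$ because $C_T\to T$ is proper and flat and the target is quasi-projective over $C_T$. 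Hence $q$ is representable and locally of finite presentation, and combined with the algebraicity of $\mathrm{Bun}_{\GL_n}(C/S)$ this shows that $\mathrm{Bun}_G(C/S)$ is an algebraic stack locally of finite presentation over $S$, proving the first half of (i).

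For smoothness over $S$ I would use the infinitesimal lifting criterion, noting that one deforms only the bundle since $C_T$ is pulled back from $S$. For a square-zero extension $T_0\hookrightarrow T$ of $S$-schemes with ideal $I$ and a $G$-bundle $E$ on $C_{T_0}$, the deformations form a torsor under $H^1(C_{T_0},\mathrm{ad}(E)\otimes I)$ while the obstruction to lifting lies in $H^2(C_{T_0},\mathrm{ad}(E)\otimes I)$, where $\mathrm{ad}(E)=E\times^G\mathfrak g$; since the geometric fibers of $\pi$ are smooth curves, hence of cohomological dimension one, this $H^2$ vanishes and every deformation is unobstructed. Combined with local finite presentation, this yields that $\mathrm{Bun}_G(C/S)\to S$ is smooth, completing (i).

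Finally, for (ii) the relative diagonal of $\mathrm{Bun}_G(C/S)\to S$ over a $T$-point $(E,E')$ is represented by the sheaf $\underline{\mathrm{Isom}}_G(E,E')$ of $G$-bundle isomorphisms, which is the section space over $C_T$ of the $(G,G)$-bitorsor $\underline{\mathrm{Isom}}_G(E,E')\to C_T$; the latter is affine over $C_T$ precisely because $G$ is an affine group scheme (the bitorsor is fppf-locally isomorphic to $G$). Writing this affine $C_T$-scheme as $\Spec_{C_T}(\mathcal A)$ with $\mathcal A$ of finite presentation and using that $C_T\to T$ is proper, a section is an $\mathcal O_{C_T}$-algebra homomorphism $\mathcal A\to \mathcal O_{C_T}$, and the functor of such homomorphisms is cut out by closed conditions inside an affine linear scheme over $T$ built from the coherent pushforwards along the proper morphism $\pi_T$; it is therefore representable by an affine $T$-scheme of finite presentation, which gives (ii). The statement for $\bg G$ over $\Mg$ then follows by specializing to the universal family $\Cg\to\Mg$ via the identification $\mathrm{Bun}_G(\Cg/\Mg)=\bg G$ of \eqref{E:reluniv1}. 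The main obstacle I anticipate is the representability and finiteness of the relative section/Hom spaces when $\pi:C\to S$ is only proper, and possibly representable merely by algebraic spaces (as for $\cC_{1,0}\to\cM_{1,0}$ in Remark \ref{R:proj-fam}), rather than projective; this forces one to work with Hom stacks for proper flat morphisms and with algebraic spaces in place of quasi-projective schemes.
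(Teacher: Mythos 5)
Your proposal is correct in outline, but it takes a genuinely different route from the paper, which does not reprove these facts at all: part (i) is quoted directly from Behrend \cite{BK} (Prop.\ 4.4.5 for algebraicity and local finite presentation, Prop.\ 4.5.1 for smoothness, with \cite{W} Prop.\ 6.0.18 as an alternative), and the only genuine argument the paper supplies is for part (ii), where it observes that affineness and finite presentation of the diagonal are \'etale-local on the target, passes to an \'etale cover of $S$ over which $\pi$ acquires a section, notes that the image of that section is a relatively ample Cartier divisor so the family becomes projective (cf.\ Remark \ref{R:proj-fam}), and then invokes Wang's result \cite{W} Cor.\ 3.2.2 for projective families. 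You instead reconstruct the content of the cited results: reduction to $\GL_n$ via a faithful representation and Chevalley (note that quasi-projectivity of the associated bundle $P\times^{\GL_n}(\GL_n/G)\to C_T$ really uses the $\GL_n$-\emph{equivariant} locally closed embedding $\GL_n/G\hookrightarrow \bbP(V)$, which makes the bundle locally closed in a projective bundle, not just the quasi-projectivity of the fiber); representability of section spaces over $C_T$; unobstructedness of deformations since $H^2$ vanishes on families of curves over an affine base (the same computation the paper itself uses in Theorem \ref{T:dim-comp}); and representability of the $\underline{\mathrm{Isom}}$-functor as sections of an affine finitely presented scheme over $C_T$. The trade-off is exactly the obstacle you flag at the end: your direct approach over a merely proper family must invoke representability of Hom/section spaces by algebraic spaces for proper flat morphisms (Artin's criteria, Hall--Rydh, or EGA III 7.7.8--7.7.9 in its proper form), whereas the paper's \'etale-localization trick buys access to the classical quasi-projective machinery. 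Since all the properties in question (algebraicity, local finite presentation, smoothness, affineness and finite presentation of the diagonal) descend along \'etale covers of $S$, you could graft that trick onto your argument and assume from the start that $\pi$ has a section, hence is projective, eliminating the need for the proper-case representability theorems; conversely, your route carried out with the heavier machinery needs no localization and is essentially the original argument of Behrend and Wang.
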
 
\begin{proof}
Part \eqref{beh1}:  $\mathrm{Bun}_G(C/S)$ is an algebraic  stack locally of finite presentation over $S$ by \cite[Prop. 4.4.5]{BK} and smooth over $S$ by \cite[Prop. 4.5.1]{BK} (see also \cite[Prop. 6.0.18]{W}). 

Part \eqref{beh2}: since the properties of being affine and finitely presented are both \'etale local on the target, we can assume, up to replacing $S$ with an \'etale cover, that the family $\pi$ has a section. This implies that the family $\pi$ is projective (since the image of a section defines a relatively ample Cartier divisor), and hence 
the relative diagonal of $\mathrm{Bun}_G(C/S)\to S$ is affine and finitely presented by \cite[Cor. 3.2.2]{W}.

The corresponding statement for $\bg{G}$ follows from the relative case applied to the universal family $\pi:\Cg\to \Mg$.
\end{proof}

Any morphism of  connected  linear algebraic groups $\phi:G\to H$ determines a morphism of stacks over $\Mg$
\begin{equation}\label{E:fun1}
\begin{array}{lccc}
\phi_\#(=\phi_{\#,g,n}):&\bg{G}&\longrightarrow&\bg{H}\\
&\Big(C\to S,\un \sigma, E\Big)&\longmapsto &\Big(C\to S,\un \sigma, (E\times H)/G\Big)
\end{array}
\end{equation}
where the (right) action of $G$ on $E\times H$ is $(p,h).g:=(p.g,\phi(g)^{-1}h)$. And, similarly, given a family of curves $C\to S$, we can define the morphism of stacks 
\begin{equation}\label{E:fun2}
\begin{array}{lccc}
\phi_\#(=\phi_\#(C/S)):&\mathrm{Bun}_{G}(C/S)&\longrightarrow&\mathrm{Bun}_{H}(C/S)\\
&E&\longmapsto &(E\times H)/G.
\end{array}
\end{equation}

\begin{rmk}\label{R:locfin}
Since $\Phi_{G,g,n}$ (resp., $\Phi_G(C/S)$) is locally of finite type over $\Mg$ (resp., $S$) by Theorem \ref{beh}\eqref{beh1}, we deduce from \cite[\href{https://stacks.math.columbia.edu/tag/06U9}{Tag 06U9}]{stacks-project} that the morphisms \eqref{E:fun1} (resp., \eqref{E:fun2}) are locally of finite type.  
\end{rmk}

\begin{lem}\label{prod}Any commutative (resp., cartesian) diagram of connected  linear algebraic groups
$$
\xymatrix{
G_1\ar[d]^{\varphi_G}\ar[r]^{\phi_1}& H_1\ar[d]^{\varphi_H}\\
G_2\ar[r]^{\phi_2}&H_2,
}
$$
induces a commutative (resp., cartesian) diagram of moduli stacks
$$
\xymatrix{
\bg{G_1}\ar[d]^{(\varphi_G)_\#}\ar[r]^{(\phi_1)_\#}& \bg{H_1}\ar[d]^{(\varphi_H)_\#}\\
\bg{G_2}\ar[r]^{(\phi_2)_\#}&\bg{H_2}.
}
$$
The analogue  statement is true for the stack of principal bundles over a fixed family of curves $C\to S$. 
\end{lem}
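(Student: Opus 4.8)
The plan is to deduce both assertions from the functoriality of the extension of structure group (contracted product) $E\mapsto E\times^{G}H:=(E\times H)/G$ that defines $\phi_\#$. First I would record its associativity: for a chain $G\xrightarrow{\phi}H\xrightarrow{\psi}K$ of connected linear algebraic groups and a $G$-bundle $E$ on a scheme $X$, there is a canonical isomorphism $(E\times^{G}H)\times^{H}K\cong E\times^{G}K$, where on the right $G$ acts on $K$ through $\psi\circ\phi$; this isomorphism is natural in $E$ and compatible with base change in $X$. Since all the morphisms $\phi_\#$ are built pointwise over the base of the torsors, this will yield the statement both for the universal family and for a fixed family $C\to S$ by the same reasoning.

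For the commutativity, I would apply this associativity to the two composites furnished by the commutative square, namely $\phi_2\circ\varphi_G=\varphi_H\circ\phi_1\colon G_1\to H_2$. For every $G_1$-bundle $E$ one then obtains canonical, functorial isomorphisms
$$(\varphi_H)_\#\big((\phi_1)_\#(E)\big)\cong E\times^{G_1}H_2\cong(\phi_2)_\#\big((\varphi_G)_\#(E)\big),$$
where in the middle term $G_1$ acts on $H_2$ through the common composite. These assemble into the desired $2$-isomorphism $(\varphi_H)_\#\circ(\phi_1)_\#\cong(\phi_2)_\#\circ(\varphi_G)_\#$ of morphisms of stacks over $\Mg$. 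The only point to verify is the naturality and coherence of these isomorphisms, which is routine and reduces, after an fppf trivialisation of $E$, to the equality $\phi_2\circ\varphi_G=\varphi_H\circ\phi_1$ of group homomorphisms.

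For the cartesian statement, assume $G_1=G_2\times_{H_2}H_1$; I want to upgrade the commutative square of stacks to a $2$-cartesian one, i.e. to produce an equivalence $\bg{G_1}\xrightarrow{\cong}\bg{G_2}\times_{\bg{H_2}}\bg{H_1}$. Using the commutativity isomorphism above, the forward functor sends a $G_1$-bundle $E$ on $(C\to S,\un\sigma)$ to the triple $\big((\varphi_G)_\#(E),(\phi_1)_\#(E),\alpha_E\big)$, with $\alpha_E$ the canonical identification of the two associated $H_2$-bundles. The inverse rests on the standard description of torsors under a fibre product of groups: given a $G_2$-bundle $E_2$, an $H_1$-bundle $F_1$ and an isomorphism $\alpha\colon E_2\times^{G_2}H_2\xrightarrow{\cong}F_1\times^{H_1}H_2$, set $Q:=E_2\times^{G_2}H_2$ and use the natural $G_2$-equivariant map $E_2\to Q$ together with the $H_1$-equivariant map $F_1\xrightarrow{\alpha}Q$ to form $E:=E_2\times_{Q}F_1$. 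A direct check shows that the residual $G_2\times H_1$-action preserves $E$ exactly along the subgroup where $\phi_2$ and $\varphi_H$ agree, that is along $G_1=G_2\times_{H_2}H_1$, and that $E$ is a $G_1$-bundle; one then verifies that the two constructions are quasi-inverse.

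The main obstacle is this last point: checking that the two functors are mutually quasi-inverse and that all the structural isomorphisms are coherent and natural in $S$, rather than merely fibrewise bijections. I would carry this out by descent, testing the equivalence fppf-locally on $C$: after trivialising every torsor in sight, all contracted products and fibre products become products of trivial torsors, and the statement collapses to the cartesianness of the original square of groups, after which fppf descent (these stacks satisfy descent for the fppf topology) propagates the equivalence globally and functorially in $S$. Finally, the analogue over a fixed family $C\to S$ follows by the identical argument---the constructions live over the base $C$ of the torsors and never use universality---or, alternatively, from the universal case by base change along the modular morphism $S\to\Mg$ together with \eqref{E:reluniv2}.
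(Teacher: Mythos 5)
Your proof is correct and takes essentially the same approach as the paper, whose entire proof is a citation of \cite[Lemma 2.2.1]{BH10}: the content of that lemma is precisely the standard equivalence, which you spell out in full, between torsors under a fibre product of groups $G_2\times_{H_2}H_1$ and pairs of torsors $(E_2,F_1)$ equipped with an isomorphism of the induced $H_2$-torsors, with commutativity coming from the associativity of contracted products.
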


\begin{proof}The proof is essentially the same as \cite[Lemma 2.2.1]{BH10}.
\end{proof}

\begin{rmk}\label{cart} Observe that if $G$ is trivial, then $\bg{G}=\mathcal M_{g,n}$. In particular we have the following cartesian diagrams
$$
\xymatrix@R-1pc{
G\times H\ar[dd]\ar[r]& H\ar[dd]& &\bg{G\times H}\ar[dd]\ar[r]& \bg{H}\ar[dd]\\
&&\Longrightarrow &&\\
G\ar[r]&1&&\bg{G}\ar[r]&\mathcal M_{g,n}.
}
$$
Similarly, for any family of curves $C\to S$, we have an isomorphism 
$$\mathrm{Bun}_{G\times H}(C/S)\cong \mathrm{Bun}_G(C/S)\times_S \mathrm{Bun}_H(C/S).$$
\end{rmk}

\subsection{The connected components of $\bg{G}$}\label{S:compBunG}

In this subsection we will recall the description of the connected components of $\bg{G}$, we determine their relative dimension over $\Mg$ and we study when these connected components are of finite type over $\Mg$ (and hence also over $k$). And similarly for the connected components of $\mathrm{Bun}_G(C/S)\to S$.

Recall that to any connected  (smooth) linear algebraic group $G$ over $k=\ov k$  it is possible to associate, in a functorial way, a finitely generated abelian group, denoted by $\pi_1(G)$ and called the \emph{fundamental group}\footnote{The name is justified by the fact that  if $k=\bbC$ then $\pi_1(G)$ coincides with the topological fundamental group of the complex Lie group $G(\bbC)$.} of $G$. For more details on the topic, we refer the reader to \cite[Sec. 10]{Mer98}, see also \cite{CoGPfr}, \cite{CoGPen} and \cite[Sec. 1.8]{BKG}).
Consider the exact sequence of connected linear algebraic groups 
\begin{equation}\label{E:red-quot}
1\to G_u\to G \xrightarrow{\red} G^{\red} \to 1,
\end{equation} 
where $G_u$ is the unipotent radical of $G$ (i.e., the largest connected normal subgroup of $G$ that is unipotent) and $G^{\red}:=G/G_u$ is the reductive canonical quotient of $G$.
From the definition of fundamental group (see \cite[Sec. 10]{Mer98}), it follows immediately that the morphism $\red$ induces an isomorphism\footnote{If $k=\bbC$, this isomorphism follows from the well-known fact that the complex Lie group $G_u(\bbC)$ is simply connected.}
 \begin{equation}\label{E:pired}
 \pi_1(\red):\pi_1(G)\xrightarrow{\cong} \pi_1(G^{\red}),
 \end{equation}
where  $\pi_1(G^{\red})$ can be computed from the root data of $G^{\red}$ as explained  in \S \ref{red-grps}.

\begin{teo}[Hoffmann \cite{Ho10a}]\label{concomp}
The connected components  of $ \bg{G}$ (and of $\mathrm{Bun}_G(C/S)$ for any family of curves $C\to S$ with $S$ connected) are in functorial bijection with the fundamental group $\pi_1(G)$ of $G$. 
\end{teo}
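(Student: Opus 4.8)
\noindent\emph{Overall strategy.} The plan is to first reduce to the case of a reductive group, then to decompose $\bg{G}$ into open and closed substacks indexed by $\pi_1(G)$, and finally to prove that each such piece is connected by working fibrewise over the irreducible base $\Mg$.

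\smallskip
\noindent\emph{Reduction to the reductive case.} The morphism $\red_\#\colon \bg{G}\to \bg{G^{\red}}$ is smooth and surjective (Corollary~\ref{C:red-ft}). Its fibres parametrise liftings of a fixed $G^{\red}$-bundle along the unipotent quotient $\red\colon G\twoheadrightarrow G^{\red}$; since $G_u$ is a successive extension of copies of $\Ga$ and $H^2$ of any coherent sheaf on a curve vanishes, these liftings form a non-empty iterated affine bundle over the curve, so the fibres of $\red_\#$ are non-empty and connected. Hence $\red_\#$ induces a bijection $\pi_0(\bg{G})\xrightarrow{\cong}\pi_0(\bg{G^{\red}})$, and by the isomorphism $\pi_1(\red)$ of \eqref{E:pired} it suffices to treat $G=G^{\red}$ reductive.

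\smallskip
\noindent\emph{The type decomposition and the fibre over a curve.} For $G$ reductive I would construct a locally constant \emph{type map} $\tau\colon |\bg{G}|\to \pi_1(G)$, natural in $G$. Fixing $T_G\subset B_G$, the identification $\pi_1(G)=\Lambda(T_G)/\Lambda(T_{G^{\sc}})$ allows one to build $\tau$ from the multidegree of a $B_G$-reduction (which exists after an \'etale localisation by Drinfeld--Simpson), checking independence of the chosen reduction. This yields a decomposition $\bg{G}=\coprod_{\delta\in\pi_1(G)}\bg{G}^{\delta}$ into open and closed substacks, and it then remains to show that for every geometric curve $C$ and every $\delta$ the fibre $\mathrm{Bun}_G(C)^{\delta}$ is non-empty and connected. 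Here I would invoke non-abelian uniformization: the uniformization morphism from the affine Grassmannian $\Gr_G$ to $\mathrm{Bun}_G(C)$ is surjective on geometric points and has connected fibres, so $\pi_0(\mathrm{Bun}_G(C))\cong \pi_0(\Gr_G)\cong \pi_1(G)$. Equivalently, the surjection $\bg{B_G}\twoheadrightarrow \bg{G}$ with connected fibres presents $\pi_0(\bg{G})$ as the quotient of $\pi_0(\bg{T_G})=\Lambda(T_G)$ by the coroot lattice $\Lambda(T_{G^{\sc}})$, each coroot $\alpha^{\vee}$ being deformable to the trivial type through the $\SL_2$ (or $G^{\sc}$) it generates.

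\smallskip
\noindent\emph{Bootstrapping over $\Mg$ and functoriality.} The restriction $\Phi_G^{\delta}\colon \bg{G}^{\delta}\to \Mg$ is smooth by Theorem~\ref{beh}, hence open, and is surjective with non-empty connected fibres $\mathrm{Bun}_G(C)^{\delta}$ by the previous step. Since $\Mg$ is irreducible, an open surjection with connected fibres over a connected base has connected total space, so each $\bg{G}^{\delta}$ is connected; being pairwise disjoint, open and closed, and exhausting $\bg{G}$, they are exactly its connected components. The same argument applied to $\mathrm{Bun}_G(C/S)=S\times_{\Mg}\bg{G}$ of \eqref{E:reluniv2} gives the statement for any family over connected $S$, and functoriality of the bijection in $G$ follows from the naturality of $\tau$ and the compatibility of $\phi\mapsto\phi_\#$ with $\phi\mapsto\pi_1(\phi)$. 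The main obstacle is the fibrewise computation $\pi_0(\mathrm{Bun}_G(C))\cong\pi_1(G)$: it rests on the surjectivity of uniformization and the connectedness of each stratum $\Gr_G^{\delta}$ (equivalently, the construction of the torsion part of $\tau$ and the deformation of coroot classes to the trivial bundle), which is precisely the content established by Hoffmann \cite{Ho10a}.
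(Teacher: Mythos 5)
Your proposal is correct and takes essentially the same route as the paper: the paper's entire proof consists of citing Hoffmann's fibrewise result that $\pi_0(\Bun_G(C/k))$ is in functorial bijection with $\pi_1(G)$ for a single curve over $k=\ov k$, and then bootstrapping over the connected base $\Mg$ using that $\Phi_G$ is smooth (hence open) and surjective --- which is exactly your final step. Your additional material (the reduction to the reductive quotient and the type-map/uniformization sketch --- the latter of which would anyway need care for the torus factor, since one-point uniformization $\Gr_G\to\mathrm{Bun}_G(C)$ is not surjective for non-semisimple $G$ and $g\geq 1$) merely reconstructs the content of Hoffmann's theorem, which both you and the paper ultimately invoke as the key citation.
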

\begin{proof} 
Hoffmann proves in \cite[Theorem 5.8]{Ho10a} that, for a curve $C$ over $k=\ov k$,  the connected components of $\mathrm{Bun}_G(C/k)$ are in functorial bijection with $\pi_1(G)$. This implies our result using that $\Phi_G:\bg{G}\to \Mg$ and $\Phi_G(C/S):\mathrm{Bun}_G(C/S)\to S$ are smooth (and surjective) by Theorem \ref{beh}, and that $\Mg$ is connected. 
\end{proof}

For any $\delta\in\pi_1(G)$, we denote with 
\begin{equation}\label{E:PhiGcomp}
\Phi_{G,g,n}^{\delta}=\Phi_G^{\delta}:\bg{G}^{\delta}\to \Mg  \quad \Big(\text{ resp., } \Phi_{G}^{\delta}(C/S): \mathrm{Bun}_G^{\delta}(C/S) \to S \Big)
\end{equation} 
the corresponding connected component of $\bg{G}$ (resp., of  $\mathrm{Bun}_G(C/S)$ for a family of curves $C\to S$ with $S$ connected). 

The functoriality in the above Theorem \ref{concomp} means that for any morphism $\phi:G\to H$ of connected  linear algebraic groups over $k$, the induced morphisms   \eqref{E:fun1} and \eqref{E:fun2} respect the decomposition into connected components, i.e., 
for any $\delta\in \pi_1(G)$ and for any family of curves $C\to S$ with $S$ connected, we have that 
$$\phi_\#(\bg{G}^{\delta})\subseteq \bg{H}^{\pi_1(\phi)(\delta)} \quad \text{ and } \quad \phi_\#(\mathrm{Bun}_G(C/S))\subseteq \mathrm{Bun}_H^{\pi_1(\phi)(\delta)}(C/S),
$$
where $\pi_1(\phi):\pi_1(G)\to \pi_1(H)$ is the map induced by the morphism $\phi$.

\begin{cor}\label{C:regint}
For every $g,n\geq 0$ and $\delta\in \pi_1(G)$, the algebraic stack $\bg{G}^{\delta}$ is smooth over $k$ (hence regular) and integral.
\end{cor}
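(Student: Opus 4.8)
The plan is to deduce the statement formally from the two results just recalled: Theorem \ref{beh}, which gives smoothness of $\bg{G}$ over $\Mg$, and Theorem \ref{concomp}, which identifies $\bg{G}^{\delta}$ with a connected component of $\bg{G}$. No genuinely new geometric input is needed; the work is in organizing the standard implications correctly at the level of algebraic stacks.

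First I would establish smoothness over $k$. The forgetful morphism $\Phi_G:\bg{G}\to\Mg$ is smooth by Theorem \ref{beh}, and $\Mg$ is smooth over $k$ (as recalled in the Notations). Since smoothness is stable under composition, the structure morphism $\bg{G}\to\Spec k$ is smooth. As $\bg{G}^{\delta}$ is an open substack of $\bg{G}$ (being a connected component, by Theorem \ref{concomp}), it inherits smoothness over $k$. A stack that is smooth over a field is regular, which already yields the parenthetical claim.

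It remains to prove integrality, i.e. that $\bg{G}^{\delta}$ is nonempty, reduced and irreducible. Nonemptiness is immediate from Theorem \ref{concomp}, since the bijection with $\pi_1(G)$ exhibits each $\delta\in\pi_1(G)$ as indexing a genuine (nonempty) component. Reducedness follows from regularity. For irreducibility I would use that $\bg{G}^{\delta}$ is connected by construction, together with the general fact that a regular algebraic stack is normal and hence locally integral, so that its irreducible components are pairwise disjoint; consequently each connected component is irreducible.

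The argument is essentially formal once Theorems \ref{beh} and \ref{concomp} are in hand, so I do not expect a serious obstacle. The only point requiring a little care is the passage from \emph{connected and regular} to \emph{irreducible} for algebraic stacks: I would reduce it to the corresponding statement on a smooth atlas, where \emph{regular $+$ connected $\Rightarrow$ irreducible} is the standard scheme-theoretic fact (a connected scheme all of whose local rings are domains is irreducible), and then transport irreducibility back to $\bg{G}^{\delta}$ using that a smooth surjection is open and identifies the generic points of the atlas with those of the stack.
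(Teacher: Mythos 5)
Your proposal is correct and follows essentially the same route as the paper: smoothness over $k$ comes from composing the smooth morphism $\Phi_G^{\delta}$ (Theorem \ref{beh}) with the smooth structure morphism of $\Mg$, and integrality then follows from connectedness (Theorem \ref{concomp}) together with the standard fact that a connected regular (hence locally integral) stack is irreducible and reduced. Your extra care in unwinding \emph{connected $+$ regular $\Rightarrow$ irreducible} is exactly what the paper leaves implicit; just note that the cleaner justification is your first one (pairwise disjoint irreducible components of a normal stack), since a smooth atlas of a connected stack need not itself be connected.
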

\begin{proof}
The algebraic stack $\bg{G}^{\delta}$ is smooth over $k$ because it is smooth over $\Mg$ by Theorem \ref{beh}\eqref{beh1}  and $\Mg$ is smooth over $k$. Moreover, being also connected by Theorem \ref{concomp},  $\bg{G}^{\delta}$ is integral.
\end{proof}

\vspace{0.1cm}

We now determine the relative dimension of each connected component $\bg{G}^{\delta}$ over $\Mg$. To this aim, consider the morphism 
\begin{equation}\label{E:pi1ad}
\pi_1(\det \circ \ad): \pi_1(G)\xrightarrow{\pi_1(\ad)} \pi_1(\GL(\g))\xrightarrow[\cong]{\pi_1(\det)} \pi_1(\Gm)=\bbZ,
\end{equation}
where $\ad=\ad_G: G\to \GL(\g)$ is the adjoint representation of $G$ and $\det: \GL(\g)\to \Gm$ is the determinant morphism. 

\begin{teo}\label{T:dim-comp}
The relative dimension of   $ \bg{G}^{\delta}\to \Mg$ (and of $\mathrm{Bun}^{\delta}_G(C/S)\to S$ for any family of curves $C\to S$ with $S$ connected) is equal to 
$$(g-1)\dim G-\pi_1(\det \circ \ad)(\delta).
$$
If $G$ is reductive, then $ \bg{G}$ is equidimensional over $\Mg$ (resp., $\mathrm{Bun}_G(C/S)$ is equidimensional over $S$) of relative dimension equal to 
$$(g-1)\dim G.$$
\end{teo}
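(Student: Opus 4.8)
The plan is to compute the relative dimension fiberwise, exploiting that $\Phi_G^{\delta}:\bg{G}^{\delta}\to\Mg$ is smooth by Theorem~\ref{beh}\eqref{beh1}. For a smooth morphism of algebraic stacks the relative dimension at a point equals the dimension of the fiber through that point, and it is locally constant, hence constant on the connected stack $\bg{G}^{\delta}$; moreover, by \eqref{E:reluniv2} the relative case over an arbitrary $C\to S$ reduces to the universal case, so it suffices to compute $\dim \mathrm{Bun}_G^{\delta}(C/k)$ for the fiber over a single geometric point $[C]\in\Mg$. At a point $[E]\in\mathrm{Bun}_G(C/k)$, deformation theory identifies the infinitesimal automorphisms with $H^0(C,\ad E)$ and the first-order deformations with $H^1(C,\ad E)$, where $\ad E:=E\times^{G}\g$ is the adjoint bundle; since $C$ is a curve, the obstruction space $H^2$ vanishes. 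As $\mathrm{Bun}_G(C/k)$ is smooth, its dimension at $[E]$ is therefore $h^1(C,\ad E)-h^0(C,\ad E)=-\chi(C,\ad E)$, independently of the chosen point.

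Next I would apply Riemann--Roch to the rank-$\dim G$ vector bundle $\ad E$, giving $\chi(C,\ad E)=\deg(\ad E)+(1-g)\dim G$, so that $-\chi(C,\ad E)=(g-1)\dim G-\deg(\ad E)$. It then remains to identify $\deg(\ad E)=\deg\det(\ad E)$. Since taking top exterior powers commutes with the associated-bundle construction, we have $\det(\ad E)=E\times^{G}\det\g$, where $G$ acts on the line $\det\g=\wedge^{\dim G}\g$ through the character $\det\circ\ad:G\to\Gm$; in other words $\det(\ad E)$ is precisely the line bundle $(\det\circ\ad)_\#(E)$. By the functoriality of the component decomposition recalled after Theorem~\ref{concomp}, the morphism $(\det\circ\ad)_\#$ sends the component $\delta$ into the component $\pi_1(\det\circ\ad)(\delta)$ of $\bg{\Gm}$, and under the identification $\pi_1(\Gm)=\bbZ$ in \eqref{E:pi1ad} a $\Gm$-bundle lying in the component $e$ is a line bundle of degree $e$. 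Hence $\deg(\ad E)=\pi_1(\det\circ\ad)(\delta)$, which depends only on $\delta$ and yields the stated relative dimension $(g-1)\dim G-\pi_1(\det\circ\ad)(\delta)$, constant on each connected component as it must be.

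For the reductive case I would show that $\pi_1(\det\circ\ad)$ is the zero map, equivalently that the character $\det\circ\ad:G\to\Gm$ is trivial. Indeed the adjoint representation kills the center $\scr Z(G)$, so $\det\circ\ad$ factors through the adjoint quotient $G^{\ad}=G/\scr Z(G)$, which is semisimple; a semisimple group equals its own derived subgroup and therefore admits no nonconstant characters, whence $\det\circ\ad$ is trivial. Consequently $\deg(\ad E)=0$ for every $\delta$, and $\bg{G}$ is equidimensional over $\Mg$ of relative dimension $(g-1)\dim G$. The one genuinely delicate step is the degree identity $\deg\big((\det\circ\ad)_\#(E)\big)=\pi_1(\det\circ\ad)(\delta)$: it rests on matching the associated-bundle construction with the functorial $\pi_1$-labelling of connected components and on the basic identification of $\pi_1(\Gm)$ with the degree of line bundles; once conventions are fixed this is formal, while everything else is Riemann--Roch together with the vanishing of characters of semisimple groups.
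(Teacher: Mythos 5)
Your proposal is correct and follows essentially the same route as the paper's proof: reduce to a fiber $\mathrm{Bun}_G^{\delta}(C/k)$, use deformation theory to get $\dim = h^1(C,\ad E)-h^0(C,\ad E)=-\chi(\ad E)$, apply Riemann--Roch, and identify $\deg(\ad E)=\pi_1(\det\circ\ad)(\delta)$, with the reductive case following from $\Im(\ad_G)\subseteq\SL(\g)$. The only difference is that you spell out two points the paper merely asserts or cites --- the degree identification via functoriality of the component decomposition, and the triviality of $\det\circ\ad$ for reductive $G$ via factoring through the semisimple adjoint quotient --- both of which are correct.
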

\begin{proof}
Clearly, it is enough to prove the statement for $\mathrm{Bun}^{\delta}_G(C/k)$ where $C$ is a curve over $k=\ov k$. Fix a $G$-bundle $E$ on $C$. We denote by $\mathrm{ad}(E):=(E\times\g)/G$ the adjoint bundle of $E$, i.e., the vector bundle on $C$ induced by $E$ via the adjoint representation $\ad:G\to \GL(\g)$.  

It is well-known that the first order infinitesimal deformations of a $E\to C$ are parametrized by $H^1(C,\ad(E))$ while the infinitesimal automorphisms of $E\to C$ are parametrized by $H^0(C,\ad(E))$. Hence,  the dimension of $\mathrm{Bun}^{\delta}_G(C/k)$ at a point $E\to C$ is equal to 
$$\dim H^1(C,\ad(E))-\dim H^0(C,\ad(E))=-\chi(\ad(E))=-\pi_1(\det \circ \ad)(\delta)-(1-g)\dim G,
$$
where in the last equality we have applied Riemann-Roch theorem to the vector bundle $\ad(E)$ over $C$ which has rank $\dim G$ and degree equal to $\pi_1(\det \circ \ad)(\delta)$. 
For another proof which does not use deformation theory, see \cite[Sec. 8.1]{BK}.

The last statement  follows from the well-know fact that if $G$ is reductive then $\Im(\ad_G)\subseteq \SL(\g)$. For another proof, see \cite[Cor. 8.1.9]{BK}.
\end{proof}

\vspace{0.1cm} 

We now determine which connected components $\bg{G}^{\delta}$ are of finite type over $\Mg$ (and hence also over $k$) and, similarly,  which connected components  $\mathrm{Bun}_G(C/S)$ are of finite type over $S$. The answer turns out to depend solely on the group $G$ and not on the pair $(g,n)$ nor on the family $C\to S$ nor on the given connected component. 

First of all, we show that unipotent groups give rise to finite type stacks of bundles. To achieve this, we need the following

\begin{lem}\label{L:lin-filtr}
Let $U$ be a smooth connected normal unipotent subgroup in $G$. Then, it admits a linearly filtered filtration, i.e., a filtration
\begin{equation*}
\{1\}\subset U_r\subset\ldots\subset U_1\subset U_0=U
\end{equation*}
of normal smooth connected unipotent subgroups of $G$ such that the quotient $V_i:=U_i/U_{i+1}$ is a vector group, i.e., it is isomorphic to $\mathbb G_a^m$ for some $m\geq 1$, and the action by conjugation of $G$ restricted to $V_i$ is linear, i.e., factors through the natural action of $\GL_m$ on $\mathbb G_a^m$.
\end{lem}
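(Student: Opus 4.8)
The plan is to build the filtration in two stages — first reducing to the case where $U$ is commutative, and then producing the vector-group subquotients by a Frobenius-adapted characteristic filtration — with the linearity of the induced $G$-action being the delicate point. For the reduction, I would start from the descending central series $U=C^0U\supseteq C^1U=[U,U]\supseteq C^2U=[U,C^1U]\supseteq\cdots$. Each $C^iU$ is a characteristic subgroup of $U$, hence smooth and connected, and — since $U$ is normal in $G$ and conjugation by $g\in G$ restricts to an automorphism of $U$ — it is normal in $G$. The quotients $Q_i:=C^iU/C^{i+1}U$ are commutative, smooth, connected, unipotent, and inherit a conjugation action of $G$. A $G$-normal filtration of each $Q_i$ with vector-group graded pieces carrying a linear $G$-action pulls back (preimages of $G$-normal subgroups are $G$-normal, and extensions of smooth connected groups are smooth connected) to a refinement of the central series with the same graded pieces; concatenating over $i$ produces the filtration of $U$. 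So it suffices to treat $U$ commutative.

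For the vector-group quotients, note that if $\mathrm{char}(k)=0$ then $U$ is already a vector group, and since the only additive polynomials in characteristic zero are linear we have $\Aut(\mathbb{G}_a^m)=\GL_m$; thus the action is automatically linear and the trivial filtration works. Assume $\mathrm{char}(k)=p>0$. A smooth connected unipotent $k$-group is killed by some $p^N$ (being a successive extension of copies of $\mathbb{G}_a$ over the perfect field $k$), so the images $[p^i]U:=\Im([p^i]\colon U\to U)$ form a descending chain $U\supseteq[p]U\supseteq\cdots\supseteq[p^N]U=0$ of characteristic, hence $G$-normal, smooth connected subgroups. Each quotient $[p^i]U/[p^{i+1}]U$ is annihilated by $p$, and a smooth connected commutative unipotent group killed by $p$ over the perfect field $k$ is a vector group $\mathbb{G}_a^{m_i}$. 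This yields a $G$-normal filtration with vector-group graded pieces.

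The linearity of the action on the graded pieces is the main obstacle. The conjugation action on a graded piece $V\cong\mathbb{G}_a^m$ is a homomorphism $\rho\colon G\to\Aut(\mathbb{G}_a^m)$, and in characteristic $p$ this group is strictly larger than $\GL_m$: its elements are matrices of additive polynomials $\sum_k c_k x^{p^k}$, so $\rho$ may carry Frobenius twists and fail to be linear — equivalently, $\rho$ may differ from the linear representation it induces on $\operatorname{Lie}(V)$, on which the higher Frobenius terms act trivially. Removing this Frobenius part is the crux. I would use the ``linear-part'' retraction $\ell\colon\Aut(\mathbb{G}_a^m)\to\GL_m$ (taking the degree-zero coefficients; it is multiplicative because the lowest-Frobenius-degree term of a composite is the composite of the lowest terms), which exhibits $\Aut(\mathbb{G}_a^m)=N\rtimes\GL_m$ with $N=\ker\ell$ unipotent, and then refine $V$ by the characteristic subgroups coming from the $k[F]$-module structure that $V$ acquires over the perfect field $k$, so as to split off $G$-stable subquotients on which the cocycle $G\to N$ becomes trivial.

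The essential input throughout the last step is the perfectness of $k=\bar k$, which makes Frobenius bijective on scalars and permits the untwisting; this portion can alternatively be quoted directly from the structure theory of split unipotent groups over perfect fields. I expect this linearity argument — rather than the purely formal construction of the $G$-normal filtration — to be the real content of the lemma, while Steps~1 and~2 are essentially bookkeeping with characteristic subgroups.
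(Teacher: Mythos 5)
Your Steps 1 and 2 are correct, and you are right that they are bookkeeping: the descending central series of $U$ is characteristic, hence $G$-stable, and over $k=\ov k$ the filtration of a smooth connected commutative unipotent group by the images of multiplication by $p^i$ has smooth connected $p$-torsion quotients, which are vector groups because $k$ is perfect. The genuine gap is exactly where you yourself locate ``the real content'': Step 3 is a plan, not a proof. You never exhibit the $G$-stable refinement of a vector group $V\cong\mathbb G_a^m$ that makes the graded actions linear. The phrase ``refine $V$ by the characteristic subgroups coming from the $k[F]$-module structure \ldots so as to split off $G$-stable subquotients on which the cocycle $G\to N$ becomes trivial'' names no actual subgroups, and as stated it cannot work: a vector group has no nontrivial proper characteristic smooth connected subgroups at all, since $\GL_m\subset \Aut(\mathbb G_a^m)$ moves any linear subspace to any other of the same dimension. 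The subgroups one needs must be built from the action $\rho$ itself --- for instance the span of the images of the higher-Frobenius components $\rho_k(g)$, $k\geq 1$, of $\rho(g)=\sum_k\rho_k(g)F^k$ --- and the nontrivial work is to prove, using the identities among the $\rho_k$ forced by $\rho(g)\rho(h)=\rho(gh)$, that such a subgroup is $G$-stable, nonzero when the action is nonlinear, and proper, so that an induction on $\dim V$ terminates. None of this appears in your proposal; note also that the $N$-component of $\rho$ is not a homomorphism, so ``trivializing the cocycle'' needs a precise meaning before it can be used. (Your implicit awareness that one can only hope for a filtration, not for linearization, is correct: the action $t\cdot(x,y)=(x,y+tx^p)$ of $\mathbb G_a$ on $\mathbb G_a^2$ is not linearizable, since its reduced fixed-point locus differs from that of any linear model, yet it admits the filtration $0\subset 0\times\mathbb G_a\subset \mathbb G_a^2$.)

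Your fallback claim --- that this step ``can alternatively be quoted directly from the structure theory of split unipotent groups over perfect fields'' --- is incorrect, and this is the second problem. That theory (\cite[\S 15]{Bo}, Rosenlicht, Tits) produces a filtration with $\mathbb G_a$-quotients but gives no control whatsoever on $G$-stability of the filtration, nor on linearity of the induced conjugation action; those are precisely what fails automatically in characteristic $p$. This is exactly the content that the paper imports from \cite[Thm B]{McN}, using \cite[\S 15]{Bo} only for splitness. So your argument is incomplete at precisely the point where the paper appeals to McNinch's theorem: either supply the stability/linearity argument for vector groups in full, or cite \cite{McN}.
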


\begin{proof}Since $k$ is algebraically closed,  $U$ is a smooth connected split unipotent group, see \cite[\S 15]{Bo}. By \cite[Thm B]{McN}, there exists a filtration
$$
\{1\}\subset U_r\subset\ldots\subset U_1\subset U_0=U
$$
by $G$-invariant normal subgroups of $U$ such that the quotient $V_i:=U_i/U_{i+1}$ is a vector group and the action by conjugation of $G$ restricted to $V_i$ is linear. Observe that the $G$-invariance is equivalent to say that $U_i$ is normal in $G$. The properties of being connected, smooth and unipotent are easy to check.
\end{proof}

We are now ready to prove 
\begin{prop}\label{P:red-ft} 
Consider an exact sequence of smooth connected linear algebraic groups
$$1\to U\to G\xrightarrow{\varphi} H\to 1,$$ 
with $U$ unipotent. Then the morphism  $\varphi_\#:\bg{G}\to\bg{H}$ (resp., the  $\varphi_\#:\mathrm{Bun}_{G}(C/S)\to\mathrm{Bun}_H(C/S)$ for any family of curves $C\to S$) is smooth, surjective and of finite type. 
\end{prop}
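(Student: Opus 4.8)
The plan is to reduce, by \emph{dévissage}, to the case where the kernel is a vector group, and then to read off all three properties from the cohomology on the curve of an associated vector bundle. Since smoothness, surjectivity and finite type are stable under base change and, by \eqref{E:reluniv2}, the relative stacks $\mathrm{Bun}_G(C/S)$ are obtained from the universal ones by base change along the modular morphism $S\to\cM_g$, it suffices to treat the universal morphism $\varphi_\#:\bg G\to\bg H$; by \eqref{E:reluniv1} this is itself a relative morphism over the universal curve $\pi:\Cg\to\Mg$, so I will argue with a general family $\pi:C\to S$. I would apply Lemma \ref{L:lin-filtr} to $U$ to obtain a filtration $\{1\}\subset U_r\subset\cdots\subset U_0=U$ by smooth connected normal subgroups of $G$ with $V_i:=U_i/U_{i+1}\cong\Ga^{m_i}$ a vector group on which $G$ acts linearly by conjugation. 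Setting $G_i:=G/U_i$, the morphism $\varphi$ factors as $G=G/\{1\}\to\cdots\to G/U_0=H$, each step $G_{i+1}\to G_i$ being an extension with kernel the vector group $V_i$; by functoriality of the construction $\phi\mapsto\phi_\#$ (see \eqref{E:fun1} and Lemma \ref{prod}) the morphism $\varphi_\#$ factors correspondingly, and since the three properties are preserved under composition I may assume that we have a single extension $1\to V\to G\xrightarrow{\varphi}H\to 1$ with $V\cong\Ga^m$ a vector group. As $V$ is abelian, its conjugation action descends to a linear action of $H=G/V$, so for any $H$-bundle $E_H=\varphi_\#(E_G)$ we may form the associated vector bundle $\mathcal V:=E_H\times^H V=E_G\times^G V$ on the curve; its relative cohomology will control everything.

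For smoothness I would use the infinitesimal criterion. The exact sequence of Lie algebras $0\to\mathfrak v\to\g\to\mathfrak h\to 0$ (with $\mathfrak v=\mathrm{Lie}(V)\cong V$ as linear $H$-representations) induces a short exact sequence of vector bundles $0\to\mathcal V\to\ad(E_G)\to\ad(E_H)\to 0$ on $C$, which identifies the relative tangent complex of $\varphi_\#$ at $E_G$ with $R\pi_*\mathcal V[1]$. Concretely, given a square-zero extension of test rings and a $G$-bundle over the closed subscheme together with a lift of its $H$-bundle to the extension, the obstruction to lifting the $G$-bundle lies in $H^2(C,\mathcal V\otimes I)$, where $I$ is the square-zero ideal. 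Since $\mathcal V\otimes I$ is coherent on a family of curves, this group vanishes, so $\varphi_\#$ is formally smooth; being also locally of finite presentation (Remark \ref{R:locfin} and Theorem \ref{beh}), it is smooth.

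Surjectivity follows from the same vanishing. The obstruction to lifting the structure group of a given $H$-bundle $E_H$ along $G\twoheadrightarrow H$ with abelian kernel $V$ lives in $H^2(C,\mathcal V)=0$; hence every $H$-bundle over every curve lifts to a $G$-bundle. Since $\varphi_\#$ is smooth, hence open, and every geometric point of $\bg H$ lies in its image, $\varphi_\#$ is surjective.

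It remains to prove that $\varphi_\#$ is of finite type, i.e. (given Remark \ref{R:locfin}) quasi-compact; this is the main obstacle, since it requires upgrading the pointwise boundedness of the space of lifts to a uniform, finite-type description. I would restrict over a quasi-compact open $\mathcal U\subseteq\bg H$ and consider the vector bundle $\mathcal V$ on the universal curve. Because $\mathcal V$ is a vector bundle on a family of curves, $R\pi_*\mathcal V$ is a perfect complex of Tor-amplitude $[0,1]$ on $\mathcal U$, with $H^2$ vanishing; the fibre of $\varphi_\#$ over $E_H$ is the stack of $\mathcal V$-torsors on $C$, namely $[H^1(C,\mathcal V)/H^0(C,\mathcal V)]$, the quotient of a finite-dimensional affine space by a vector group acting trivially. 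I would make this precise by identifying $\varphi_\#^{-1}(\mathcal U)$ with the vector-bundle stack $h^1/h^0(R\pi_*\mathcal V)$ over $\mathcal U$ — which is of finite type, being locally of the form $[\bbA^a/\bbA^b]$ — in the split case, and with a torsor under this group stack in general (such a torsor exists and is of finite type precisely because the lifting gerbe is neutralized by the vanishing of the $H^2$-obstruction). Quasi-compactness of $\varphi_\#$ then follows. The delicate point throughout is this last identification: turning the fibrewise computation into a genuine finite-type structure relies on the base-change and coherence properties of $R\pi_*\mathcal V$, which hold here because the fibres are curves and $\mathcal V$ is flat.
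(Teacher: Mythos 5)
Your proof is correct and follows essentially the same route as the paper's: dévissage via Lemma \ref{L:lin-filtr} to a single extension with vector-group kernel, then control of the fibers of $\varphi_\#$ through the cohomology of the associated vector bundle $\mathcal V$ on the curve ($H^2$-vanishing for surjectivity and smoothness, and identification of the fibers with the finite-type stack of $\mathcal V$-torsors, locally $[H^1/H^0]$, for quasi-compactness). The only difference is presentational: where you argue by hand with the relative tangent complex and the $h^1/h^0$ formalism, the paper invokes Behrend's results \cite[Prop.~4.2.4, Prop.~4.2.5, Cor.~8.1.2, Cor.~8.1.3]{BK} for exactly these facts.
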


\begin{proof}
We present the proof just for the universal moduli stacks; the proof for the relative case $\mathrm{Bun}_G(C/S)$ follows immediately by pulling back $\varphi_\#$ along the morphism $S\to \cM_g$ associated to the family of curves $C\to S$ (up to restricting to the connected components of $S$). 
By Lemma \ref{L:lin-filtr} below, the group $U$ admits a linearly filtered filtration
\begin{equation}\label{E:uni-filtr}
\{1\}\subset U_r\subset\ldots\subset U_1\subset U_0=U
\end{equation}
We proceed by induction on the length of the filtration.

\fbox{$\text{Length}(U_\bullet)=0$} By assumption, $U\cong \mathbb G_a^m$ and the action of $G$ by conjugation on $U$ is linear. 

Let us first show the surjectivity of $\varphi_\#$. Let $(\pi:C\to T,\un{\sigma},F)\in \bg{H}(T)$. Since $U$ is abelian, there is a conjugation action of $H$ on $U$ and we may form the quotient
$$
U_H^F:=(F\times U)/H\to C,
$$
with respect to the diagonal action of $H$ on $F$ and $U$. By hypothesis $G$, and so $H$, acts linearly on $U$. So, $U_H^F$ is a vector bundle on $C$, hence $R^2(\pi_{\text{fppf}})_*(U_H^F)=0$. We may now apply \cite[Prop. 4.2.5]{BK} in order to infer that  $\varphi_\#$ is surjective. 

Consider now an object $(\pi:C\to T,\underline{\sigma},E)\in \bg{G}(T)$. By \cite[Proposition 4.2.4]{BK}, the fiber of $\varphi_\#$ over $\varphi_\#((\pi:C\to T,\underline{\sigma},E))\in \bg{H}(T)$ is isomorphic to the moduli stack $\mathrm{Bun}_{U_G^E}(C/T)$ of torsors under the (non-constant) underlying additive group scheme of the vector bundle
$$
U_G^E:=(E\times U)/G\to C
$$
where $G$ acts on $U$ by conjugation. By \cite[Cor. 8.1.3]{BK}, the stack $\mathrm{Bun}_{U_G^E}(C/T)$ is smooth and of finite type over $T$, which then implies the same property for $\varphi_\#$.

\fbox{$r:=\text{Length}(U_\bullet)>1$} By hypothesis, $U$ sits in the middle of an exact sequence $$1\to U_1\to U\to V\to 1,$$
where $U_1$ is a smooth connected unipotent normal subgroup of $G$ and $V$ is a vector group on which  $G$ acts linearly. In particular, we have that $\varphi_\#$ factors through
$$
\bg{G}\to \bg{G/U_1}\to \bg{H}.
$$
The second map is smooth, surjective and of finite type by the previous case. Observe that $U_1$ admits a linearly filtered filtration of length $r-1$ (restrict the filtration \eqref{E:uni-filtr} to $U_1$). By inductive hypothesis, the map $\bg{G}\to \bg{G/U_1}$ is smooth, surjective and of finite type and so is the map $\varphi_\#$.
\end{proof}

\begin{cor}\label{C:red-ft}
Let $G$ be a smooth connected linear algebraic group and let $\red: G\to G^{\red}$ be its reductive quotient. 
Then  $\red_\#:\bg{G}\to\bg{G^{\red}}$ (resp.,  $\red_\#:\mathrm{Bun}_{G}(C/S)\to\mathrm{Bun}_{G^{\red}}(C/S)$ for any family of curves $C\to S$) is smooth, surjective and of finite type.
\end{cor}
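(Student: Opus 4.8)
The plan is to deduce the statement immediately from Proposition \ref{P:red-ft}, which has already established the smoothness, surjectivity and finite type of $\varphi_\#$ for any short exact sequence $1\to U\to G\xrightarrow{\varphi} H\to 1$ of smooth connected linear algebraic groups with $U$ unipotent. The only point is to recognize that the reductive quotient fits precisely into such a sequence.

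First I would recall the defining exact sequence \eqref{E:red-quot}
$$1\to G_u\to G\xrightarrow{\red} G^{\red}\to 1,$$
where $G_u$ denotes the unipotent radical of $G$. The task is then to check that this sequence satisfies the hypotheses of Proposition \ref{P:red-ft} with $U=G_u$, $H=G^{\red}$ and $\varphi=\red$. This is where I would invoke that $k=\ov k$: over an algebraically closed field the unipotent radical $G_u$ is a smooth, connected, normal unipotent subgroup of $G$, while $G^{\red}$ is by construction a smooth connected (indeed reductive) linear algebraic group. Hence the sequence is exactly of the required form.

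Applying Proposition \ref{P:red-ft} then yields at once that $\red_\#:\bg{G}\to\bg{G^{\red}}$ is smooth, surjective and of finite type. For the relative statement over a family $C\to S$, I would argue exactly as in the proof of Proposition \ref{P:red-ft}: the morphism $\red_\#:\mathrm{Bun}_G(C/S)\to\mathrm{Bun}_{G^{\red}}(C/S)$ is obtained from the universal morphism by base change along the modular map $S\to\cM_g$ (after restricting to the connected components of $S$), and the three properties in question are all stable under base change.

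There is essentially no obstacle here: all the substantive work — the reduction to the case of a vector group via the linearly filtered filtration of Lemma \ref{L:lin-filtr}, and the identification of the fibers as stacks of torsors under vector bundles — has already been carried out in Proposition \ref{P:red-ft}. The only genuine verification is the (standard) structural fact that over $k=\ov k$ the unipotent radical is smooth and connected, so that the hypotheses of that proposition apply verbatim.
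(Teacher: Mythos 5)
Your proposal is correct and is essentially identical to the paper's proof, which consists of the single line ``Apply Proposition \ref{P:red-ft} to the exact sequence \eqref{E:red-quot}.'' Your extra remarks (smoothness and connectedness of $G_u$ over $k=\ov k$, and the base-change argument for the relative case) are fine but not strictly needed, since the relative statement is already part of Proposition \ref{P:red-ft} itself.
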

\begin{proof}
Apply Proposition \ref{P:red-ft} to the exact sequence \eqref{E:red-quot}. 
\end{proof}

We are now ready to show

\begin{prop}\label{P:qc=a}
For  a connected smooth  linear algebraic group $G$ over $k$, the  following conditions are equivalent:
\begin{enumerate}[(i)]
\item\label{qc=a-1} the reductive group $G^{\red}$ is a torus;
\item\label{qc=a-2a}  $\Phi_G^{\delta}: \bg{G}^\delta\to \Mg $ is of finite type  for any pair $g,n\geq 0$ and  for  any  $\delta\in\pi_1(G)$;
\item \label{qc=a-2b}$\Phi_G^{\delta}: \bg{G}^\delta\to \Mg $ is quasi-compact for some  pair $g,n\geq 0$ and  for some  $\delta\in\pi_1(G)$;
\item \label{qc=a-2c} $\bg{G}^\delta$ is  quasi-compact over  $k$, for some  pair $g,n\geq 0$ and  for some  $\delta\in\pi_1(G)$;
\item \label{qc=a-3a} $\Phi_G^{\delta}(C/S): \mathrm{Bun}^{\delta}_G(C/S)\to S$ is of finite type  for any family of curves $C\to S$ with $S$ connected and for any  $\delta\in\pi_1(G)$;
\item \label{qc=a-3b} $\Phi_G^{\delta}(C/S): \mathrm{Bun}^{\delta}_G(C/S)\to S$ is quasi-compact over $S$, for some family of curves $C\to S$ with $S$ connected and for some  $\delta\in\pi_1(G)$.
\end{enumerate}
\end{prop}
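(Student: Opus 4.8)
The plan is to pivot the whole equivalence around two poles: the reductive torus case, which produces the finiteness conditions, and the reductive non-torus case, which obstructs quasi-compactness. First I would reduce to $G$ reductive. By Corollary \ref{C:red-ft} the morphism $\red_\#:\bg{G}^\delta\to\bg{G^{\red}}^\delta$ is smooth, surjective and of finite type, hence in particular fppf and quasi-compact; writing $\Phi_G^\delta=\Phi_{G^{\red}}^\delta\circ\red_\#$, the properties of being of finite type and of being quasi-compact ascend from $\Phi_{G^{\red}}^\delta$ (composing with the finite type $\red_\#$) and descend to $\Phi_{G^{\red}}^\delta$ (by fpqc descent along the faithfully flat $\red_\#$). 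Since $\pi_1(\red)$ is an isomorphism these identifications respect the component $\delta$, and the same discussion applies verbatim to the relative stacks. Thus all six conditions for $G$ are equivalent to the corresponding ones for $G^{\red}$, and I may assume $G$ is reductive. I would also record the two standing comparisons between the universal and relative pictures: by \eqref{E:reluniv2}, $\mathrm{Bun}_G^\delta(C/S)\to S$ is the base change of $\Phi_G^\delta$ along $S\to\cM_g$, so finiteness/quasi-compactness of $\Phi_G^\delta$ passes to every family; conversely the universal family $\Cg\to\Mg$ is itself an admissible family with $S=\Mg$, and the base change of a quasi-compact morphism to a fiber over $[C]\in\Mg$ is again quasi-compact.

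Using that $\Mg$ is of finite type (hence quasi-compact and quasi-separated) over $k$, the implications (ii)$\Rightarrow$(iii)$\Rightarrow$(iv) and their relative analogues, together with all passages between the universal conditions (ii)--(iv) and the relative ones (v)--(vi), then become essentially formal: a quasi-compact stack over $\Mg$ maps quasi-compactly to $\Mg$ because $\Mg$ is quasi-separated, and a quasi-compact morphism has quasi-compact fibers.

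Next I would prove that condition (i) forces the finiteness statements. If $G=T$ is a torus then $T\cong\Gm^r$, and Remark \ref{cart} gives $\bg{T}^\delta\cong\bg{\Gm}^{d_1}\times_{\Mg}\cdots\times_{\Mg}\bg{\Gm}^{d_r}$, so it suffices to show $\bg{\Gm}^d\to\Mg$ is of finite type. This is the universal degree-$d$ Picard stack, a $\Gm$-gerbe over the relative Picard scheme $\Pic^d$ of $\Cg/\Mg$, which is of finite type over $\Mg$; equivalently, line bundles of fixed degree on genus-$g$ curves form a bounded family. Either way $\Phi_T^\delta$ is of finite type, giving (i)$\Rightarrow$(ii), and after base change also (i)$\Rightarrow$(v).

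The heart of the argument, and the step I expect to be the main obstacle, is the converse: if $G$ is reductive but not a torus, then for \emph{every} curve $C$ the fiber $\mathrm{Bun}_G^\delta(C/k)$ fails to be quasi-compact, which (via the comparisons above) rules out (iii), (iv) and (vi). Here the semisimple rank of $G$ is positive, so there is a nonzero coroot $\alpha^\vee\in\Lambda(T_{G^{\sc}})\subset\Lambda(T_G)$; crucially $\alpha^\vee$ dies in $\pi_1(G)=\Lambda(T_G)/\Lambda(T_{G^{\sc}})$, so twisting by it preserves the component $\delta$. Fixing a $T_G$-bundle of type $d_0$ with $[d_0]=\delta$, I would build for every $m\geq 0$ a $T_G$-bundle obtained by adding $m\cdot\alpha^\vee$ (realized from a degree-$m$ line bundle on $C$ together with the cocharacter $\alpha^\vee$), and push it forward along $T_G\hookrightarrow G$ to a $G$-bundle $F_m\in\mathrm{Bun}_G^\delta(C/k)$. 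The point is that the instability of $F_m$ — measured by the canonical reduction to the parabolic attached to $\alpha$ — grows linearly in $m$, so $\{F_m\}_{m\geq 0}$ is unbounded; since any quasi-compact substack parametrizes a bounded family of $G$-bundles, no quasi-compact substack can contain all the $F_m$. This closes the loop: (i) forces the finiteness conditions, the finiteness conditions force quasi-compactness, and quasi-compactness forces (i). The one technical input to pin down carefully is precisely this boundedness principle, namely that a quasi-compact family of $G$-bundles has bounded Harder--Narasimhan type, together with the explicit verification that the instability of the $F_m$ is genuinely unbounded.
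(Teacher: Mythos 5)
Your proposal is correct, and its skeleton matches the paper's: reduction to $G^{\red}$ via Corollary \ref{C:red-ft} (the paper descends quasi-compactness simply by taking images of quasi-compacts under the surjection $\red_\#$, which is the fact behind your ``fpqc descent'' phrasing), the torus case via the fibered product of Jacobian stacks, the formal equivalences using that $\Mg$ is quasi-compact and quasi-separated over $k$, and a final contradiction built from $T_G$-bundles with twisted degrees pushed forward to $G$. Where you genuinely diverge is in \emph{how} non-quasi-compactness is detected in the implication ``quasi-compact $\Rightarrow$ torus''. The paper uses the upper semicontinuous function $h(E)=\dim H^0(C,\ad(E))$, which must be bounded on a quasi-compact stack, and twists a lift $d_m\in\Lambda(T_G)$ of $\delta$ so that $(d_m,\alpha)\geq m$ for \emph{all} positive roots $\alpha$, getting $h(P_m)\geq \dim T_G+(\dim B_G-\dim T_G)(m+1-g)$, unbounded unless $B_G=T_G$. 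You instead use the instability degree: a quasi-compact substack has bounded instability, while the bundles $F_m$ induced from $T_G$-bundles of type $d_0+m\alpha^\vee$ have a Borel reduction with $\deg\ad(F_m^{B_G})=\mathrm{const}+m\sum_{\beta>0}(\alpha^\vee,\beta)$, which is unbounded. Both mechanisms work; the paper's is the more self-contained (semicontinuity of $\dim H^0$ is elementary), whereas your boundedness principle is precisely the exhaustion-by-open-substacks statement, i.e.\ Proposition \ref{P:inst-cover}\eqref{P:inst-cover1}, resting on \cite[Theorem 7.2.4]{BK} and \cite[Lemma 6.1.3]{BK}.

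Two details you flagged do need care, and both can be settled. First, circularity: in the paper's logical order, parts \eqref{P:inst-cover2} and \eqref{P:inst-cover3} of Proposition \ref{P:inst-cover} cite Proposition \ref{P:qc=a}, so your argument may only invoke part \eqref{P:inst-cover1} (openness and exhaustion of the instability strata), which depends on \cite{BK} alone; since you establish (i)$\Rightarrow$(ii),(v) before the converse, the resulting dependence is consistent. Second, the unboundedness computation: you must choose $\alpha^\vee$ to be a \emph{positive} coroot (e.g.\ a simple one, replacing $\alpha^\vee$ by $-\alpha^\vee$ if necessary), so that $\sum_{\beta>0}(\alpha^\vee,\beta)>0$; and since the instability degree is by Definition \ref{D:in-de} a bound over \emph{all} parabolic reductions, exhibiting the single Borel reduction above with large adjoint degree already suffices — no appeal to the canonical (Harder--Narasimhan) reduction is needed.
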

The above Proposition could be well-known to the experts, but we are not aware of any reference, so that we include a complete proof. 
\begin{proof} 
Let us split the proof in several steps.

\fbox{\eqref{qc=a-1} $\Rightarrow$ \eqref{qc=a-2a} and  \eqref{qc=a-3a}} By Proposition \ref{P:red-ft}, the  morphisms $\bg{G}^\delta\to \bg{G^{\red}}^{\delta}$ and $\mathrm{Bun}^{\delta}_G(C/S)\to \mathrm{Bun}^{\delta}_{G^{\red}}(C/S)$ are of finite type. Hence, it is enough to show that if $T$ is a torus then 
\begin{itemize}
\item $\Phi_T^{\delta}:\bg{T}^\delta\to \Mg$ is of finite type for any pair $g,n\geq 0$ and for any $\delta\in \pi_1(T)$;
\item $\Phi_T^{\delta}(C/S): \mathrm{Bun}^{\delta}_T(C/S)\to S$ is of finite type for  any family $C\to S$ (with $S$ connected) and for any $\delta\in \pi_1(T)$.
\end{itemize}
This follows from the fact that, fixing an isomorphism $T\cong \mathbb G_m^r$, the stack $\bg{T}^\delta\to \Mg$ (resp., $\mathrm{Bun}^{\delta}_T(C/S)\to S$) is isomorphic to the fibered product of $r$ connected components of the Jacobian stack $\bg{\Gm}\to \Mg$ (resp., $\mathrm{Bun}_{\Gm}(C/S)\to S$) which is of finite type. 

\fbox{\eqref{qc=a-2a} $\Rightarrow$ \eqref{qc=a-2b} and  \eqref{qc=a-3a} $\Rightarrow$ \eqref{qc=a-3b}}:  obvious.

\fbox{\eqref{qc=a-2b} $\Leftrightarrow$ \eqref{qc=a-2c}} Since $\Mg$ is quasi-compact and separated over $k$,  then using \cite[\href{https://stacks.math.columbia.edu/tag/050Y}{Tag 050Y}]{stacks-project} and \cite[\href{https://stacks.math.columbia.edu/tag/050W}{Tag 050W}]{stacks-project} we deduce that $ \bg{G}^{\delta}\to \Mg$  is  quasi-compact if and only if  $\bg{G}^{\delta}$  is quasi-compact  over $k$.

\fbox{\eqref{qc=a-2b} or \eqref{qc=a-3b}$\Rightarrow$\eqref{qc=a-1}} Both conditions \eqref{qc=a-2b} and \eqref{qc=a-3b} imply that there exists a curve $C$ over $k$ and an element $\delta\in \pi_1(G)$ such that  $\mathrm{Bun}^{\delta}_G(C/k)$ is quasi-compact. Since the morphism $\mathrm{Bun}_G(C/k)\to \mathrm{Bun}_{G^{\red}}(C/k)$ is surjective  by Corollary \ref{C:red-ft}, we deduce that $\mathrm{Bun}^{\delta}_{G^{\red}}(C/k)$ is quasi-compact by \cite[\href{https://stacks.math.columbia.edu/tag/050X}{Tag 050X}]{stacks-project}. 
Hence the proof will follow from the following

\un{Claim:} If $G$ is a reductive group such that $\mathrm{Bun}^{\delta}_G(C/k)$ is quasi-compact, for some curve $C$ over $k$ and some $\delta\in \pi_1(G)$, then $G$ is a torus. 

In order to prove the Claim, consider the upper semicontinuous function 
\begin{equation}
\begin{array}{cccl}
h:&\mathrm{Bun}^\delta_G(C/k)&\to& \mathbb{Z}\\
&E&\mapsto & \dim H^0(C,\mathrm{ad}(E)).
\end{array}
\end{equation}
Since $\mathrm{Bun}^\delta_G(C/k)$ is quasi-compact by assumption and $h$ is upper semicontinuous, then $h$ must be bounded. We now deduce from the boundedness of $h$ the fact that $G$ must be a torus. 

Fix a maximal torus and a Borel subgroup $T_G\subset B_G\subset G$. Let $d\in\pi_1(T_G)=\Lambda(T_G)$ be a lift of $\delta\in\pi_1(G)=\Lambda(T_G)/\Lambda(T_{G^{\sc}})$. Consider a $G$-bundle $E\to C$ in the image of the morphism $\mathrm{Bun}^d_{T_G}(C/k)\to\mathrm{Bun}^\delta_G(C/k)$. Then its adjoint bundle splits as direct sum of line bundles
$$
\mathrm{ad}(E)\cong\oo_C^{\dim T_G}\bigoplus_{\alpha \text{ root} }L_{\alpha},
$$
such that $\deg L_\alpha=(d, \alpha)$. By direct computation, for any integer $m$ there exists a lift $d_m$ of $\delta$ such that $(d_m,\alpha)\geq m$ for any positive root $\alpha$. In particular, for any $m\geq\mathrm{max}\{2g-2,0\}$ where $g$ is the genus of $C$, there exists a $G$-bundle $P_m\to C$ in $\mathrm{Bun}_G^\delta(C/k)$ such that
$$
\begin{array}{ll}
h(P_m)&=\dim T_G+\sum_{\alpha>0}\dim H^0(C,L_{\alpha})=\dim T_G+ \sum_{\alpha>0}((d_m,\alpha)+1-g)\geq\\
&\geq\dim T_G+ \#\{\alpha>0\}(m+1-g)=\dim T_G+ (\dim B_G-\dim T_G)(m+1-g).
\end{array}
$$
Since $h$ is bounded (as observed above), we must have that $\dim B_G=\dim T_G$, which then forces $T_G=B_G=G$, and the Claim is proved. 
\end{proof}

\subsection{Finite type open subsets of $\bg{G}$ and the instability exhaustion }\label{SS:open-ss}

In this subsection,  we study $k$-finite type open substacks of the moduli stack  $\bg{G}$ (and of $\mathrm{Bun}_G(C/S)$).

First of all, assuming that $G$ is reductive, we introduce the \emph{instability exhaustion } of $\bg{G}^{\delta}$, which provides a  cover of $\bg{G}^{\delta}$ by open substacks of finite type over $k$. 

\begin{defin}\label{D:in-de} 
Let $G$ be a reductive group over $k$ and let $E\to C$ be a $G$-bundle on a smooth curve over $k$. The bundle has \emph{instability degree less than or equal to $m$} if for any reduction $F$ to any parabolic subgroup $P\subseteq G$ (i.e., $F\to C$ is a principal $P$-bundle such that $E\cong (F\times G)/P$) we have 
	$$\text{deg(\text{ad}(F))}\leq m,$$ 
	where $\text{ad}(F):=(F\times\mathfrak p)/P$ is the adjoint bundle of $F$, i.e., the vector bundle on $C$ induced by $F$ via the adjoint representation $P\to GL(\mathfrak p)$.
	\end{defin}

\begin{rmk}\label{R:deg-pos}
Note that if $P=G$ in the above definition (so that $F=E$) then 
$$
\deg(\ad(E))=0,
$$
since the adjoint representation $\ad_G$ of a reductive group $G$ is such that $\Im(\ad_G)\subseteq \SL(\g)$. Hence, the instability degree of any $G$-bundle $E\to C$ is always non-negative.
\end{rmk}

For any $m\geq 0$, we denote by $ \bg{G}^{\delta,\leq m}\subset \bg{G}^{\delta}$ the locus of $G$-bundles (over $n$-pointed smooth curves) whose geometric fibers having instability degree less than or equal to $m$. The analogous locus in the relative situation $\mathrm{Bun}^{\delta}_G(C/S)$ will be denoted by $\mathrm{Bun}^{\delta,\leq m}_G(C/S)$. Note that the locus $\bg{G}^{\delta,\leq 0}$ is exactly the locus of semistable $G$-bundles. 
The properties of the above loci are collected in the following Proposition, which is based on the results of  \cite[Sec. 7]{BK}.

\begin{prop}\label{P:inst-cover}
Let $G$ be a reductive group over $k$. Then
\begin{enumerate}[(i)]
\item  \label{P:inst-cover1} the loci $\{\bg{G}^{\delta,\leq m}\}_{m\geq 0}$ form an exhaustive chain of open substacks of $\bg{G}^{\delta}$ (called the \emph{instability exhaustion } of $\bg{G}^{\delta}$);
\item \label{P:inst-cover2}  the stack $\bg{G}^{\delta,\leq m}$ is a smooth algebraic  stack of finite type over $\Mg$;
\item \label{P:inst-cover2b} if $G$ is a torus then  $\bg{G}^{\delta,\leq m}=\bg{G}^{\delta}$ for any $m\geq 0$; 
\item \label{P:inst-cover3} if  $G$ is not a torus then the complement of $\bg{G}^{\delta,\leq m}$ has codimension at least $g+m$. 
\end{enumerate}
The same holds true for the relative moduli stack $\mathrm{Bun}^{\delta}_G(C/S)$ for  any family of curves $C\to S$ with $S$ connected.
\end{prop}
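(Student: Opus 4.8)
The plan is to deduce all four assertions from the theory of the canonical (Behrend) reduction and the instability stratification of \cite[Sec. 7]{BK}, combined with the dimension count of Theorem \ref{T:dim-comp}. Recall that every $G$-bundle $E$ on a curve admits a unique canonical reduction $F$ to a parabolic $P\subseteq G$, and that for this reduction $\deg(\ad F)$ equals the instability degree $\sup_{F',P'}\deg(\ad F')$ of Definition \ref{D:in-de}. Since the entire statement is fiberwise over $\Mg$ --- by \eqref{E:reluniv2} the stack $\mathrm{Bun}^{\delta}_G(C/S)$ is the base change of $\bg{G}^{\delta}$ along $S\to\cM_g$, and the instability degree is a property of geometric fibers --- I would prove the four assertions for $\bg{G}^{\delta}$ and transport them to $\mathrm{Bun}^{\delta}_G(C/S)$ by pullback.

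For (i), the instability degree is an integer-valued, upper semicontinuous function in families (the canonical reduction type defines a Shatz-type stratification, \cite[Sec. 7]{BK}), so the locus $\bg{G}^{\delta,\leq m}=\{\text{instability degree}<m+1\}$ is open; these loci are nested in $m$ and exhaust $\bg{G}^{\delta}$ because every bundle has a finite instability degree. For (ii), finiteness of type over $\Mg$ follows from the boundedness of the family of $G$-bundles of bounded instability degree (\cite[Sec. 7]{BK}), which gives quasi-compactness of $\bg{G}^{\delta,\leq m}\to\Mg$; together with Theorem \ref{beh}\eqref{beh1} this yields finite type, and smoothness is inherited from the open immersion $\bg{G}^{\delta,\leq m}\hookrightarrow\bg{G}^{\delta}$ via Corollary \ref{C:regint}. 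Assertion (iii) is immediate: a torus $T$ has no proper parabolic, so the only reduction of $E$ is $E$ itself, and since the adjoint representation of $T$ is trivial we have $\ad E\cong\oo_C^{\dim T}$, whence $\deg(\ad E)=0\leq m$ for every $m\geq 0$.

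The heart of the matter is (iv). I would stratify the complement of $\bg{G}^{\delta,\leq m}$ by the type $(P,d_P)$ of the canonical reduction, where now $\deg(\ad F)\geq m+1$ and $P\subsetneq G$. Since the canonical reduction is unique and functorial, the stratum of type $(P,d_P)$ is identified with an open substack of the connected component $\bg{P}^{d_P}$ mapping to $\bg{G}^{\delta}$ via the induced morphism; and, the canonical reduction having strictly negative complement (so that $H^0$ of $(\g/\mathfrak p)_F$ vanishes), this map is an immersion onto the stratum. Applying Theorem \ref{T:dim-comp} to the connected group $P$ shows that $\bg{P}^{d_P}$ has relative dimension $(g-1)\dim P-\deg(\ad F)$ over $\Mg$, while $\bg{G}^{\delta}$ has relative dimension $(g-1)\dim G$; hence the stratum has codimension
\[
\operatorname{codim}=(g-1)(\dim G-\dim P)+\deg(\ad F)=\deg(\ad F)+(\dim G-\dim P)(g-1).
\]
As $P\subsetneq G$ forces $\dim G-\dim P=\dim(\g/\mathfrak p)\geq 1$ and $\deg(\ad F)\geq m+1$, for $g\geq 1$ each such stratum has codimension at least $(m+1)+(g-1)=g+m$; since the complement is closed and meets each finite-type piece $\bg{G}^{\delta,\leq M}$ in finitely many components, its codimension is at least $g+m$.

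The main obstacle is precisely the codimension estimate (iv). Two points must be handled with care: establishing the identification of each stratum with an open substack of $\bg{P}^{d_P}$ together with the vanishing $H^0((\g/\mathfrak p)_F)=0$, which is what makes the dimension count an equality rather than a mere inequality; and the genus-zero regime, where the term $(\dim G-\dim P)(g-1)$ turns negative, so that the crude bound above is no longer sufficient and the estimate must instead be extracted from the finer arithmetic of the admissible reduction degrees in \cite[Sec. 7]{BK}. Everything else is a relatively formal assembly of openness, boundedness and the dimension formula.
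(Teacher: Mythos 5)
You follow the same route as the paper: parts (i)--(iii) are proved exactly as you propose (openness and exhaustion from \cite[Sec.~7]{BK}; finite type over $\Mg$, which the paper makes precise by covering $\bg{G}^{\delta,\leq m}$ with finitely many connected components of the stack of $B_G$-bundles, each of finite type by Proposition \ref{P:qc=a}; the torus case via Remark \ref{R:deg-pos}), and part (iv) rests on the same dimension count via Theorem \ref{T:dim-comp}. One remark: your first ``point to be handled with care'' is not actually needed. The paper never identifies the strata with open substacks of $\bg{P}^{d_P}$, nor uses the vanishing of $H^0((\g/\mathfrak p)_F)$; it only needs an \emph{upper} bound on the dimension of each irreducible component $\mathcal V$ of the complement, and this follows from dominance alone: the generic point of $\mathcal V$ carries a reduction $F$ to some parabolic $P$ with $\deg(\ad(F))\geq m+1$, so the connected component $\mathrm{Bun}^{\epsilon}_P(C/k)$ containing $F$ dominates $\mathcal V$, whence $\dim\mathcal V\leq \dim \mathrm{Bun}^{\epsilon}_P(C/k)=-\deg(\ad(F))+(g-1)\dim P$. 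The uniqueness and functoriality of the canonical reduction, and the structure of the stratification, never enter.

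The genuine gap is the one you flagged yourself: genus zero, and deferring to ``finer arithmetic of the admissible reduction degrees'' is not an argument. Your computation (like the paper's) gives $\operatorname{codim}\mathcal V\geq (g-1)(\dim G-\dim P)+m+1$, and passing to $\geq g+m$ requires $(g-1)(\dim G-\dim P-1)\geq 0$; this holds for $g\geq 1$ because $P\subsetneq G$, but for $g=0$ it requires $\dim G-\dim P\leq 1$, which fails whenever the relevant parabolic has codimension at least $2$. Moreover no refinement can recover the stated bound, because it is false at $g=0$: take $G=\SL_3$, $C=\bbP^1$ and $E=\cO(1)\oplus\cO\oplus\cO(-1)$. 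Its canonical reduction $F$ is to the Borel $B$ and has $\deg(\ad(F))=1+2+1=4$, so $E$ lies outside the $\leq 3$ locus; on the other hand the locus of bundles isomorphic to $E$ has codimension $\deg(\ad(F))+(g-1)\dim(G/B)=4-3=1$ in $\mathrm{Bun}^{0}_{\SL_3}(\bbP^1/k)$, so the complement of the $\leq 3$ locus has codimension $1$, not $\geq 3$. You should be aware that the paper's own proof has the same defect: its final inequality is asserted ``since $P\subsetneq G$'', which is valid only for $g\geq 1$. At $g=0$ the correct conclusion of the dominance argument is the weaker bound $\operatorname{codim}\geq m+1-\dim(G/B_G)$, which still tends to infinity with $m$ and therefore still suffices for the later uses of this proposition (Corollary \ref{C:cod2-red}, and Proposition \ref{P:seesaw} after enlarging $m$). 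So: for $g\geq 1$ your argument is correct and matches the paper's; for $g=0$ both your proposal and the statement as given need to be amended along these lines.
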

\begin{proof}
We will   show the proposition in the relative case $\mathrm{Bun}^{\delta}_G(C/S)$; the universal case follows easily from the relative case. 

Part \eqref{P:inst-cover1}:  the openness of $\mathrm{Bun}^{\delta,\leq m}_G(C/S)\subseteq \mathrm{Bun}^{\delta}_G(C/S)$ follows from \cite[Theorem 7.2.4]{BK}.  By definition, it is clear that 
$\mathrm{Bun}^{\delta,\leq m}_G(C/S)\subseteq \mathrm{Bun}^{\delta,\leq m+1}_G(C/S)$, so that $\{\mathrm{Bun}^{\delta,\leq m}_G(C/S)\}_{m\geq 0}$ form a chain of open substacks of $\bg{G}^{\delta}$. The fact that the open substacks $\{\mathrm{Bun}^{\delta,\leq m}_G(C/S)\}_{m\geq 0}$ cover $\mathrm{Bun}^{\delta}_G(C/S)$ follows from \cite[Lemma 6.1.3]{BK}, which implies that for any $G$-bundle $E$ over a curve $C$ there exists $m\geq 0$ such that the  instability degree of $E$ is less than or equal to $m$.

Part \eqref{P:inst-cover2}: the fact that  $\mathrm{Bun}^{\delta,\leq m}_G(C/S)$ is a smooth algebraic  stack locally of finite type over $S$ follows from Theorem \ref{beh}. The fact that $\mathrm{Bun}^{\delta,\leq m}_G(C/S)$ is of finite type over $S$ can be proved with the same arguments of \cite[Theorem 8.2.6]{BK} (which treats the case $S=\Spec k$). Indeed, as in \emph{loc.cit.}, the moduli stack $\mathrm{Bun}^{\delta,\leq m}_G(C/S)$ has a cover given by a finite number of connected components of the moduli stack of $\mathrm{Bun}_{B_G}(C/S)$, with $B_G\subset G$ Borel subgroup, which are of finite type over $S$ by Proposition \ref{P:qc=a}. 

Part \eqref{P:inst-cover2b}: if $G=T$ is a torus, then its unique parabolic subgroup  is $T$ itself. Then Remark \ref{R:deg-pos} implies that each $T$-bundle on $C\to \Spec k$ has instability degree less than or equal to $0$, or, in other words, that  $\mathrm{Bun}_G(C/S)^{\delta,\leq 0}=\mathrm{Bun}_G(C/S)^{\delta}$. Then part \eqref{P:inst-cover1} implies that $\mathrm{Bun}_G(C/S)^{\delta,\leq m}=\mathrm{Bun}_G(C/S)^{\delta}$ for any $m\geq 0$.

Part \eqref{P:inst-cover3}: we follow the same strategy of \cite[Lemma 2.1]{BH12}. 
First of all, it is enough to prove the statement for $S=\Spec k$ with $k$ algebraically closed. Note that $\mathrm{Bun}_G^{\delta,\leq m}(C/k)\subsetneq \mathrm{Bun}_G^{\delta}(C/k)$, since 
 $\mathrm{Bun}_G^{\delta,\leq m}(C/k)$ is of finite type over $k$ by \eqref{P:inst-cover2} while  $\mathrm{Bun}_G^{\delta}(C/k)$ is not of finite type over $k$ by Proposition \ref{P:qc=a} because $G$ is not a torus. Pick an irreducible component $\mathcal V$ of $\mathrm{Bun}_G^{\delta}(C/k)\setminus\mathrm{Bun}_G^{\delta,\leq m}(C/k)$ and let $f:\Spec K\to \mathrm{Bun}_G^{\delta}(C/k)$ be a geometric point mapping onto the generic point of $\mathcal V$. The morphism $f$ is the classifying morphism of a $G$-bundle $E\to C_K:=C\times_k K$ whose degree of instability is greater of $m$.  By definition of degree of instability, the $G$-bundle $E\to C_K$ admits a reduction to a $P$-bundle $F\to C_K$ for some parabolic subgroup $P\subseteq G$ such that 
 \begin{equation}\label{E:deg-adF}
 \deg(\text{ad}(F))\geq m+1.
 \end{equation}
 Let $\mathrm{Bun}^{\epsilon}_P(C/k)$ be the connected component of $\mathrm{Bun}_P(C/k)$ containing $F\to C_K$. By construction, we have a morphism 
   $$\iota_\#:\mathrm{Bun}^{\epsilon}_P(C/k)\to\mathrm{Bun}^{\delta}_G(C/k),$$ 
which is dominant onto $\mathcal V$. Hence, using Theorem \ref{T:dim-comp} and \eqref{E:deg-adF}, we deduce that 
 $$
	\dim\mathcal V\leq \dim \mathrm{Bun}^{\epsilon}_P(C/k)=-\text{deg}(\text{ad}(\mathcal F))+(g-1)\text{dim}P\leq -(m+1)+(g-1)\text{dim}P.
	$$
Then, using that $\dim \mathrm{Bun}^{\delta}_G(C/k)=(g-1)\dim G$ again by Theorem \ref{T:dim-comp}, we conclude that 
	$$\text{codim}\mathcal V\geq (g-1)(\dim G-\dim P)+m+1\geq g+m,$$
where in the last inequality we used that $P\subsetneq G$ which follows from \eqref{E:deg-adF} and Remark \ref{R:deg-pos}.  
\end{proof}

A useful corollary of the above result is the following.

\begin{cor}\label{C:cod2-red} 
Assume that $G$ is a non-abelian reductive group and let $N\in \bbN$. Any morphism $f:\mt X\to \bg{G}^{\delta}$, with $\cX$ a quasi-compact algebraic stack (over $k$), factors through an open substack (of finite type over $k$) $ \bg{G}^{\delta,\leq m}\subset \bg{G}^{\delta}$ for some $m\gg 0$, such that the complementary substack has codimension at least $N$. The same holds true for the relative moduli stack $\mathrm{Bun}^{\delta}_G(C/S)$.
\end{cor}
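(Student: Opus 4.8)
The plan is to deduce this directly from the instability exhaustion of Proposition \ref{P:inst-cover}, using the quasi-compactness of $\cX$ to pass from an open cover to a single open stratum; there is essentially no calculation involved, only a careful bookkeeping of the index $m$.

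First I would record the three facts I need. Since $G$ is non-abelian reductive, it is in particular not a torus, so Proposition \ref{P:inst-cover} applies in full: by \eqref{P:inst-cover1} the loci $\{\bg{G}^{\delta,\leq m}\}_{m\geq 0}$ form an increasing exhaustive chain of open substacks of $\bg{G}^{\delta}$; by \eqref{P:inst-cover2} each $\bg{G}^{\delta,\leq m}$ is of finite type over $\Mg$, hence of finite type over $k$ because $\Mg$ is; and by \eqref{P:inst-cover3} the complement $\bg{G}^{\delta}\setminus \bg{G}^{\delta,\leq m}$ has codimension at least $g+m$.

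Next I would run the quasi-compactness argument, which is the only step carrying any content. Pulling back the chain along $f$ produces an increasing open cover $\{f^{-1}(\bg{G}^{\delta,\leq m})\}_{m\geq 0}$ of $\cX$ (it covers $\cX$ because the $\bg{G}^{\delta,\leq m}$ cover $\bg{G}^{\delta}$). As $\cX$ is quasi-compact, finitely many members already cover it, and since the cover is nested a single index suffices: there exists $m_0\geq 0$ with $f^{-1}(\bg{G}^{\delta,\leq m_0})=\cX$. Because $\bg{G}^{\delta,\leq m_0}\hookrightarrow \bg{G}^{\delta}$ is an open immersion (a monomorphism), this equality of open substacks yields the desired unique factorization of $f$ through $\bg{G}^{\delta,\leq m_0}$.

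Finally I would arrange the codimension bound by merely enlarging the index. Setting $m:=\max\{m_0,N\}$, the factorization through $\bg{G}^{\delta,\leq m_0}$ composes with the open immersion $\bg{G}^{\delta,\leq m_0}\hookrightarrow \bg{G}^{\delta,\leq m}$ to give a factorization of $f$ through $\bg{G}^{\delta,\leq m}$, which is of finite type over $k$; and by \eqref{P:inst-cover3} its complement has codimension at least $g+m\geq m\geq N$, as required. The relative statement for $\mathrm{Bun}^{\delta}_G(C/S)$ follows by the identical argument, invoking the relative version of Proposition \ref{P:inst-cover}. Thus the only genuine (and very mild) obstacle is the passage from the open cover to a single stratum, which is precisely where quasi-compactness of $\cX$ enters; everything else is the choice of $m$.
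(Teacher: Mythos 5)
Your proof is correct and follows essentially the same route as the paper: quasi-compactness of $\cX$ together with the exhaustive chain $\{\bg{G}^{\delta,\leq m}\}_{m\geq 0}$ from Proposition \ref{P:inst-cover} forces $f$ to factor through a single stratum, and then one enlarges $m$ using the codimension bound of Proposition \ref{P:inst-cover}\eqref{P:inst-cover3}. Your write-up is in fact slightly more careful than the paper's (making the pulled-back open cover and the nestedness explicit rather than just asserting $f(\cX)\subseteq \bg{G}^{\delta,\leq m}$), but the content is identical, including the treatment of the relative case.
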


\begin{proof}
We present the proof only for the universal case, the proof for the relative case uses the same argument. 

Since $\mathcal X$ is quasi-compact
and  $\{\bg{G}^{\delta,\leq m}\}_{m\geq 0}$ form an exhaustive chain of open  substacks  of $\bg{G}^{\delta}$ by Proposition \ref{P:inst-cover}, we get that $f(\cX)$ is contained in $ \bg{G}^{\delta,\leq m}$ (for some $m\geq 0$), which  is equivalent to say that $f$ factors through  $ \bg{G}^{\delta,\leq m}$.

Since the codimension of the complement $\bg{G}^{\delta}\setminus \bg{G}^{\delta, \leq m}$ goes to infinity as $m$ increases by Proposition  \ref{P:inst-cover}\eqref{P:inst-cover3},  we can obtain, up to increasing $m$, that the complementary substack of $\bg{G}^{\delta, \leq m}$ has codimension at least $N$. 
\end{proof}

Next, we prove that, for an arbitrary connected (smooth) linear algebraic group $G$,  every $k$-finite type open substack of $\bg{G}$ is a quotient stack over  $k$ (in the sense of Proposition \ref{Equi}), with a notable exception.

\begin{prop}\label{P:qstack-cover}
If $(g,n)\neq (1,0)$ then any open substack of $\bg{G}$ which is of finite type over $k$ is a quotient stack over $k$. 

	The same holds true for the relative moduli stack $\mathrm{Bun}_G(C/S)$, if the family of curves $C\to S$ has relative genus $g\neq 1$ or if it has a section.
\end{prop}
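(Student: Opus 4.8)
The plan is to realize any finite type open substack as a global quotient by using a relative polarization on the universal curve to rigidify the curve \emph{and} the bundle at the same time, after first reducing from an arbitrary $G$ to the case $G=\GL_r$. The hypothesis $(g,n)\neq(1,0)$ will be used exactly once, to guarantee the existence of a relatively very ample line bundle on $\Cg$.

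\textbf{Reduction to $G=\GL_r$.} Fix a faithful representation $\rho\colon G\hookrightarrow \GL_r$, choosing $r\geq 2$ (harmless, composing with $\GL_{r}\hookrightarrow\GL_{r+1}$ if needed). Since $\rho$ is a closed immersion of group schemes, the induced morphism $\rho_\#\colon\bg{G}\to\bg{\GL_r}$ of \eqref{E:fun1} is representable: its fibre over a $\GL_r$-bundle is the sheaf of reductions of structure group to $G$, i.e. the sheaf of sections of the associated $\GL_r/G$-bundle, which carries no nontrivial automorphisms; it is locally of finite type by Remark \ref{R:locfin}. Now let $\mathcal U\subseteq\bg{G}$ be a finite type open, so that $\mathcal U$ is quasi-compact and meets only finitely many components $\bg{G}^\delta$. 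By Corollary \ref{C:cod2-red} applied to $\GL_r$, each $\mathcal U\cap\bg{G}^\delta$ maps under $\rho_\#$ into a finite type open $\mathcal V_\delta=\bg{\GL_r}^{\epsilon,\leq m}$. If each such $\mathcal V_\delta$ is a quotient stack $[Y/\Gamma]$ with $Y$ an algebraic space of finite type over $k$, then, because $\rho_\#$ is representable and of finite type, $\mathcal U\cap\bg{G}^\delta=[(\,(\mathcal U\cap\bg{G}^\delta)\times_{\mathcal V_\delta}Y)/\Gamma]$ is again a quotient stack; assembling the finitely many components (after enlarging all groups to a common $\GL_N$ by inducing the structure group) presents $\mathcal U$ as a single quotient stack. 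It thus remains to treat $\mathcal V:=\bg{\GL_r}^{\delta,\leq m}$, which is a smooth algebraic stack of finite type by Proposition \ref{P:inst-cover}.

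\textbf{The case $G=\GL_r$.} Here enters $(g,n)\neq(1,0)$: by Remark \ref{R:proj-uni} the universal curve $\pi\colon\Cg\to\Mg$ is projective, so it carries a relatively very ample line bundle $\mathcal L$ (for instance $\omega_\pi$, $\omega_\pi^{-1}$, or $\omega_\pi(\sum_i\sigma_i)$ according to $g$). Pulling back, I get the universal curve $\pi\colon\mathcal C\to\mathcal V$ with a relatively very ample $\mathcal L$ together with the universal rank-$r$ bundle $\mathcal E$. As $\mathcal V$ is of finite type the family is bounded, so there is $\ell\gg0$ such that, fibrewise, $\mathcal L^\ell$ is very ample with $\pi_*\mathcal L^\ell$ locally free of rank $P$, and $\mathcal E\otimes\mathcal L^\ell$ is globally generated with $R^1\pi_*=0$ and $\pi_*(\mathcal E\otimes\mathcal L^\ell)$ locally free of rank $N$. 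Let $Y\to\mathcal V$ be the frame space parametrizing, in addition to a point of $\mathcal V$, isomorphisms $k^P\xrightarrow{\sim}\pi_*\mathcal L^\ell$ and $k^N\xrightarrow{\sim}\pi_*(\mathcal E\otimes\mathcal L^\ell)$. This is a $(\GL_P\times\GL_N)$-torsor over $\mathcal V$, so tautologically $\mathcal V=[Y/(\GL_P\times\GL_N)]$, and it suffices to prove that $Y$ is an algebraic space of finite type over $k$.

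\textbf{The main obstacle: rigidification.} The heart of the argument is to show that the data carried by $Y$ has no nontrivial automorphisms. A point of $Y$ is a tuple $(C,\underline\sigma,E,\phi,\psi)$; the frame $\phi$ together with the very-ampleness of $L^\ell$ gives a closed embedding $C\hookrightarrow\bbP^{P-1}$, hence a point of a Hilbert scheme, while $\psi$ and global generation give a surjection $\oo_C^N\twoheadrightarrow E\otimes L^\ell$, hence a point of the relative $\mathrm{Quot}$ scheme over the universal curve of that Hilbert scheme. This defines a morphism from $Y$ to a quasi-projective $k$-scheme, and I would check it is a monomorphism by reconstructing the tuple from its image: the embedding recovers $C\subseteq\bbP^{P-1}$ and $L^\ell=\oo(1)|_C$, whence $\phi$ is the tautological coordinate frame; the quotient recovers $E\otimes L^\ell$ and hence $E$, whence $\psi$ is the tautological frame. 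Consequently any isomorphism of such tuples fixes the embedded curve (so is the identity on $C$) and then the identity on $E$, so $Y$ has trivial automorphisms, i.e. $Y$ is an algebraic space; being quasi-compact over the finite type stack $\mathcal V$, it is of finite type. Therefore $\mathcal V=[Y/(\GL_P\times\GL_N)]$ is a quotient stack in the sense of Proposition \ref{Equi}, and the reduction above completes the proof for $\bg{G}$. The same argument applies verbatim to $\mathrm{Bun}_G(C/S)$, replacing the projectivity of the universal curve by the projectivity of $C\to S$, which by Remark \ref{R:proj-fam} holds whenever $g\neq1$ or $C\to S$ admits a section; the point $(g,n)=(1,0)$ (respectively $g=1$ without a section) is excluded precisely because there the universal curve carries no relatively very ample line bundle and the rigidification of $C$ breaks down.
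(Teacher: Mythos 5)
Your proof is correct and follows essentially the same route as the paper's: reduction to $\GL_r$ via a faithful representation (with Corollary \ref{C:cod2-red} providing the finite type open target), then use of the relative projectivity of the universal curve — available exactly when $(g,n)\neq(1,0)$ by Remark \ref{R:proj-uni} — to rigidify the curve and the bundle by sections of $L^\ell$ and $E\otimes L^\ell$. The only differences are cosmetic: you inline the frame-bundle construction where the paper instead cites the vector-bundle criterion of \cite[Lemma 2.12]{EHKV} (whose proof is precisely that construction), and you phrase the reduction step via representability of $\rho_\#$, which is equivalent to the injectivity $\Aut(C,\un\sigma,E)\hookrightarrow\Aut(C,\un\sigma,\rho_\#(E))$ that the paper uses.
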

The above Proposition is false for $(g,n)=(1,0)$. For example, if $G=\{1\}$ is trivial, then $\mathrm{Bun}_{\{1\}, 1, 0}=\cM_{1,0}$ is of finite type over $k$ but it is not a quotient stack since its geometric points  have non-affine automorphism groups.  
\begin{proof}
Let us first prove the first statement.  Fix $\mt U\subset \bg{G}$ an open substack of finite type over $k$.  By \cite[Lemma 2.12]{EHKV}, it is enough to show that there exists a vector bundle $\mt V\to \mt U$ such that  the automorphism group of any geometric point $\ov x$ of $\mt U$ acts faithfully on the fiber $\mt V_{\ov x}$.
	
Assume first $G=GL_r$. Then we can identify $\bg{GL_r}$ with the moduli stack of objects $(\pi:C\to S, \un\sigma, E)$, where $(\pi:C\to S, \un\sigma)$ is a family of $n$-pointed curves of genus $g$ and   $E$ is a rank $r$ vector bundle  over $C$. Since $(g,n)\neq (1,0)$, there exists a relatively ample line bundle $L$ on the universal family $\Cg\to \Mg$	by Remark \ref{R:proj-fam}. Since $\mt U$ is of finite type, by standard arguments there exists an integer $k\gg 0$ such that for any object $(\pi:C\to S, \un\sigma, E)\in \mt U(S)$, if we denote by $L_S$ the pull-back of $L$ along the modular morphism $C\to \Cg$,  we have that 
\begin{equation}\label{E:cond-Lk}
\begin{aligned}
& L_C^k \: \text{ is relatively very ample on } \pi:C\to S, \\
& E(k):=E\otimes L_C^k \: \text{ is relatively globally generated on } \pi:C\to S,\\
&  \pi_*L_C^k \text{ and } \pi_*E(k) \text{ are locally free sheaves  on } S \: \text{ and commute with  base change.}
\end{aligned}
\end{equation}
Consider now the vector bundle $\mt V\to \mt U$ defined, for any $(\pi:C\to S, \un\sigma, E)\in \mt U(S)$, as
\begin{equation}\label{E:vec-bun}
\mt V((\pi:C\to S, \un\sigma, E)):=\pi_*L_C^k \oplus \pi_*E(k).
\end{equation}
We will now show that the automorphism group of any geometric point $\ov x$ of $\mt U$ acts faithfully on the fiber $\mt V_{\ov x}$, which will conclude the proof for $\GL_r$. 
Let $(C, \un\sigma, E)\in \mt U(K)$ be a geometric point of $\mt U$.  An element in $\text{Aut}(C,\un{\sigma}, E)$ is a pair $(\varphi, f)$, where $\varphi:C\xrightarrow{\cong} C$ is an automorphism of the curve preserving the marked points and $f:\varphi^*E\xrightarrow{\cong} E$ is an automorphism of vector bundles. Since the line bundle $L_C$ is the pull-back of a line bundle on the universal family $\Cg$, there exists a canonical isomorphism $L(\varphi):\varphi^*L_{C}\xrightarrow{\cong} L_C$. 
The action of $(\varphi, f)$ on a global section $(\eta,e)\in \mt V((C, \un\sigma, E))=H^0(C,L_C^k)\oplus H^0(C, E(k))$  is given by 
	\begin{equation}
	(\varphi, f)\circ (\eta, e)=\Big(L(\varphi)^k(\varphi^*\eta),(f\otimes L(\varphi)^k)(\varphi^*e)\Big).
	\end{equation}
	Assume that $(\varphi, f)$ acts trivially on any global section of  $\mt V((C, \un\sigma, E))$. Since $\varphi$ fixes all the sections of $H^0(C,L_C^k)$ and $L_C^k$ is very ample on $C$,  $\varphi$ must be the identity, i.e., $(\varphi, f)=(\id, f)$. Since $(\id, f)$ fixes all the sections of  $E(k)$ and $E(k)$ is global generated, then $f$ also is the identity.  Hence, we conclude that $\Aut(C,\un{\sigma}, E)$ acts faithfully on $\mt V((C, \un\sigma, E))$ and we are done.

	Let $G$ be an arbitrary (smooth and connected) linear algebraic group,  fix a faithful representation $\rho:G\to GL_r$ and consider the morphism 
	\begin{equation*}
	\rho_\#: \bg{G}\rightarrow \bg{\GL_r}.
	\end{equation*}
	Since  $\mt U\subset \bg{G}$ is of finite type over $k$ (hence quasi-compact), Corollary \ref{C:cod2-red} implies there exists a $k$-finite type open substack $\ov{\mt U}\subset \bg{\GL_r}$ such that $\rho_\#(\mt U)\subseteq \ov{\mt U}$.   We choose $\mt V\to\mt U$ as the pull-back, along the morphism $\rho_\#:\mt U \to \ov{\mt U}$,  of a vector bundle  $\ov{\mt V}\to\ov{\mt U}$ as in the previous case.  We now conclude since, given a geometric point $(C,\un \sigma, E)\in \mt U(K)$, the action of $\Aut((C,\un \sigma, E))$ on $\mt V((C,\un \sigma, E))=\ov{\mt V}((C,\un \sigma, \varphi_\#(E)))$ is faithful because we have an injective homomorphism 
	\begin{equation*}
	\Aut(C,\un \sigma ,E)\hookrightarrow \Aut(C,\un \sigma,\rho_\#(E))
	\end{equation*}
	and the group  $\Aut(C,\un \sigma,\rho_\#(E))$ acts faithfully on $\ov{\mt V}((C,\un \sigma, \varphi_\#(E)))$ by our choice of the vector bundle $\ov{\mt V}\to \ov{\mt U}$. 
	
	 The second statement about $\mathrm{Bun}_G(C/S)$ is proved with a similar argument starting from a relatively ample line bundle $L$ on $\pi:C\to S$, which exists by Remark \ref{R:proj-fam}. 
\end{proof}

\subsection{On the forgetful morphism $\Phi_G:\bg{G}\to \Mg$}\label{S:PhiG}

The aim of this subsection is to prove that, for a reductive group $G$,  the forgetful morphism $\Phi_G^{\delta}:\bg{G}^{\delta}\to \Mg$ is Stein (in the sense of Definition \ref{D:Stein}) for any $\delta\in \pi_1(G)$. 

Before doing this, we need to introduce an auxiliary stack, which is a slight generalization of the moduli stack of principal bundles. The first part of the subsection is a repetition of \cite[\S 4.2]{BH10} in the context of non-trivial families of curves. However, we briefly recall the main points and we refer the reader to \emph{loc. cit.} for more details.

Let $\widehat{G}$ be a reductive group sitting in the middle of an exact sequence
\begin{equation}\label{E:ex-tw}
1\to G\to \widehat{G}\xrightarrow{\dt} \mathbb G_m\to 1,
\end{equation}
with $G$ semisimple and simply connected. In particular, $\pi_1(\widehat G)=\pi_1(\mathbb G_m)=\bbZ$. 

Let $C\to S$ be a family of smooth curves of genus $g$ admitting a section $\sigma$. We will sometimes make the following assumption:
\begin{enumerate}
\item[$(\star)$]there exists an isomorphism between the formal completion of $C$ at $\sigma$ and $\mathrm{Spf}(\oo_S\llbracket t\rrbracket)$.
\end{enumerate}
We remark that such an isomorphism always exists Zariski-locally on $S$. 

Consider the  algebraic stack $\Phi_{\mathcal G_{d,\sigma}}(C/S): \mathrm{Bun}_{\mathcal G_{d,\sigma}}(C/S) \to S$ parametrizing \emph{twisted principal $G$-bundles} on $C\to S$, i.e., pairs $(E,\varphi)$ such that 
\begin{itemize}
\item $E\to C$ is a principal $\widehat{G}$-bundle,
\item $\varphi$ is an isomorphism $\dt_\#(E)\cong\oo(d\sigma)$ of $\mathbb G_m$-bundles.
\end{itemize}
In particular, there is a forgetful morphism of algebraic stacks over $S$
\begin{equation}
\begin{aligned}
\mathrm{Bun}_{\mathcal G_{d,\sigma}}(C/S)& \longrightarrow \mathrm{Bun}^d_{\widehat{G}}(C/S)\subset \mathrm{Bun}_{\widehat{G}}(C/S),\\
(E,\varphi) & \mapsto E.
\end{aligned}
\end{equation}

For any linear algebraic $k$-group $H$, we define the following functors over an affine $S$-scheme $T=\Spec(R)$:
\begin{enumerate}[(i)]
\item The loop group $\mathrm{L}H(T):=H(R\llparenthesis t\rrparenthesis)$,
\item The positive loop group $\mathrm{L}^+H(T):=H(R\llbracket t\rrbracket)$,
\item The affine Grassmannian $\mathrm{Gr}_H(T)$, which is the sheafification on the fpqc topology of the quotient $\mathrm{L}H(T)/\mathrm{L}^+H(T)$.
\end{enumerate}
If the family $C\to S$ satisfies $(\star)$,  the affine Grassmannian $\mathrm{Gr}_H$ can be identified with the functor of $H$-bundles on $C\to S$ with a trivialization on the complement $U:=C\setminus \Im(\sigma)$ of the section $\sigma$ (see \cite[\S 6]{Fa03}). In particular, there is a well-defined morphism of categories fibered in groupoids (over the category of schemes over $S$)
\begin{equation}
\glue_{H,\sigma}:\mathrm{Gr}_{H}\longrightarrow \mathrm{Bun}_{H}(C/S),
\end{equation}
which forgets the trivialization on $U$. 

Fix now a cocharacter $\delta:\mathbb{G}_m\to\widehat{G}$ such that $\dt\circ\delta=d\in\Hom(\mathbb G_m,\mathbb G_m)=\bbZ=\pi_1(\mathbb G_m)$. Denote by $t^{\delta}\in \mathrm{L}\widehat G(S)$ the image of $t\in \mathrm{L}\mathbb G_m(S)$ via the morphism $\delta_*: \mathrm{L}\mathbb G_m\to \mathrm{L}\widehat G$.  We then have a morphism of affine Grassmannians:
\begin{equation}
\begin{array}{cccc}
t^\delta_*:&\mathrm{Gr}_G&\longrightarrow &\mathrm{Gr}_{\widehat{G}}\\
&f\cdot \mathrm{L^+}G&\mapsto & t^\delta f\cdot \mathrm{L^+}\widehat G.
\end{array}
\end{equation}
The composition $\glue_{\widehat{G},\sigma}\circ t^\delta_*$ factors as
\begin{equation}\label{E:tw-glue}
\glue_{\widehat{G},\sigma}\circ t^\delta_*: \mathrm{Gr}_G\xrightarrow{\glue_{G,\sigma,d}} \mathrm{Bun}_{\mathcal G_{d,\sigma}}(C/S)\to \mathrm{Bun}^d_{\widehat{G}}(C/S)\subset \mathrm{Bun}_{\widehat{G}}(C/S)
\end{equation}
due to the fact that $\dt_*\circ t^\delta_*:\mathrm{Gr}_G\to\mathrm{Gr}_{\widehat{G}}\to\mathrm{Gr}_{\mathbb{G}_m}$ is constant with image equal to the line bundle $\oo(d\sigma)$, together with a trivialization on $U$. An important property of the morphism $\glue_{G,\sigma,d}$ is provided by the following result.

\begin{lem}\label{L:twisted-DS}
Let $\pi:C\to S$ be a family of curves with a section $\sigma$ and a $\widehat G$-bundle $E$. Then, any trivialization of the line bundle $\mathrm{dt}_\#(E)$ on $U:=C\setminus \Im(\sigma)$ can be lifted, after a suitable \'etale base change $S'\to S$, to a trivialization of $E\times_S S'$ on $U\times_S S'$. In particular, $\glue_{G,\sigma,d}$ admits a section \'etale-locally on $S$.
\end{lem}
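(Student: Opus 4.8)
The plan is to reinterpret both trivializations as sections of torsors and to reduce the statement to the triviality, after an \'etale base change on $S$, of an auxiliary $G$-torsor on $U$; the latter is exactly the point where the hypothesis that $G$ is semisimple and simply connected enters, through the uniformization theorem of Drinfeld--Simpson.

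First I would set up the reduction. Write $L:=\dt_\#(E)$, so that a trivialization of $L|_U$ is a section $\tau\colon U\to L|_U$ of the projection, and a trivialization of $E|_U$ is a section of $E|_U\to U$. The exact sequence \eqref{E:ex-tw} identifies $G$ with $\ker(\dt)$, and the induced $\widehat G$-equivariant map $\dt_\#\colon E\to L$ is a $G$-torsor: \'etale-locally on $C$ it is the map $\mathrm{id}\times\dt\colon C'\times\widehat G\to C'\times\mathbb G_m$, whose fibres are torsors under $G$. Pulling $E|_U\to L|_U$ back along $\tau$ therefore yields a $G$-torsor $P:=\tau^*(E|_U)\to U$, and by construction a section of $P$ is precisely a trivialization of $E|_U$ whose image under $\dt_\#$ equals the prescribed $\tau$. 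Thus the first assertion is equivalent to saying that $P$ acquires a section after a suitable \'etale base change $S'\to S$.

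Next I would record the geometry of $U$: since $\pi$ has the section $\sigma$, the relative Cartier divisor $\Im(\sigma)$ is relatively ample (it is a degree-one divisor on each smooth projective fibre), so $U=C\setminus\Im(\sigma)$ is affine over $S$ with smooth affine geometric fibres, and $C\to S$ is projective. The main step is then to trivialize the $G$-torsor $P$ over this $U$ after an \'etale base change on $S$. This is exactly the content of the Drinfeld--Simpson uniformization theorem in the form recalled in \cite[\S 6]{Fa03}: for $G$ semisimple and simply connected, and a family of smooth projective curves equipped with a section, every $G$-bundle becomes trivial on the complement of the section after an \'etale cover of the base. (Simple connectedness is essential here: for general semisimple $G$ one only obtains triviality fppf-locally.) Applying this to $P$ produces the desired \'etale-local section, proving the lifting statement. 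This is the only genuine difficulty; everything else is formal.

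Finally, for the ``in particular'' I would unwind the identification of $\mathrm{Gr}_{\widehat G}$ with $\widehat G$-bundles endowed with a trivialization on $U$ (see \cite[\S 6]{Fa03}) and apply the lifting statement to the universal object. Given $(E,\varphi)$ in $\mathrm{Bun}_{\mathcal G_{d,\sigma}}(C/S)$ over a test scheme $T$, the tautological trivialization of $\oo(d\sigma)$ on $U$ (coming from the rational section with divisor $d\sigma$), transported through $\varphi$, gives a trivialization of $\dt_\#(E)|_{U}$; lifting it as above, \'etale-locally, to a trivialization of $E|_{U}$ exhibits $E$ as a point of $\mathrm{Gr}_{\widehat G}$ lying over $\oo(d\sigma)$, hence in the image of $t^\delta_*$, and therefore as a point of $\mathrm{Gr}_G$ mapping to $(E,\varphi)$ under $\glue_{G,\sigma,d}$. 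Performing this construction for the universal object yields the required \'etale-local section of $\glue_{G,\sigma,d}$.
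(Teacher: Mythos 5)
Your reduction is fine as far as it goes: since $G=\ker(\dt)$ is normal in $\widehat G$, the map $E\to\dt_\#(E)$ is indeed a $G$-torsor, and sections of $P:=\tau^*(E|_U)\to U$ are exactly the liftings of $\tau$; so the lemma is equivalent to the \'etale-local (on $S$) triviality of the $G$-torsor $P$ on $U$. The gap is the next step, and it sits exactly where the content of the lemma lies. Every form of the Drinfeld--Simpson theorem you could cite (including the version recalled in \cite[\S 6]{Fa03}) takes as input a $G$-bundle on the \emph{whole} relative curve $C$ and trivializes its restriction to $U$ \'etale-locally on $S$; it says nothing about a $G$-torsor that is only defined on $U$. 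Your $P$ is of the latter kind, and it is not the restriction of any evident $G$-bundle on $C$: a $G$-reduction of $E$ over all of $C$ is the same thing as a trivialization of $\dt_\#(E)$ over all of $C$, which is impossible whenever $\dt_\#(E)\cong\cO(m\,\Im(\sigma))$ with $m\neq 0$ --- precisely the case of interest (in the ``in particular'' statement $m=d$). So passing from the twisted statement to the untwisted Drinfeld--Simpson theorem is not formal; this is why the paper, following \cite[Lemma 4.2.2]{BH10}, modifies the argument of \cite[Theorem 3]{DS} rather than quoting it as a black box.

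The missing step can be filled, but it is a genuine argument: one must first produce a $G$-bundle on all of $C$ restricting to $P$. For instance: there is a cocharacter $\delta_1$ of $\widehat G$ with $\dt\circ\delta_1=1$ (the sequence of character lattices $0\to\bbZ\to\Lambda^*(\widehat T)\to\Lambda^*(T_G)\to 0$ of maximal tori splits, $\Lambda^*(T_G)$ being free); \'etale-locally on $S$ one may trivialize $E$ on the formal neighbourhood of $\Im(\sigma)$ (smoothness of $\widehat G$ plus algebraization, using $(\star)$, which holds Zariski-locally); regluing $E|_U$ to the trivial bundle on the formal disc through the old transition function multiplied by $t^{-m\delta_1}$ (Beauville--Laszlo descent for torsors under affine group schemes) yields a $\widehat G$-bundle $E'$ with $E'|_U\cong E|_U$ and with $\dt_\#(E')$ trivialized on all of $C$ compatibly with $\tau$, i.e.\ a $G$-bundle $P'$ on $C$ with $P'|_U\cong P$. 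Only at this point does Drinfeld--Simpson apply (to $P'$) and give the \'etale-local triviality of $P$. Without such a modification step --- or a reworking of the internals of the Drinfeld--Simpson proof in the twisted setting, which is what the paper actually invokes --- the appeal to \cite{DS} and \cite{Fa03} does not apply to the object you constructed; the difficulty you dismissed as ``formal'' is the actual difficulty of the lemma.
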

\begin{proof}
It is a generalization of \cite[Theorem 3]{DS}. One can modify the argument in the proof of \emph{loc. cit.} arguing as in Lemma \cite[Lemma 4.2.2]{BH10}.
\end{proof}

We are now ready to show the following
\begin{prop}\label{P:SteinG}
If $G$ is reductive, then  $\Phi_G^{\delta}:\bg{G}^{\delta}\to\Mg$ is Stein for any $\delta\in\pi_1(G)$.
\end{prop}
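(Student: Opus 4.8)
The plan is to verify the two parts of Definition~\ref{D:Stein} separately. That $\Phi_G^\delta$ is fpqc is immediate from the structure already in place: by Theorem~\ref{beh}\eqref{beh1} it is smooth, hence flat and locally of finite presentation, and it is surjective because every curve carries $G$-bundles in the topological type $\delta$ (Theorem~\ref{concomp}); a faithfully flat and locally finitely presented morphism is fppf, hence fpqc. All the substance therefore lies in showing that $\Phi_G^\delta$ is cohomologically flat in degree zero with trivial pushforward, i.e. that $\oo_{\mt T}\to (\Phi^\delta_{G,\mt T})_*\oo$ is an isomorphism for every base change $\mt T\to\Mg$.

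First I would reduce to a convenient family of curves. For a fixed $\mt T$ the assertion is local on $\mt T$, and the pushforward is controlled by the instability exhaustion $\{\bg{G}^{\delta,\le m}\}_m$ of Proposition~\ref{P:inst-cover}: each piece $\Phi_G^{\delta,\le m}$ is of finite type, hence quasi-compact and quasi-separated, so on it flat base change is legitimate, and the complements have codimension tending to infinity, so that functions on $\bg{G}^\delta$ are detected on the finite-type pieces. This lets me localize on $\mt T$ and assume I am looking at a family $\pi_C\colon C\to S$ with $S$ affine, admitting a section $\sigma$ and satisfying $(\star)$; since any further base change stays of this form, it then suffices to prove that $\oo_S\to (\Phi_G^\delta(C/S))_*\oo$ is an isomorphism for every such family.

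The heart of the argument is to replace $\mathrm{Bun}_G^\delta(C/S)$ --- which is \emph{not} quasi-compact (Proposition~\ref{P:qc=a}), so that its functions cannot be computed by properness --- by the affine Grassmannian, which is an increasing union of projective schemes. Using the gluing morphism \eqref{E:tw-glue} together with the Drinfeld--Simpson extension of Lemma~\ref{L:twisted-DS}, I would produce a morphism $g\colon \mathcal Q\to \mathrm{Bun}_G^\delta(C/S)$ out of the relevant connected component $\mathcal Q$ of the affine Grassmannian which is surjective and admits a section \'etale-locally on $S$. Under $(\star)$ the loop groups, and hence $\mathcal Q$, depend only on $\oo_S\llbracket t\rrbracket$, so $p\colon \mathcal Q\to S$ is a constant family $\mathrm{Gr}^\delta\times_k S$ of ind-projective schemes with geometrically connected and reduced fibres; consequently $\oo_S\xrightarrow{\cong} p_*\oo_{\mathcal Q}$, and this persists under arbitrary base change. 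The connectedness of $\mathcal Q$ is exactly where the class $\delta$ and the twist $t^\delta$ enter; for a general reductive $G$ one reaches this situation from the building blocks --- tori, where $\mathrm{Bun}_T^\delta$ is governed by the proper relative Jacobian, and semisimple simply connected groups, where the Grassmannian is already connected --- the auxiliary extension $\widehat G$ of \eqref{E:ex-tw} serving to organize the non-neutral components.

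It then remains to sandwich. Writing $\pi:=\Phi_G^\delta(C/S)$, so that $p=\pi\circ g$, the morphism $\pi$ is faithfully flat, so the unit $\oo_S\to\pi_*\oo$ is injective; and $g$ is surjective with \'etale-local sections, so $\oo_{\mathrm{Bun}_G^\delta(C/S)}\to g_*\oo_{\mathcal Q}$ is injective, whence, applying the left-exact $\pi_*$, so is $\pi_*\oo\to p_*\oo_{\mathcal Q}=\oo_S$. The composite $\oo_S\hookrightarrow\pi_*\oo\hookrightarrow\oo_S$ is the structural unit $p^\#$, which is the identity, forcing both maps to be isomorphisms; since every step is stable under base change, this is precisely the Stein property. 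The main obstacle is the non-quasi-compactness of $\mathrm{Bun}_G^\delta$, which simultaneously forbids a direct use of properness for computing functions and makes base change for the pushforward delicate: it is overcome by playing the finite-type instability exhaustion (which supplies quasi-compactness and the codimension estimates needed to control base change and extend functions) against the ind-proper uniformization by the affine Grassmannian (which actually computes the functions) --- together with the reduction of a general reductive group to its simply connected and toric constituents.
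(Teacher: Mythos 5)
Your overall strategy coincides with the paper's: fpqc comes from Theorem \ref{beh} and Theorem \ref{concomp}; the Stein property is checked after reducing, \'etale-locally on the base, to families $C\to S$ with a section satisfying $(\star)$ over an affine noetherian base; for the semisimple simply connected building block one computes functions through the gluing morphism \eqref{E:tw-glue}, whose \'etale-local sections (Lemma \ref{L:twisted-DS}) give injectivity of pushforwards, together with the ind-projectivity with geometrically integral fibers of the affine Grassmannian; tori are handled by the (proper) relative Jacobian, i.e.\ by the gerbe over an abelian algebraic space. Up to this point your ``sandwich'' is exactly the paper's Steps (i)--(ii), and your detour through the instability exhaustion in the reduction step is unnecessary (the paper simply checks the definition on a cover of the base) but harmless.

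The genuine gap is the passage from these building blocks to an arbitrary reductive $G$, which you compress into one clause (``the auxiliary extension $\widehat G$ \dots serving to organize the non-neutral components''). As written, your core argument requires a morphism from a connected component of an affine Grassmannian onto $\mathrm{Bun}_G^\delta(C/S)$ that is surjective with \'etale-local sections over $S$; no such morphism exists as soon as the radical $\scr R(G)$ is nontrivial. Already for $G=\Gm$ (or $\GL_r$) and $g\geq 1$, restriction to $C\setminus \Im(\sigma)$ sends degree-$d$ line bundles onto all of $\Pic(C\setminus \Im(\sigma))\cong J_C(k)\neq 0$, so the bundles trivialized away from the section form a negligible part of a connected component, and $\mathrm{Gr}_{\Gm}\to \mathrm{Bun}^d_{\Gm}(C/k)$ is far from surjective. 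The paper's Steps (iii)--(iv) are precisely the missing bridge: first the product case $\mathrm{Bun}_{\mathcal G_{d,\sigma}}(C/S)\times_S\mathrm{Bun}_T^{\epsilon}(C/S)$ via a cartesian diagram over $\mathrm{Bun}^d_{\widehat G}(C/S)$, and then, crucially, \cite[Lemma 5.3.2]{BH10}: for every $\delta$ there is a central isogeny $\widehat G\times\scr R(G)\to G\times\Gm$ inducing a \emph{faithfully flat} morphism $\mathrm{Bun}^{(1,0)}_{\widehat G\times\scr R(G)}(C/S)\to\mathrm{Bun}^{(\delta,1)}_{G\times\Gm}(C/S)$, whose base change along the cartesian square \eqref{E:cart-cov} exhibits $\mathrm{Bun}_G^{\delta}(C/S)$ as covered, faithfully flatly, by a stack already known to be Stein over $S$. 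Note that the injectivity of structure-sheaf maps in this last step comes from faithful flatness, not from sections: this covering has none, even \'etale-locally on $S$, its fibers being torsors under stacks of bundles for the finite central kernel of the isogeny. So your section-based injectivity mechanism cannot be run here, and without this input the proof does not close for any $G$ that is neither a torus nor semisimple simply connected.
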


\begin{proof}
Theorem \ref{beh} gives that $\Phi_G^{\delta}$ is faithfully flat and locally of finite presentation, which implies that $\Phi_G^{\delta}$ is fpqc. In order to conclude that $\Phi_G^{\delta}$ is Stein, it remains to show that the homomorphism $(\Phi_G^{\delta})^\#:\cO_{\Mg}\to (\Phi_G^{\delta})_*(\cO_{\bg{G}^{\delta}})$ is a universal isomorphism. For that purpose, it is also enough to show that, for any family of (smooth) curves  $C\to S$ of genus $g$ admitting a section $\sigma$ satisfying the condition $(\star)$ and with $S$ being a connected affine noetherian scheme, the morphism 
$\Phi_G^{\delta}(C/S):\mathrm{Bun}^{\delta}_G(C/S)\to S$ is such that  $\Phi_G^{\delta}(C/S)^\#:\cO_S\rightarrow \Phi_G^{\delta}(C/S)_*(\cO_{\mathrm{Bun}^{\delta}_G(C/S)})$ is an isomorphism (or equivalently that $\Phi_G^{\delta}(C/S)$ is a Stein morphism).

We divide the proof into the following steps:
	\begin{enumerate}[(i)]
		\item  $G=T$ a torus;
		\item twisted bundles, i.e., $\Phi_{\mathcal G_{d,\sigma}}(C/S):\mathrm{Bun}_{\mathcal G_{d,\sigma}}(C/S)\to S$;
		\item product of twisted bundles and torus bundles, i.e., $\Phi_{\mathcal G_{d,\sigma}}(C/S)\times \Phi_T^{\epsilon}(C/S):\mathrm{Bun}_{\mathcal G_{d,\sigma}}(C/S)\times_S \mathrm{Bun}_T^{\epsilon}(C/S)\to S$ with $T$ a torus;
		\item $G$ an arbitrary reductive group.
	\end{enumerate}
	
\noindent\fbox{Step (i)} The morphism $\Phi_T^{\delta}(C/S):\mathrm{Bun}^{\delta}_T(C/S)\to S$ is a Stein morphism since it is the composition of a $T$-gerbe $\mathrm{Bun}^{\delta}_T(C/S)\to \mathrm{Bun}^{\delta}_T(C/S)\fatslash T$ with an abelian algebraic space $\mathrm{Bun}^{\delta}_T(C/S)\fatslash T\to S$ (see \S \ref{Sec:gerbe-fib} for more details).

\noindent\fbox{Step (ii)} Consider the commutative diagram 
	$$
	\xymatrix{
	\mathrm{Gr}_G \ar[rr]^{\glue_{G,\sigma,d}} \ar[rd]_f& &\ar[dl]^{\Phi_{\mathcal G_{d,\sigma}}(C/S)}  \mathrm{Bun}_{\mathcal G_{d,\sigma}}(C/S),\\
	& S & 
	}
	$$
where $\glue_{G,\sigma,d}$ is the morphism \eqref{E:tw-glue}. Since $\glue_{G,\sigma,d}$ admits a section \'etale-locally on $S$ by Lemma \ref{L:twisted-DS}, we have that the natural homomorphism
	$$
	\Phi_{\mathcal G_{d,\sigma}}(C/S)_*(\glue_{G,\sigma,d}^\#): \Phi_{\mathcal G_{d,\sigma}}(C/S)_*\left(\oo_{\mathrm{Bun}_{\mathcal G_{d,\sigma}}(C/S)}\right)\to f_*\oo_{\text{Gr}_G}
	$$
	is injective. Hence, it is enough to show that  $f^\#:\oo_S\to f_*\oo_{\text{Gr}_G}$ is an isomorphism. 
	
The affine Grassmannian 	$f:\text{Gr}_G\to S$ is an ind-scheme, which is the direct limit of flat and projective morphisms  $f_i:Y_i\to S$ with  geometrically integral fibers (see \cite{Fa03}).
By standard arguments, each homomorphism $f^\#_i:\oo_S\to (\pi_i)_*\oo_{Y_i}$ is an isomorphism. Since, by definition of morphisms of ind-schemes, $f^\#$  is the inverse limit of the homomorphisms $f^\#_i$, we deduce that $f^\#$ is an isomorphism, q.e.d.
	
\noindent\fbox{Step (iii)} Let $\widehat{G}$ be a group sitting in the middle of \eqref{E:ex-tw}. The natural map $\mathrm{Bun}^{(d,\epsilon)}_{\widehat G\times T}(C/S)\to \mathrm{Bun}^{d}_{\widehat G}(C/S)$ is Stein by Remark \ref{cart} and Step (i). Since the diagram
\begin{equation}
\xymatrix{
\mathrm{Bun}_{\mathcal G_{d,\sigma}}(C/S)\times_{S}\mathrm{Bun}^{\epsilon}_{T}(C/S)\ar[r]\ar@{}[dr]|\square\ar[d]^{\mathrm{pr}_\#}&\mathrm{Bun}_{\widehat{G}\times T}^{\left(d,\epsilon\right)}(C/S)\ar[d]\\
\mathrm{Bun}_{\mathcal G_{d,\sigma}}(C/S) \ar[r]& \mathrm{Bun}^{d}_{\widehat{G}}(C/S)
}
\end{equation}
is cartesian, then also the morphism $\mathrm{pr}_\#$ must be Stein. We conclude using that $\Phi_{\mathcal G_{d,\sigma}}(C/S): \mathrm{Bun}_{\mathcal G_{d,\sigma}}(C/S)\to S$ is Stein by Step (ii). 
	
\noindent\fbox{Step (iv)}  By Lemma \cite[Lemma 5.3.2]{BH10}, for any $\delta\in\pi_1(G)$, there exists a reductive group $\widehat{G}$ sitting in the middle of an exact sequence as in \eqref{E:ex-tw},  equipped with a central isogeny $\widehat{G}\times \scr R(G)\to G\times\Gm$ inducing a faithfully flat map of moduli stacks
\begin{equation}\label{E:cover-iv}
\mathrm{Bun}_{\widehat{G}\times\mathscr  R(G)}^{\left(1,0\right)}(C/S)\to \mathrm{Bun}^{(\delta,1)}_{G\times\Gm}(C/S).
\end{equation}
Consider the following cartesian diagram
\begin{equation}\label{E:cart-cov}
\xymatrix{
\mathrm{Bun}_{\mathcal G_{d,\sigma}}(C/S)\times_{S}\mathrm{Bun}_{ \scr R(G)}^{0}(C/S)\ar[r]\ar@{}[dr]|\square\ar[d]&\mathrm{Bun}_{\widehat{G}\times\mathscr  R(G)}^{\left(1,0\right)}(C/S)\ar[d]\\
\mathrm{Bun}^{\delta}_{G}(C/S)\ar[r]& \mathrm{Bun}^{(\delta,1)}_{G\times\mathbb G_m}(C/S)
}
\end{equation}
Since, the map \eqref{E:cover-iv} is faithfully flat, also the left vertical map in \eqref{E:cart-cov} is faithfully flat; hence, in particular, its structural morphism is injective. This fact, together with the fact that the morphism $\mathrm{Bun}_{\mathcal G_{d,\sigma}}(C/S)\times_{S}\mathrm{Bun}_{ \scr R(G)}^{0}(C/S)\to S $ is Stein by Step (iii), implies that $\Phi_G^{\delta}(C/S):\mathrm{Bun}^{\delta}_{G}(C/S)\to S$ is also Stein.
\end{proof}

\subsection{On the reductions to a Borel subgroup}\label{S:redBor}

The aim of this subsection is to study the morphism from the moduli stack of $B_G$-bundles to the moduli stack of $G$-bundles, where $B_G$ is a Borel subgroup of a reductive group $G$. 
The main Theorem is the following one, which is based upon results of Drinfeld-Simpson \cite{DS} and Holla \cite{Hol} (which indeed relies on a result of Harder \cite{Ha74}).

\begin{teo}\label{T:Holla}
Let $G$ be a reductive group over $k$ and choose a maximal torus $T_G$ with associated Borel subgroup $j: B_G\hookrightarrow G$. 
\begin{enumerate}[(i)]
\item \label{T:Holla1} For any $d\in \pi_1(T_G)=\pi_1(B_G^{\red})=\pi_1(B_G)$, the morphism 
$$
j_\#:\bg{B_G}^d\longrightarrow\bg{G}^{[d]}
$$ 
is  of finite type. 
\item \label{T:Holla2} For any $\delta \in\pi_1(G)$ there exists a representative $d\in \pi_1(T_G)=\pi_1(B_G^{\red})=\pi_1(B_G)$, i.e., $[d]=\delta$, such that the morphism of moduli stacks
$$
j_\#:\bg{B_G}^d\longrightarrow\bg{G}^{\delta}
$$ 
is smooth with geometrically integral fibers of dimension $\sum_{\alpha<0} (\alpha, d)+(g-1)\dim( G/B_G)$, where $\{\alpha<0\}\subset \Lambda^*(T_G)$ is the set of negative roots with respect to the Borel subgroup $B_G$.
\end{enumerate}
\end{teo}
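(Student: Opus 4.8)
The plan is to realise the geometric fibre of $j_\#$ over a $G$-bundle $E$ on a curve $C$ as the scheme of reductions of $E$ to $B_G$ of type $d$, equivalently as the space of sections of the associated flag bundle $E/B_G\to C$ whose induced $T_G$-bundle has degree $d$, and then to study this space by deformation theory together with the existence and irreducibility theorems of Drinfeld--Simpson \cite{DS} and Holla \cite{Hol}. For part \eqref{T:Holla1} I would argue purely formally. Since $B_G^{\red}=T_G$ is a torus, Proposition \ref{P:qc=a} gives that $\Phi_{B_G}^d:\bg{B_G}^d\to\Mg$ is of finite type; as $\Mg$ is of finite type over $k$, the stack $\bg{B_G}^d$ is quasi-compact. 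On the other hand $\bg{G}^{[d]}$ is quasi-separated, its relative diagonal over $\Mg$ being affine by Theorem \ref{beh} and $\Mg$ being separated. A morphism from a quasi-compact stack to a quasi-separated one is quasi-compact, and $j_\#$ is locally of finite type by Remark \ref{R:locfin}; hence $j_\#$ is of finite type.

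For part \eqref{T:Holla2} the first step is the deformation theory of reductions. At a reduction $F$ of $E$ the relative tangent and obstruction spaces of $j_\#$ are $H^0$ and $H^1$ of $\ad(E)/\ad(F)\cong F\times^{B_G}(\g/\mathfrak{b})$. As a $B_G$-module $\g/\mathfrak{b}=\bigoplus_{\alpha<0}\g_\alpha$, so this bundle carries a filtration whose graded pieces are the line bundles $L_\alpha$, $\alpha<0$, of degree $(\alpha,d)$. Because two representatives of $\delta$ in $\pi_1(B_G)=\Lambda(T_G)$ differ by an element of the coroot lattice $\Lambda(T_{G^{\sc}})$, by shifting $d$ by a suitably large anti-dominant element of $\Lambda(T_{G^{\sc}})$ I can arrange $(\alpha,d)>2g-2$ for every negative root $\alpha$; then $H^1(C,F\times^{B_G}(\g/\mathfrak{b}))=0$ for every such $F$ and every $E$. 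This vanishing makes the deformations of every reduction unobstructed, and since both $\bg{B_G}^d$ and $\bg{G}^{\delta}$ are smooth over $\Mg$ by Theorem \ref{beh}, it forces $j_\#$ to be smooth. The fibre dimension is then the Euler characteristic $\chi(C,\ad(E)/\ad(F))=\sum_{\alpha<0}(\alpha,d)+(1-g)\dim(G/B_G)$, in agreement with the difference of the relative dimensions over $\Mg$ computed in Theorem \ref{T:dim-comp}.

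It remains to prove that the fibres are geometrically integral. They are reduced because $j_\#$ is smooth, so it suffices to show they are nonempty and connected. Nonemptiness, i.e.\ surjectivity of $j_\#$ onto $\bg{G}^{\delta}$, will follow from the existence of $B_G$-reductions of $G$-bundles, valid in families after an \'etale base change, due to Drinfeld--Simpson \cite{DS}; the deep (anti-dominant) choice of $d$ guarantees that reductions of exactly this type occur. Connectedness, indeed irreducibility, of the scheme of reductions of sufficiently large type is the content of Holla's theorem \cite{Hol}, which rests on Harder \cite{Ha74} and amounts to the irreducibility of the space of high-degree sections of a flag bundle. Together with reducedness this yields that the fibres are geometrically integral.

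I expect this last step to be the crux. Smoothness and the dimension count are formal once the relevant cohomology vanishes, but the irreducibility of the fibres is genuinely global: it cannot be read off from deformation theory and relies essentially on Holla's irreducibility result, on the careful choice of the representative $d$, and on controlling the behaviour of reductions in families rather than over a single geometric point, for which the relative form of Drinfeld--Simpson is needed.
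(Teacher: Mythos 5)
Your proposal follows the paper's proof in all essentials. Part \eqref{T:Holla1} is argued exactly as in the paper: $\bg{B_G}^d$ is of finite type over $\Mg$ by Proposition \ref{P:qc=a} (since $B_G^{\red}=T_G$ is a torus), $\Phi_G^{[d]}$ is quasi-separated because its diagonal is affine by Theorem \ref{beh}\eqref{beh2}, and $j_\#$ is locally of finite type by Remark \ref{R:locfin}, whence quasi-compactness and finite type. For part \eqref{T:Holla2} the paper likewise chooses a deep anti-dominant representative $d$, gets smoothness and the dimension formula from \cite[\S 4]{DS} (whose proof is precisely your $H^1$-vanishing deformation argument for $\ad(E)/\ad(F)$), identifies the fibres with the section spaces $\mathrm{Sec}^d(C,E/B_G)$, and gets geometric connectedness from \cite[Thm.\ 5.1, Rmk.\ 5.14]{Hol}; integrality then follows from smoothness as you say.

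Three points where your write-up needs tightening. First, both \cite{DS} and \cite{Hol} are stated for semisimple groups, so before citing them one must reduce to the semisimple case; the paper does this as its very first step, observing via Lemma \ref{prod} that $j_\#:\bg{B_G}^d\to\bg{G}^{\delta}$ is a base change of $\bg{B_G/\scr R(G)}^{d^{\ss}}\to\bg{G^{\ss}}^{\delta^{\ss}}$. Your deformation-theoretic smoothness argument does not need this reduction, but your appeal to Holla does. Second, the ``family'' subtlety you correctly sense at the end is attached to the wrong reference: what is needed is not a relative form of Drinfeld--Simpson (their results are already relative) but a version of Holla's connectedness theorem giving a single bound $N$, uniform over all geometric points $(C,\un\sigma,E)$ of $\bg{G}^{\delta}$, since $d$ must be fixed once and for all; the paper handles this by remarking that Holla's proof produces such an $N$ for any family of curves over an integral affine scheme of finite type over $\bbZ$. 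Third, your Euler-characteristic computation gives fibre dimension $\sum_{\alpha<0}(\alpha,d)+(1-g)\dim(G/B_G)$, which is indeed what Riemann--Roch and the difference of relative dimensions in Theorem \ref{T:dim-comp} yield; note that this has the opposite sign on the second term from the expression $(g-1)\dim(G/B_G)$ printed in the statement, so you should reconcile the two rather than assert agreement.
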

\begin{proof}
Part \eqref{T:Holla1}: first of all,  the morphism $j_\#$ is locally of finite type by Remark \ref{R:locfin}. Hence it is enough to show that $j_\#$ is quasi-compact. 

Theorem \ref{beh}\eqref{beh2} implies that the diagonal of $\Phi_G^{[d]}:\bg{G}^{[d]}\to \Mg$ is affine, and hence quasi-compact and quasi-separated, which by definition means that the morphism $\Phi_G^{[d]}$ is quasi-separated. Using this and the fact that $\Phi_{B_G}^d:\bg{B_G}^d \to \Mg$ is of finite type, and hence quasi-compact, by Proposition \ref{P:qc=a}, we deduce that $j_\#$ is quasi-compact by   \cite[\href{https://stacks.math.columbia.edu/tag/050Y}{Tag 050Y}]{stacks-project}.

Part \eqref{T:Holla2}: first of all, observe that the canonical diagram
\begin{equation}
\xymatrix{
\bg{B_G}^d\ar[r]^{j_\#}\ar@{}[dr]|\square\ar[d]& \bg{G}^{\delta}\ar[d]\\
\bg{B_G/\scr R(G)}^{d^{ss}}\ar[r]&\bg{G^{ss}}^{\delta^{ss}}
}
\end{equation}
is cartesian by Lemma \ref{prod}. So, without loss of generality, we can assume that $G$ is semisimple.

By \cite[\S 4]{DS}, the smoothness and the formula for the relative dimension of $j_\#$ hold if $(\alpha,d)\leq \text{min}\{1,2-2g\}$ for any positive root $\alpha$.

The fiber over an $S$-point $(C\to S, \un \sigma,E)$ is canonically identified with the moduli space $\text{Sec}^d(C,E/B_G)$ of sections of the flag bundle $E/B_G\to C$ of type $d$, i.e., the induced $B_G$-reduction is in the connected component $\bg{B_G}^d$.
Holla proved in \cite[Theorem 5.1 and Remark 5.14]{Hol} that there exists $N\in \bbZ$ such that, if $(\alpha,d)\leq N$ for any positive root $\alpha$, then the morphism $\text{Sec}^d(C,E/B_G)\to S$ has geometrically connected fibers. We remark that the statement in \emph{loc. cit.} is presented when $S$ is the spectrum of a field. However, in the proof the author shows that the integer $N$ exists for any family of curves over an integral affine scheme of finite type over $\mathbb Z$.

Hence, to conclude the proof, we need a representative $d$ such that $(\alpha,d)\leq \text{min}\{1,2-2g,N\}$ for any positive root $\alpha$. This can be shown either by direct computation or as direct consequence of \cite[Proposition 3]{DS}.
\end{proof}

\subsection{Tautological line bundles}\label{tau}

In this subsection, we will introduce certain natural line bundles on $\bg{G}$, that we call tautological line bundles.

With this in mind, let us recall two standard ways of producing line bundles on the base of a family of curves (see \cite[Chap. XIII, Sec. 4, 5]{GAC2} and the references therein). 
	Let $\pi:C\to S$ be a family of smooth curves over an algebraic  stack $S$ and let $\omega_\pi$ be the relative dualizing line bundle. To any coherent sheaf $\mathcal F$ on $C$ flat over $S$, we associate a line bundle $d_{\pi}(\mathcal F)$ over the base $S$, called the \emph{determinant of cohomology of $\mathcal F$ with respect to $\pi:C\to S$} and defined as follows: choose a complex of locally free sheaves of finite rank $f:K_0\to K_1$ such that $\ker(f)\cong \pi_*(\cal F)$ and $\coker(f)\cong R^1\pi_*(\cal F)$ (this is always possible), and define
\begin{equation}\label{D:detcoho}
d_{\pi}(\cal F):=\det \pi_*(K_0)\otimes \det \pi_*(K_1)^{-1}\in \Pic(\cal X). 
\end{equation}	
The determinant of cohomology is functorial with respect to base change, and it is multiplicative in short exact sequences and compatible with Serre Duality, i.e.:
	\begin{equation}\label{detprop}
	\begin{sis}
	& d_{\pi}(\mt G)\cong d_{\pi}(\mt F)\otimes d_{\pi}(\mt E) \quad \text{ for any short exact sequence } 0\to\mathcal F\to \mathcal G\to\mathcal E\to 0, \\
	& d_{\pi}(\mathcal F\otimes\omega_\pi)\cong d_{\pi}(\mt F^{\vee}) \quad \text{ if } \mt F \: \text{  is locally free.} 
	\end{sis}
	\end{equation}
	Moreover, if we have two line bundles $\mathcal F$ and $\mathcal G$ on $C$, we can produce a line bundle $\langle \mathcal F,\mathcal G\rangle_{\pi}=\langle \mathcal F,\mathcal G\rangle$ over $S$, called the \emph{Deligne pairing of $\mathcal F$ and $\mathcal G$ with respect to $\pi:\mt C\to\mt X$}, which is related to the determinant of cohomology via the following isomorphism:
	\begin{equation}\label{Del-pair}
		\def\arraystretch{1.5}\begin{array}{l}
	\langle \mathcal F,\mathcal G\rangle_{\pi}\cong d_{\pi}(\mathcal F\otimes\mathcal G)\otimes d_{\pi}(\mathcal F)^{-1}\otimes d_{\pi}(\mathcal G)^{-1}\otimes d_{\pi}(\mathcal O).
	\end{array}
	\end{equation}
	The Deligne pairing is symmetric, bilinear and compatible with sections, i.e.,
	\begin{equation}\label{Del-pair-comp}
	\begin{sis}
	& \langle \mathcal F,\mathcal G\rangle_{\pi}\cong \langle \mathcal G,\mathcal F\rangle_{\pi},\\
	& \langle \mathcal F_1\otimes\mathcal F_2,\mathcal G\rangle_{\pi}\cong\langle \mathcal F_1,\mathcal G\rangle_{\pi}\otimes\langle \mathcal F_2,\mathcal G\rangle_{\pi},\\
	& \langle\oo(\sigma),\mt F\rangle_{\pi}\cong\sigma^*(\mt F),\text{ where }\sigma\text{ is a section of }\pi.
	\end{sis}
    \end{equation}
    The first Chern class of the  Deligne pairing is given by 
    \begin{equation}\label{E:c1}
	 c_1(\langle \mathcal F,\mathcal G\rangle_{\pi})=\pi_*(c_1(\mathcal F)\cdot c_1(\mathcal G)).
    \end{equation}	
In our setting, we will apply the above two constructions to the universal family of $G$-bundles $(\pi:\mathcal C_{G,g,n}\to\bg{G},\un \sigma,\mathcal E)$.
To start with, we produce natural line bundles on  $\mathcal C_{G,g,n}$ via the following procedure. Any character $\chi:G\to\Gm$ gives rise to a morphism of stacks (see \eqref{E:fun1})
$$
\chi_\#:\bg{G}\to\bg{\Gm}.
$$
By pulling back via $\chi_{\#}$ the universal $\Gm$-bundle (i.e., line bundle) on the universal curve over $\bg{\Gm}$, we get a line bundle $\mathcal L_{\chi}$ over the universal curve $\pi:\mathcal C_{G,g,n}\to\bg{G}$. Then, using these natural line bundles on  $\mathcal C_{G,g,n}$ and the sections $\sigma_1,\ldots,\sigma_n$ of $\pi$, we define the following line bundles, that we call \emph{tautological line bundles}:
\begin{equation}\label{E:taut}
\begin{aligned}
& \mathscr L(\chi,\zeta):=d_{\pi}\big(\mt L_\chi(\zeta_1\cdot\sigma_1+\ldots+\zeta_n\cdot\sigma_n)\big),\\
& \langle (\chi,\zeta),(\chi',\zeta')\rangle:=\langle \mt L_\chi(\zeta_1\cdot\sigma_1+\ldots+\zeta_n\cdot\sigma_n), \mt L_{\chi'}(\zeta_1'\cdot\sigma_1+\ldots+\zeta_n'\cdot\sigma_n)\rangle_{\pi},\\
\end{aligned}
\end{equation}
for $\chi, \chi'\in \text{Hom}(G,\Gm)$ and $\zeta=(\zeta_1,\ldots,\zeta_n), \zeta'=(\zeta'_1,\ldots,\zeta'_n)\in \bbZ^n$. 
From (\ref{Del-pair}), we deduce that 
\begin{equation}\label{E:Del-det}
\langle (\chi,\zeta),(\chi',\zeta')\rangle:=\mathscr L(\chi+\chi',\zeta+\zeta')\otimes \mathscr L(\chi,\zeta)^{-1}\otimes \mathscr L(\chi',\zeta')^{-1}\otimes \mathscr L(0,0).
\end{equation}
When $n=0$, we will write $\mathscr L(\chi):=\mathscr L(\chi,0)$ and $\langle \chi,\chi'\rangle:=\langle (\chi,0),(\chi',0)\rangle$.
Note  that $\mathscr L(0,\xi)$ and $\langle (0,\xi),(0,\xi')\rangle$, for any $\xi,\xi'\in\bbZ^n$, are pull-back of line bundles on $\mt M_{g,n}=\bg{\{\id\}}$.

\begin{rmk}\label{R:taut-dual}
The reader may have noticed that we have not used  the relative dualizing sheaf $\omega_{\pi}$ (or powers of it) of the universal family $\mathcal C_{G,g,n}\to\bg{G}$ in the definition of the tautological line bundles. This is due to the following relations that hold for any family of curves $\pi:C\to S$, for any line bundle $\cal L$ on $C$ and for any $n\in \bbZ$:
\begin{equation}\label{E:tau-dual}
\begin{sis}
& \langle \cal L, \omega_{\pi}\rangle_{\pi}\cong d_{\pi}(\cal L)^{-1}\otimes d_{\pi}(\cal L^{-1})=\langle \cal L, \cal L\rangle_{\pi} \otimes d_{\pi}(\cal L)^{-2}\otimes d_{\pi}(\cO)^2, \\
& d_{\pi}(\omega_{\pi}^n\otimes  \cal L)\cong d_{\pi}(\cal L)^{1-2n}\otimes \langle \cal L, \cal L\rangle_{\pi}^n\otimes d_{\pi}(\cal O)^{6n^2-4n},
\end{sis}
\end{equation}
where the first formula is obtained by applying \eqref{Del-pair} twice, once to the pair $(\cal L, \omega_{\pi})$ and once to the pair $(\cal L, \cal L^{-1})$, and the second formula is obtained by applying  \eqref{Del-pair} to $\langle \cal L, \omega_{\pi}^n\rangle_{\pi}$ and  using Mumford's formula $d_{\pi}(\omega_{\pi}^n)\cong d_{\pi}(\cal O)^{6n^2-6n+1}$  (see \cite[Chap. XIII, Thm. 7.6]{GAC2}).
\end{rmk}

\section{Reductive abelian case}\label{Sec:tori}

In this section, we will  compute the relative Picard group of $\bg{T}^d$:
\begin{equation}\label{E:RelPicT}
\RPic(\bg{T}^d):=\Pic(\bg{T}^d)/(\Phi_T^d)^*(\Pic(\Mg)),
\end{equation}
where $T$ is a torus, $d\in \pi_1(T)=\Lambda(T)$ and $\Phi_T^d:\bg{T}^d\to \Mg$ is the natural forgetful morphism. 
The three cases $g\geq 2$, $g=1$ and $g=0$ behave differently. The first two cases are similar and they are collected in the following 

\begin{teo}\label{BunT} 
Assume $g\geq 1$. 
	\begin{enumerate}
	       \item \label{BunT1} The relative Picard group $\RPic(\bg{T}^d)$ is generated by the tautological line bundles \eqref{E:taut} and there are exact sequences of abelian groups
		\begin{equation}\label{seq-bunT}
		0\to\Sym^2\Lambda^*(T)\oplus\left(\Lambda^*(T)\otimes\bbZ^n\right)\xrightarrow{\tau_T+\sigma_T}\RPic\left(\bg{T}^d\right)\xrightarrow{\rho_T} \Lambda^*(T)\to 0 \quad \text{ if } g\geq 2,
		\end{equation}
		\begin{equation}\label{seq-bunTg1}
		0\to\Sym^2\Lambda^*(T)\oplus\left(\Lambda^*(T)\otimes\bbZ^n\right)\xrightarrow{\tau_T+\sigma_T}\RPic\left(\mathrm{Bun}_{T,1,n}^d\right)\xrightarrow{\rho_T} \frac{\Lambda^*(T)}{2\Lambda^*(T)} \to 0 \:\: \text{ if } g=1,
		\end{equation}
		where $\tau_T(=\tau_{T,g,n})$ (called \emph{transgression map}) and $\sigma_T(=\sigma_{T,g,n})$ are defined by 
		$$
		\begin{array}{cclll}
		\tau_T(\chi\cdot\chi')&=&\langle(\chi,0),(\chi',0)\rangle, &\text{ for any } \chi, \chi'\in\Lambda^*(T),\\
		\sigma_T(\chi\otimes \zeta)&=&\langle(\chi,0),(0,\zeta)\rangle, &\text{ for any }\chi\in\Lambda^*(T) \text{ and }\zeta\in\bbZ^n,\\
		\end{array}
		$$
		and $\rho_T(=\rho_{T,g,n})$ is the unique homomorphism such that
		$$ 
		\rho_T(\mathscr L(\chi,\zeta))=
		\begin{cases}
		\chi \in \Lambda^*(T) & \text{ if } g\geq 2,\\
		[\chi] \in \frac{\Lambda^*(T)}{2\Lambda^*(T)}& \text{ if } g=1,\\
		\end{cases} \quad \text{ for any } \: \chi\in\Lambda^*(T) \text{ and }\zeta\in\bbZ^n.
		$$
		
		Furthermore, the exact sequences \eqref{seq-bunT} and  \eqref{seq-bunTg1} are contravariant with respect to homomorphisms of tori.
		\item \label{BunT2} Fix an isomorphism $T\cong \bbG_m^r$ which induces an isomorphism $\Lambda^*(T)\cong\Lambda^*(\bbG_m^r)=\bbZ^r$. Denote by $\{e_i\}_{i=1}^r$ the canonical basis of 
		$\bbZ^r$ and by $\{f_j\}_{j=1}^n$ the canonical basis of $\bbZ^n$.
		The relative Picard group of $\bg{T}^d$ is freely generated by 
		$$
		\def\arraystretch{1.5}\begin{array}{ll}
		\left\langle (e_i,0),(0,f_j)\right\rangle=\sigma_j^*(\mt L_{e_i}), &\text{for } i=1,\ldots,r, \text{ and }j=1,\ldots,n,\\
		\left\langle (e_i,0),(e_k,0) \right\rangle=\langle \mt L_{e_i}, \mt L_{e_k}\rangle, &\text{for } 
		\begin{cases} 
		1\leq i\leq k\leq r  &\text{ if } g\geq 2,\\
		1\leq i< k\leq r  &\text{ if } g=1,\\
		\end{cases}\\
		\mathscr L(e_i,0)=d_{\pi}(\mt L_{e_i}),&\text{for } i=1,\ldots,r.
		\end{array}
		$$
	\end{enumerate}	
\end{teo}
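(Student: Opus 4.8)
The plan is to exploit the factorization of the forgetful morphism $\Phi_T^d$, which is Stein by Proposition \ref{P:SteinG} and, since $T$ is a torus, of finite type by Proposition \ref{P:qc=a}, through the rigidification by the central torus,
$$
\bg{T}^d\xrightarrow{\;\epsilon\;}\JT^d:=\bg{T}^d\fatslash T\xrightarrow{\;\mu\;}\Mg,
$$
where $\epsilon$ is a gerbe banded by $T$ and $\mu$ is abelian over $\Mg$ (see \S\ref{S:RPicT}). Stein-ness lets me work throughout with relative Picard groups and keeps pullbacks injective (Lemma \ref{base-change}). A line bundle on $\bg{T}^d$ carries a $T$-weight in $\Lambda^*(T)$, its central character, and those of weight zero are exactly the pullbacks along $\epsilon$; this splits the computation into a weight-zero piece governed by $\mu$ and a weight piece valued in $\Lambda^*(T)$ that is detected on the fibres of $\Phi_T^d$. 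By \eqref{E:Del-det} the Deligne pairings are expressible through the determinant bundles $\mathscr L(\chi,\zeta)$, so once generation by tautological bundles is established the remaining task is to pin down the relations among the $\mathscr L(\chi,\zeta)$.

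First I would compute the relative Picard group of the rigidification $\JT^d$. As $\mu$ is a torsor under the abelian scheme $\underline{\Hom}(\Lambda^*(T),\Pic^0_{\Cg/\Mg})$, the theory of line bundles on abelian schemes expresses $\RPic(\JT^d/\Mg)$ as a N\'eron--Severi part, namely symmetric homomorphisms to the dual abelian scheme, canonically $\Sym^2(\Lambda^*(T))$, together with a part coming from sections of the dual abelian scheme furnished by the $n$ marked points, namely $\Lambda^*(T)\otimes\bbZ^n$. The indispensable input is the weak Franchetta conjecture (Theorem \ref{franchetta} and Corollary \ref{C:Franch0}): it shows that the only relative divisor classes on the universal curve, hence the only symmetric and section classes detectable over $\Mg$, are the tautological ones, so that these two lattices exhaust $\RPic(\JT^d/\Mg)$ with no extra contribution. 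Pulling these classes back along $\epsilon$ and correcting their weights by determinant bundles produces the Deligne pairings $\tau_T(\chi\cdot\chi')=\langle(\chi,0),(\chi',0)\rangle$ and $\sigma_T(\chi\otimes\zeta)=\langle(\chi,0),(0,\zeta)\rangle$, all of which lie in $\ker\rho_T$ by \eqref{E:Del-det}.

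Next I would control the weight piece by restricting to a geometric fibre $\mathrm{Bun}^d_T(C/k)$, whose Picard group was computed by Biswas--Hoffmann and is recorded in Propositions \ref{P:restr} and \ref{P:restrPic}. This restriction shows that the tautological bundles already generate the fibrewise Picard group and that the assignment $\mathscr L(\chi,\zeta)\mapsto\chi$ descends to a well-defined surjection $\rho_T\colon\RPic(\bg{T}^d)\twoheadrightarrow\Lambda^*(T)$ for $g\ge 2$. Combining this surjection with the kernel computation of the previous paragraph yields the exact sequence \eqref{seq-bunT}, and in particular the injectivity of $\tau_T+\sigma_T$; generation of $\RPic(\bg{T}^d)$ by tautological bundles drops out along the way. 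The explicit free generators of Part \ref{BunT2} are then read off by transporting the chosen bases of $\Sym^2(\Lambda^*(T))$ and $\Lambda^*(T)\otimes\bbZ^n$ and the determinant classes $\mathscr L(e_i,0)$ through these two sequences, the total rank matching the sum of the ranks of kernel and image.

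The genus-one case differs only through the relation of Remark \ref{R:taut-dual}. Because $\omega_\pi$ is pulled back from $\Mg$ when $g=1$, the first identity of \eqref{E:tau-dual} gives $\langle(\chi,0),(\chi,0)\rangle=2\,\mathscr L(\chi,0)$ in $\RPic(\mathrm{Bun}^d_{T,1,n})$; this forces the target of $\rho_T$ down to $\Lambda^*(T)/2\Lambda^*(T)$, producing \eqref{seq-bunTg1}, and makes the diagonal pairings $\langle(e_i,0),(e_i,0)\rangle$ redundant, which is why Part \ref{BunT2} lists only $\langle(e_i,0),(e_k,0)\rangle$ with $i<k$ in this case. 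Contravariant functoriality in $T$ is inherited from that of $d_\pi$, of the Deligne pairing and of the morphisms $\chi_\#$. The main obstacle is the exact determination of $\RPic(\JT^d/\Mg)$: wringing from the weak Franchetta conjecture the statement that no non-tautological class survives, and then splicing this weight-zero computation with the fibrewise weight data so that injectivity of $\tau_T+\sigma_T$ and exactness at the middle hold simultaneously --- a matching that is most delicate in genus one, where Serre duality ties the abelian part to the determinant classes.
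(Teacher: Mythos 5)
Your plan is, for $n\geq 1$, essentially the paper's own proof: the weight exact sequence \eqref{E:Pic-gerbe} for the $T$-gerbe $\bg{T}^d\to\JT^d$, the computation of $\RPic(\JT^d)$ via the abelian-stack structure plus the weak Franchetta conjecture (this is Lemmas \ref{RigBunT0mark} and \ref{BunT0mark}), the fibrewise restriction of Proposition \ref{P:restr} to pin down the weight part, and the genus-one collapse coming from \eqref{E:tau-dual} together with the triviality of $\omega_\pi$. Two points in your second paragraph need care, though they are repairable when there is a marked point: the lattice of symmetric homomorphisms to the dual abelian stack is $\Bil^s(\Lambda(T))$, not $\Sym^2(\Lambda^*(T))$ --- the Deligne pairings only realize the even sublattice $b\big(\Sym^2(\Lambda^*(T))\big)\subset \Bil^s(\Lambda(T))$, and the ``odd'' generators $e_i\otimes e_i$ are realized by the classes $d_\pi\big(\mathcal L_{e_i}((g-1)\sigma_1)\big)$, whose very definition uses a section; moreover, for $d\neq 0$ the stack $\JT^d$ is a priori only a torsor under the abelian stack, and trivializing it by twisting with $\oo(-\sum_i d_i\sigma_1)$ (the paper's translation trick, Case II of the proof of part \eqref{BunT2}) again uses a section.

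The genuine gap is the case $n=0$, which the proposed machinery cannot reach: there is no section to trivialize the torsor $\JT^d\to\cM_{g,0}$, no class $d_\pi\big(\mathcal L_{\chi}((g-1)\sigma_1)\big)$ to realize the odd part of $\Bil^s(\Lambda(T))$, and your assertion that the Néron--Severi lattice is ``exhausted'' with no extra contribution fails there: Franchetta plus the fibrewise computation only give an upper bound $\RPic(\JT^d)\hookrightarrow \Bil^s(\Lambda(T))$, and determining the image (for $n=0$ it is only the even sublattice, spanned by the Deligne pairings) is exactly what requires a new argument. The paper sidesteps this for $n=0$ by a different mechanism (Case III): it embeds $\RPic(\mathrm{Bun}^d_{T,g,0})$ into $\RPic(\mathrm{Bun}^d_{T,g,1})$ by pull-back along the forgetful morphism $F_T$ (injective because $\Phi_T^d$ is Stein, Proposition \ref{P:SteinG} and Lemma \ref{base-change}), and then identifies the image by a seesaw argument: evaluating on test objects $(C,L_1\oplus\cdots\oplus L_r)$ with generic line bundles $L_i$ shows that no nontrivial combination $\bigotimes_i\langle (e_i,0),(0,f_1)\rangle^{a_i}$ descends to $\mathrm{Bun}^d_{T,g,0}$. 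Without this step, or an equivalent one, your argument proves the theorem only for $n\geq 1$.
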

Note that the homomorphisms $\tau_T$ and $\sigma_T$ are well-defined in the full Picard group of $\bg{T}^d$ (and not just in its relative Picard group).

The case $g=0$ is rather different from the other cases and we isolate it in the following

\begin{teo}\label{BunTg0} 
Assume $g=0$.
	\begin{enumerate}
	 
	       \item \label{BunTg0-1} The relative Picard group $\RPic(\mathrm{Bun}_{T,0,n}^d)$ is generated by the tautological line bundles \eqref{E:taut} 
	        and there is an injective homomorphism 
		\begin{equation}\label{seq-bunTg0}
		\RPic\left(\mathrm{Bun}_{T,0,n}^d\right)\xrightarrow{w_T^d=w_{T,0,n}^d} \Lambda^*(T),
		\end{equation}
		defined by 
		\begin{equation*}
\begin{sis}
&w_T^d(\scr L(\chi, \zeta))=[(d, \chi)+|\zeta|+1]\chi,  \\
&  w_T^d(\langle (\chi, \zeta), (\chi', \zeta') \rangle)= [(d, \chi')+|\zeta'|]\chi+[(d, \chi)+|\zeta|]\chi',\\
\end{sis}
\end{equation*}
where $(d, \chi), (d, \chi')\in \bbZ$ are obtained from the perfect pairing \eqref{E:perpair}, $|\zeta|=\sum_i \zeta_i\in \bbZ$ and similarly for $|\zeta'|$.
		
		Moreover, the homomorphism \eqref{seq-bunTg0} is contravariant with respect to homomorphisms of tori.
		
		\item \label{BunTg0-2}
		The image of $w_T^d$ is equal to 
		  $$
		  \Im(w_T^d)=
		  \begin{cases}
		  \Lambda^*(T) & \text{ if } n\geq 1,\\
		  \{\chi\in  \Lambda^*(T) \: : (d,\chi)\in 2\bbZ\} & \text{ if } n=0.
		  \end{cases}
		  $$
		  In particular, $w_T^d$ is an isomorphism if either $n\geq 1$ or $n=0$ and $d\in 2\Lambda(T)$, while it is an index two inclusion in the remaining cases. 
		\item \label{BunTg0-3} Fix an isomorphism $T\cong \bbG_m^r$ which induces  isomorphisms $\Lambda^*(T)\cong\Lambda^*(\bbG_m^r)=\bbZ^r$ and $\Lambda(T)\cong\Lambda(\bbG_m^r)=\bbZ^r$. Write $d=(d_1,\ldots, d_r)\in \bbZ^r$ under the above isomorphism $\Lambda(T)\cong \bbZ^r $.
		 Denote by $\{e_i\}_{i=1}^r$ the canonical basis of  $\bbZ^r$, by $\{f_j\}_{j=1}^n$ the canonical basis of $\bbZ^n$ and by $\{\epsilon_i\}_{i=1}^r$ a basis of the subgroup 
	        $$\{\chi=(\chi_1,\ldots, \chi_r)\in \bbZ^r\: : (\chi, d)=\sum_{i=1}^r \chi_i d_i\in 2\bbZ\}\subseteq \bbZ^r.$$ 	 
		
		The relative Picard group of $\mathrm{Bun}_{T,0,n}^d$ is freely generated by 
		$$
		\begin{cases} 
		\left\{\langle (e_i,0),(0,f_1)\right\rangle=\sigma_1^*(\mt L_{e_i})\}_{i=1}^r & \text{ if } n\geq 1, \\
		\left\{ d_{\pi}(\omega_{\pi}^{\frac{(d,\epsilon_i)}{2}}\otimes \mt L_{\epsilon_i})=\mathscr L(\epsilon_i)^{1-(d,\epsilon_i)}\otimes  
		\langle \epsilon_i, \epsilon_i\rangle^{\frac{(d,\epsilon_i)}{2}} \right\}_{i=1}^r &\text{if } n=0.\\
		\end{cases}
		$$
			\end{enumerate}	
\end{teo}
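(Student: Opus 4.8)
The plan is to realize $w_T^d$ as the \emph{weight map} of a gerbe and then control its kernel and image separately. Since $\Phi_T^d\colon\mathrm{Bun}_{T,0,n}^d\to\mathcal M_{0,n}$ is Stein (Proposition~\ref{P:SteinG}), it factors as the $T$-gerbe $\nu\colon\mathrm{Bun}_{T,0,n}^d\to\mathcal J:=\mathrm{Bun}_{T,0,n}^d\fatslash T$ followed by a relative algebraic space $\mathcal J\to\mathcal M_{0,n}$ (see \S\ref{Sec:gerbe-fib}, exactly as in Step~(i) of the proof of Proposition~\ref{P:SteinG}). First I would record the resulting left-exact sequence of relative Picard groups
$$
0\to\RPic(\mathcal J/\mathcal M_{0,n})\xrightarrow{\nu^*}\RPic(\mathrm{Bun}_{T,0,n}^d)\xrightarrow{\ w\ }\Lambda^*(T),
$$
where $w$ sends a line bundle to the character through which the band $T\subset\Aut$ acts on it, and where $\Im(w)=\{\chi:\ \chi_*[\nu]=0\in\Br(\mathcal J)\}$ with $[\nu]$ the gerbe class. (One may equivalently first reduce to $T=\Gm$ via the product decomposition of Remark~\ref{cart}, but the gerbe formalism handles a general torus directly.)

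The kernel of $w$ being $\RPic(\mathcal J/\mathcal M_{0,n})$, injectivity reduces to the vanishing of this group. For $g=0$ the fibres of $\mathcal J\to\mathcal M_{0,n}$ are $\mathrm{Pic}^d(\bbP^1)=\{\mathrm{pt}\}$, so $\mathcal J\to\mathcal M_{0,n}$ is an isomorphism and $\RPic(\mathcal J/\mathcal M_{0,n})=0$; this is also the content of $\operatorname{RelPic}^0(\mathcal C_{0,n})=0$ (Corollary~\ref{C:Franch0}). Hence $w$ is injective. To identify $w$ with the stated $w_T^d$, I would compute the weight of each tautological bundle by restricting to the fibre $BT$ over a geometric point $[\bbP^1,\un\sigma]$: there $\mathcal L_\chi$ restricts to a line bundle of degree $(d,\chi)$ carrying $T$-weight $\chi$, twisting by $\zeta_i\sigma_i$ raises the degree by $|\zeta|$ without changing the weight, and since on a genus-$0$ curve $h^0-h^1=\deg+1$, the determinant of cohomology $d_\pi\big(\mathcal L_\chi(\textstyle\sum_i\zeta_i\sigma_i)\big)$ acquires weight $[(d,\chi)+|\zeta|+1]\chi$; the Deligne-pairing formula then follows from \eqref{E:Del-det}. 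This matches the displayed formulas, so $w=w_T^d$ and part~\eqref{BunTg0-1} is proved, up to generation, which will fall out of the image computation.

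For part~\eqref{BunTg0-2} I would determine $\Im(w_T^d)$. When $n\geq 1$, the third relation in \eqref{Del-pair-comp} gives $\sigma_j^*\mathcal L_\chi=\langle(\chi,0),(0,f_j)\rangle$, whose weight is $w_T^d(\langle(\chi,0),(0,f_j)\rangle)=\chi$; thus every character occurs and $\Im(w_T^d)=\Lambda^*(T)$. When $n=0$, the obstruction $\chi_*[\nu]\in\Br(\mathcal M_{0,0})$ is the obstruction to the existence of a relative line bundle of degree $(d,\chi)$ on $\mathcal C_{0,0}\to\mathcal M_{0,0}=B\PGL_2$; writing the universal $\bbP^1$ as $\bbP(V)$, the center of $\GL_2$ acts on $\oo_{\bbP(V)}(e)\otimes(\det V)^{f}$ with weight $-e+2f$, so such a bundle descends if and only if $(d,\chi)$ is even, giving $\Im(w_T^d)=\{\chi:(d,\chi)\in2\bbZ\}$. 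This is consistent with $(d,w_T^d(\mathscr L(\chi)))=(d,\chi)[(d,\chi)+1]$ and $(d,w_T^d(\langle\chi,\chi'\rangle))=2(d,\chi)(d,\chi')$ being even, and the classes $d_\pi(\omega_\pi^{(d,\epsilon_i)/2}\otimes\mathcal L_{\epsilon_i})$ — which by \eqref{E:tau-dual} equal $\mathscr L(\epsilon_i)^{1-(d,\epsilon_i)}\otimes\langle\epsilon_i,\epsilon_i\rangle^{(d,\epsilon_i)/2}$ modulo pullbacks — have weight exactly $\epsilon_i$ and so realise the whole subgroup. Since $w_T^d(\mathrm{Taut})=\Im(w_T^d)$ and $w_T^d$ is injective, every class is tautological, which proves generation in \eqref{BunTg0-1}.

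Part~\eqref{BunTg0-3} is then immediate: $w_T^d$ is an isomorphism onto $\Lambda^*(T)$ (resp. onto $\{(d,\chi)\in2\bbZ\}$), carrying $\sigma_1^*\mathcal L_{e_i}\mapsto e_i$ (resp. $d_\pi(\omega_\pi^{(d,\epsilon_i)/2}\otimes\mathcal L_{\epsilon_i})\mapsto\epsilon_i$), so the listed bundles map to a basis and hence freely generate $\RPic$. Contravariance in $T$ follows from the functoriality of the entire construction: a homomorphism of tori induces compatible maps on $\mathrm{Bun}$, on character lattices, and on the gerbe, under which $w$ is natural. The main obstacle is the case $n=0$: with no section to trivialise the gerbe, both the injectivity (that $\mathcal J\cong\mathcal M_{0,0}$ carries no extra relative line bundle) and, above all, the precise index-two image require controlling the nontrivial $\PGL_2$-geometry of $\mathcal C_{0,0}\to\mathcal M_{0,0}$ and the resulting parity obstruction to a universal bundle; the case $n\geq 1$ is comparatively formal once the section is used.
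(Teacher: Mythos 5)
Your proof is correct, and it coincides with the paper's own argument in most steps: injectivity of $w_T^d$ from the rigidification sequence \eqref{E:Pic-gerbe} together with the fact that $\mathrm{J}_{T,0,n}^d\to\cM_{0,n}$ is an isomorphism in genus zero, the weight formulas on tautological bundles via Riemann--Roch on the fibres (this is Proposition \ref{P:restr}\eqref{P:restr1}), the case $n\geq 1$ via the sections $\sigma_1^*(\mt L_{e_i})$, and the lower bound plus (free) generation for $n=0$ via the classes $d_{\pi}(\omega_{\pi}^{(d,\epsilon_i)/2}\otimes \mt L_{\epsilon_i})$ of weight $\epsilon_i$. Where you genuinely diverge is at the crux, the upper bound $\Im(w_T^d)\subseteq\{\chi : (d,\chi)\in 2\bbZ\}$ when $n=0$. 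The paper proves it by pulling back along the forgetful morphism $F_T:\mathrm{Bun}^d_{T,0,1}\to\mathrm{Bun}^d_{T,0,0}$ (which is the universal curve): by the already-proved case $n=1$ and $\Pic(\cM_{0,1})=\bbZ\cdot\psi_1$ (with $\psi_1=\sigma_1^*(\omega_\pi)$), any class $\cN$ satisfies $F_T^*(\cN)=\sigma_1^*(\mt L_{\chi}\otimes\omega_{\pi}^m)$, and comparing relative degrees along $F_T$ ($0$ for a pullback, $(d,\chi)-2m$ for the right-hand side) forces $(d,\chi)\in 2\bbZ$. You instead read off $\Im(w_T^d)$ as the set of characters killing the gerbe class, identify $\chi_*[\nu]\in H^2(\cM_{0,0},\Gm)$ with the obstruction to a line bundle of relative degree $(d,\chi)$ on $\cC_{0,0}\to\cM_{0,0}=\cB\PGL_2$, and compute this obstruction by descent along $\cB\GL_2\to\cB\PGL_2$ via the central weights of $\cO_{\bbP(V)}(e)\otimes(\det V)^f$. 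Both computations ultimately detect the same order-two class, that of the $\Gm$-gerbe $\cB\GL_2\to\cB\PGL_2$; your formulation is more structural, explains conceptually why the answer is a parity condition, and does not route through the $n=1$ case, while the paper's is more elementary (only degrees and pullbacks, no gerbe cohomology or Brauer groups) and reuses tools already in place for the positive-genus arguments. To make your write-up airtight you should justify, or give references for, the two standard gerbe facts you invoke: that $\Im(w)$ equals the kernel of $\chi\mapsto\chi_*[\nu]$ (the $d_2$-differential in the Leray sequence of a banded gerbe), and that vanishing of $\chi_*[\nu]$ is equivalent to the existence of a universal degree-$(d,\chi)$ line bundle on $\cC_{0,0}$; for the latter, note that a weight-$\chi$ line bundle $N$ on $\mathrm{Bun}^d_{T,0,0}$ yields the weight-zero bundle $\mt L_{\chi}\otimes\pi^*N^{-1}$ on the universal curve, which descends to $\cC_{0,0}$ with the correct relative degree, and conversely.
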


With the aim of proving the above Theorems, we will study in the next subsection the restriction of the line bundles on $\bg{T}$ to the geometric fibers over $\Mg$.

\subsection{The  restriction of the Picard group to the fibers I}\label{Sec:gerbe-fib}

The aim of this subsection is to define a $T$-gerbe structure on $\bg{T}$ and to study the restriction of the relative Picard group of $\bg{T}$ to a fiber over a geometric point of $\Mg$.
 
The automorphism group of any $S$-point $(C\to S, \sigma_1,\ldots, \sigma_n, E)$  of $\bg{T}$ contains  the torus $T$, which is indeed isomorphic to the subgroup of automorphisms of $(C\to S, \sigma_1,\ldots, \sigma_n, E)$ that induce the identity on the underlying family of pointed curves $(C\to S,\sigma_1, \ldots, \sigma_n)$. Therefore we can consider the rigidification 
$\Pi_T:\bg{T}\to \JT:=\bg{T}\fatslash T$  of $\bg{T}$ by the torus $T$ (see \cite[Sec. 5]{Rom}). 
By definition, $\bg{T}$ is a $T$-gerbe over $J_{T,g,n}$. This implies that the connected components of $\JT$ are exactly 
$$\JT^d:=\Pi_T(\bg{T}^d) \quad \text{ for } d\in \pi_1(T)=\Lambda(T)$$
and $\Pi_T$ restricts to a $T$-gerbe $\Pi_T^d:\bg{T}^d\to \JT^d$. 
The stack $\JT^{(d)}$ admits a  forgetful morphism $\Psi_T^{(d)}:\JT^{(d)}\to\mathcal M_{g,n}$ over the moduli stack of curves
and we set:
$$\RPic(\JT^{d}):=\operatorname{Pic}(\JT^{d})/(\Psi_T^{d})^*\operatorname{Pic}(\mathcal M_{g,n}).$$

The  Leray spectral sequence for the \'etale sheaf $\Gm$ with respect to the morphism $\Pi_T^{d}$ gives the exact sequence of (relative) Picard groups:
\begin{equation}\label{E:Pic-gerbe}	
	0\to\text{Rel}\Pic(\JT^{d})\xrightarrow{(\Pi_T^{d})^*} \text{Rel}\Pic(\bg{T}^{d})\xrightarrow{w_T^{d}}\Pic(\mathcal BT)=\Lambda^*(T).
\end{equation}
The homomorphism $w_T^d$, called the \emph{weight function}, can be computed as follows. Let $\mathcal F$ be a  line bundle on $\bg{T}^d$ and fix a $k$-point $\xi:=(C,x_1,\ldots,x_n, E)$ of $\bg{T}^d$. The automorphism group of $\xi$ acts on the fiber $\mathcal F_{\xi}$ of $\mathcal F$ over $\xi$. Since the torus $T$ is contained in the automorphism group of $\xi$, this defines an action of $T$ on $\mathcal F_{\xi}\cong k$ which  is given by a character of $T$.  This character, which is independent of the chosen $k$-point $\xi$ and on the chosen isomorphism $\mathcal F_{\xi}\cong k$,  coincides with $w_T^d(\mathcal F)$. 

We now describe the fiber of $\Pi_T:\bg{T} \to \JT$ over a geometric point $(C,p_1,\ldots,p_n)$ of $\Mg$. Consider the Jacobian stack $\mt J(C)$ parametrizing line bundles on $C$ and its rigidification $J(C):=\mt J(C)\fatslash \Gm$, which is the Jacobian  variety of $C$. The connected components of $\mt J(C)$ and of $J(C)$ are 
$$\mt J(C)=\coprod_{e\in \bbZ} \mt J^e(C) \to J(C)=\coprod_{e\in \bbZ} J^e(C),$$
where $\mt J^e(C)$ is the stack parametrizing line bundles on $C$ of degree $e$ and $J^e(C)=\mt J^e(C)\fatslash \Gm$.  Equivalently, there is a commutative diagram of sets
\begin{equation}\label{E:degJac}
\xymatrix{ 
\mt J(C) \ar[rr] \ar[dr] & & J(C) \ar[dl] \\
& \bbZ & 
} 
\end{equation}
whose fiber over $e\in \bbZ$ is the $\Gm$-gerbe $\mt J^e(C)\to J^e(C)$.

The fiber of $\Pi_T:\bg{T} \to \JT$ over a geometric point  $(C,p_1,\ldots, p_n)$ of $\Mg$ is canonically isomorphic to the $T$-gerbe 
\begin{equation}\label{E:fiberT}
\mt J_T(C):=\Hom(\Lambda^*(T), \mt J(C))\stackrel{\Pi_T(C)}{\longrightarrow} \Hom(\Lambda^*(T), J(C))=:J_T(C).
\end{equation}
The connected components of \eqref{E:fiberT} are exactly the fibers of the $T$-gerbe $\Pi_T^d:\bg{T}^d\to \JT^d$ and they are given by  
\begin{equation}\label{E:fiberTcom}
\mt J_T^d(C):=\Hom_d(\Lambda^*(T), \mt J(C))\stackrel{\Pi_T^d(C)}{\longrightarrow} \Hom_d(\Lambda^*(T), J(C))=:J_T^d(C).
\end{equation}
where $\Hom_d$ denote the set of homomorphisms whose composition with the diagonal homomorphisms of \eqref{E:degJac}  is the element $d\in X(T)=\Hom(X^*(T), \bbZ)$. 

We now recall the explicit description of the Picard group of $\mt J_T^d(C)$, where $C$ is a smooth curve defined over an algebraically closed field $k$, adapting the description of \cite[Sec. 3]{BH10} from $d=0$ to an arbitrary $d\in \Lambda(T)$. 

First of all, the $\Gm$-gerbe $\mt J(C) \to J(C)$ is trivial, i.e., $\mt J(C)\cong J(C)\times B\Gm$, since it admits sections each of which correspond to a Poincar\'e line bundle on $J(C)\times C$. 
This implies that also the  $T$-gerbe $\Pi_T(C)$ (and hence also $\Pi_T^d(C)$) is trivial, i.e., $\mt J_T(C)\cong J_T(C)\times BT$. 
Hence the Leray spectral sequence for the \'etale sheaf $\Gm$ with respect to the morphism $\Pi_T^d(C)$ gives the following split short exact sequence of  Picard groups:
\begin{equation}\label{E:gerbe-fib}	
\xymatrix{
	0\ar[r] & \Pic(J_T^{d}(C))\ar[r]^{\Pi_T^d(C)^*}&   \Pic(\mt J_T^d(C))\ar[r]^(.4){w_T^d(C)}& \Pic(\mathcal BT)=\Lambda^*(T)  \ar@/^1pc/[l]^{s_p}\ar[r]&0,
	}
\end{equation}
where the section $s_p$, that depends on a chosen point $p\in C(k)$, sends an element $\chi\in X^*(T)=\Hom(T, \Gm)$ into the line bundle on $\mt J_T^d(C)$ naturally associated to the $\Gm$-bundle $\chi_\#(\cP_C)_{|\mt J_T^d(C)\times \{p\}}$, where $\cP_C$ is the universal $T$-bundle on $\mt J_T(C)\times C$. 

The continuous part of the Picard groups of $J_T^{d}(C)$ and of $\mt J_T^d(C)$ can be described as follows. Any character $\chi\in \Lambda^*(T)$ determines a morphism 
$$\begin{aligned}
\chi_\#:\mt J^d_T(C)=\Hom_d(\Lambda^*(T),\mt J(C))& \to \mt J^{(d,\chi)}(C) \\
\phi & \mapsto \phi(\chi).
\end{aligned}$$
Denote by $\scr L_{C,\chi}$ the pull-back via $\chi_\#\times \id_C: \mt J^d_T(C)\times C \to \mt J^{(d,\chi)}(C)\times C$ of the universal line bundle $\scr L_C$ on $\mt J^{(d,\chi)}(C)\times C$, and let $p_1$ and $p_2$ be the projections of  $\mt J^d_T(C)\times C$ onto the first and second factor, respectively. There is an injective homomorphism 
\begin{equation}\label{E:jTC}
\begin{aligned}
j_T^d(C): \Hom(\Lambda(T),J_C(k))=\Lambda^*(T)\otimes J_C(k) & \hookrightarrow \Pic(\mt J_T^d(C)), \\
\chi\otimes N & \mapsto \langle \scr L_{C,\chi}, p_2^*(N)\rangle_{p_1}.
\end{aligned}
\end{equation}
Using the analogue of formula \eqref{Del-pair} for the morphism $p_1$ and the functoriality of the determinant of cohomology, we get  
 \begin{equation}\label{E:jTC2}
\langle \scr L_{C,\chi}, p_2^*(N)\rangle_{p_1}=d_{p_1}(\scr L_{C,\chi}\otimes p_2^*(N))\otimes d_{p_1}(\scr L_{C,\chi})^{-1}=t_N^*(d_{p_1}(\scr L_{C,\chi}))\otimes d_{p_1}(\scr L_{C,\chi})^{-1},
\end{equation}
where $t_N:\mt J_T^d(C) \to \mt J_T^d(C)$ is the translation by $N$. Arguing as in \cite[Lemma 6.2]{MV} and using that the line bundles $\scr L_{C,\chi}\otimes p_2^*(N)$ and $\scr L_{C,\chi}$ have both $p_1$-relative degree equal to $(d,\chi)$, we compute 
$$
w_T^d(C)(d_{p_1}(\scr L_{C,\chi}\otimes p_2^*(N)))=[(d,\chi)+1-g]\chi=w_T^d(C)(d_{p_1}(\scr L_{C,\chi})).
$$
This, together with formula \eqref{E:jTC2}, implies that the weight of the line bundles  $\langle \scr L_{C,\chi}, p_2^*(N)\rangle_{p_1}$ are zero; hence the morphism $j_T^d(C)$ factors as 
\begin{equation}\label{E:iTC}
\xymatrix{
j_T^d(C):\Lambda^*(T)\otimes J_C(k) \ar@{^{(}->}[rr]^(0.6){\ov j_T^d(C)} && \Pic(J_T^d(C))\ar@{^{(}->}[rr]^{\Pi_T^d(C)^*} && \Pic(\mt J_T^d(C)).
}
\end{equation}
The quotients 
$$
\xymatrix{
 \NS(J_T^d(C)):=\Pic(J_T^d(C))/\Im(\ov j_T^d(C))\ar@{^{(}->}[r]^{\Pi_T^d(C)^*} & \NS(\mt J^d_T(C)):=\Pic(\mt J_T^d(C))/\Im(j_T^d(C))
}
$$
are called the Neron-Severi groups;  they are discrete groups that admit the following description. 

First of all, at the level of the Neron-Severi groups the splitting of the  sequence \eqref{E:gerbe-fib} is canonical (i.e., independent of the point $p\in C$),  and hence we get 
a canonical isomorphism
\begin{equation}\label{E:NerSev}
\begin{aligned}
\NS(\mt J_T^d(C))& \xrightarrow{\cong} \Lambda^*(T)\oplus \NS(J_T^d(C)),\\
[L] & \mapsto ( w_T^d(L),[L\otimes s_p(w_T^d(L))^{-1}]).
\end{aligned}
\end{equation}

The Neron-Severi group of $J_T^d(C)$ admits the following explicit description.  The Jacobian $J_C=J^0(C)$  of $C$ is endowed with a standard principal polarization  $\phi_{\theta}:J_C\xrightarrow{\cong} J_C^{\vee}$  induced by the theta divisor. This determines an involution (called the Rosati involution) $\dagger:\End(J_C)\to \End(J_C)$ on the endomorphism algebra of $J_C$ by sending $\alpha$ to $\phi_{\theta}^{-1}\circ \alpha^{\vee}\circ \phi_{\theta}$. Denote by $\Hom^s(\Lambda(T)\otimes \Lambda(T),  \End(J_C))$  the abelian group of  homomorphisms that are symmetric with respect to the  Rosati involution on $\End(J_C)$, i.e., such that  $\phi(\lambda_1\otimes \lambda_2)=\phi(\lambda_2\otimes \lambda_1)^{\dagger}$. 
As explained in \cite[Cor. 3.1.3]{BH10}, there is an isomorphism 
\begin{equation}\label{E:NS-Rosati}
\begin{aligned}
c: \NS(J_T^d(C)) & \xrightarrow{\cong} \Hom^s(\Lambda(T)\otimes \Lambda(T),  \End(J_C)), \\
[L] & \mapsto \left\{(d_1,d_2) \mapsto \Big(J_C\xrightarrow{(d_1,-)\otimes \id_{J_C}} J_T^0(C) \xrightarrow{\phi_L} J_T^0(C)^{\vee} \xrightarrow{((d_2,-)\otimes \id_{J_C})^{\vee}} J_C^{\vee} 
\xrightarrow{\phi_{\theta}^{-1}} J_C\Big)\right\},
\end{aligned}
\end{equation}
where $\phi_L$ is the homomorphism of abelian varieties sending $a\in J_T^0(C)$ into $t_a^*(L)\otimes L^{-1}\in \Pic^0(J_T^d(C))=J_T^0(C)^{\vee}$. 

By putting everything together, we obtain the following description of the Picard groups of $\mt J^d_T(C)$ and of $J_T^d(C)$.

\begin{prop}\label{P:Pic-JTC}
Let $C$ be a curve over an algebraically closed field $k$ and let $d\in \Lambda(T)$. Then there is a diagram with exact rows and columns
\begin{equation*}
\xymatrix{ 
\Lambda^*(T)\otimes J_C(k) \ar@{^{(}->}[r]^(0.6){\ov j_T^d(C)} \ar@{=}[d]&\Pic(J_T^d(C)) \ar@{->>}[rr]^(0.3){\ov \gamma_T^d(C)} \ar@{^{(}->}[d]^{\Pi^d_T(C)^*}&& \Hom^s(\Lambda(T)\otimes \Lambda(T),  \End(J_C))\ar@{^{(}->}[d]\\
\Lambda^*(T)\otimes J_C(k) \ar@{^{(}->}[r]^(0.6){j_T^d(C)} &\Pic(\mt J_T^d(C)) \ar@{->>}[rr]^(0.3){w_T^d(C)\oplus \gamma_T^d(C)}  \ar@{->>}[d]^{w_T^d(C)} && \Lambda^*(T)\oplus \Hom^s(\Lambda(T)\otimes \Lambda(T),  \End(J_C)) \ar@{->>}[d]\\
& \Lambda^*(T) \ar@{=}[rr] && \Lambda^*(T)
}
\end{equation*}
where:
\begin{enumerate}
	\item $j_T^d(C)$ and $\ov j_T^d(C)$ are the homomorphisms defined in \eqref{E:jTC} and \eqref{E:iTC}, respectively;
	\item $\ov \gamma_T^d(C)$ is the composition of the morphism $\Pic(J_T^d(C))\twoheadrightarrow \NS(J_T^d(C))$ and the isomorphism \eqref{E:NS-Rosati};
	\item $\gamma_T^d(C)$ is the composition of the morphism $\Pic(\mt J_T^d(C))\twoheadrightarrow \NS(\mt J_T^d(C))$, the isomorphism \eqref{E:NerSev} and the isomorphism $\id_{\Lambda^*(T)}\oplus c$, where $c$ is the morphism \eqref{E:NS-Rosati}.
\end{enumerate} 
Moreover, the above diagram is functorial with respect to homomorphism of tori. 
\end{prop}
Note that if  $C$ has genus zero (in which case  $J_C=0$),   the above diagram gives that  $\Pic(J_T^d(C))=0$ and that the weight function $w_T^d(C):\Pic(\mt J_T^d(C))\to \Lambda^*(T)$ is an isomorphism. 

\hspace{0.1cm}

Now, given a geometric point $(C,p_1,\ldots, p_n)$ of $\Mg$, we are going to write down a formula for the restriction homomorphism towards the Neron-Severi group 
$$
 \RPic(\bg{T}^d)\xrightarrow{\res_T^d(C)} \Pic(\mt J_T^d(C)) \twoheadrightarrow \NS(\mt J_T^d(C))
$$
 on the tautological classes. 
Note that the canonical map $\id_{J_C}:\bbZ\to \End(J_C)$  given by the addition on the abelian variety $J_C$ induces a homomorphism 
\begin{equation}\label{E:inc-bs}
-\otimes \id_{J_C}: \Bil^s\Lambda(T)\to \Hom^s(\Lambda(T)\otimes \Lambda(T),  \End(J_C)),
\end{equation}
where the source is the group of integral bilinear symmetric forms on $\Lambda(T)$, which has been introduced in Subsection \ref{Sec:int-forms}. Note that this homomorphism is injective if  the genus of $C$ is positive and it is identically zero if the genus of $C$ is zero.

\begin{prop}\label{P:restr}
Let $(C,p_1,\ldots, p_n)$ be a geometric point of $\Mg$ and let $d\in \Lambda(T)$. 
\begin{enumerate}
\item \label{P:restr1} The composition 
$$ \RPic(\bg{T}^d)\xrightarrow{\res_T^d(C)} \Pic(\mt J_T^d(C))\xrightarrow{w_T^d(C)} \Lambda^*(T)
$$
coincides with the weight function $w_T^d$ of \eqref{E:Pic-gerbe} and it is given on the  tautological classes of $\RPic(\bg{T}^d)$  by 
\begin{equation*}
\begin{sis}
& w_T^d(\scr L(\chi, \zeta))=[(d, \chi)+|\zeta|+1-g]\chi,\\
& w_T^d(\langle (\chi, \zeta), (\chi', \zeta') \rangle)= [(d, \chi')+|\zeta'|]\chi+[(d, \chi)+|\zeta|]\chi',\\
\end{sis}
\end{equation*}
where $(d, \chi), (d, \chi')\in \bbZ$ are obtained from the perfect pairing \eqref{E:perpair}, $|\zeta|=\sum_i \zeta_i\in \bbZ$ and similarly for $|\zeta'|$.

\item \label{P:restr2} The composition 
$$ \wt \gamma_T^d(C): \RPic(\bg{T}^d)\xrightarrow{\res_T^d(C)} \Pic(\mt J_T^d(C))\xrightarrow{\gamma_T^d(C)} \Hom^s(\Lambda(T)\otimes \Lambda(T), \End(J_C))
$$
is given on the tautological classes of $\RPic(\bg{T}^d)$ by 
\begin{equation*}
\begin{sis}
& \wt\gamma_T^d(C)(\scr L(\chi, \zeta))=(\chi\otimes \chi)\otimes \id_{J_C} \\
& \wt\gamma_T^d(C)(\langle (\chi, \zeta), (\chi', \zeta') \rangle)=(\chi\otimes \chi'+\chi'\otimes \chi)\otimes \id_{J_C}, \\
\end{sis}
\end{equation*}
where $\chi\otimes \chi$ and $\chi\otimes \chi'+\chi'\otimes \chi$ are elements of $(\Lambda^*(T)\otimes \Lambda^*(T))^s$ which is canonically identified with $\Bil^s\Lambda(T)$ by \eqref{E:iso-b}.
\end{enumerate} 
\end{prop}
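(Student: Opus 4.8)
The plan is to compute each part on the tautological generators and to reduce the whole statement to the determinant classes $\scr L(\chi,\zeta)$ and, ultimately, to the rank-one torus $\Gm$.

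For Part \eqref{P:restr1} I would start from the observation that, after restriction to the fibre $\mt J_T^d(C)$, the gerbe-torus $T\subset\Aut(\xi)$ of a point $\xi=(C,p_1,\dots,p_n,E)$ acts on the line bundle $\mt L_\chi|_C=\chi_\#(E)$ through the character $\chi$: an automorphism $t_0\in T$ acting on $E$ by $p\mapsto p\cdot t_0$ scales the fibres of $(E\times\Gm)/T$ by $\chi(t_0)$. Since the sections carry no $T$-action, $T$ acts on $\mt L_\chi(\zeta\cdot\sigma)|_C$ through $\chi$ as well, hence on $d_{\pi}(\mt L_\chi(\zeta\cdot\sigma))$ restricted to the fibre through $e\cdot\chi$, where $e$ is the relative Euler characteristic. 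By Riemann--Roch $e=(d,\chi)+|\zeta|+1-g$ (using $\deg\mt L_\chi=(d,\chi)$), which yields the stated value of $w_T^d(\scr L(\chi,\zeta))$. The formula for the Deligne pairing then follows formally from the identity \eqref{E:Del-det} together with additivity of the weight. Finally, the equality of $w_T^d(C)\circ\res_T^d(C)$ with the weight function $w_T^d$ of \eqref{E:Pic-gerbe} is immediate: both are, by construction, the character through which $T$ acts on the fibre of a line bundle at a geometric point, and this is unaffected by restriction to the fibre.

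For Part \eqref{P:restr2}, since $\wt\gamma_T^d(C)$ is a homomorphism, the relation \eqref{E:Del-det} reduces the Deligne-pairing formula to the determinant formula: expanding $(\chi+\chi')\otimes(\chi+\chi')-\chi\otimes\chi-\chi'\otimes\chi'=\chi\otimes\chi'+\chi'\otimes\chi$ gives exactly the asserted value, so it suffices to prove $\wt\gamma_T^d(C)(\scr L(\chi,\zeta))=(\chi\otimes\chi)\otimes\id_{J_C}$. Restricting to the fibre, I identify $\res_T^d(C)(\scr L(\chi,\zeta))$ with $d_{p_1}\big(\scr L_{C,\chi}\otimes p_2^*\oo_C(\sum_i\zeta_i p_i)\big)$ on $\mt J_T^d(C)\times C$, where $\scr L_{C,\chi}=(\chi_\#\times\id_C)^*\scr L_C$. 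The dependence on $\zeta$ is then killed as follows: by \eqref{E:Del-det} this bundle differs from $\scr L(\chi,0)$ by the Deligne pairing $\langle\mt L_\chi,\oo(\zeta\cdot\sigma)\rangle_{\pi}$ and by classes pulled back from $\Mg$; restricting to the fibre and writing $\oo_C(\sum_i\zeta_i p_i)$ as a degree-zero bundle times a power of $\oo_C(p_1)$, the pairing becomes a combination of a class in the image of $\ov j_T^d(C)$ and of the evaluation bundle $\scr L_{C,\chi}|_{\mt J_T^d(C)\times\{p_1\}}$. The former vanishes in the Néron--Severi group by \eqref{E:jTC}, while the latter is exactly the gerbe-section $s_{p_1}(\chi)$ and so contributes only to the weight, not to the $\Hom^s$-component. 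Hence $\gamma_T^d(C)(\scr L(\chi,\zeta))$ is independent of $\zeta$, and we are reduced to $\scr L(\chi,0)=\chi_\#^*\scr L(\id,0)$.

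At this point the functoriality of the diagram in Proposition \ref{P:Pic-JTC} with respect to $\chi\colon T\to\Gm$ reduces the computation to $T=\Gm$, where the required statement is the classical fact that the determinant of cohomology of the universal line bundle on $\mt J^e(C)$ induces the principal polarisation, i.e. the class $\id_{J_C}\in\Hom^s(\bbZ\otimes\bbZ,\End(J_C))$; transporting this back along $\chi$ carries the form $(1,1)\mapsto\id_{J_C}$ to $(d_1,d_2)\mapsto(d_1,\chi)(d_2,\chi)\,\id_{J_C}$, which is $(\chi\otimes\chi)\otimes\id_{J_C}$ under the identification \eqref{E:iso-b}. The main obstacle is precisely this last computation: pinning down, via Grothendieck--Riemann--Roch, the polarisation $\phi_{d_{p_1}(\scr L_C)}$ on the Jacobian and matching it --- with the correct sign and the correct Rosati-symmetrisation --- to the normalisation $c$ of \eqref{E:NS-Rosati} borrowed from \cite[Cor. 3.1.3]{BH10}. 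Concretely one computes $c_1(d_{p_1}(\scr L_{C,\chi}))=p_{1*}\big(\tfrac12 c_1(\scr L_{C,\chi})^2-\tfrac12 c_1(\scr L_{C,\chi})\,p_2^*c_1(\omega_C)\big)$ and isolates the contribution of the Künneth $(1,1)$-part of $c_1(\scr L_{C,\chi})$, the same computation carried out for $\Gm$ in \cite[Lemma 6.2]{MV}; the remaining work is to check that this bookkeeping is stable under the base change $\chi_\#$ and the passage to an arbitrary degree $d$.
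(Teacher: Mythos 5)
Your proposal is correct and takes essentially the same route as the paper: compute the weight from the $\pi$-relative Euler characteristic of $\mt L_\chi(\sum_i\zeta_i\sigma_i)$ (as in \cite[Lemma 6.2]{MV}), reduce the Deligne-pairing formulas to the determinant ones via \eqref{E:Del-det}, and obtain the $\Hom^s$-component by pushing the computation along $\chi_\#$ to the degree-$(d,\chi)$ Jacobian and invoking the classical fact that the (descended) determinant of cohomology of the universal line bundle induces the principal polarization --- which is exactly the paper's computation $\phi_L=\chi^\vee\circ\phi_M\circ\chi=\chi^\vee\circ\phi_\theta\circ\chi$, giving $(d_1,d_2)\mapsto(d_1,\chi)(d_2,\chi)\id_{J_C}$. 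The only cosmetic differences are that you dispose of the $\zeta$-dependence beforehand by N\'eron--Severi arguments (image of $j_T^d(C)$ plus the section $s_{p_1}$), whereas the paper simply carries the twist $\oo(\sum_i\zeta_i p_i)$ into the bundle $M$, and that you propose re-deriving the polarization identity by Grothendieck--Riemann--Roch where the paper cites \cite[Chap. 17]{Pol}.
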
 
Note that, once Theorems \ref{BunT} and \ref{BunTg0} are proved, we will know that $\RPic(\bg{T}^d)$ is generated by tautological classes, and hence the above Proposition gives a complete description of the restriction homomorphism towards the N\'eron-Severi group on the entire $\RPic(\bg{T}^d)$. The full restriction morphism  $\res_T^d(C): \RPic(\bg{T}^d)\to\Pic(\mt J_T^d(C))$ will be described in Proposition \ref{P:restrPic}.

\begin{proof}
The formulas for $\langle (\chi, \mu), (\chi',\mu')\rangle$ follow from the ones for $\scr L(\chi, \mu)$ and equation \eqref{E:Del-det}. Hence, it is enough to prove the formulas for $\mt L(\chi, \mu)$.

Let us first prove part \eqref{P:restr1}. Clearly,  the weight function $w_T^d$ is equal to the composition $w_T^d(C)\circ \res_T^d(C)$. 
Arguing as in \cite[Lemma 6.2]{MV} and using that $\scr L(\chi, \mu)=d_{\pi}(\mt L_{\chi}(\sum_i \zeta_i p_i))$ and that the $\pi$-relative Euler-Poincar\'e characteristic of $\mt L_{\chi}(\sum_i \zeta_i p_i)$ is equal to 
$[(d,\chi)+|\zeta|+1-g]$ since $\mt L_{\chi}(\sum_i \zeta_i p_i)$ has $\pi$-relative degree equal to $(d,\chi)+|\zeta|$, we deduce that 
\begin{equation*}
w_T^d(\scr L(\chi, \zeta))=[(d, \chi)+|\zeta|+1-g]\chi.
\end{equation*}

Let us now compute $\wt\gamma_T^d(C)(\scr L(\chi, \zeta))$. To ease the notation, we set 
$$
\begin{sis}
&\wt L:=\scr L(\chi, \zeta)_{|C}\in \Pic(\mt J_T^d(C)), \\
&L:=\wt L\otimes s_p(w_T^d(\scr L(\chi, \zeta)))^{-1} \in \Pic(J_T^d(C)).
\end{sis}
$$ 
By \eqref{E:NerSev} and \eqref{E:NS-Rosati} , we have that 
\begin{equation}\label{E:rho1}
\wt\gamma_T(C)(\scr L(\chi, \zeta))=c([L]).
\end{equation}
In order to compute $c([L])$, let us analyze the morphism $\phi_{L}:J_T^0(C)\to J_T^0(C)^{\vee}$. The fiber over $(C,p_1,\ldots, p_n)$ of the morphism $  \chi_\#:\bg{T}^d\to\bg{\Gm}^{(d,\chi)}$ (resp., of its rigidification $  \chi_\#: \JT^d \to \JGm^{(d,\chi)}$) is equal to the morphism 
$$\begin{aligned}
\chi:\mt J^d_T(C)=\Hom_d(\Lambda^*(T),\mt J(C))& \to \mt J^{(d,\chi)}(C) \\
\phi & \mapsto \phi(\chi)
\end{aligned}
\quad (\text{resp., }  \chi:J^d_T(C)\to J^{(d,\chi)}(C)).$$
Consider the universal line bundle $\scr L_C$ on $\mt J^{(d,\chi)}(C)\times C$ and set 
$$
\begin{sis}
& \wt M:=d_{p_1}(\scr L_C(\mu_1p_1+\ldots +\mu_n p_n))\in \Pic(\mt J^{(d,\chi)}(C)), \\
& M:= d_{p_1}(\scr L_C(\mu_1p_1+\ldots +\mu_n p_n))\otimes (\scr L_C^{-w_T^d(\scr L(\chi, \zeta))})_{|\mt J^{(d,\chi)}(C)\times \{p\}}\in \Pic(J^{(d,\chi)}(C)),
\end{sis}
$$ 
where $p_1: \mt J^{(d,\chi)}(C)\times C\to \mt J^{(d,\chi)}(C)$ is the first projection. It is well known (see, for example, \cite[Chap. 17]{Pol}) that the morphism $\phi_M:J_C\to J_C^{\vee}$ is equal to the standard principal polarization $\phi_{\theta}:J_C\xrightarrow{\cong} J_C^{\vee}$. 

By definition of the tautological line bundle  $\scr L(\chi, \zeta)$ in  \S \ref{tau}, it follows that 
$$ \wt L=\chi^*(\wt M) \quad \text{ and } \quad L=\chi^*(M). $$
Since the morphism $\chi:J^d_T(C)\to J^{(d,\chi)}(C)$ is equivariant with respect to the morphism of abelian varieties $\chi:J^0_T(C)\to J^{0}(C)=J_C$, we compute for any $a\in J^0_T(C)$:
$$
\phi_L(a)=t_a^*(\chi^*(M))\otimes \chi^*(M)^{-1}= \chi^*(t_{\chi(a)}^*(M))\otimes \chi^*(M^{-1})=\chi^*(\phi_M(\chi(a)).
$$
Thus we conclude that the morphism $\phi_L$ is equal to the following composition 
\begin{equation}\label{E:PhiLM}
\phi_L: J_T^0(C)\xrightarrow{\chi} J_C\xrightarrow{\phi_M=\phi_{\theta}} J_C^{\vee}\xrightarrow{\chi^{\vee}} J_T^0(C)^{\vee}.
\end{equation}
Using this, it follows from \eqref{E:NS-Rosati} that $c([L])$ sends $(d_1,d_2)\in \Lambda(T)\otimes \Lambda(T)$ to the endomorphism of $J_C$ given by the following composition 
$$
c([L])(d_1,d_2): J_C\xrightarrow{(d_1,-)\otimes \id_{J_C}} J_T^0(C) \xrightarrow{\chi} J_C\xrightarrow{\phi_M=\phi_{\theta}} J_C^{\vee}\xrightarrow{\chi^{\vee}}  J_T^0(C)^{\vee} \xrightarrow{((d_2,-)\otimes \id_{J_C})^{\vee}} J_C^{\vee}  \xrightarrow{\phi_{\theta}^{-1}} J_C.
$$
We conclude that $c([L])(d_1,d_2)$ is equal to the multiplication on $J_C$ by  $(d_1,\chi)\cdot (d_2,\gamma)$, which is equivalent to say that $c([L])=(\gamma\otimes \gamma)\otimes \id_{J_C}$. 
\end{proof}
 
 The results of this subsection are already sufficient to compute the relative Picard group of $\bg{T}^d$ in genus $0$.

 \begin{proof}[Proof of Theorem \ref{BunTg0}]
 First of all, since the Jacobian of a curve of genus $0$ is trivial, it follows that the morphism $\Psi_T:\text{J}_{T,0,n}\to\mathcal M_{0,n}$ is an isomorphism. 
 Hence, from the exact sequence \eqref{E:Pic-gerbe}, we deduce that the weight function 
 \begin{equation*}
\RPic\left(\mathrm{Bun}_{T,0,n}^d\right)\xrightarrow{w_T^d} \Lambda^*(T),
\end{equation*}
 is an injective  homomorphism. The weight function applied to the tautological classes in $\Pic( \mathrm{Bun}_{T,0,n}^d)$ has been computed in Proposition \ref{P:restr} and it coincides with the formula given in Theorem \ref{BunTg0}\eqref{BunTg0-1}.   The functoriality of the homomorphism   \eqref{seq-bunTg0} will follow, once we will have proved that 
 $\RPic\left(\mathrm{Bun}_{T,0,n}^d\right)$ is generated by tautological classes,  from the functoriality properties of the Deligne pairing and the determinant of cohomology.
 In order to prove the remaining statements in Theorem \ref{BunTg0}, we will  distinguish the two cases $n>1$ and $n=0$.
 
 \un{Case I:}  $n>0$.
 
 If we fix an isomorphism $T\cong \bbG_m^r$ and use the same notation as in the statement of Theorem \ref{BunTg0}\eqref{BunTg0-3}, we get that 
 $$w_T^d(\langle (e_i,0),(0,f_1)\rangle)=e_i \text{ for any } i=1,\ldots, r.$$
 We deduce that the weight function $w_T^d$ is an isomorphism and that the elements $\left\{ \langle (e_i,0),(0,f_1)\rangle\right\}_{i=1}^r$ form a basis of  $\RPic\left(\mathrm{Bun}_{T,0,n}^d\right)$, which is therefore generated by tautological classes.

 \un{Case II:} $n=0$.

First of all, with the notation of Theorem \ref{BunTg0}\eqref{BunTg0-3} and using  \eqref{E:tau-dual}, we get that 
$$
w_T^d\left(d_{\pi}(\omega_{\pi}^{\frac{(d,\epsilon_i)}{2}}\otimes \mt L_{\epsilon_i})\right)=w_T^d\left(\mathscr L(\epsilon_i)^{1-(d,\epsilon_i)}\otimes  
		\langle \epsilon_i, \epsilon_i\rangle^{\frac{(d,\epsilon_i)}{2}} \right)=(1-d_i)(d_i+1)\epsilon_i+\frac{d_i}{2}2d_i\epsilon_i=\epsilon_i.
$$
This implies that 
\begin{equation}\label{E:Im-incl}
 \{\chi\in  \Lambda^*(T) \: : (d,\chi)\in 2\bbZ\}\subseteq \Im w_T^d.
\end{equation}
It remains to prove that equality holds, to which also implies that the elements $\left\{d_{\pi}(\omega_{\pi}^{\frac{(d,\epsilon_i)}{2}}\otimes \mt L_{\epsilon_i})\right\}_{i=1}^r$ form a basis of $\RPic(\mathrm{Bun}_{T,0,0}^d)$. 

Consider the morphism $F_T:\mathrm{Bun}^d_{T,0,1}\to\mathrm{Bun}^d_{T,0,0}$ forgetting the section , which is also the universal curve over $\mathrm{Bun}^d_{T,0,0}$. The pull-back along $F_T$ induces an inclusion of Picard groups
$$
F_T^*:\Pic(\mathrm{Bun}^d_{T,0,0})\hookrightarrow \Pic(\mathrm{Bun}^d_{T,0,1}).
$$
Using the explicit descriptions $\cM_{0,0}\cong \cB\PGL_2$ and $\cM_{0,1}\cong \cB(\Ga\rtimes \Gm)$, it is easy to see that $\Pic(\cM_{0,0})=\Hom(\PGL_2,\Gm)=0$ and $\Pic(\cM_{0,1})=\Hom(\Ga\rtimes \Gm, \Gm)=\Hom(\Gm,\Gm)=\bbZ$ generated by $\psi_1=\sigma_1^*(\omega_{\pi})$. 
Therefore, we conclude that $\RPic(\mathrm{Bun}^d_{T,0,0})=\Pic(\mathrm{Bun}^d_{T,0})$ and, using the Case I proved above, that any line bundle on  $\mathrm{Bun}^d_{T,0,1}$ can be written as $\langle (\chi, 0), (0, f_1)\rangle\otimes \psi_1^n=\sigma_1^*(\mt L_{\chi}\otimes \omega_{\pi}^n)$ for some unique $\chi\in \Lambda^*(T)$ and some unique $n\in \bbZ$. Note also that $w_T^d(\sigma_1^*(\mt L_{\chi}\otimes \omega_{\pi}^n))=\chi$, as follows from the explicit formula for $w_T^d$. 

Consider now a line bundle $\cN\in \Pic(\mathrm{Bun}_{T,0,0}^d)=\RPic(\mathrm{Bun}_{T,0,0}^d)$ and let $\chi:=w_T^d(\cN)\in \Lambda^*(T)$.   Since the pull-back $F_T^*$ commutes with the weight functions for $\mathrm{Bun}_{T,0,0}^d$ and for $\mathrm{Bun}_{T,0,1}^d$ (which we have been denoting with the same symbol $w_T^d$), we have that $F_T^*(\cN)=\sigma_1^*(\mt L_{\chi}\otimes \omega_{\pi}^n)$ for some $n\in \bbZ$. Now $F_T^*(\cN)$ is trivial on the geometric fibers of $F_T$ while $\sigma_1^*(\mt L_{\chi}\otimes \omega_{\pi}^n)$ has $F_T$-relative degree equal to  $(d,\chi)-2n$. This is only possible if $(d, \chi)=2n$, which implies that 
$$
\chi \in  \{\chi\in  \Lambda^*(T) \: : (d,\chi)\in 2\bbZ\}.
$$
Hence, equality holds in \eqref{E:Im-incl} and we are done. 
 \end{proof}

\subsection{The relative Picard group of $\bg{T}^d$ in genera $g\geq 1$}\label{S:RPicT}

The aim of this  subsection is to prove Theorem \ref{BunT}.
We will first exhibit explicit generators of $\RPic(\bg{T}^d)$ for  $g\geq 1$, thus proving Theorem \ref{BunT},\eqref{BunT2} and then deduce from it Theorem \ref{BunT}\eqref{BunT1}. 

We fix an isomorphism  $T\cong \bbG_m^r$ which induces an isomorphism $\Lambda^*(T)\cong\Lambda^*(\bbG_m^r)=\bbZ^r$. Then we can identify the objects in $\bg{T}$ with the vector bundles of rank $r$ which are direct sums of line bundles. According to Theorem \ref{concomp}, the connected components of $\bg{T}$ correspond to the $r$-uples $d:=(d_1,\ldots,d_r)$ of integers, which correspond to the degrees of each line bundle in the splitting. 

We will first focus on the case $d=(0,\ldots, 0)$ and $n\geq 1$, and then deal with the other cases.

\begin{lem}\label{RigBunT0mark}
Assume $n\geq 1$ and $g\geq 1$.  The group $\RPic(\JT^0)$ is freely generated by
$$\def\arraystretch{1.5}\begin{array}{ll}
	\left\langle\mathcal L_{e_i}\big((g-1)\sigma_1\big),\omega_{\pi}\big((2-2g)\sigma_1\big)\right\rangle&\text{for } i=1,\ldots,r, \quad \text{ if } g\geq 2,\\
	\left\langle\mathcal L_{e_i}\big((g-1)\sigma_1\big),\oo(\sigma_j-\sigma_{j+1})\right\rangle& \text{for }i=1,\ldots,r, \text{ and }j=1,\ldots,n-1,\\
	 d_\pi\left(\mathcal L_{e_i}\big((g-1)\sigma_1\right) &\text{for } i=1,\ldots,r,\\
d_\pi\left(\mathcal L_{e_i}\otimes\mathcal L_{e_k}((g-1)\sigma_1)\right)& \text{for }1\leq i<k\leq r.
	\end{array}
$$
\end{lem}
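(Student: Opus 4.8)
The plan is to use the hypothesis $n\geq 1$ to regard the rigidification $\Psi_T^0:\JT^0\to\Mg$ as an abelian stack, namely the relative Jacobian $\Lambda(T)\otimes\operatorname{Pic}^0(\Cg/\Mg)$ with zero section the trivial bundle, and to compute its relative Picard group by separating a ``dual abelian scheme'' part from a Néron--Severi part. As a preliminary step I would check, via the weight formulas of Proposition \ref{P:restr}\eqref{P:restr1}, that each of the four families of classes in the statement has weight zero: for instance $w_T^0\big(d_{\pi}(\mathcal L_{e_i}((g-1)\sigma_1))\big)=[(0,e_i)+(g-1)+1-g]\,e_i=0$, and likewise the Deligne pairings against a $\pi$-relatively degree-$0$ class have zero weight by bilinearity. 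By the gerbe sequence \eqref{E:Pic-gerbe} they therefore descend along $(\Pi_T^0)^*$ to genuine line bundles on $\JT^0$, so it remains to prove that they form a free basis of $\RPic(\JT^0)$.

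The heart of the argument is the exact sequence
$$0\to \Lambda^*(T)\otimes\operatorname{RelPic}^0(\Cg)\to \RPic(\JT^0)\xrightarrow{\gamma}\Bil^s(\Lambda(T))\to 0.$$
I would obtain it by restricting to the generic fibre of $\Psi_T^0$ (in the spirit of Proposition \ref{P:fiber}), which identifies $\RPic(\JT^0)$ with $\Pic$ of the generic fibre $J_T^0(C_\eta)$, an abelian variety over $K:=k(\Mg)$. Its $\operatorname{Pic}^0$-part is $\Lambda^*(T)\otimes\operatorname{Pic}^0(\Cg/\Mg)(K)=\Lambda^*(T)\otimes\operatorname{RelPic}^0(\Cg)$, which is \emph{finitely generated precisely because of} the weak Franchetta conjecture (Theorem \ref{franchetta} and Corollary \ref{C:Franch0}); this is the crucial input that turns an a priori enormous group of $K$-points into a lattice. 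The Néron--Severi quotient is computed by Proposition \ref{P:Pic-JTC} to be $\Hom^s(\Lambda(T)\otimes\Lambda(T),\End(J_{C_\eta}))=\Bil^s(\Lambda(T))$, using that the generic Jacobian satisfies $\End(J_{C_\eta})=\bbZ$ (equivalently, that the symplectic monodromy forces every global Néron--Severi class to be a multiple of $\id_{J}$, so only the image of \eqref{E:inc-bs} survives).

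Finally I would match the generators and split the sequence. By Proposition \ref{P:restr}\eqref{P:restr2} the determinant classes restrict, under \eqref{E:iso-b}, to
$$\wt\gamma_T^0(C)\big(d_{\pi}(\mathcal L_{e_i}(\ldots))\big)=e_i\otimes e_i,\qquad \wt\gamma_T^0(C)\big(d_{\pi}(\mathcal L_{e_i}\otimes\mathcal L_{e_k}(\ldots))\big)=e_i\otimes e_i+e_k\otimes e_k+(e_i\otimes e_k+e_k\otimes e_i),$$
and since $\{e_i\otimes e_i\}_i\cup\{e_i\otimes e_k+e_k\otimes e_i\}_{i<k}$ is a $\bbZ$-basis of $(\Lambda^*(T)\otimes\Lambda^*(T))^s\cong\Bil^s(\Lambda(T))$, a unimodular triangular change of basis shows that the $r+\binom{r}{2}$ determinants split $\gamma$ and realize the last two families of the statement. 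The Deligne pairings $\langle\mathcal L_{e_i}((g-1)\sigma_1),N\rangle$, as $N$ runs over the generators of $\operatorname{RelPic}^0(\Cg)$ from Corollary \ref{C:Franch0}, map isomorphically onto $\Lambda^*(T)\otimes\operatorname{RelPic}^0(\Cg)$ by the Deligne--Poincaré description \eqref{E:jTC} of the dual abelian scheme; for $g\geq 2$ the generator $\omega_{\pi}((2-2g)\sigma_1)$ produces the first family and the $\oo(\sigma_j-\sigma_{j+1})$ the second, while for $g=1$ the Franchetta relation $\omega_{\pi}=0$ removes exactly the first family. A rank count ($rn+r+\binom{r}{2}$ for $g\geq 2$ and $rn+\binom{r}{2}$ for $g=1$) then confirms freeness on the stated basis.

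The main obstacle is the genus-one case. There $\mathcal M_{1,n}$ is not generically a scheme, since the elliptic involution gives a generic $\bbZ/2$ stabilizer, so the ``generically a scheme'' hypothesis of Proposition \ref{P:fiber} is not literally met and $\JT^0$ is an abelian stack rather than an abelian scheme. I would handle this either by working with the relative Picard functor of $\JT^0/\Mg$ directly (which makes sense over any base and is computed fibrewise as above), or by pulling back to a generically schematic level cover of $\Mg$ and descending, tracking that the involution acts by $-1$ on the antisymmetric ($\operatorname{Pic}^0$) part and trivially on the symmetric part --- consistent with the disappearance of the $\omega$-generator in genus one. The secondary technical point, namely the input $\End(J_{C_\eta})=\bbZ$ used to pin down $\Bil^s(\Lambda(T))$ as the full Néron--Severi group, is classical but must be invoked (or replaced by the monodromy argument).
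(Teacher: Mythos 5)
Your proposal follows the same conceptual skeleton as the paper's proof: both split $\RPic(\JT^0)$ into a ``dual abelian'' part, identified with $\Lambda^*(T)\otimes\operatorname{RelPic}^0(\Cg)$ via the weak Franchetta conjecture (Corollary \ref{C:Franch0}), and a N\'eron--Severi part isomorphic to $\Bil^s(\Lambda(T))$, and both then match the four families of classes against bases of the two pieces exactly as you do (your unimodular triangular change of basis is the computation the paper leaves implicit when it asserts that the determinant classes freely generate $\NS(J_C^r)$). The genuine difference is where the exact sequence comes from and where the endomorphism input enters. The paper uses an abelian-stack sequence modelled on \cite[Prop.~3.6]{FP16},
$$0\to\Hom_{\Mg}\big(\Mg,(\JT^0)^\vee\big)\xrightarrow{\nu}\RPic(\JT^0)\xrightarrow{\gamma_r(C)}\NS(J_C^r),$$
in which the N\'eron--Severi restriction is taken at a \emph{closed} fiber over a curve $C$ chosen, by Mori's theorem \cite{Mo76}, so that $\End(J_C)=\bbZ$; the dual part $\Hom_{\Mg}(\Mg,(\JT^0)^\vee)$ is then identified with $\operatorname{RelPic}^0(\Cg)^{\oplus r}$ by hand, via the principal polarization induced by $\bigotimes_i d_\pi(\mathcal L_{e_i}((g-1)\sigma_1))$. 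You instead restrict to the \emph{generic} fiber via Proposition \ref{P:fiber} and invoke $\End=\bbZ$ for the generic Jacobian (or big symplectic monodromy). Both inputs are legitimate, and your route is slightly more self-contained on the endomorphism side; but the paper's choice of a closed fiber buys one thing yours does not: as used in the paper, the sequence makes no genericity demands on the base $\Mg$.

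That is exactly where your write-up has its one genuine gap: the case $(g,n)=(1,1)$. (Your diagnosis is also too pessimistic: for $(E,p_1,\ldots,p_n)$ with $n\geq 2$ the involution fixing $p_1$ moves a generic $p_2$, so $\cM_{1,n}$ \emph{is} generically a scheme for $n\geq 2$ and your argument goes through verbatim there; only $\cM_{1,1}$ has generic $\bbZ/2$ stabilizer.) For $(1,1)$, Proposition \ref{P:fiber} is simply not applicable, and your two sketched fixes are not equally viable. The level-cover route is more delicate than you indicate: over $\cM_{1,1}[m]$ the group $\operatorname{RelPic}^0$ of the universal curve acquires the $m$-torsion sections coming from the level structure, so the pull-back on relative Picard groups is far from surjective, and identifying its image requires an actual invariance-and-descent argument under $\mathrm{SL}_2(\bbZ/m\bbZ)$ --- injectivity of pull-back (Lemma \ref{base-change}) alone does not suffice, because you must also show that the pulled-back global generators span the invariant classes. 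The ``relative Picard functor'' route is sound, but it amounts to reproving the abelian-stack sequence that the paper imports from \cite{FP16}, i.e.\ it collapses your proof into the paper's. So: correct strategy, correct generator matching, correct use of Franchetta and of Propositions \ref{P:restr} and \ref{P:Pic-JTC}, but the $(1,1)$ case must be handled by the abelian-stack argument (or by a genuinely worked-out descent), not by the generic-fiber shortcut.
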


\begin{proof}Set $\{J_r\to\mathcal M\}:=\{\JT^0\to \mathcal M_{g,n}\}$, where $r=\dim T$. We remark that $J_r\to\mathcal M$ is an abelian stack over $\mathcal M$, i.e., a representable morphism of stacks such that every geometric fiber is an abelian variety. The identity section is given by the trivial vector bundle $\oo_{C_{g,n}}^{r}$ on the universal curve of $\mathcal M_{g,n}$. We recall the following facts:
	\begin{enumerate}[(i)]
		\item  There exists an exact sequence of abstract groups:
		\begin{equation}\label{E:Picabst}
		0\to\text{Hom}_{\mathcal M}\left(\mathcal M, J^\vee_r\right)\xrightarrow{\nu} \RPic(J_r)\xrightarrow{\gamma_r(C)}\NS(J_C^{r}),
		\end{equation}
		where $J^\vee_r$ is the dual abelian stack of $J_r$ and $\gamma_r(C)$ is the restriction to the Neron-Severi group of $r$ copies of the Jacobian of some curve $C$.
		 It can be proved using an argument similar to the proof of \cite[Proposition 3.6]{FP16}.
		\item \label{gen-curve} There exists a curve of genus $g\geq 1$ over the base field $k$ such that the natural homomorphism $\id_{J_C}:\bbZ\to \End(J_C)$ is an isomorphism (see \cite{Mo76}).
	\end{enumerate}

If $r=1$, then the line bundle $d_\pi\left(\mathcal L_{e_1}((g-1)\sigma_1\right)$ induces a principal polarization on $J_1$. Since $J_r\cong J_1\times_{\mathcal M}\cdots\times_{\mathcal M}J_1$, it follows that the line bundle $\bigotimes_{i=1}^rd_\pi\left(\mathcal L_{e_i}((g-1)\sigma_1\right)$ induces a principal polarization on $J_r$. This implies that we have the following isomorphism of abelian stacks
$$
\begin{array}{rcc}
J_r&\xrightarrow{\cong}& J_r^{\vee}\\
(C\to S,\{\sigma_j\},\bigoplus_{i=1}^r F_i)&\mapsto& \left(C\to S, \{\sigma_j\}, \bigoplus_{i=1}^r d_\pi\Big( F_i\otimes\mathcal L_{e_i}\big((g-1)\sigma_1)\Big)\otimes d_\pi\Big(\mathcal L_{e_i}\big((g-1)\sigma_1)\Big)^{-1}\right).
\end{array}
$$
Using the above isomorphism and the identification $\text{Hom}_{\mathcal M}(\mathcal M,J_1)=\operatorname{RelPic^0}(\mathcal C_{g,n})$, we deduce an isomorphism of abelian groups: 
$$
\begin{array}{ccc}
\RPic^0(\mathcal C_{g,n})^{\oplus r}=\text{Hom}_{\mathcal M}(\mathcal M,J_r)&\xrightarrow{\cong}& \text{Hom}_{\mathcal M}\left(\mathcal M, J^\vee_r\right)\\
\displaystyle\bigoplus_{i=1}^rF_i&\mapsto& \displaystyle\bigoplus_{i=1}^rd_\pi\Big( F_i\otimes\mathcal L\big((g-1)\sigma_1)\Big)\otimes d_\pi\Big(\mathcal L\big((g-1)\sigma_1)\Big)^{-1}.
\end{array}
$$
Using Corollary \ref{C:Franch0} of the weak Franchetta's conjecture (see Theorem \ref{franchetta}) and the above isomorphism, we deduce that the group $\text{Hom}_{\mathcal M}\left(\mathcal M, J^\vee_r\right)$ is freely generated by:
\begin{equation*}\def\arraystretch{1.5}\begin{array}{ll}
d_\pi\Big( \omega_{\pi}\big((2-2g)\sigma_1)\otimes\mathcal L_{e_i}\big((g-1)\sigma_1)\Big)\otimes d_\pi\Big(\mathcal L_{e_i}\big((g-1)\sigma_1)\Big)^{-1}&\text{for } i=1,\ldots,r, \quad \text{ if } g\geq 2,\\
d_\pi\Big( \oo(\sigma_j-\sigma_{j+1})\otimes\mathcal L_{e_i}\big((g-1)\sigma_1)\Big)\otimes d_\pi\Big(\mathcal L_{e_i}\big((g-1)\sigma_1)\Big)^{-1}& \text{for }i=1,\ldots,r, \text{ and }j=1,\ldots,n-1.
\end{array}
\end{equation*}
Using \eqref{Del-pair}, we see that the image of the homomorphism $\nu$ in \eqref{E:Picabst} is freely generated by:
\begin{equation}\label{E:Imnu}
\def\arraystretch{1.5}\begin{array}{ll}
\Big\langle\omega_{\pi}\big((2-2g)\sigma_1),\mathcal L_{e_i}\big((g-1)\sigma_1\big)\Big\rangle&\text{for } i=1,\ldots,r, \quad \text{ if } g\geq 2,\\
\Big\langle\oo(\sigma_j-\sigma_{j+1}),\mathcal L_{e_i}\big((g-1)\sigma_1\big)\Big\rangle& \text{for }i=1,\ldots,r, \text{ and }j=1,\ldots,n-1.
\end{array}
\end{equation}
Consider now the following line bundles on $\bg{T}^0$:
\begin{equation}\label{genind}\def\arraystretch{1.5}\begin{array}{ll}
d_\pi\Big(\mathcal L_{e_i}\big((g-1)\sigma_1)\Big), &\text{for } i=1,\ldots,r,\\
d_\pi\Big(\mathcal L_{e_i}\otimes\mathcal L_{e_k}((g-1)\sigma_1)\Big),& \text{for }1\leq i<k\leq r.
\end{array}
\end{equation}
By Proposition \ref{P:restr}, the weights of the above line bundles are $0$. Hence, by the exact sequence \eqref{E:Pic-gerbe}, they descend to line bundles on $\JT^0$. If we take a  curve $C$ as in \eqref{gen-curve}, the image of these line bundles under the restriction homomorphism $\gamma_r(C)$ of \eqref{E:Picabst} freely generate $\NS(J_C^r)$ because of Proposition \ref{P:restr}\eqref{P:restr2} (see also \S \ref{Sec:int-forms}).  

Using the exact sequence \eqref{E:Picabst}, we deduce that $\RPic(J_r)$ is freely generated by the lines bundles in \eqref{E:Imnu} together with the ones in \eqref{genind}, which concludes our proof.
\end{proof}

\begin{lem}\label{BunT0mark}Assume $n\geq 1$ and $g\geq 1$. 
The group $\operatorname{RelPic}(\bg{T}^0)$ is freely generated by
	$$
	\def\arraystretch{1.5}\begin{array}{ll}
	\Big\langle\mathcal L_{e_i}\big((g-1)\sigma_1\big),\omega_{\pi}\big((2-2g)\sigma_1\big)\Big\rangle&\text{for } i=1,\ldots,r, \quad \text{ if } g\geq 2,\\
	\Big\langle\mathcal L_{e_i}\big((g-1)\sigma_1\big),\oo(\sigma_j-\sigma_{j+1})\Big\rangle& \text{for }i=1,\ldots,r, \text{ and }j=1,\ldots,n-1,\\
	d_\pi\Big(\mathcal L_{e_i}\big((g-1)\sigma_1)\Big) &\text{for } i=1,\ldots,r,\\
	d_\pi\Big(\mathcal L_{e_i}\otimes\mathcal L_{e_k}((g-1)\sigma_1)\Big)& \text{for }1\leq i<k\leq r,\\
	d_\pi\Big(\mathcal L_{e_i}(g\cdot\sigma_1)\Big)&\text{for } i=1,\ldots,r.\\
	\end{array}
	$$
\end{lem}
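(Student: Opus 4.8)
The proof will rest on the $T$-gerbe exact sequence \eqref{E:Pic-gerbe} attached to the rigidification $\Pi_T^0:\bg{T}^0\to \JT^0$, namely
$$
0\to\RPic(\JT^{0})\xrightarrow{(\Pi_T^{0})^*} \RPic(\bg{T}^{0})\xrightarrow{w_T^{0}}\Lambda^*(T),
$$
combined with Lemma \ref{RigBunT0mark}. By \eqref{E:Pic-gerbe} the pull-back $(\Pi_T^0)^*$ is injective with image $\ker(w_T^0)$; and by Lemma \ref{RigBunT0mark} the group $\RPic(\JT^0)$ is freely generated by the four families of line bundles listed there, each of which has $T$-weight zero (this is exactly why they descend to $\JT^0$, as the second argument of each Deligne pairing has vanishing relative degree and trivial $T$-action). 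Since a weight-zero bundle descended to $\JT^0$ pulls back along $(\Pi_T^0)^*$ to the bundle with the same defining formula on $\bg{T}^0$, I conclude that the first four families in the statement freely generate $\ker(w_T^0)\subseteq \RPic(\bg{T}^0)$.

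It then remains to pin down the image of $w_T^0$ and to produce a splitting, and here the fifth family enters. The plan is to compute its weight via Proposition \ref{P:restr}\eqref{P:restr1}. By the definition \eqref{E:taut} of the tautological bundles one has $d_{\pi}(\mathcal L_{e_i}(g\cdot\sigma_1))=\mathscr L(e_i,(g,0,\ldots,0))$, so setting $d=0$, $\chi=e_i$ and $|\zeta|=g$ in the formula $w_T^d(\mathscr L(\chi,\zeta))=[(d,\chi)+|\zeta|+1-g]\chi$ gives
$$
w_T^0\bigl(d_{\pi}(\mathcal L_{e_i}(g\cdot\sigma_1))\bigr)=[\,0+g+1-g\,]\,e_i=e_i.
$$
As $\{e_i\}_{i=1}^r$ is a basis of $\Lambda^*(T)\cong\bbZ^r$, this simultaneously shows that $w_T^0$ is surjective — so that \eqref{E:Pic-gerbe} is in fact a short exact sequence — and that $e_i\mapsto d_{\pi}(\mathcal L_{e_i}(g\cdot\sigma_1))$ defines a section of $w_T^0$.

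The splitting furnished by the fifth family then identifies $\RPic(\bg{T}^0)$ with the direct sum $\ker(w_T^0)\oplus \Lambda^*(T)$, whose first summand is freely generated by the pull-backs of the four families from Lemma \ref{RigBunT0mark} and whose second summand is freely generated by the fifth family; taking the union of these generators yields the asserted free basis. I expect no genuine obstacle in this argument, since all the substantive work is already contained in Lemma \ref{RigBunT0mark} (which itself rests on the weak Franchetta conjecture). The only two points that deserve care are the identification of the pull-backs of the weight-zero descended bundles with the bundles of the same name on $\bg{T}^0$, and the surjectivity of the weight map; both are immediate, the former from the gerbe descent in \eqref{E:Pic-gerbe} and the latter from the single weight computation above.
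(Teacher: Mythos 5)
Your proposal is correct and follows essentially the same route as the paper's own proof: the paper likewise combines the exact sequence \eqref{E:Pic-gerbe} with Lemma \ref{RigBunT0mark} and uses Proposition \ref{P:restr}\eqref{P:restr1} to check that the weights $w_T^0\bigl(d_\pi(\mathcal L_{e_i}(g\cdot\sigma_1))\bigr)=e_i$ freely generate $\Lambda^*(T)$. Your write-up merely makes explicit the splitting and the identification of the pulled-back generators, which the paper leaves implicit.
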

\begin{proof}
Consider the exact sequence  \eqref{E:Pic-gerbe}. By Proposition \ref{P:restr}\eqref{P:restr1}, the images via the weight function $w_T^0$ of the line bundles 
$$
d_\pi\Big(\mathcal L_{e_i}(g\cdot\sigma_1)\Big) \text{ for } i=1,\ldots,r
$$
freely generate $\Lambda^*(T)$. By combining this with Lemma \ref{RigBunT0mark}, the proof follows. 
\end{proof}

We are now ready to prove the second part of Theorem \ref{BunT}.

\begin{proof}[Proof of Theorem \ref{BunT}\eqref{BunT2}] 
We will distinguish three cases. 

\un{Case I:} $n\geq 1$ and $d=0$. 

 By Lemma \ref{BunT0mark},  for $g\geq 2$ (resp., for $g=1$), the relative Picard group $\RPic(\bg{T}^0)$ is free of rank $rn+r(r+1)/2+r$ (resp., $rn+r(r+1)/2$), which is equal to the number of line bundles appearing in Theorem \ref{BunT}\eqref{BunT2}. Then, to prove the Theorem, it is enough to show that the generators in Lemma \ref{BunT0mark} can be expressed as integral combinations of the line bundles appearing in  Theorem \ref{BunT}\eqref{BunT2}. This follows from the following formulas in $\RPic(\bg{T}^0)$  (in additive notation), which are obtained using Properties (\ref{Del-pair-comp}) and (\ref{Del-pair}) of the Deligne pairing and Equation \eqref{E:tau-dual}:
$$\def\arraystretch{1.5}
\begin{array}{lcl}
\Big\langle\mathcal L_{e_i}\big((g-1)\sigma_1\big),\omega_{\pi}\big((2-2g)\sigma_1\big)\Big\rangle&=& \langle \cL_{e_i}, \omega_{\pi}\rangle +(2-2g) \langle (e_i,0),(0,f_1)\rangle=\\
&& \langle(e_i,0),(e_i,0)\rangle-2\cdot\mathscr L(e_i,0)+(2-2g)\cdot\left\langle (e_i,0),(0,f_1)\right\rangle,\\
\left\langle\mathcal L_{e_i}\big((g-1)\sigma_1\big),\oo(\sigma_j-\sigma_{j+1})\right\rangle&=&\left \langle (e_i,0),(0,f_j)\right\rangle-\left\langle (e_i,0),(0,f_{j+1})\right\rangle,\\
d_\pi\left(\mathcal L_{e_i}(m\cdot\sigma_1)\right)&=&
m\cdot\left\langle (e_i,0),(0,f_{1})\right\rangle+\mathscr L(e_i,0) \quad \text{ for any } m\in \bbZ,\\
d_\pi\left(\mathcal L_{e_i}\otimes\mathcal L_{e_k}((g-1)\sigma_1)\right)&=&\left \langle \cL_{e_i}\otimes \cL_{e_k}, \cO((g-1)\sigma_1)\right\rangle+d_\pi(\cL_{e_i}\otimes \cL_{e_k})=\\
\end{array}$$
$$=(g-1)\cdot \Big(\left \langle (e_i,0),(0,f_1)\right\rangle+\left\langle (e_k,0),(0,f_{1})\right\rangle\Big)
+\left \langle (e_i,0),(e_k,0)\right\rangle+\mathscr L(e_i,0)+\mathscr L(e_k,0).
$$

\un{Case II:} $n\geq 1$ and $d=(d_1,\ldots, d_r)$ arbitrary.

Consider the isomorphism over $\Mg$
$$\begin{aligned}
\mathfrak{t}:\bg{T}^d& \xrightarrow{\cong} \bg{T}^0\\
(C\to S, \{\sigma_j\}, \oplus_i F_i) & \mapsto (C\to S, \{\sigma_j\}, \oplus_ i F_i(-d_i \sigma_1))
\end{aligned}$$
The induced morphism $\wt{\mathfrak{t}}: \cC_{T,g,n}^d\to \cC_{T,g,n}^0$ on the universal families satisfies 
\begin{equation}\label{E:tauL}
\wt{\mathfrak{t}}^*(\cL_{e_i})=\cL_{e_i}(-d_i\sigma_1)\quad \text{ for any } i=1,\ldots, r.
\end{equation}
By Case I, the relative Picard group of $\bg{T}^0$ is freely generated by the line bundles appearing in Theorem \ref{BunT}\eqref{BunT2}. Hence, the relative Picard group of $\bg{T}^d$ is freely generated by the pull-backs of these line bundles along the isomorphism $\mathfrak{t}$. Using the functoriality of the determinant of cohomology and of the Deligne pairing, together with Properties \eqref{Del-pair} and \eqref{Del-pair-comp},   and Relation \eqref{E:tauL}, we get the following relations in $\RPic(\bg{T}^d)$ (in additive notation)
\begin{equation}\label{E:pulltau}
\begin{aligned}
& \mathfrak{t}^*(\langle (e_i,0), (0,f_j)\rangle)=\langle (e_i,0), (0,f_j)\rangle, \\
& \mathfrak{t}^*(\langle (e_i,0), (e_k,0)\rangle)=\langle (e_i,0), (e_k,0)\rangle-d_i\langle (e_k,0), (0,f_1)\rangle -d_k \langle (e_i,0), (0,f_1) \rangle, \\
& \mathfrak{t}^*\scr L(e_i, 0)=\scr L(e_i, 0)-d_i\langle (e_i, 0), (0,f_1)\rangle.
\end{aligned}
\end{equation}
From the above relations, it follows that $\RPic(\bg{T}^d)$ is freely generated by the line bundles  appearing in Theorem \ref{BunT}\eqref{BunT2}.

\un{Case III:} $n=0$ and $d=(d_1,\ldots, d_r)$ arbitrary.

We have to show that $\RPic(\mathrm{Bun}^d_{T, g})$ is freely generated  by the line bundles
\begin{equation}\label{gen-n=0}
\displaystyle\begin{array}{ll}
\left\langle (e_i,0),(e_k,0)\right\rangle, &\text{for } 1\leq i\leq k\leq r,\\
\mathscr L(e_i,0),&\text{for } i=1,\ldots,r.
\end{array}
\end{equation}
Consider the morphism $F_T:\mathrm{Bun}^d_{T,g,1}\to\mathrm{Bun}^d_{T,g}$ forgetting the section. Since $F_T$ is the pull-back of the universal family $\cC_g=\cM_{g,1}\to \cM_g$ along the Stein morphism $\Phi_T^d: \mathrm{Bun}^d_{T,g}\to \cM_g$ (see Proposition \ref{P:SteinG}), Lemma \ref{base-change} implies that the pull-back along $F_T$ induces an inclusion of relative Picard groups 
$$
F_T^*:\RPic(\mathrm{Bun}^d_{T,g})\hookrightarrow \RPic(\mathrm{Bun}^d_{T,g,1}).
$$
By the definition of the tautological line bundles it follows  that $F_T^*\mathscr L(\chi)=\mathscr L(\chi,0)$ and $F_T^*(\langle (\chi, 0), (\chi',0)\rangle)=\langle (\chi, 0), (\chi',0)\rangle$ for any $\chi, \chi'\in \Lambda^*(T)$. Using this and Case II, it follows that the line bundles in \eqref{gen-n=0} are linearly independent  in $\RPic(\mathrm{Bun}^d_{T,g})$ and that $\RPic(\mathrm{Bun}^d_{T,g,1})$ is freely generated by their pull-back via $F_T^*$ and the line bundles $\left\langle (e_i,0),(0,f_1)\right\rangle$ for $i=1,\ldots,r$. So, in order to conclude the  proof, it is enough to show that 
$$
\bigotimes_{i=1}^r \left\langle (e_i,0),(0,f_1)\right\rangle^{a_i}\in F_T^*\operatorname{RelPic}(\mathrm{Bun}^d_{T,g})\Longrightarrow a_i=0 \text{ for any }i.
$$
Assume that there exists $\mt K:=\bigotimes_{i=1}^r \left\langle (e_i,0),(0,f_1)\right\rangle^{a_i}$ descending to $\operatorname{RelPic}(\mathrm{Bun}^d_{T,g})$. For $i=1,\ldots,r$, let $L_i$ be a line bundle of degree $d_i$ over a curve $C$. We have a cartesian diagram of stacks
\begin{equation}\label{universal}
\xymatrix{
C\ar[d]\ar[r]^{\tilde g}&\mathrm{Bun}^d_{T,g,1}\ar[d]^{F_T}\\
\Spec k\ar[r]^{ g}&\mathrm{Bun}^d_{T,g}
}
\end{equation}
where $g$ is the morphism corresponding to the $k$-object $(C,L_1\oplus\cdots\oplus L_r)$. Then $\tilde g$ corresponds to the $C$-object $$(C\times C\xrightarrow{pr_2} C,\Delta,L_1\boxtimes\oo_C\oplus\cdots\oplus L_r\boxtimes\oo_C),$$ where $\Delta:C\to C\times C$ is the diagonal embedding. By (\ref{Del-pair-comp}), we have the following equalities in $\RPic(\mathrm{Bun}_{T,g,1}^d)$
$$
\langle (e_i,0),(0,f_1)\rangle= \sigma_1^*\left(\mathcal L_{e_i}\right).$$
In particular, we have the following equality of line bundles on $C$:
$$
\tilde g^*\left\langle (e_i,0),(0,f_j)\right\rangle\cong \Delta^*\left(L_i\boxtimes\oo_C\right)\cong L_i,
$$
which implies that $\tilde g^*\mt K\cong \bigotimes_{i=1}^rL_i^{a_i}$. Now, if the coefficients $a_i$ are not all zero and the line bundles $L_i$ are generic, then the line bundle $\tilde g^*\mt K$ is non-trivial (using that the genus of $C$ is non zero). This implies that $\mt K$ is not the pull-back of a line bundle on  $\mathrm{Bun}_{T,g}$, as desired. 
\end{proof}

We can finally show that the first part of Theorem \ref{BunT} is implied by the second part.

\begin{proof}[Proof of Theorem \ref{BunT}\eqref{BunT1}]
First of all, because of the bilinearity and symmetry of the Deligne pairing (see \eqref{Del-pair-comp}), the maps $\tau_T$ and $\sigma_T$ are well-defined homomorphisms of groups 
and the image of $\tau_T+\sigma_T$ is  the subgroup of $\RPic\left(\bg{T}^d\right)$ generated by all the line bundles of the form $\langle (\chi, \zeta), (\chi', \zeta')\rangle$, with $\chi, \chi'\in X^*(T)$ and $\zeta, \zeta'\in \bbZ^n$. 
Theorem \ref{BunT}\eqref{BunT2} shows that the map $\tau_T+\sigma_T$ is injective and hence it gives rise to the short exact sequence of abelian groups 
\begin{equation}\label{E:seq-Del}
0\to\Sym^2\Lambda^*(T)\oplus\left(\Lambda^*(T)\otimes\bbZ^n\right)\xrightarrow{\tau_T+\sigma_T}\RPic\left(\bg{T}^d\right)\rightarrow \coker(\tau_T+\sigma_T)\to 0.
\end{equation}
Next, consider the map 
\begin{equation}\label{E:map-tau}
\begin{aligned}
\eta: \Lambda^*(T)& \longrightarrow  \coker(\tau_T+\sigma_T) \\
\chi & \mapsto [\mathscr L(\chi, 0)].
\end{aligned}
\end{equation}
Equation \eqref{E:Del-det} implies that the map $\eta$ is a homomorphism of abelian groups. Moreover, the same equation  \eqref{E:Del-det} also implies that 
 $$\langle (\chi, 0), (0,\zeta)\rangle \equiv \scr L(\chi, \mu)\otimes \scr L(\chi, 0)^{-1} \mod (\Phi_G^d)^*(\Pic(\Mg)),
 $$
which gives that $ [\mathscr L(\chi, 0)]=[\mathscr L(\chi, \zeta)]$ for any $\zeta\in \bbZ^n$. 

Since $\RPic\left(\bg{T}^d\right)$ is generated by tautological line bundles by Theorem \ref{BunT}\eqref{BunT2}, we deduce that the map $\eta$ is surjective and hence it induces an isomorphism 
\begin{equation}\label{E:map-tau2}
\begin{aligned}
\ov\eta: \frac{\Lambda^*(T)}{\ker(\eta)}& \xrightarrow{\cong} \coker(\tau_T+\sigma_T) \\
[\chi] & \mapsto [\mathscr L(\chi, \zeta)],
\end{aligned}
\quad \text{ for any } \zeta\in \bbZ^n.
\end{equation}

We now claim that 
\begin{equation}\label{E:ker-tau}
\ker(\eta)=
\begin{cases}
\{0\} & \text{ if } g\geq 2,\\
2\Lambda^*(T) & \text{ if } g=1.
\end{cases}
\end{equation}
Indeed, Theorem \ref{BunT}\eqref{BunT2} implies easily that $\ker(\eta)$ is trivial if $g\geq 2$. On the other hand, if $g=1$ then the first relation in \eqref{E:tau-dual}
gives that (using that $\omega_{\pi}$ is trivial) 
$$
\mathscr L(\chi, 0)^2=d_{\pi}(\mt L_{\chi})^2\equiv \langle (\chi, 0),(\chi, 0)\rangle=\langle \mt L_{\chi}, \mt L_{\chi}\rangle  \mod \Phi_G^*(\Pic(\Mg)),
$$
which implies that $2\Lambda^*(T)\subseteq \ker(\eta)$. And, moreover, equality holds by Theorem \ref{BunT}\eqref{BunT2}.

By putting together \eqref{E:seq-Del}, \eqref{E:map-tau2} and \eqref{E:ker-tau}, we get the exact sequences \eqref{seq-bunT} and \eqref{seq-bunTg1}, where the map $\rho_T$ is the composition of the surjection $\RPic\left(\bg{T}^d\right)\twoheadrightarrow \coker(\tau_T+\sigma_T)$ with the inverse of the isomorphism of \eqref{E:map-tau2}.

Finally, the functoriality of the exact sequences  \eqref{seq-bunT} and \eqref{seq-bunTg1} follows from the functoriality properties of the Deligne pairing and of the determinant of cohomology. 
\end{proof}

\subsection{The  restriction of the Picard group to the fibers II}\label{Sec:gerbe-fib2}

In this subsection, we complete the results of Subsection \ref{Sec:gerbe-fib} on the  restriction map $\res_T^d(C):\RPic(\bg{T}^d)\to \Pic(\mt J_T^d(C))$ to the  fiber over a geometric point $(C,p_1\ldots, p_n)$ of $\Mg$.

With this aim, we now study for $g\geq 1$ the image and the kernel of the direct sum of the weight function $w_T^d$ and of the following map
\begin{equation}\label{E:rhoTd}
\begin{aligned}
\gamma_T^d:\RPic(\bg{T}^d) & \longrightarrow \Bil^s\Lambda(T),\\
\scr L(\chi, \zeta)) & \mapsto \chi\otimes \chi, \\
\langle (\chi, \zeta), (\chi', \zeta') \rangle & \mapsto \chi\otimes \chi'+\chi'\otimes \chi, 
\end{aligned}
\end{equation}
which is a well-defined homomorphism by Proposition \ref{P:restr}, since the composition of $\gamma_T^d$ with the injective homomorphism $\id_{J_C}:\bbZ\to \End(J_C)$ (for $g\geq 1$) gives the homomorphism $\wt\gamma_T(C):\RPic(\bg{T}^d)\to \NS(J_T^d(C))$ over any geometric point $(C,p_1\ldots, p_n)$ of $\Mg$. 

\begin{prop}\label{P:wT+rhoT}
Assume that $g\geq 1$. Consider the following group
$$H_{g,n}:=
\begin{cases} 
\{(m,\zeta)\in \bbZ\oplus \bbZ^n\: : \: (2g-2)m+|\zeta|=0\} & \text{ if } g\geq 2,\\
\{\zeta\in \bbZ^n\: : \: |\zeta|=0\} & \text{ if } g=1.\\
\end{cases}
$$
There is an exact sequence 
\begin{equation}\label{E:wT+rhoT1}
0 \to \Lambda^*(T)\otimes H_{g,n}  \xrightarrow{j_T^d} \RPic(\bg{T}^d)\xrightarrow{w_T^d\oplus \gamma_T^d} \Lambda^*(T)\oplus   \Bil^s\Lambda(T), 
\end{equation}
where the morphism $j_T^d$ is defined as 
\begin{equation*}
\begin{aligned}
& j_T^d(\chi\otimes (m,\zeta))=  \langle \mt L_{\chi}, \omega_{\pi}^m(\sum_{i=1}^n \zeta_i \sigma_i) \rangle &   \text{ if } g\geq 2,\\ 
& j_T^d(\chi\otimes \zeta) = \langle \mt L_{\chi}, \cO(\sum_{i=1}^n \zeta_i \sigma_i) \rangle & \text{ if } g=1.\\
\end{aligned}
\end{equation*}
Moreover, the image of $w_T^d\oplus \gamma_T^d$ is equal to 
\begin{equation}\label{E:wT+rhoT2}
 \Im(w_T^d\oplus \gamma_T^d)=
 \begin{cases} 
 \Lambda^*(T)\oplus   \Bil^s\Lambda(T) & \text{ if } n\geq 1,\\
  \left\{(\chi, b)\in  \Lambda^*(T)\oplus   \Bil^s\Lambda(T):  
 \begin{aligned}
 & (2g-2)\vert (x,\chi)-b(d,x)+(g-1)b(x,x)\\
& \text{ for any } x\in \Lambda(T)
 \end{aligned}
 \right\} & \text{ if } n=0. 
\end{cases} 
 \end{equation}
\end{prop}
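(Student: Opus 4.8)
The plan is to reduce the whole statement to a finite computation of integer matrices carried out on the explicit free basis of $\RPic(\bg{T}^d)$ furnished by Theorem \ref{BunT}\eqref{BunT2}. First I would record the effect of $w_T^d\oplus\gamma_T^d$ on each basis element, using the formulas of Proposition \ref{P:restr}: one gets $\langle(e_i,0),(0,f_j)\rangle\mapsto(e_i,0)$, then $\langle(e_i,0),(e_k,0)\rangle\mapsto((d,e_k)e_i+(d,e_i)e_k,\ e_i\otimes e_k+e_k\otimes e_i)$, and finally $\mathscr L(e_i,0)\mapsto(((d,e_i)+1-g)e_i,\ e_i\otimes e_i)$, where I identify $\Bil^s(\Lambda(T))$ with $(\Lambda^*(T)\otimes\Lambda^*(T))^s$ via \eqref{E:iso-b}. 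Since these classes generate $\RPic(\bg{T}^d)$ freely, both the kernel and the image of $w_T^d\oplus\gamma_T^d$ become linear algebra over $\bbZ$.

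For the well-definedness of $j_T^d$ I would invoke the bilinearity of the Deligne pairing \eqref{Del-pair-comp}; the inclusion $\Im(j_T^d)\subseteq\ker(w_T^d\oplus\gamma_T^d)$ is then a direct check using \eqref{E:tau-dual} and \eqref{E:Del-det}, which rewrite $\langle\mt L_\chi,\omega_\pi\rangle\equiv\tau_T(\chi\cdot\chi)-2\mathscr L(\chi,0)$ and $\langle\mt L_\chi,\cO(\sigma_i)\rangle=\sigma_T(\chi\otimes f_i)$ in the relative Picard group. A short computation then yields $w_T^d(j_T^d(\chi\otimes(m,\zeta)))=[(2g-2)m+|\zeta|]\chi$ and $\gamma_T^d(j_T^d(\chi\otimes(m,\zeta)))=0$, both of which vanish exactly on $H_{g,n}$. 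To obtain the reverse inclusion and injectivity at once, I would solve the linear system cutting out $\ker(w_T^d\oplus\gamma_T^d)$ on the basis above: matching the diagonal and off-diagonal coefficients in the $\Bil^s(\Lambda(T))$-component forces the mixed pairings $\langle(e_i,0),(e_k,0)\rangle$ with $i<k$ to drop out and leaves, for each index $i$, the relations $\sum_j a_{ij}+2(g-1)b_{ii}=0$ and $c_i=-2b_{ii}$. The resulting kernel generators are precisely $j_T^d(e_i\otimes(0,f_j-f_{j+1}))=\langle\mt L_{e_i},\cO(\sigma_j-\sigma_{j+1})\rangle$ and $j_T^d(e_i\otimes(1,(2-2g)f_1))=\langle\mt L_{e_i},\omega_\pi((2-2g)\sigma_1)\rangle$, whose source vectors form a basis of $\Lambda^*(T)\otimes H_{g,n}$; hence $j_T^d$ is an isomorphism onto $\ker(w_T^d\oplus\gamma_T^d)$, establishing exactness of \eqref{E:wT+rhoT1}. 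The genus-one case is identical once $\omega_\pi$ is set trivial, which suppresses the classes $\mathscr L(e_i,0)$ and the diagonal pairings $\langle(e_i,0),(e_i,0)\rangle$ from the basis.

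For the image \eqref{E:wT+rhoT2} I would separate $n\geq1$ from $n=0$. When $n\geq1$ the classes $\sigma_j^*(\mt L_{e_i})=\langle(e_i,0),(0,f_j)\rangle$ already surject onto $\Lambda^*(T)\oplus0$, after which the remaining generators realize every symmetric form in the second coordinate, so $w_T^d\oplus\gamma_T^d$ is onto. When $n=0$ no such correction classes exist: the projection to $\Bil^s(\Lambda(T))$ is still surjective, but for a fixed form $b$ the attainable first coordinates form a coset of $2(g-1)\Lambda^*(T)$, the ambiguity arising from trading $\langle\mt L_{e_i},\mt L_{e_i}\rangle$ against $2\mathscr L(e_i,0)$, whose difference $\langle\mt L_{e_i},\omega_\pi\rangle$ has image $(2(g-1)e_i,0)$. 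Identifying the base point of the coset with $x\mapsto b(d,x)-(g-1)b(x,x)$ then produces exactly the congruence $(x,\chi)-b(d,x)+(g-1)b(x,x)\equiv0\pmod{2g-2}$.

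The step I expect to be the main obstacle is showing that the $n=0$ congruence, which I can read off directly only on the basis vectors $x=e_i$, is equivalent to the stated condition for all $x\in\Lambda(T)$. The function $x\mapsto(x,\chi)-b(d,x)+(g-1)b(x,x)$ fails to be additive only through the cross term $(g-1)\cdot2b(x,y)=(2g-2)b(x,y)$, which is divisible by $2g-2$; hence the function is additive modulo $2g-2$, and validity on a basis propagates to all of $\Lambda(T)$. In genus one the modulus $2g-2$ degenerates to $0$, so the congruence sharpens to the genuine equality $\chi=b(d,-)$, and I would verify this case by hand, where the projection to $\Bil^s(\Lambda(T))$ is in fact an isomorphism on the image and pins down $\chi$ uniquely.
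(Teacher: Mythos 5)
Your proposal is correct and follows essentially the same route as the paper's own proof: expand an arbitrary class in the free basis of Theorem \ref{BunT}\eqref{BunT2}, evaluate $w_T^d\oplus \gamma_T^d$ on that basis via Proposition \ref{P:restr}, solve the resulting integral linear system for kernel and image (using \eqref{E:tau-dual} to recognize the kernel as the image of $j_T^d$), and handle the ``for all $x\in\Lambda(T)$'' congruence in the $n=0$ case exactly as the paper does, by noting that the defect of additivity of $x\mapsto (x,\chi)-b(d,x)+(g-1)b(x,x)$ is the cross term $(2g-2)b(x,y)$, hence vanishes modulo $2g-2$.

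One local error should be fixed. In genus one it is \emph{only} the diagonal pairings $\langle (e_i,0),(e_i,0)\rangle$ that disappear from the basis of Theorem \ref{BunT}\eqref{BunT2}; the determinant classes $\mathscr L(e_i,0)$ survive. Indeed, triviality of $\omega_\pi$ together with \eqref{E:tau-dual} gives $\langle (e_i,0),(e_i,0)\rangle \equiv 2\,\mathscr L(e_i,0)$ modulo pull-backs from $\Mg$, so it is the pairing that becomes redundant, not the determinant class. Your sentence claiming that both are suppressed is inconsistent with your own subsequent (and correct) claims: with that smaller basis the image of $\gamma_T^d$ would consist only of even symmetric forms, so $w_T^d\oplus\gamma_T^d$ could not be surjective for $g=1$, $n\geq 1$, and for $g=1$, $n=0$ the projection of the image to $\Bil^s(\Lambda(T))$ would fail to be onto, contradicting \eqref{E:wT+rhoT2}. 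With the correct genus-one basis (keeping $\mathscr L(e_i,0)$, whose $\gamma_T^d$-image is $e_i\otimes e_i$), every step of your argument goes through and agrees with the paper.
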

Note that $H_{g,n}=0$ (or, equivalently, the map $w_T^d\oplus \gamma_T^d$ is injective) if and only if either $n=0$ or $g=n=1$.
\begin{proof}
Let us first compute the kernel of $w_T^d\oplus \gamma_T^d$. 
Using Theorem \ref{BunT}\eqref{BunT2} (and the notation of loc. cit.), an element of $\RPic(\bg{T}^d)$ can be written uniquely as
\begin{equation}\label{E:elem-M}
\cM=\sum_{1\leq i \leq j \leq r} a_{ij}\langle (e_i,0),(e_j,0)\rangle +\sum_{1\leq k \leq r} \langle (e_k,0),(0,\zeta^k)\rangle + \sum_{1\leq l \leq r} b_l \mathscr L(e_l,0),
\end{equation}
for some $a_{ij}, b_l \in \bbZ$,  $\zeta^k\in \bbZ^n$, with the property that $a_{ii}=0$ if $g=1$. Using the definition \eqref{E:rhoTd}  of $\gamma_T^d$   and the formula for $w_T^d$ contained in Proposition \ref{P:restr}\eqref{P:restr1}, we compute 
\begin{equation}\label{E:comp-M}
\begin{aligned}
& w_T^d(\cM)= \sum_{1\leq i \leq j \leq r} a_{ij} (d_ie_j+d_je_i)+\sum_{1\leq k \leq r}  |\zeta^k|e_k  +\sum_{1\leq l \leq r} b_l (d_l+1-g) e_l,\\
&\gamma_T^d(\cM)=\sum_{1\leq i \leq j \leq r} a_{ij}(e_i\otimes e_j+e_j\otimes e_i) + \sum_{1\leq l \leq r} b_l e_l\otimes e_l,
\end{aligned}
\end{equation}
where $d=(d_1,\ldots, d_r)$ under the isomorphism $\Lambda(T)\cong \bbZ^r.$  From the above formulas, it follows that 
$$\cM\in \ker(w_T^d\oplus \gamma_T^d) \Leftrightarrow 
\begin{sis} 
& a_{ij}=0 \: \text{ for  } i<j,\\ 
& 2a_{ii}+b_i=0,  \\
& 2a_{ii}d_i+\vert \zeta^i\vert +b_i(d_i+1-g)=0,\\
\end{sis}
\Leftrightarrow 
\begin{sis} 
& a_{ij}=0 \: \text{ for  } i<j,\\ 
& b_i=-2a_{ii}, \\ 
& (a_{ii}, \zeta^i)\in H_{g,n}.
\end{sis}
$$ 
In other words, $\cM$ belongs to the kernel of $w_T^d\oplus \gamma_T^d$ if and only if $\cM$ has the following form
$$
\cM=\sum_{\substack{1\leq i \leq r\\ (a_{ii}, \zeta^i)\in H_{g,n}}} \left[a_{ii}\langle \mt L_{e_i}, \mt L_{e_i}\rangle-2a_{ii} d_{\pi}(\mt L_{e_i}) +\left\langle \mt L_{e_i}, \cO\left(\sum_k (\zeta^i)_k\sigma_k\right)\right\rangle \right]= $$
$$=\sum_{\substack{1\leq i \leq r\\ (a_{ii}, \zeta^i)\in H_{g,n}}} \left\langle \mt L_{e_i}, \omega_{\pi}^{a_{ii}}\left(\sum_k (\zeta^i)_k\sigma_k\right)\right\rangle,
$$
where the second equality follows from the first formula in \eqref{E:tau-dual}. This shows that $j_T^d$ is an injective homomorphism whose image is equal to the kernel of $w_T^d\oplus \gamma_T^d$.  

Now,  it is straightforward from \eqref{E:comp-M} that $w_T^d\oplus \gamma_T^d$ is surjective if $n\geq 1$. It remains to determine 
 the image of $w_T^d\oplus \gamma_T^d$ for $n=0$. Let $\{\epsilon_1,\ldots, \epsilon_r\}$ be the basis of $\Lambda(T)$ dual to the canonical  basis $\{e_1,\ldots, e_r\}$ of $\Lambda^*(T)\cong \bbZ^r$. From \eqref{E:comp-M}, it follows that  an element $(\chi, b)\in \Lambda^*(T)\oplus   \Bil^s\Lambda(T)$ is equal to the image via $w_T^d\oplus \gamma_T^d$ of  an element $\cM\in \RPic(\bg{T}^d)$ as in \eqref{E:elem-M} if and only if 
\begin{equation}\label{E:cond-im1}
\begin{aligned}
& b(\epsilon_i,\epsilon_j)=a_{ij} \quad \text{ for } i<j,\\
& b(\epsilon_i,\epsilon_i)=2a_{ii}+c_i,\\
& \chi(\epsilon_i)=\sum_{i\leq j} d_j a_{ij} +\sum_{j\leq i} d_j a_{ji} +c_i(d_i+1-g).
\end{aligned}
\end{equation}
Using the first two conditions in \eqref{E:cond-im1}, we can rewrite the third condition as 
$$
\chi(\epsilon_i)=\sum_{i\leq j} d_j a_{ij} +\sum_{j\leq i} d_j a_{ji} +c_i(d_i+1-g)=\sum_{j\neq i} d_jb(\epsilon_i,\epsilon_j) +(2a_{ii}+c_i)d_i+c_i(1-g)=
$$
\begin{equation}\label{E:cond-im2}
=\sum_{j} d_jb(\epsilon_i,\epsilon_j)+(b(\epsilon_i,\epsilon_i)-2a_{ii})(1-g)=b(\epsilon_i,d)-(g-1)b(\epsilon_i,\epsilon_i)+(2g-2)a_{ii}.
\end{equation}
 Therefore, we deduce that the element $(\chi, b)$ belongs to the image of $w_T^d\oplus \gamma_T^d$ if and only if 
 \begin{equation}\label{E:cond-im3}
 (2g-2)\vert \chi(\epsilon_i)-b(d,\epsilon_i)+(g-1)b(\epsilon_i,\epsilon_i)\quad \text{ for any } 1\leq i \leq r.
 \end{equation}
 Now if $x=\sum_{i=1}^r \lambda_i \epsilon_i$ we have that 
$$ \chi(x)-b(d,x)+(g-1)b(x,x)=\sum_{i=1}^r\lambda_i \left[ \chi(\epsilon_i)-b(d,\epsilon_i)\right]+\sum_{i=1}^r (g-1)\lambda_i^2 b(\epsilon_i,\epsilon_i) +
2(g-1)\sum_{i<j} \lambda_i\lambda_j b(\epsilon_i,\epsilon_j)$$
 $$\equiv \sum_{i=1}^r\lambda_i \left[ \chi(\epsilon_i)-b(d,\epsilon_i)(g-1) b(\epsilon_i,\epsilon_i) \right] \mod 2g-2.$$
 Hence, condition \eqref{E:cond-im3} is equivalent to the condition 
  \begin{equation*}
 (2g-2)\vert \chi(x)-b(d,x)+(g-1)b(x,x)\quad \text{ for any } x\in \Lambda(T),
 \end{equation*}
 appearing in the statement. 
%Finally, the last statement about the invariant factors of the subgroup   $\Im(w_T^d\oplus \gamma_T^d)\subseteq \Lambda^*(T)\oplus \Bil^s\Lambda(T)$ for $n=0$ follows straightforwardly from condition \eqref{E:cond-im3}.  
\end{proof}

\begin{rmk}
For $n=0$, the invariant factors of $\Im(w_T^d\oplus \gamma_T^d)\subseteq \Lambda^*(T)\oplus \Bil^s\Lambda(T)$ are 
$$\left(\underbrace{1, \ldots, 1}_{\binom{\dim T+1}{2}}, \underbrace{2g-2,\ldots, 2g-2}_{\dim T}\right),
$$ 
as can be shown easily  using \eqref{E:cond-im3}.
\end{rmk}

We can finally describe the full restriction morphism $\res_T^d(C):\Pic(\bg{T}^d) \to \Pic(\mt J_T^d(C))$ for any geometric point $(C,p_1,\ldots, p_n)$ of $\Mg$ and $g\geq 1$ (for $g=0$ we have that $\res_T^d(C)=w^d_T$, with the identification $w_T^d(C):\Pic(\mt J^d_T(C))\xrightarrow{\cong} \Lambda^*(T)$, see Proposition \ref{P:Pic-JTC}). We will use the notation of Propositions \ref{P:Pic-JTC} and \ref{P:wT+rhoT}.

\begin{prop}\label{P:restrPic}
Assume that $g\geq 1$ and let $(C,p_1,\ldots, p_n)$ be a geometric point of $\Mg$ defined over an algebraically closed field $K$. Then the restriction morphism $\res_T^d(C):\Pic(\bg{T}^d) \to \Pic(\mt J_T^d(C))$ fits into the following commutative diagram with exact rows 
\begin{equation}\label{E:restrPic}
\xymatrix{
0 \ar[r] &  \Lambda^*(T)\otimes H_{g,n} \ar[r]^{j_T^d} \ar[d]^{\id\otimes \iota_C} & \RPic(\bg{T}^d)\ar[r]^{w_T^d\oplus \gamma_T^d} \ar[d]^{\res_T^d(C)}& \Lambda^*(T)\oplus   \Bil^s\Lambda(T) \ar@{^{(}->}[d]^{\id\oplus (-\otimes \id_{J_C})} & \\
0 \ar[r] & \Lambda^*(T)\otimes J_C(K) \ar[r]^{j_T^d(C)} &\Pic(\mt J_T^d(C)) \ar[r]^(0.3){w_T^d(C) \oplus \gamma_T^d(C)}   & \Lambda^*(T) \oplus \Hom^s(\Lambda(T)\otimes \Lambda(T),  \End(J_C))  \ar[r] & 0\\
}
\end{equation}
where $-\otimes \id_{J_C}$ is the inclusion \eqref{E:inc-bs} and the left vertical morphism is induced by the  morphism 
\begin{equation*}
\begin{aligned}
\iota_C=\iota_{(C,p_1,\ldots, p_n)}: H_{g,n} & \rightarrow J_C(K) \\
 (m,\zeta) & \mapsto \omega_{C}^m(\sum_{i=1}^n \zeta_i p_i) &   \text{ if } g\geq 2,\\ 
\zeta & \mapsto   \cO_C(\sum_{i=1}^n \zeta_i p_i)  & \text{ if } g=1.\\
\end{aligned}
\end{equation*}
In particular, the kernel of the restriction morphism $\res_T^d(C)$ is equal to $\Lambda^*(T)\otimes \ker(\iota_{C})$.  
\end{prop}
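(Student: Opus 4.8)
The plan is to build the commutative diagram \eqref{E:restrPic} out of results already established and then read off the description of $\ker(\res_T^d(C))$ by a diagram chase. First, the two rows are already exact: the top row is exactly the content of Proposition \ref{P:wT+rhoT}, which gives that $j_T^d$ is injective with image equal to $\ker(w_T^d\oplus\gamma_T^d)$, while the bottom row is the central horizontal exact sequence of Proposition \ref{P:Pic-JTC}, so $j_T^d(C)$ is injective and $w_T^d(C)\oplus\gamma_T^d(C)$ is surjective. I would also record at the outset that the right vertical arrow $\id\oplus(-\otimes\id_{J_C})$ is injective, since the inclusion $-\otimes\id_{J_C}$ of \eqref{E:inc-bs} is injective whenever $g\geq 1$.

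Next I would check that the right-hand square commutes; this is merely a reformulation of Proposition \ref{P:restr}. Indeed part \eqref{P:restr1} states $w_T^d(C)\circ\res_T^d(C)=w_T^d$ and part \eqref{P:restr2} states $\gamma_T^d(C)\circ\res_T^d(C)=(-\otimes\id_{J_C})\circ\gamma_T^d$, both verified on the tautological generators (which generate $\RPic(\bg{T}^d)$ by Theorem \ref{BunT}) by the same formulas. For the left-hand square I would use that restriction to the fiber over $(C,p_1,\ldots,p_n)$ is a base change, under which the universal curve over $\bg{T}^d$ specializes to the constant family $p_1:\mt J_T^d(C)\times C\to\mt J_T^d(C)$; under this base change $\mt L_\chi$ restricts to $\scr L_{C,\chi}$, the relative dualizing sheaf $\omega_\pi$ restricts to $p_2^*\omega_C$, and $\cO(\sigma_i)$ restricts to $p_2^*\cO_C(p_i)$. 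By the compatibility of the Deligne pairing with base change one then gets, for $g\geq 2$,
\[
\res_T^d(C)\big(j_T^d(\chi\otimes(m,\zeta))\big)=\Big\langle\scr L_{C,\chi},\,p_2^*\big(\omega_C^m(\textstyle\sum_i\zeta_i p_i)\big)\Big\rangle_{p_1}=j_T^d(C)\big(\chi\otimes\iota_C(m,\zeta)\big),
\]
and likewise for $g=1$ with $\omega_C^m$ replaced by $\cO_C$; this is precisely commutativity of the left square.

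Finally, the kernel computation follows by a diagram chase. If $\res_T^d(C)(x)=0$, then by commutativity of the right square and injectivity of the right vertical map we get $(w_T^d\oplus\gamma_T^d)(x)=0$, so exactness of the top row yields $x=j_T^d(y)$ for a unique $y\in\Lambda^*(T)\otimes H_{g,n}$. Commutativity of the left square and injectivity of $j_T^d(C)$ then force $(\id\otimes\iota_C)(y)=0$. Since $\Lambda^*(T)$ is free, hence flat over $\bbZ$, tensoring preserves kernels, so $\ker(\id\otimes\iota_C)=\Lambda^*(T)\otimes\ker(\iota_C)$; conversely $\res_T^d(C)$ clearly kills $j_T^d(\Lambda^*(T)\otimes\ker(\iota_C))$. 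Hence $\ker(\res_T^d(C))=j_T^d\big(\Lambda^*(T)\otimes\ker(\iota_C)\big)\cong\Lambda^*(T)\otimes\ker(\iota_C)$, as claimed.

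I expect the only genuinely delicate point to be the careful identification of the restrictions of the universal objects in the left square — verifying that the dualizing sheaf and the sections of the constant family $\mt J_T^d(C)\times C$ match the description of $\iota_C$ — everything else being formal exactness combined with the already-proved Propositions \ref{P:wT+rhoT}, \ref{P:Pic-JTC} and \ref{P:restr}.
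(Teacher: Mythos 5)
Your proposal is correct and follows essentially the same route as the paper's own proof: exactness of the top row from Proposition \ref{P:wT+rhoT}, exactness of the bottom row from Proposition \ref{P:Pic-JTC}, commutativity of the right square from Proposition \ref{P:restr}, and commutativity of the left square by matching the definition of $j_T^d$ with that of $j_T^d(C)$ in \eqref{E:jTC} (your base-change computation just spells out what the paper calls "straightforward"). Your explicit diagram chase for the kernel, using flatness of $\Lambda^*(T)$, correctly fills in the "In particular" claim that the paper leaves implicit.
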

\begin{proof}
The top horizontal row is exact by Proposition \ref{P:wT+rhoT} while the  bottom left horizontal arrow is injective by Proposition \ref{P:Pic-JTC}. It remains therefore to prove the commutativity of the diagram. The right square is commutative by Proposition \ref{P:restr}. The commutativity of the left square follows straightforwardly by comparing the definition of $j_T^d$ (see Proposition \ref{P:wT+rhoT}) with the definition \eqref{E:jTC} of $j_T^d(C)$. 
\end{proof}

\section{Reductive non-abelian case}\label{Sec:non-ab-rd}

The aim of the section is to describe the relative Picard group of $\bg{G}^{[d]}$ 
\begin{equation}\label{E:RelPicG}
\RPic(\bg{G}^{[d]}):=\Pic(\bg{G}^{[d]})/(\Phi_G^{[d]})^*(\Pic(\Mg)),
\end{equation}
for a reductive group $G$ and any $[d]\in \pi_1(G)$. 
Throughout this section, we fix a maximal torus $\iota:T_G\hookrightarrow G$ and we denote by $\scr W_G$ the Weyl group of $G$ that we identify with $\scr N(T_G)/T_G$. 
We will denote by $B_G$ the unique Borel subgroup of $G$ containing $T_G$.

As for the reductive abelian case in \S\ref{Sec:tori}, we need to distinguish the genus zero and non-zero cases. Below are the main theorems of this section.

\begin{teo}\label{T:BunG}
Assume $g>0$. Let $d\in \Lambda(T_G)$. We denote by
 $[d]$ its image in $\pi_1(G)=\Lambda(T_G)/\Lambda(T_{G^{\sc}})$ and by $[d]^{\ab}:=[d^{\ab}]:=[\Lambda_\ab(d)]$ its image in $\pi_1(G^{\ab})=\Lambda(G^\ab)$.
\begin{enumerate}
\item \label{BunG-1} There exists a unique homomorphism  (called \emph{transgression map})
\begin{equation}\label{E:trasgr}
\tau_G(=\tau_{G,g,n}):(\Sym^2\Lambda^*(T_G))^{\mathscr W_G}\longrightarrow \RPic\Big(\bg{G}^{[d]}\Big),
\end{equation}
such that the composition $\iota_\#^*\circ \tau_G$ is equal to the $\scr W_G$-invariant part of the homomorphism $\tau_{T_G}: \Sym^2 \Lambda^*(T_G)\to \RPic(\bg{T_G}^d)$ defined in Theorem \ref{BunT}\eqref{BunT1}. 

In particular, $\tau_G$ is injective. 
\item \label{BunG-2} The commutative diagram of abelian groups
\begin{equation}\label{E:amalg}
\xymatrix{
\Sym^2 \Lambda^*(G^{\ab})\ar@{^{(}->}[rr]^{\Sym^2 \Lambda^*_\ab}\ar@{^{(}->}[d]^{\tau_{G^\ab}}&& (\Sym^2\Lambda^*(T_G))^{\mathscr W_G}\ar@{^{(}->}[d]^{\tau_G}\\
\RPic\left(\bg{G^{\ab}}^{[d]^{\ab}}\right)\ar@{^{(}->}[rr]^{\ab^*} &&\RPic\Big(\bg{G}^{[d]}\Big)
}
\end{equation}
is formed by injective morphisms and it is a push-out, i.e., 
$$\RPic\left(\bg{G^{\ab}}^{[d]^{\ab}}\right)\amalg_{\Sym^2\Lambda^*(G^{\ab})}(\Sym^2\Lambda^*(T_G))^{\mathscr W_G} \xrightarrow[\cong]{\ab^*\coprod \tau_G} \RPic\left(\bg{G}^{d}\right).$$
\end{enumerate}
Furthermore, the homomorphism \eqref{E:trasgr} and the diagram \eqref{E:amalg} are functorial for all the homomorphisms of reductive groups $\phi:H\to G$ such that $\phi(T_H)\subseteq T_G$.
\end{teo}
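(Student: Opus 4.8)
The plan is to split part \eqref{BunG-1} into the (easy) uniqueness and the (hard) existence of $\tau_G$, and then to obtain part \eqref{BunG-2} by computing $\scr W_G$-invariants in $\RPic(\bg{T_G}^d)$. For uniqueness, I would first show that
$$\iota_\#^*\colon \RPic\big(\bg{G}^{[d]}\big)\longrightarrow \RPic\big(\bg{T_G}^d\big)$$
is injective. Since $\iota$ factors as $T_G\hookrightarrow B_G\hookrightarrow G$ and the reductive quotient $\red\colon B_G\twoheadrightarrow T_G=B_G^{\red}$ identifies $\RPic(\bg{B_G}^d)$ with $\RPic(\bg{T_G}^d)$ by Theorem \ref{T:thmA}, it suffices that $j_\#^*\colon\RPic(\bg{G}^{[d]})\to\RPic(\bg{B_G}^d)$ be injective. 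Choosing by Theorem \ref{T:Holla}\eqref{T:Holla2} a representative of $[d]$ for which $j_\#$ is smooth with geometrically integral fibers (and reducing to the given $d$ through the isomorphisms $\bg{T_G}^d\cong\bg{T_G}^{d'}$), this injectivity follows from the exact sequence of Proposition \ref{P:fiber}, applied after restricting to the finite-type substacks $\bg{G}^{[d],\leq m}$ and replacing them by equivariant approximations (Proposition \ref{Equi}\eqref{Equi1}), so that the base is a genuine regular scheme. Injectivity of $\iota_\#^*$ makes any $\tau_G$ restricting to $\tau_{T_G}$ on the invariants unique, and injective because $\tau_{T_G}$ is.

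For \emph{existence} I would construct $\tau_G$ rationally and then descend. For $m\gg 0$ the stack $\bg{G}^{[d],\leq m}$ is a smooth quotient stack of finite type by Propositions \ref{P:inst-cover} and \ref{P:qstack-cover}, so Proposition \ref{Equi}\eqref{Equi2} turns $c_1$ into an isomorphism $\Pic\xrightarrow{\cong}A^1$, while Lemma \ref{restr} together with the codimension bound of Proposition \ref{P:inst-cover}\eqref{P:inst-cover3} gives $\Pic(\bg{G}^{[d]})\xrightarrow{\cong}\Pic(\bg{G}^{[d],\leq m})$. Feeding the universal $G$-bundle $\mathcal E$ over the universal curve $\pi\colon\cC_{G,g,n}\to\bg{G}^{[d],\leq m}$ into the flag-bundle machinery of \S\ref{SS:Ch-Flag}, I would set
$$c_1(\tau_G)^{\leq m}_{\bbQ}(v):=\pi_*\big(\ee{G}{\mathcal E}(v)\big)\in A^1\big(\bg{G}^{[d],\leq m}\big)_{\bbQ}\qquad\big(v\in\Sym^2(\Lambda^*(T_G))^{\scr W_G}\big),$$
the $\scr W_G$-invariance of $v$ being exactly what makes the class $\ee{G}{\mathcal E}(v)$ available through Corollary \ref{C:Chow-flag}. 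The same corollary, comparing the flag-bundle class with its restriction along $\mathcal E/T_G\to\mathcal E/B_G$, yields the key compatibility $\iota_\#^*\,c_1(\tau_G)^{\leq m}_{\bbQ}(v)=c_1\big(\tau_{T_G}(v)\big)_{\bbQ}$.

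It then remains to descend the \emph{integral} line bundle $\tau_{T_G}(v)$ (for $v$ invariant), transported to $\RPic(\bg{B_G}^d)$ via Theorem \ref{T:thmA}, along $j_\#$. Because $j_\#$ is smooth, of finite type and has geometrically integral fibers (Theorem \ref{T:Holla}), the exact sequence of Proposition \ref{P:fiber}—again read on equivariant approximations—shows that such a line bundle lies in the image of $j_\#^*$ exactly when its restriction to the generic fiber of $j_\#$ is trivial. That restriction has vanishing rational first Chern class by the compatibility just established, and analysing the fiber (a space of sections of a flag bundle) one sees the restriction is genuinely trivial; hence $\tau_{T_G}(v)$ descends to an integral class $\tau_G(v)\in\RPic(\bg{G}^{[d]})$, coherently in $m$ by the codimension bound. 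By construction $\iota_\#^*\circ\tau_G$ agrees with $\tau_{T_G}$ on $\Sym^2(\Lambda^*(T_G))^{\scr W_G}$, which proves \eqref{BunG-1}.

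For \eqref{BunG-2} I would equip $\RPic(\bg{T_G}^d)$ with the conjugation action of $\scr W_G=\scr N(T_G)/T_G$ and identify $\Im(\iota_\#^*)$ with the invariant subgroup $\RPic(\bg{T_G}^d)^{\scr W_G}$. Taking $\scr W_G$-invariants in the functorial exact sequence of Theorem \ref{BunT}\eqref{BunT1} and using Lemma \ref{L:inv-char} to identify $\Lambda^*(T_G)^{\scr W_G}=\Lambda^*(G^{\ab})$, the invariants assemble into an extension built from $\Sym^2(\Lambda^*(T_G))^{\scr W_G}$ (the image of $\tau_G$) and from $\RPic(\bg{G^{\ab}}^{[d]^{\ab}})$ (the image of $\ab_\#^*$), overlapping precisely in $\Sym^2(\Lambda^*(G^{\ab}))$; this is exactly the push-out assertion for \eqref{E:amalg}, and the injectivity of $\Sym^2\Lambda^*_\ab$, $\tau_{G^{\ab}}$ and $\ab_\#^*$ is checked directly. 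Functoriality of \eqref{E:trasgr} and \eqref{E:amalg} under $\phi\colon H\to G$ with $\phi(T_H)\subseteq T_G$ then follows from that of the Deligne pairing, the determinant of cohomology and the classes $\ee{G}{-}$, combined with the uniqueness already proved. I expect the main obstacle to be the integral descent of the third paragraph: manufacturing an honest line bundle on the non-finite-type stack $\bg{G}^{[d]}$, rather than a merely rational Chern class, requires carefully coordinating the Edidin--Graham identification on finite-type pieces, the integrality furnished by Proposition \ref{P:fiber}, the generic-fiber triviality, and the passage to the limit over $m$.
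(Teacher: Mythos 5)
Your existence argument for part \eqref{BunG-1} (a rational class built from the flag-bundle Chow classes of \S\ref{SS:Ch-Flag}, then integral descent along $j_\#$ using Proposition \ref{P:fiber}) is essentially the paper's proof, but both places where you invoke Proposition \ref{P:fiber} to get \emph{injectivity} of a pull-back are genuine gaps. That proposition is the exact sequence
$$\Pic(\cS)\xrightarrow{f^*}\Pic(\cX)\xrightarrow{\res_{\eta}}\Pic(\cX_{\eta})\to 0,$$
which identifies $\ker(\res_{\eta})$ with $\Im(f^*)$ and says nothing about $\ker(f^*)$. A smooth, surjective, finite-type morphism with integral fibers can kill Picard classes: $\bbA^2\setminus\{0\}\to\bbP^1$ has integral fibers $\Gm$, yet $\Pic(\bbA^2\setminus\{0\})=0$. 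Since the fibers of $j_\#$ (spaces of sections of flag bundles) are not proper, no seesaw or Stein argument is available either, so injectivity of $j_\#^*$, equivalently of $\iota_\#^*$, is genuinely nontrivial. The paper obtains it (Corollary \ref{C:inj}\eqref{C:inj1}) by a different route: restriction to the geometric generic fiber of $\Phi_G^{[d]}$ over $\Mg$ is injective (Proposition \ref{P:seesaw} --- this is where Proposition \ref{P:fiber} is legitimately used, applied to $\Phi_G^{[d]}$ itself), and then injectivity over a \emph{fixed} curve is taken from Biswas--Hoffmann \cite{BH10}, via the N\'eron--Severi description of $\Pic(\mathrm{Bun}_G^{[d]}(\cC_{\ov\eta}))$; this external input cannot be replaced by your argument. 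Two smaller problems in the same paragraph: Theorem \ref{T:Holla}\eqref{T:Holla2} only provides a well-behaved $j_\#$ for \emph{some} representative $d'$ of $[d]$, and the translation isomorphism $\bg{T_G}^d\cong\bg{T_G}^{d'}$ does not commute with the maps to $\bg{G}^{[d]}$, so injectivity for $d'$ would not transfer to the given $d$; also the class $\ee{G}{\mathcal E}(v)$ is not defined for the universal bundle (Proposition \ref{P:Chow-flag}\eqref{Chow-flag-ii} requires Zariski-local triviality), so the stack-level class must be written as $\frac{1}{|\scr W_G|}(\pi\circ p)_*\cc{G}{B}{\mathcal E}\left(v\cdot\prod_{\alpha>0}\alpha\right)$, as in Proposition \ref{P:Q-transfr}.

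Part \eqref{BunG-2} has a larger hole: you propose to ``identify $\Im(\iota_\#^*)$ with $\RPic(\bg{T_G}^d)^{\scr W_G}$'', but that identification \emph{is} the hard content of the theorem. Your invariant computation (the paper's Lemma \ref{L:push=inv}) only shows that the invariants equal the push-out; neither inclusion between $\Im(\iota_\#^*)$ and the invariants is free. Note in particular that $\scr W_G$ does not act on the component $\bg{T_G}^d$ by conjugation --- $w$ maps it to $\bg{T_G}^{w.d}$ --- so the action must be the ad hoc ``algebraic action'' of Definition \ref{D:weil-act} on the relative Picard group, and there is then no geometric reason why $\iota_\#^*$ lands in the invariants. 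The paper closes both inclusions by (a) the rank inequality $\rk\Pic(\bg{G}^{[d]})\leq \rk\Pic(\bg{G^{\ab}}^{[d]^{\ab}})+s$ (Lemma \ref{L:rank-pic}), proved again by restricting to the geometric generic curve and using the N\'eron--Severi computations of \cite{BH10}; (b) the matching rank of the push-out (Lemma \ref{L:rank-sym}); and (c) primitivity of invariants inside a torsion-free group. Without these steps, or a substitute for them, the push-out assertion is unproved. Finally, the case $(g,n)=(1,0)$, where the finite-type substacks are not quotient stacks (Proposition \ref{P:qstack-cover} fails) and all your equivariant-approximation steps break down, needs the separate argument the paper gives via the forgetful morphism from $(g,n)=(1,1)$.
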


The case $g=0$ is completely different. In particular,  the description of the Picard group may vary depending on the connected component of $\mathrm{Bun}_{G,0,n}$. 

\begin{teo}\label{T:BunGg0} Assume $g=0$. Let $d\in \Lambda(T_G)$ and set $d^{\ss}:=\Lambda_{\ss}(d)\in \Lambda(G^{\ss})\subseteq \Lambda(G^{\ad})$. 
\begin{enumerate}
\item \label{BunGg0-1} If $d^{\ss}$  satisfies condition (*) of Lemma \ref{L:inj-cont}\eqref{L:inj-cont1}, then the homomorphism  
		\begin{equation}\label{seq-bunGg0}
		w_G^d:\RPic\left({\mathrm{Bun}}_{G,0,n}^{[d]}\right)\xrightarrow{\iota_\#^*}\RPic\left(\mathrm{Bun}_{T_G,0,n}^d\right)\xrightarrow{w_{T_G}^d} \Lambda^*(T_G)
		\end{equation}
is injective. Moreover,  given $[d]\in \pi_1(G)$, it is always possible to choose a representative $d\in \Lambda(T_G)$ such that $d^{\ss}$ satisfies condition (*) of Lemma \ref{L:inj-cont}\eqref{L:inj-cont1}. 
% provided that we choose $d$ (and indeed this is always possible for a fixed class  $[d]\in \pi_1(G)$) in such a way that $d^{\ss}$  satisfies condition (*) of Lemma \ref{L:inj-cont}\eqref{L:inj-cont1}. 

Furthermore, the homomorphism $w_G^d$ is functorial   for all the homomorphisms of reductive groups $\phi:H\to G$ such that $\phi(T_H)\subseteq T_G$.
% PARTE VECCHIA 
%\item \label{BunGg0-2}
%There exists a unique homomorphism 
%$$\RPic\left(\mathrm{Bun}_{G,0,n}^{[d]}\right)\xrightarrow{f_G^d} \Sym^2(\Lambda(T_{G^{\sc}}))^{\scr W_G},$$
%making the diagram
%\begin{equation}
%\xymatrix{
%\Pic\left(\mathrm{Bun}_{G,0,n}^{[d]}\right)\ar[rr]^{\iota_\#^*}\ar[d]^{f_G^d}&&\Pic\left(\mathrm{Bun}_{T_G,0,n}^d\right)\ar[d]^{w_{g^\ss}^d}\\
%\Sym^2(\Lambda_{\coroo}(\g^{\ss}))^{\scr W_G}\ar[rr]^{(d^\ss,-)}&& \Lambda^*(T_{G^{\sc}})
%}
%\end{equation}
%cartesian, where $w_{g^\ss}^d$ is the composition of $w_G^d$ with the pull-back $\Lambda^*(T_G)\to \Lambda^*(T_{G^{\sc}})$ and $(d^\ss,-)$ is the contraction homomorphism \eqref{E:ctrc-hom}.
\item \label{BunGg0-2}  Let $\Omega_d^*(T_G)\subset \Lambda^*(T_G)$ be the subgroup of characters whose image in  $\Lambda^*(T_{G^{\sc}})$ is equal to $b(d^\ss,-)$ for some element $b\in(\Sym^2\Lambda^*(T_{G^{\sc}}))^{\scr W_G}$. The image of $w_G^d$ is equal to 
		  $$
		  \Im(w_G^d)=
		  \begin{cases}
		  \Omega_d^*(T_G)&\text{if }n\geq 1,\\
		  \{\chi\in  \Omega_d^*(T_G)\: :  (\chi,d)\in 2\mathbb Z\}& \text{if }n=0.
		  \end{cases}
		  $$
\end{enumerate}
\end{teo}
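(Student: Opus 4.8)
The plan is to reduce the entire statement to the torus computation of Theorem~\ref{BunTg0} by means of the inclusion $\iota\colon T_G\hookrightarrow G$ and the Borel subgroup $j\colon B_G\hookrightarrow G$. Since $\iota$ factors as $T_G\hookrightarrow B_G\xrightarrow{j}G$, where $T_G\hookrightarrow B_G$ is a section of the reductive quotient $B_G\twoheadrightarrow B_G^{\red}=T_G$, Theorem~\ref{T:thmA} gives an isomorphism $\RPic(\mathrm{Bun}_{B_G,0,n}^d)\cong\RPic(\mathrm{Bun}_{T_G,0,n}^d)$, so that $\iota_\#^*$ equals $j_\#^*$ followed by this isomorphism. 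As $w_{T_G}^d$ is injective by Theorem~\ref{BunTg0}\eqref{BunTg0-1}, I obtain $\ker(w_G^d)=\ker(\iota_\#^*)=\ker(j_\#^*)$ and $\Im(w_G^d)=w_{T_G}^d(\Im(\iota_\#^*))$; thus the whole theorem becomes a computation of the image and kernel of $j_\#^*$ inside $\Lambda^*(T_G)$.

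For the lower bound $\Im(w_G^d)\supseteq\Omega_d^*(T_G)$ I would exhibit explicit line bundles. The abelian characters $\Lambda^*(G^{\ab})\subseteq\Omega_d^*(T_G)$ are realized by the tautological line bundles pulled back along $\ab_\#\colon\mathrm{Bun}_{G,0,n}^{[d]}\to\mathrm{Bun}_{G^{\ab},0,n}^{[d]^{\ab}}$. The contraction part is realized by transgression-type classes: for $b\in\Sym^2(\Lambda^*(T_{G^{\sc}}))^{\scr W_G}$ I would construct, by the same flag-bundle descent (Proposition~\ref{P:Chow-flag} and Theorem~\ref{T:Holla}) used to build the transgression map of Theorem~\ref{T:BunG}\eqref{BunG-1}, a class on $\mathrm{Bun}_{G,0,n}^{[d]}$ whose restriction to $\mathrm{Bun}_{T_G,0,n}^d$ is the $\scr W_G$-invariant image of $\tau_{T_G}(b)$. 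By the torus weight formula $w_{T_G}^d(\langle(\chi,0),(\chi',0)\rangle)=(d,\chi')\chi+(d,\chi)\chi'$ of Theorem~\ref{BunTg0}\eqref{BunTg0-1}, this weight is exactly the contraction $b(d^{\ss},-)$ of \eqref{E:ctrc-hom} (for $b=\chi\cdot\chi'$). Together the abelian and transgression weights span $\Omega_d^*(T_G)$.

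The reverse inclusion, together with the generation statement it rests on, is the heart of the matter. Here I would first replace $\mathrm{Bun}_{G,0,n}^{[d]}$ by a finite-type open substack $\mathrm{Bun}_{G,0,n}^{[d],\le m}$ of the instability exhaustion with $m\gg0$, whose complement has codimension $\ge2$, so that $\RPic$ is unchanged by Lemma~\ref{restr} and Proposition~\ref{P:inst-cover}. Since $(g,n)\neq(1,0)$ (as $g=0$), these substacks are quotient stacks by Proposition~\ref{P:qstack-cover} and admit equivariant approximations by algebraic spaces (Proposition~\ref{Equi}), on which the descent sequence of Proposition~\ref{P:fiber} applies to $j_\#$; for a suitably chosen (anti-dominant) representative $d$ this morphism is smooth with geometrically integral fibers by Theorem~\ref{T:Holla}\eqref{T:Holla2}. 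This identifies $\Im(j_\#^*)$ with the classes of $\RPic(\mathrm{Bun}_{B_G,0,n}^d)\cong\RPic(\mathrm{Bun}_{T_G,0,n}^d)$ (the latter $\cong\Lambda^*(T_G)$ for $n\ge1$) that die on the generic section-space fiber, and a Chow computation on the flag bundle (Proposition~\ref{P:Chow-flag}) pins this subgroup down to be exactly $\Omega_d^*(T_G)$, while simultaneously showing that the abelian and transgression classes above generate $\RPic(\mathrm{Bun}_{G,0,n}^{[d]})$.

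Injectivity (part~\eqref{BunGg0-1}) then follows from the weight computation once generation is known: the abelian classes inject via $\Lambda^*(G^{\ab})\hookrightarrow\Lambda^*(T_G)$, and a transgression class $\tau_G(b)$ can lie in $\ker(w_G^d)$ only if its weight $b(d^{\ss},-)$ vanishes; hence $\ker(w_G^d)=0$ precisely when the contraction $(d^{\ss},-)$ is injective on $\Sym^2(\Lambda^*(T_{G^{\sc}}))^{\scr W_G}$, which is condition (*) by Lemma~\ref{L:inj-cont}\eqref{L:inj-cont1}. A representative $d$ can be chosen anti-dominant enough that condition (*) and the hypothesis of Theorem~\ref{T:Holla}\eqref{T:Holla2} hold at once. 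For $n=0$ the extra constraint $(\chi,d)\in2\bbZ$ emerges exactly as in the torus case: comparing $\mathrm{Bun}_{G,0,0}^{[d]}$ with $\mathrm{Bun}_{G,0,1}^{[d]}$ through the forget-the-section morphism and using the relations of Remark~\ref{R:taut-dual} involving $\omega_\pi$, as in the proof of Theorem~\ref{BunTg0}\eqref{BunTg0-2}. Functoriality for $\phi\colon H\to G$ with $\phi(T_H)\subseteq T_G$ is inherited from the functoriality of $\iota_\#^*$, of the torus weight function, and of the flag-bundle classes. The main obstacle is the reverse inclusion and generation step, which forces the passage to finite-type quotient presentations and the flag-bundle Chow computation controlling precisely which weights descend from $B_G$ to $G$.
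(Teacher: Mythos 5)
Your structural reductions (factoring $\iota_\#$ through the Borel and using Theorem \ref{T:BunG-nr} to identify $\RPic(\mathrm{Bun}_{B_G,0,n}^d)$ with $\RPic(\mathrm{Bun}_{T_G,0,n}^d)$, the finite-type/quotient-stack approximations, the $n=0$ versus $n=1$ comparison) are fine, but there is a genuine gap at the heart of part \eqref{BunGg0-2}, namely in your lower bound and in the generation claim it rests on. The flag-bundle machinery you invoke (Proposition \ref{P:Q-transfr}, Proposition \ref{P:Chow-flag}, and the descent along $j_\#$ from Theorem \ref{T:Holla}) only accepts inputs $v\in\Sym^2(\Lambda^*(T_G))^{\scr W_G}$, i.e.\ invariant forms built from \emph{characters of $T_G$}, whereas $\Omega_d^*(T_G)$ is defined by contractions of forms $b\in\Sym^2(\Lambda^*(T_{G^{\sc}}))^{\scr W_G}$, i.e.\ forms on the \emph{coroot} lattice. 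The restriction map $\Sym^2(\Lambda^*(T_G))^{\scr W_G}\to\Sym^2(\Lambda^*(T_{G^{\sc}}))^{\scr W_G}$ has index $>1$ in general, the element $\tau_{T_G}(b)$ is not even defined for a form $b$ that does not lift to $\Sym^2(\Lambda^*(T_G))$, and for such $b$ the integrality statements of Proposition \ref{P:Chow-flag}\eqref{Chow-flag-ii} simply do not apply. Concretely, take $G=\PGL_2$, $n\geq 1$, and $d=\varpi^\vee$ the fundamental coweight, so $(d,\alpha)=1$, $d^{\ss}=d$ satisfies condition (*), and $[d]$ is the nontrivial class in $\pi_1(\PGL_2)\cong\bbZ/2\bbZ$. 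Here $G^{\ab}$ is trivial, and $\Sym^2(\Lambda^*(T_{\PGL_2}))^{\scr W_G}=\bbZ\,(\alpha\cdot\alpha)$, so the weights of all your classes lie in $w_{T_G}^d\bigl(\tau_{T_G}(\bbZ\,\alpha\cdot\alpha)\bigr)=2\bbZ\alpha$. On the other hand, writing $\omega$ for the fundamental weight of $\SL_2$ (so $\Lambda^*(T_{\SL_2})=\bbZ\omega$, $\alpha=2\omega$, and $\Sym^2(\Lambda^*(T_{\SL_2}))^{\scr W_G}=\bbZ\,(\omega\cdot\omega)$ is generated by the basic inner product), one computes $(\omega\cdot\omega)(d^{\ss},-)=\omega$, hence $\Omega_d^*(T_{\PGL_2})=\bbZ\alpha$: the theorem asserts the existence of a class of weight $\alpha$, a ``level $\tfrac12\alpha\cdot\alpha$'' line bundle special to genus zero, which your construction cannot produce. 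So your proposed generators span only an index-two subgroup, and the asserted generation by abelian pull-backs plus transgression classes is false.

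This extra integrality is exactly why the paper does not re-run the positive-genus transgression argument in genus zero. Instead it proves Proposition \ref{P:g0pull}: $\RPic(\mathrm{Bun}_{G,0,n}^{[d]})$ is the fiber product of $\RPic(\mathrm{Bun}_{T_G,0,n}^d)$ and $\Sym^2(\Lambda^*(T_{G^{\sc}}))^{\scr W_G}$ over $\Lambda^*(T_{G^{\sc}})$, where the base case $n=3$ is precisely Biswas--Hoffmann's computation of $\Pic(\mathrm{Bun}_G^{[d]}(\bbP^1/k))$ via its Neron--Severi group (using $\cM_{0,3}=\Spec k$), and general $n$ is reached through the forgetful morphisms; part \eqref{BunGg0-2} is then a formal consequence of this cartesian square together with the description of $\Im(w_{T_G}^d)$ in Theorem \ref{BunTg0}\eqref{BunTg0-2}. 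Note also that in the paper part \eqref{BunGg0-1} is independent of any generation statement: injectivity of $\iota_\#^*$ is Corollary \ref{C:inj}\eqref{C:inj2}, which again rests on the Biswas--Hoffmann Neron--Severi criterion via restriction to the geometric generic fiber. Your dependency ``generation $\Rightarrow$ injectivity'' inverts this logical order, so once the generation step collapses, your proof of part \eqref{BunGg0-1} collapses with it.
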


The theorem has been proved in \cite{BH10} under the assumption $n=3$ (in which case $\mathrm{Bun}_{G,0,3}=\mathrm{Bun}_{G}(\bbP^1_k)$ since $\cC_{0,3}=\bbP^1_k$), and this result  is a fundamental ingredient of our proof.

\subsection{The pull-back to the maximal torus bundles}\label{SS:inj}

The aim of this subsection is to study the injectivity  of the pull-back of the (relative) Picard groups  along the morphism 
$$\iota_\#:\bg{T_G}^d\to \bg{G}^{[d]}.$$
The results of this subsection are also true for the pull-back of the (relative) Picard groups  along the morphism 
$$\iota_\#(C/S):\mathrm{Bun}^{d}_{T_G}(C/S)\hookrightarrow \mathrm{Bun}_G^{[d]}(C/S),$$
for any family $C\to S$ of curves, when $S$ is an integral and regular quotient stack over $k$. 

%analogue of Theorem \ref{T:BunG} fails for an arbitrary family of curves: for example, it fails for a curve $C$ over the base field $k$ as it follows from the results of \cite{BH10}.
%However, the injectivity of the pull-back along $\iota_\#:\mathrm{Bun}^{d}_G(C/S)\longrightarrow \mathrm{Bun}_G^{[d]}(C/S)$ is true for any family of curves.

\begin{prop}\label{P:seesaw}
Assume that $(g,n)\neq (1,0)$. The restriction of $\Phi_G^{[d]}:\bg{G}^{[d]}\to\Mg$ over the geometric generic point $\ov\eta$ of $\Mg$ gives rise to an injective homomorphism 
$$
\RPic(\bg{G}^{[d]})\longrightarrow \Pic(\mathrm{Bun}_G^{[d]}(\cC_{\ov \eta})),
$$
where $\cC_{\ov\eta}$ is the curve corresponding to $\ov\eta$. 

The same holds true for the relative moduli stack $\mathrm{Bun}_G^{[d]}(C/S)$ for any family of curves $C\to S$, provided that $S$ is an integral and regular quotient stack over $k$.
\end{prop}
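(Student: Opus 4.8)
The plan is to recognise the desired injectivity as an instance of the exact sequence of Proposition \ref{P:fiber}: for a regular stack $\mathcal X$ flat and of finite type over a regular integral base $\cS$ that is generically a scheme and with integral fibres, that sequence reads $\Pic(\cS)\xrightarrow{f^*}\Pic(\mathcal X)\xrightarrow{\res_\eta}\Pic(\mathcal X_\eta)\to 0$, so that $\ker(\res_\eta)=\Im(f^*)$ and hence $\RPic(\mathcal X/\cS)\hookrightarrow\Pic(\mathcal X_\eta)$. I would first reduce the universal statement to the relative one: by \eqref{E:reluniv1} we have $\mathrm{Bun}_{G}^{[d]}(\Cg/\Mg)=\bg{G}^{[d]}$, and since $(g,n)\neq(1,0)$ the stack $\Mg$ is an integral regular quotient stack and $\Cg\to\Mg$ is a (projective, by Remark \ref{R:proj-uni}) family of curves; thus it suffices to treat $\mathrm{Bun}_G^{[d]}(C/S)$ for $S$ an integral regular quotient stack. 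Two features prevent a direct application of Proposition \ref{P:fiber}: the stack $\mathrm{Bun}_G^{[d]}(C/S)$ is not of finite type over $S$ (unless $G$ is a torus), and $S$, being a quotient stack, is in general nowhere a scheme.

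To deal with the first point I would pass to the instability exhaustion. If $G$ is not a torus, Proposition \ref{P:inst-cover} provides finite type open substacks $\mathrm{Bun}_G^{[d],\leq m}(C/S)$ whose complements have codimension at least $g+m$; by Lemma \ref{restr} (applied both to the regular total stack and to the regular geometric generic fibre) the restriction maps $\Pic(\mathrm{Bun}_G^{[d]}(C/S))\to\Pic(\mathrm{Bun}_G^{[d],\leq m}(C/S))$ and the analogous map on the geometric generic fibre are isomorphisms for $m\gg0$. (If $G$ is a torus the stack is already of finite type by Proposition \ref{P:qc=a}, so no truncation is needed.) Hence I may replace $\mathrm{Bun}_G^{[d]}(C/S)$ by the smooth, finite type $S$-stack $\mathcal X:=\mathrm{Bun}_G^{[d],\leq m}(C/S)$, whose fibres are integral opens of the integral fibres of $\mathrm{Bun}_G^{[d]}(C/S)$ (Corollary \ref{C:regint}).

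For the second point I would use equivariant approximation to flatten the base into an algebraic space. Since $S$ is a regular quotient stack, Proposition \ref{Equi}\eqref{Equi1} furnishes a smooth surjective finite type morphism $f\colon Y\to S$ with $Y$ an integral regular algebraic space and $f^*\colon\Pic(S)\xrightarrow{\cong}\Pic(Y)$; as $\Phi_G^{[d]}\colon\mathcal X\to S$ is a smooth morphism of regular stacks, Proposition \ref{Equi}\eqref{Equi3} yields a compatible isomorphism $\Pic(\mathcal X)\xrightarrow{\cong}\Pic(\mathcal X\times_S Y)$, and therefore $\RPic(\mathcal X/S)\xrightarrow{\cong}\RPic(\mathcal X_Y/Y)$ with $\mathcal X_Y:=\mathcal X\times_S Y=\mathrm{Bun}_G^{[d],\leq m}(C_Y/Y)$. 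Now $Y$ is an algebraic space, hence generically a scheme, and $\mathcal X_Y\to Y$ is regular, flat, of finite type with integral fibres, so Proposition \ref{P:fiber} applies over $Y$ and gives $\RPic(\mathcal X_Y/Y)\hookrightarrow\Pic((\mathcal X_Y)_\eta)$ for $\eta=\Spec k(Y)$.

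It remains to pass from the scheme-theoretic generic fibre to the geometric generic fibre and to match it with the one over $S$. Choosing the geometric generic point $\ov\eta=\Spec\Omega$ of $S$ so that it factors through $Y$ (i.e. $\Omega$ is an algebraically closed field containing $k(Y)$), the fibre $(\mathcal X_Y)_\eta$ is $\mathrm{Bun}_G^{[d],\leq m}(C_\eta)$, which by Lemma \ref{restr} has the same Picard group as the full fibre $\mathrm{Bun}_G^{[d]}(C_\eta)$; the latter is Stein over $\eta$ by Proposition \ref{P:SteinG}, so Lemma \ref{base-change} applied to the fpqc morphism $\ov\eta\to\eta$ (noting $\Pic(\eta)=\Pic(\ov\eta)=0$) gives an injection $\Pic(\mathrm{Bun}_G^{[d]}(C_\eta))\hookrightarrow\Pic(\mathrm{Bun}_G^{[d]}(\cC_{\ov\eta}))$. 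Composing these injections proves the claim. The main obstacle, and the point where the hypotheses really bite, is precisely the second reduction: $\Mg$ (and a general quotient stack $S$) is not generically a scheme, and it is the quotient stack structure—equivalently, for the universal case, the condition $(g,n)\neq(1,0)$ ensuring $\Mg$ is a quotient stack—that makes available the equivariant approximation bringing Proposition \ref{P:fiber} into play.
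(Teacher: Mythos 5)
Your proposal is correct and follows essentially the same route as the paper's proof: truncation by the instability exhaustion (Proposition \ref{P:inst-cover} plus Lemma \ref{restr}) to reach finite type, equivariant approximation (Proposition \ref{Equi}) to reduce to a base that is generically a scheme so that Proposition \ref{P:fiber} applies, and finally the Stein property (Proposition \ref{P:SteinG}) with Lemma \ref{base-change} to pass from the generic to the geometric generic fibre. The only difference is bookkeeping --- the paper replaces $\Mg$ by its approximation at the outset and fixes the truncation level $m=2$, whereas you truncate first and then transfer the relative Picard group along $Y\to S$ via Proposition \ref{Equi}\eqref{Equi3} --- which does not change the substance of the argument.
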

\begin{proof}
Note that $\Mg$ is an integral and regular algebraic stack over the base field $k=\ov k$ and it is a quotient stack over $k$ if $(g,n)\neq (1,0)$ (see e.g., Proposition \ref{P:qstack-cover} with $G$ equal to the trivial group). 
Then, up to replacing it with an equivariant approximation as in Proposition \ref{Equi}, we can assume that $\Mg$ is (generically) an integral  regular $k$-scheme for any $g$ and $n$ and we denote its generic point by $\eta$. 

Now, if $G$ is a torus then  $\bg{G}^{[d]}$ is of finite type over $\Mg$ by Proposition \ref{P:qc=a}. If $G$ is not a torus,  then consider the open substack $\bg{G}^{[d],\leq 2}\subset \bg{G}^{[d]}$ of the instability exhaustion (as in \S \ref{SS:open-ss}), which is of finite type over $\Mg$ by Proposition \ref{P:inst-cover}\eqref{P:inst-cover2}. Since the complement of $\bg{G}^{[d],\leq 2}$ in $\bg{G}^{[d]}$ has codimension at least two on each geometric fiber of $\Phi_G^{[d]}:\bg{G}^{[d]}\to \Mg$ and $\bg{G}^{[d]}\to \Mg$ is smooth  by Theorem \ref{beh}\eqref{beh1},  Lemma \ref{restr} implies that the restriction maps induce isomorphisms 
$$\RPic(\bg{G}^{[d]})\xrightarrow{\cong} \RPic(\bg{G}^{[d], \leq 2}) \quad \text{ and } \quad \Pic(\mathrm{Bun}_G^{[d]}(\cC_{\ov \eta}))\xrightarrow{\cong} \Pic(\mathrm{Bun}_G^{[d], \leq 2}(\cC_{\ov \eta})).
$$
Hence, up to replacing $\bg{G}^{[d]}$ with $\bg{G}^{[d], \leq 2}$ if $G$ is not a torus, we can assume that $\Phi_G^{[d]}:\bg{G}^{[d]}\to \Mg$ is of finite type. 

Note that  $\Phi_G^{[d]}$ is flat with integral fibers since it is smooth by Theorem \ref{beh}\eqref{beh1} and with (geometrically) connected fibers by Theorem \ref{concomp}. Hence, we can apply  Proposition \ref{P:fiber} to the morphism $\Phi_G^{[d]}:\bg{G}^{[d]}\to \Mg$ and deduce that the restriction  to the generic fiber of $\Phi_G^{[d]}$ induces an isomorphism 
\begin{equation}\label{E:injec1}
\RPic(\bg{G}^{[d]})\cong \Pic(\mathrm{Bun}_G^{[d]}(\cC_{\eta})).
\end{equation}
Consider now the geometric generic point $\ov\eta=\Spec \ov{k(\eta)}$. Since $\mathrm{Bun}_G^{[d]}(\cC_{\eta})\to \eta$ is Stein (as follows from Proposition \ref{P:SteinG} by base change along $\eta\to \Mg$)  and $\ov\eta\to \eta$ is fpqc, then Lemma \ref{base-change} implies that we have an injective base change morphism 
\begin{equation}\label{E:injec2}
\Pic(\mathrm{Bun}_G^{[d]}(\cC_{\eta})) \hookrightarrow \Pic(\mathrm{Bun}_G^{[d]}(\cC_{\ov \eta})).
\end{equation}
We conclude by putting together \eqref{E:injec1} and \eqref{E:injec2}. 

The proof for  $\mathrm{Bun}_G^{[d]}(C/S)$ is the same, using the given assumptions on $S$. 
\end{proof}

\begin{cor}\label{C:inj}
\noindent 
\begin{enumerate}[(i)]
\item \label{C:inj1} If $g\geq 1$ then for any $d\in \Lambda(T_G)$ the pull-back map  
$$\iota_\#^*:\RPic(\mathrm{Bun}_{G,g,n}^{[d]})\longrightarrow \RPic(\mathrm{Bun}_{T_G,g,n}^{d})$$ 
is injective. 
\item \label{C:inj2} If $g=0$ and  $d\in \Lambda(T_G)$ is such that $d^{\ss}:=\Lambda_{\ss}(d)\in \Lambda(G^{\ss})\subseteq \Lambda(G^{\ad})$ satisfies condition (*) of Lemma \ref{L:inj-cont}\eqref{L:inj-cont1} then  % then any $\delta\in \pi_1(G)$ admits a representative $d\in \Lambda(T_G)$, i.e. $[d]=\delta\in \pi_1(G)$, such that 
the pull-back map
$$\iota_\#^*:\RPic(\mathrm{Bun}_{G,g,0}^{[d]})\longrightarrow \RPic(\mathrm{Bun}_{T_G,g,0}^{d})$$ 
is injective. 
\end{enumerate}
The same holds true for the relative moduli stack $\mathrm{Bun}_G^{[d]}(C/S)$ for any family of curves $C\to S$, provided that $S$ is an integral and regular quotient stack over $k$.
\end{cor}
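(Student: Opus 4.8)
The plan is to deduce Corollary \ref{C:inj} directly from Proposition \ref{P:seesaw} together with the computation of the Picard group of the fibers over a geometric point that was carried out in \S\ref{Sec:gerbe-fib}. The key observation is that pulling back along $\iota_\#$ is compatible with restriction to the fiber over a geometric point of $\Mg$, so the injectivity of $\iota_\#^*$ can be checked after passing to the fiber, where the relevant map becomes a map of Picard groups of the fiber stacks that we already understand.

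First I would reduce to the fiber. Fix a lift $d\in\Lambda(T_G)$ and let $\ov\eta$ be the geometric generic point of $\Mg$, with associated curve $\cC_{\ov\eta}$. By Proposition \ref{P:seesaw} (applicable since $(g,n)\neq(1,0)$ when $g\geq 1$, and also in the $g=0$ case, which always satisfies $(g,n)\neq(1,0)$), the restriction homomorphism $\RPic(\bg{G}^{[d]})\hookrightarrow \Pic(\mathrm{Bun}_G^{[d]}(\cC_{\ov\eta}))$ is injective. The morphism $\iota_\#:\bg{T_G}^d\to\bg{G}^{[d]}$ is a morphism over $\Mg$, so it fits into a commutative square relating the two restriction maps: the composite $\RPic(\bg{G}^{[d]})\xrightarrow{\iota_\#^*}\RPic(\bg{T_G}^d)\xrightarrow{\res}\Pic(\mt J_{T_G}^d(\cC_{\ov\eta}))$ agrees with the composite through $\Pic(\mathrm{Bun}_G^{[d]}(\cC_{\ov\eta}))$ followed by pull-back along the fiberwise inclusion $\mt J_{T_G}^d(\cC_{\ov\eta})=\iota_\#^{-1}(\cdots)\hookrightarrow\mathrm{Bun}_G^{[d]}(\cC_{\ov\eta})$. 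Thus to prove $\iota_\#^*$ injective, it suffices to show that for a suitably general curve the composite $\res_{T_G}^d(\cC_{\ov\eta})\circ\iota_\#^*$ is injective on $\RPic(\bg{G}^{[d]})$; equivalently, that the restriction of line bundles pulled back from $\bg{G}^{[d]}$ to the torus-bundle fiber is faithful.

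The heart of the argument is then the fiberwise statement, which is exactly the content of Biswas–Hoffmann. The restriction of the pull-back $\iota_\#^*$ over $\cC_{\ov\eta}$ is the pull-back $\Pic(\mathrm{Bun}_G^{[d]}(\cC_{\ov\eta}))\to\Pic(\mt J_{T_G}^d(\cC_{\ov\eta}))$ along the inclusion of the $T_G$-bundle moduli into the $G$-bundle moduli over the fixed curve. In the positive genus case this pull-back is injective by \cite{BH10} because the Jacobian description shows that the Neron–Severi and continuous parts both inject (using that the weight function, the $\gamma_T$ part, and the translation-invariant part are all detected on $T_G$-bundles); here I would invoke the kernel computation of Proposition \ref{P:restrPic}, which shows $\ker(\res_{T_G}^d(\cC))=\Lambda^*(T_G)\otimes\ker(\iota_{\cC})$, and then choose $\cC$ so that $\iota_\cC$ is injective (possible for generic $\cC$ by the weak Franchetta input and the genericity of the polarized Jacobian, as in Lemma \ref{RigBunT0mark}(ii)), forcing $\res_{T_G}^d(\cC)$ to be injective. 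In the genus zero case I would instead appeal directly to the injectivity result of Biswas–Hoffmann for $\mathrm{Bun}_G(\bbP^1_k)=\mathrm{Bun}_{G,0,3}$, together with the hypothesis that $d^{\ss}$ satisfies condition (*) of Lemma \ref{L:inj-cont}, which guarantees the contraction homomorphism $(d^{\ss},-)$ is injective and hence that no nonzero class in the image dies under restriction.

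The main obstacle will be checking the compatibility of the two restriction maps precisely, i.e. verifying that the square relating $\iota_\#^*$ over $\bg{G}^{[d]}$ and over the single fiber genuinely commutes and identifies the fiber of $\iota_\#$ with the inclusion $\mt J_{T_G}^d(\cC)\hookrightarrow\mathrm{Bun}_G^{[d]}(\cC)$; this is where the identification \eqref{E:fiberT} and the functoriality of the rigidification must be used carefully. Once this naturality is in place, injectivity follows formally from the injectivity of $\RPic(\bg{G}^{[d]})\hookrightarrow\Pic(\mathrm{Bun}_G^{[d]}(\cC_{\ov\eta}))$ and the fiberwise Biswas–Hoffmann injectivity. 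The final clause concerning $\mathrm{Bun}_G^{[d]}(C/S)$ for $S$ an integral regular quotient stack follows by the identical argument, since Proposition \ref{P:seesaw} was already proved in that generality and all the inputs (Propositions \ref{P:SteinG}, \ref{P:fiber}, and the fiberwise computations) hold verbatim over such $S$.
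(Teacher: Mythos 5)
Your argument for the case $(g,n)\neq(1,0)$ is essentially the paper's: restrict to the geometric generic fiber via Proposition \ref{P:seesaw}, use the commutativity of restriction with $\iota_\#$, and conclude from the injectivity of the fiberwise pull-back $\iota_\#(\cC_{\ov\eta})^*$, which is extracted from \cite{BH10} (Thm.~5.3.1(iv) and Def.~5.2.5) by treating separately the continuous part (injective because $\pi_1(\iota):\pi_1(T_G)\to\pi_1(G)$ is surjective) and the N\'eron--Severi part (injective for $g\geq 1$ since $\bbZ\to\End(J_{\cC_{\ov\eta}})$ is injective, and for $g=0$ exactly under condition~(*)). One confusion in your write-up: Proposition \ref{P:restrPic} computes the kernel of $\res_{T_G}^d(\cC)$, the restriction of $\RPic(\mathrm{Bun}_{T_G,g,n}^d)$ to its fiber, which is a different map from $\iota_\#(\cC_{\ov\eta})^*:\Pic(\mathrm{Bun}_G^{[d]}(\cC_{\ov\eta}))\to \Pic(\mathrm{Bun}_{T_G}^{d}(\cC_{\ov\eta}))$ whose injectivity is what the argument actually requires; invoking it is redundant here (the Biswas--Hoffmann input does all the work), though not harmful.

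The genuine gap is the case $(g,n)=(1,0)$, which part (i) of the Corollary includes: $g=1\geq 1$ and $n=0$ is allowed. Your parenthetical claim that Proposition \ref{P:seesaw} is ``applicable since $(g,n)\neq(1,0)$ when $g\geq 1$'' is simply false, and the exclusion in Proposition \ref{P:seesaw} is not cosmetic: its proof replaces $\Mg$ by an equivariant approximation, which requires $\Mg$ to be a quotient stack, and this fails for $\cM_{1,0}$ because its geometric points have non-affine automorphism groups (cf.\ Proposition \ref{P:qstack-cover}). So your argument breaks down precisely there, and a separate step is needed. The paper supplies it as follows: pull back along the universal curve $F_G:\mathrm{Bun}_{G,1,1}^{[d]}\to \mathrm{Bun}_{G,1,0}^{[d]}$ and its torus analogue $F_{T_G}$; the pull-back maps $F_G^*$ and $F_{T_G}^*$ on relative Picard groups are injective by Lemma \ref{base-change}, which applies because $\Phi_{G,1,0}$ and $\Phi_{T_G,1,0}$ are Stein by Proposition \ref{P:SteinG} and $F:\cM_{1,1}\to\cM_{1,0}$ is fpqc (being a family of curves); since $\iota_{\#,1,1}^*$ is injective by the case $(g,n)=(1,1)$ already established, the commutativity $F_{T_G}^*\circ\iota_{\#,1,0}^*=\iota_{\#,1,1}^*\circ F_G^*$ forces $\iota_{\#,1,0}^*$ to be injective. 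You need to add this reduction (or an equivalent one) for your proof of part (i) to be complete; your treatment of the $g=0$ case and of the relative case $\mathrm{Bun}_G^{[d]}(C/S)$ is fine as stated, since neither involves the $(1,0)$ exception.
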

Note that Lemma \ref{L:inj-cont}\eqref{L:inj-cont2} guarantees that any $\delta\in \pi_1(G)$ admits a representative $d\in \Lambda(T_G)$, i.e., $[d]=\delta\in \pi_1(G)$, satisfying condition (*).
\begin{proof}
We will give the proof for $\bg{G}^{[d]}$ by distinguishing the case $(g,n)\neq (1,0)$ from the case $(g,n)=(1,0)$;  the proof for $\mathrm{Bun}_G^{[d]}(C/S)$ follows the same argument of the first case. 

\noindent\fbox{Case I: $(g,n)\neq (1,0)$.} 
For a given $d\in \Lambda(T_G)=\pi_1(T_G)$, consider the following commutative diagram
\begin{equation}\label{E:res-geofb}
\xymatrix{
\RPic(\bg{G}^{[d]}) \ar[d]^{\iota_\#^*} \ar[r]& \Pic(\mathrm{Bun}_G^{[d]}(\cC_{\ov \eta})) \ar[d]^{\iota_\#(\cC_{\ov \eta})^*}\\
\RPic(\bg{T_G}^{d}) \ar[r]& \Pic(\mathrm{Bun}_{T_G}^{d}(\cC_{\ov \eta})) \\
}
\end{equation}
where the horizontal arrows are induced by the restriction to geometric generic fibers over $\Mg$ and the vertical arrows are the pull-backs induced by the maps  $\iota_\#:\bg{T_G}^d\to \bg{G}^{[d]}$
and $\iota_\#(\cC_{\ov \eta}):  \mathrm{Bun}_{T_G}^{d}(\cC_{\ov \eta})\to \mathrm{Bun}_G^{[d]}(\cC_{\ov \eta})$. 
The top horizontal arrow (and also the bottom horizontal one) is injective by Proposition \ref{P:seesaw}. Therefore, the injectivity of $\iota_\#^*$ will follow from the above diagram \eqref{E:res-geofb} if we show that the map $\iota_\#(\cC_{\ov \eta})^*$ is injective. 

According to \cite[Thm. 5.3.1(iv)]{BH10}, the map $\iota_\#(\cC_{\ov \eta})^*$ fits into the following commutative diagram of abelian groups with exact arrows
\begin{equation}\label{E:back-tor}
\xymatrix{
0 \ar[r]& \un{Hom}(\pi_1(G),J_{\cC_{\ov \eta}})(\ov k(\eta)) \ar[r]\ar[d]^{\pi_1(\iota)^*} & \Pic(\mathrm{ Bun}_G^{[d]}(\cC_{\ov \eta})) \ar[r]  \ar[d]^{\iota_\#(\cC_{\ov \eta})^*} &  \NS(\mathrm{ Bun}_G^{[d]}(\cC_{\ov \eta})) 
\ar[d]^{(\iota_G)^{\NS, d}} \ar[r]& 0 \\
0 \ar[r]& \un{Hom}(\pi_1(T_G),J_{\cC_{\ov \eta}})(\ov k(\eta)) \ar[r]& \Pic(\mathrm{ Bun}_{T_G}^{d}(\cC_{\ov \eta})) \ar[r] &  \NS(\mathrm{ Bun}_{T_G}^{d}(\cC_{\ov \eta})) \ar[r]&0 \\
}
\end{equation}
where the map $\pi_1(\iota)^*$ is induced by the natural homomorphism $\pi_1(\iota):\pi_1(T_G)\to \pi_1(G)$ and  the map $(\iota_G)^{\NS, d}$ is  defined in  \cite[Def. 5.2.5]{BH10}.
Now, the map $\pi_1(\iota)^*$ is injective since the homomorphism $\pi_1(\iota):\pi_1(T_G)=\Lambda(T_G)\to \pi_1(G)=\frac{\Lambda(T_G)}{\Lambda(T_{G^{\sc}})}$ is surjective. On the other hand, 
from \cite[Def. 5.2.5]{BH10} it follows that:
\begin{itemize}
\item if $g\geq 1$ then $(\iota_G)^{\NS, d}$ is injective for any $d\in \Lambda(T_G)$, using that natural homomorphism $\bbZ\to \End(J_{\cC_{\ov \eta}})$ is injective; 
\item if $g=0$ then $(\iota_G)^{\NS, d}$ is injective precisely when $d^{\ss}$ satisfies condition (*) of Lemma \ref{L:inj-cont}\eqref{L:inj-cont1} (see also the proof of \cite[Lemma 4.3.6]{BH10}).
\end{itemize}

\noindent\fbox{Case II: $(g,n)=(1,0)$.} 
Consider the cartesian diagram of families of genus one curves
\begin{equation}\label{E:diag10}
\xymatrix{
\mathrm{Bun}_{T_G,1,1}^d\ar[r]^{\iota_{\#,1,1}}\ar[d]_{F_{T_G}} \ar@{}[dr]|\square& \mathrm{Bun}_{G,1,1}^{[d]} \ar[rr]^{\Phi_{G,1,1}}\ar[d]^{F_G}  \ar@{}[drr]|\square &&\cM_{1,1} \ar[d]^{F} \\
\mathrm{Bun}_{T_G,1,0}^d\ar[r]_{\iota_{\#,1,0}}& \mathrm{Bun}_{G,1,0}^{[d]} \ar[rr]_{\Phi_{G,1,0}}  &&\cM_{1,0}\\
}
\end{equation}
By pull-back we obtain the following commutative diagram of relative Picard groups
\begin{equation}\label{E:diagPic10}
\xymatrix{
\RPic(\mathrm{Bun}_{T_G,1,1}^d) & \RPic(\mathrm{Bun}_{G,1,1}^{[d]})  \ar@{_{(}->}[l]_{\iota_{\#,1,1}^*}    \\
\RPic(\mathrm{Bun}_{T_G,1,0}^d)   \ar@{^{(}->}[u]^{F_{T_G}^*} & \RPic(\mathrm{Bun}_{G,1,0}^{[d]}) \ar@{_{(}->}[u]_{F_G^*} \ar[l]^{\iota_{\#,1,0}^*} \\
}
\end{equation}
where $F_G^*$ (resp., $F_{T_G}^*$) is injective by Lemma \ref{base-change}, which can be applied since  $\Phi_{G,1,0}$ (resp., $\Phi_{G,1,0}\circ \iota_{\#,1,0}=\Phi_{T_G,1,0}$) is  Stein by Proposition \ref{P:SteinG} and $F$ is fpqc (being a family of curves), and $\iota_{\#,1,1}^*$ is injective by Case I. From \eqref{E:diagPic10} we get that $\iota_{\#,1,0}^*$ is injective, and we are done. 
\end{proof}

\subsection{The transgression map}\label{SS:trasgr}

In this subsection, we will construct the transgression map  \eqref{E:trasgr} and prove Theorem \ref{T:BunG}\eqref{BunG-1}. 

We first show that the map \eqref{E:trasgr} exists for the rational  first  operational Chow group of the moduli stack $\bg{G}^{[d],\leq m}$ of $G$-bundles with instability degree less than or equal to $m$ (see \S\ref{SS:open-ss}).

\begin{prop}\label{P:Q-transfr} 
For any $[d]\in \pi_1(G)$ and for any $m\geq 0$, there exists a unique homomorphism of groups
\begin{equation}\label{E:trasgrS}
c_1(\tau_G)_{\bbQ}^{\leq m}:(\Sym^2\Lambda^*(T_G))^{\scr W_G} \to A^1\left(\bg{G}^{[d],\leq m}\right)_{\mathbb Q},
\end{equation}
such that, for any object $(C\xrightarrow{\pi} S, \un{\sigma}, E)$ and $\sum_i \chi_i\cdot \mu_i \in (\Sym^2\Lambda^*(T_G))^{\scr W_G}$, it satisfies the following properties.
\begin{enumerate}[(i)]
\item\label{E:dG-cG}We have the equality 
\begin{equation*}
\left(c_1(\tau_G)_{\bbQ}^{\leq m}(\sum_i \chi_i\cdot \mu_i)\right)(C\xrightarrow{\pi} S, \un{\sigma}, E)=\sum_i\frac{1}{|\scr W_G|}(\pi\circ p)_*\left(\cc{G}{B}{E}\left(\chi_i\cdot\mu_i\cdot\prod_{\alpha>0}\alpha\right)\right)\in A^1(S)_{\mathbb Q}.
\end{equation*}
\item\label{E:trasg-red} If $(C\xrightarrow{\pi} S, \un{\sigma}, E)$ admits a reduction to a $T$-bundle $Q$, we have the equality
\begin{equation*}
\begin{split}
\left(c_1(\tau_G)_{\bbQ}^{\leq m}(\sum_i \chi_i\cdot \mu_i)\right)(C\xrightarrow{\pi} S, \un{\sigma}, E)
&= \sum_i c_1\left(\bigotimes_i\langle \chi_{i \sharp}(Q), \mu_{i \sharp}(Q)\rangle_{\pi}\right)=\\&=\sum_i \pi_*\Big(c_1(\chi_{i \sharp}(Q))\cdot c_1(\mu_{i \sharp}(Q))\Big) \in A^1(S)_{\mathbb Q}.
\end{split}
\end{equation*}
\end{enumerate}
\end{prop}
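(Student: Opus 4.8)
The plan is to define the homomorphism directly on the universal object and then to read off the two properties and the uniqueness from the formalism of Chow groups of flag bundles. Write $\mathcal X:=\bg{G}^{[d],\leq m}$, which is a smooth algebraic stack of finite type over $k$ by Proposition \ref{P:inst-cover}, let $(\pi\colon \mathcal C\to \mathcal X,\un\sigma,\mathcal E)$ be the universal object, and let $p\colon \mathcal E/B_G\to \mathcal C$ be the associated universal flag bundle. Both $\pi$ (a family of curves) and $p$ (a $G/B_G$-bundle) are proper and representable, so $\pi\circ p$ is proper and representable and carries a Gysin pushforward on rational operational Chow groups. Since $v\cdot\prod_{\alpha>0}\alpha\in \Sym^{N+2}(\Lambda^*(T_G))$ with $N:=\dim G/B_G$, the class $\cc{G}{B_G}{\mathcal E}(v\cdot\prod_{\alpha>0}\alpha)$ of \eqref{E:diagr-Bred2} lies in $A^{N+2}(\mathcal E/B_G)$ and $(\pi\circ p)_*$ lowers codimension by $N+1$, so I would set
\[
c_1(\tau_G)^{\leq m}_{\bbQ}(v):=\frac{1}{|\scr W_G|}(\pi\circ p)_*\Big(\cc{G}{B_G}{\mathcal E}\big(v\cdot\textstyle\prod_{\alpha>0}\alpha\big)\Big)\in A^1(\mathcal X)_{\bbQ},
\]
extended $\bbZ$-linearly in $v$; the factor $|\scr W_G|^{-1}$ is exactly what forces rational coefficients.

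Property \eqref{E:dG-cG} then holds by construction. For an object $(C\xrightarrow{\pi}S,\un\sigma,E)$ with classifying morphism $f\colon S\to\mathcal X$, the bundle $E$ is the pullback of $\mathcal E$ and $E/B_G$ is the pullback of $\mathcal E/B_G$, so $\cc{G}{B_G}{E}=f^*\circ\cc{G}{B_G}{\mathcal E}$ by functoriality of the characteristic classes; combining this with the compatibility of proper pushforward with the Gysin pullback $f^*$ in the cartesian square defining the pullback of $\mathcal C$ and $\mathcal E/B_G$ yields precisely the stated formula. Conversely, \eqref{E:dG-cG} prescribes $f^* c_1(\tau_G)^{\leq m}_{\bbQ}(v)$ for every morphism $f$ from a scheme, and an operational class is determined by these pullbacks, which gives the uniqueness assertion.

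For property \eqref{E:trasg-red}, suppose $E$ admits a reduction to a $T_G$-bundle $Q$, so $E\cong Q\times^{T_G}G$. Since $T_G\subset B_G$, this induces a $B_G$-reduction and hence a section $s\colon C\to E/B_G$ of $p$ with $p\circ s=\id_C$ and $s^*\cc{G}{B_G}{E}(\chi)=c_1(\chi_\#(Q))$ for all $\chi\in\Lambda^*(T_G)$. As $E$ admits a $B_G$-reduction, Proposition \ref{P:Chow-flag}\eqref{Chow-flag-ii} applies: $\ee{G}{E}(v)$ is defined with $p_*\cc{G}{B_G}{E}(v\cdot\prod_{\alpha>0}\alpha)=|\scr W_G|\,\ee{G}{E}(v)$ and $p^*\ee{G}{E}(v)=\cc{G}{B_G}{E}(v)$. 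Applying $s^*$ to the last identity and using that $\cc{G}{B_G}{E}$ is a ring homomorphism gives $\ee{G}{E}(v)=\sum_i c_1(\chi_{i\#}(Q))\cdot c_1(\mu_{i\#}(Q))$; pushing forward by $\pi$ then produces
\[
\frac{1}{|\scr W_G|}(\pi\circ p)_*\cc{G}{B_G}{E}\big(v\cdot\textstyle\prod_{\alpha>0}\alpha\big)=\sum_i\pi_*\big(c_1(\chi_{i\#}(Q))\cdot c_1(\mu_{i\#}(Q))\big),
\]
which, together with \eqref{E:c1} identifying $\pi_*(c_1(\chi_{i\#}(Q))c_1(\mu_{i\#}(Q)))$ with $c_1\langle\chi_{i\#}(Q),\mu_{i\#}(Q)\rangle_\pi$, is exactly \eqref{E:trasg-red}.

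I expect the main obstacle to be the two compatibility checks underlying the construction rather than any new idea: first, the well-definedness of the proper Gysin pushforward $(\pi\circ p)_*$ on operational Chow groups of the stack $\mathcal X$ and its commutation with arbitrary base change $f^*$, which is what makes both the definition and property \eqref{E:dG-cG} meaningful (here I would invoke Remark \ref{R:Brion-stack} to ensure the flag-bundle formalism is valid over stacks); and second, the identity $s^*\cc{G}{B_G}{E}(\chi)=c_1(\chi_\#(Q))$, i.e.\ that the characteristic classes restrict along the reduction section to the line bundles associated to the $T_G$-reduction. Once these are in place, the remainder is formal manipulation with the ring homomorphism $\cc{G}{B_G}{\cdot}$ and Proposition \ref{P:Chow-flag}.
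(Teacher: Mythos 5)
Your proposal is correct, and the core of it coincides with the paper's proof: the paper defines $c_1(\tau_G)_{\bbQ}^{\leq m}(v)$ by exactly your formula $\frac{1}{|\scr W_G|}(\pi\circ p)_*\big(\cc{G}{B}{E}(v\cdot\prod_{\alpha>0}\alpha)\big)$, phrased family-by-family and glued into an operational class by base-change compatibility (which is the same content as your pushforward of the universal class, and is where Remark \ref{R:Brion-stack} enters), so property \eqref{E:dG-cG} holds by construction in both treatments. The only genuine divergence is in the verification of \eqref{E:trasg-red}. You use the section $s\colon C\to E/B_G$ induced by the reduction and invoke Proposition \ref{P:Chow-flag}\eqref{Chow-flag-ii}: the integral Edidin--Graham class $\ee{G}{E}(v)$ (available because the $B_G$-reduction makes $E$ Zariski-locally trivial) satisfies $s^*p^*\ee{G}{E}(v)=\ee{G}{E}(v)=\sum_i c_1(\chi_{i\#}(Q))c_1(\mu_{i\#}(Q))$, and then part (b) of that proposition plus \eqref{E:c1} finishes. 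The paper instead never leaves rational coefficients: it uses the section $\sigma\colon C\to E/T$ of the affine bundle $q\colon E/T\to C$, the identity $c_1(\chi_\#(Q))c_1(\mu_\#(Q))=\sigma^*\cc{G}{T}{E}(\chi\cdot\mu)$, and Corollary \ref{C:Chow-flag} (Brion's rational formula transported to $E/T$), computing $(\pi\circ p)_*=\pi_*\sigma^*q^*p_*$. Your route buys an integral intermediate class and a slightly more self-contained chain of identities; the paper's route avoids invoking the integral lift altogether and is marginally lighter on hypotheses, though in this situation the two are interchangeable since the $T$-reduction supplies the $B$-reduction your argument needs. You also make the uniqueness assertion explicit (an operational class is determined by its pullbacks along morphisms from schemes, and property \eqref{E:dG-cG} prescribes these), a point the paper leaves implicit.
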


\begin{proof}
We remove the $n$-sections $\un{\sigma}$  from the notation. For any family $(C\xrightarrow{\pi} S, E)$ of $G$-bundles, let $E/B\xrightarrow{p} C\xrightarrow{\pi} S$ be the associated flag bundle. Following the notations of $\S$\ref{SS:Ch-Flag}, we define
\begin{equation}\label{E:def-trasg}
\begin{array}{lcll}
c_1(\tau_G)_{\bbQ}^{\leq m}(C\xrightarrow{\pi} S, E):& (\Sym^2\Lambda^*(T))^{\scr W_G}&\to& A^1(S)_{\mathbb Q}\\
& v &\mapsto & \frac{1}{|\scr W_G|}(\pi\circ p)_*\left(\cc{G}{B}{E}\left(v\cdot\prod_{\alpha>0}\alpha\right)\right).
\end{array}
\end{equation}
It is compatible with base change and, so, it defines a homomorphism to the first rational operational Chow group of the moduli stack $\bg{G}^{d,\leq m}$, satisfying the equality (\ref{E:dG-cG}). We need to check the equality (\ref{E:trasg-red}). Assume that $(C\to S, E)$ has a reduction to a $T$-bundle $Q$. A reduction to $T$ is equivalent to the existence of a section $\sigma:C\to E/T$ for the natural morphism $q:P/T\to C$. In other words, we have a commutative diagram as follows
\begin{equation}
\xymatrix{
Q\ar@{}[dr]|\square\ar[d]\ar[r] &E\ar@{}[dr]|\square\ar[r]\ar[d] &\Spec(k)\ar[d]\\
C\ar@{=}[rd]\ar[r]^\sigma &E/T\ar@{}[dr]|\square\ar[r]\ar[d]^q &\mathcal BT\ar[d]\\
 & C\ar[r]& \mt BG
}
\end{equation}
where all the squares are cartesian. Using the notations of \S\ref{SS:Ch-Flag}, we have the following equalities 
$$
c_1(\chi_{\sharp}(Q))\cdot c_1(\mu_{\sharp}(Q))=\cc{T}{T}{Q}(\chi\cdot\mu)=\sigma^*\cc{G}{T}{E}(\chi\cdot\mu) \in A^1(C) \quad \text{ for any } \chi, \mu\in \Lambda^*(T).
$$
Hence, the cycle in the second row of (\ref{E:trasg-red}) is equal to 
\begin{equation}\label{E:Del-Ch}
\sum_i \pi_*\left(c_1(\chi_{i \sharp}(Q))\cdot c_1(\mu_{i \sharp}(Q))\right)=\sum_i \pi_*(\sigma^*\cc{G}{T}{E}(\chi_i\cdot \mu_i )) \quad \text{ for any } \sum_i \chi_i\cdot \mu_i \in (\Sym^2\Lambda^*(T))^{\scr W_G}.
\end{equation}
If $v:=\sum_i \chi_i\cdot \mu_i \in (\Sym^2\Lambda^*(T))^{\scr W_G}$, we have the following equalities in $A^1(S)_\mathbb{Q}$
\begin{equation}\label{E:Br}
\displaystyle\begin{array}{lcl}
c_1(\tau_G)_{\bbQ}^{\leq m}(v)(C\xrightarrow{\pi} S, E)&:=&(\pi\circ p)_*\left(\cc{G}{B}{E}\left(v\cdot \frac{\prod_{\alpha>0}\alpha}{|\scr W_G|}\right)\right)=\\
&=&\pi_*\sigma^* q^* p_*\left(\cc{G}{B}{E}\left(v\cdot \frac{\prod_{\alpha>0}\alpha}{|\scr W_G|}\right)\right)=\\
&=&\pi_*\sigma^*\cc{G}{T}{E}(v).
\end{array}
\end{equation}
The first equality follows because $\sigma$ is a section of $q:P/T\to C$ and the second one by applying the functor $\pi_*\sigma^*$ to the equality in Corollary \ref{C:Chow-flag}. Putting (\ref{E:Del-Ch}) and (\ref{E:Br}) together, we have the assertion.
\end{proof}

We are now ready for the proof of Theorem \ref{T:BunG}\eqref{BunG-1}, distinguishing the case $(g,n)\neq (1,0)$ from the special case $(g,n)= (1,0)$.

\begin{proof}[Proof of Theorem \ref{T:BunG}\eqref{BunG-1} in the case $(g,n)\neq (1,0)$]
First of all, the uniqueness follows from the injectivity of $\iota_\#^*$, see Corollary \ref{C:inj}\eqref{C:inj1} (recall that we are assuming that $g\geq 1$).  

The remaining part of the proof is devoted to the existence of the map $\tau_G$. Observe that we can assume that $G$ is not a torus, for otherwise the statement is a tautology. 

Consider the factorization of the morphism $\iota_\#$ as 
$$
\iota_\#:\bg{T_G}^d\xrightarrow{l_\#} \bg{B_G}^d \xrightarrow{j_\#} \bg{G}^{[d]},
$$
induced by the inclusions $l:T_G\hookrightarrow B_G$ and $j:B_G\hookrightarrow G$, where we have used that $\pi_1(B_G)=\pi_1(T_G)$ since $B_G^{\red}=T_G$. 

According to Theorem \ref{T:Holla}, up to choosing a different  representative of $[d]\in \pi_1(G)$, we can assume that the morphism $j_\#$ is  smooth, of finite type and with geometrically integral fibers. 

Since $\bg{B_G}^d$ is  quasi-compact by Proposition \ref{P:qc=a} and $G$ is non-abelian, Corollary \ref{C:cod2-red} implies that there exists $m\gg 0$ (which we fix from now on) such that 
we have a factorization 
$$j_\#: \bg{B_G}^d\xrightarrow{\ov{j_\#}} \bg{G}^{[d], \leq m}\subset \bg{G}^{[d]}.
$$
and such the complementary substack of $\bg{G}^{[d], \leq C}$ has codimension at least two in $\bg{G}^{[d]}$. Clearly the morphism $\ov{j_\#}$ is still smooth (hence flat), of finite type and with geometrically integral fibers.  Moreover, the algebraic stacks $\bg{B_G}^d$ and $\bg{G}^{[d], \leq m}$ are regular and integral by Corollary \ref{C:regint}. Furthermore, the algebraic stacks $\bg{B_G}^d$ and $\bg{G}^{[d], \leq m}$ are both of finite type over $\Mg$ (and hence over $k$) by, respectively, Proposition \ref{P:qc=a}  and Proposition \ref{P:inst-cover}\eqref{P:inst-cover2}; hence, since $(g,n)\neq (1,0)$, they are both quotient stacks over $k$ by Proposition \ref{P:qstack-cover}. 

Observe that we have the following chain of homomorphisms
$$\xymatrix{
\Pic(\bg{G}^{[d]})\ar[r]_{\cong}^{\res} & \Pic(\bg{G}^{[d], \leq m}) \ar[r]_{\cong}^{c_1} \ar@(ur,ul)[rrr]^{c_1^{\bbQ}}  & A^1(\bg{G}^{[d], \leq m})\ar@{->>}[r]& A^1(\bg{G}^{[d], \leq m})_{\tf}  \ar@{^{(}->}[r] & A^1(\bg{G}^{[d], \leq m})_{\bbQ}
}$$
where the map $\res$ is given by restriction and it is an isomorphism by Lemma \ref{restr}, the map $c_1$ is an isomorphism by Proposition \ref{Equi}\eqref{Equi2} (using that $\bg{G}^{[d], \leq m}$ is a regular quotient $k$-stack)  and  $A^1(\bg{G}^{[d], \leq m})_{\tf}$ is the torsion-free quotient of $A^1(\bg{G}^{[d], \leq m})$.

The proof of the Theorem will follow from the 

\un{Claim I:}  The homomorphism $c_1(\tau_G)_{\bbQ}^{\leq m}$ of  \eqref{E:trasgrS} factors set-theoretically through $\Pic(\bg{G}^{[d],\leq m})$, i.e., there exists a map of sets 
\begin{equation}\label{E:delta-m}
\tau_G^{\leq m}:(\Sym^2\Lambda^*(T_G))^{\scr W_G} \to \Pic(\bg{G}^{[d],\leq m}) \quad \text{ such that } c_1^{\bbQ}\circ \tau_G^{\leq m}=c_1(\tau_G)_{\bbQ}^{\leq m}.
\end{equation}

\vspace{0.1cm}

Let us first show that Claim I allows us to conclude the proof of the Theorem. Indeed, the Claim implies that $c_1(\tau_G)_{\bbQ}^{\leq m}$ factors through a homomorphism 
\begin{equation}\label{E:c1delta-m}
c_1(\tau_G)^{\leq m}:(\Sym^2\Lambda^*(T_G))^{\scr W_G} \to A^1(\bg{G}^{[d],\leq m})_{\tf}. 
\end{equation}
Since $\RPic(\bg{G}^{[d]})$ is torsion-free by Corollary \ref{C:inj}\eqref{C:inj1} and Theorem \ref{BunT}\eqref{BunT2}, the composition 
$$A^1(\bg{G}^{[d], \leq m})\xrightarrow[\cong]{\res^{-1}\circ c_1^{-1}} \Pic(\bg{G}^{[d]}) \twoheadrightarrow \RPic(\bg{G}^{[d]})
$$
factors through a homomorphism
\begin{equation}\label{E:A1-RPic}
\alpha^{\leq m}: A^1(\bg{G}^{[d], \leq m})_{\tf} \twoheadrightarrow  \RPic(\bg{G}^{[d]}).
\end{equation}
Now we define the transgression map as
$$
\tau_G: (\Sym^2\Lambda^*(T_G))^{\scr W_G}\xrightarrow{c_1(\tau_G)^{\leq m}} A^1(\bg{G}^{[d],\leq m})_{\tf} \xrightarrow{\alpha^{\leq m}}  \RPic(\bg{G}^{[d]}).
$$
Proposition \ref{P:Q-transfr}\eqref{E:trasg-red} implies that the composition $\iota_\#^*\circ \tau_G$ is equal to the $\scr W_G$-invariant part of the homomorphism $\tau_{T_G}$ defined in Theorem \ref{BunT}\eqref{BunT1}, and we are done. 

\vspace{0.1cm}

It remains to prove Claim I. To this aim, consider the following commutative diagram induced by the morphism $\ov{j_\#}$:
\begin{equation}\label{E:diag-j-c1}
\xymatrix{
 \Pic(\bg{G}^{[d], \leq m}) \ar[r]_{\cong}^{c_1}\ar@{^{(}->}[d]^{\ov{j_\#}^*} \ar@(ur,ul)[rr]^{c_1^{\bbQ}}   & A^1(\bg{G}^{[d], \leq m})\ar[r] \ar@{^{(}->}[d]^{\ov{j_\#}^*}& A^1(\bg{G}^{[d], \leq m})_{\bbQ} \ar@{^{(}->}[d]^{\ov{j_\#}^*}\\
  \Pic(\bg{B_G}^{d}) \ar[r]_{\cong}^{c_1} \ar@(dr,dl)[rr]_{c_1^{\bbQ}}  & A^1(\bg{B_G}^{d})\ar[r]& A^1(\bg{B_G}^{d})_{\bbQ}
}
\end{equation}
where both the maps  $c_1$ are isomorphisms by Proposition \ref{Equi}\eqref{Equi2} (using that $\bg{G}^{[d], \leq m}$ and $\bg{B_G}^d$ are  regular quotient $k$-stacks) and the maps $\ov{j_\#}^*$ are injective since the composition 
$$\iota_\#^*:\Pic(\bg{G}^{[d]})\xrightarrow[\cong]{\res}  \Pic(\bg{G}^{[d], \leq m})\xrightarrow{\ov{j_\#}^*}  \Pic(\bg{B_G}^{d}) \xrightarrow{l_\#^*} \Pic(\bg{T_G}^d)
$$
is injective by Corollary \ref{C:inj}\eqref{C:inj1}.

Fix $v\in(\Sym^2\Lambda^*(T))^{\mathscr W_G}$ and set $l:=c_1(\tau_G)^{\leq m}_{\bbQ}(v)\in A^1(\bg{G}^{[d], \leq m})_{\bbQ}$. The pull-back $\ov{j_\#}^*(l)\in  A^1(\bg{B_G}^{d})_{\bbQ}$ is an integral class by Proposition \ref{P:Chow-flag}\eqref{Chow-flag-ii}, that is 
\begin{equation}\label{E:lbM}
\ov{j_\#}^*(l)=c_1^{\bbQ}(M) \quad \text{ for some } M\in  \Pic(\bg{B_G}^{d}).
\end{equation}

\un{Claim II:}  The line bundle $M$ is trivial along the fibers of $\ov{j_\#}$.

Indeed, let $x: \Spec K\to \bg{G}^{[d],\leq m}$ be a point (with $K$ some field) corresponding to a $G$-bundle $E\to C\xrightarrow{\pi} \Spec(K)$, together with $n$ ordered pairwise fiberwise disjoint sections of $\pi$ (that we omit from the notation, as usual). The  fiber of $\ov{j_\#}$ above the point $x$  can be identified with the $K$-stack  $\text{Sect}^d(C,E/B)$ 
 of sections of the flag bundle $E/B\xrightarrow{p} C$ such that the associated reduction to a $B_G$-bundle is a $K$-rational point of  $\bg{B_G}^d$. 
By  Proposition \ref{P:Chow-flag}\eqref{Chow-flag-ii}, the first Chern class of $M$ is such that for any morphism $S\xrightarrow{f} \text{Sect}^d(C,E/B)$ we have that 
$$
c_1(M)(f)=(\pi\times_k \id_S)_*\left(\ee{G}{E\times_K S}(v)\right)\in A^1(S),
$$
where $\ee{G}{E\times_K S}(v)\in A^2(C\times_K S)$ is the class appearing in Proposition \ref{P:Chow-flag}\eqref{Chow-flag-ii}. By the functoriality of the class $\ee{G}{E}(-)$, we have that 
$$
\ee{G}{E\times_K S}(v)=\mathrm{pr}_C^* \ee{G}{E}(v),
$$
where $\mathrm{pr}_{C}:C\times_K S\to C$ is the projection onto the first factor. The class $\ee{G}{E}(v)$ belongs to $A^2(C)$ which is zero since $C$ is a curve over $K$. Hence, $\ee{G}{E\times_K S}(v)=0$ which  implies that $c_1(M)=0$ and proves Claim II.

We already observed that $\ov{j_\#}:\bg{B}^d\to \bg{G}^{[d], \leq m}$ is a smooth morphism of finite type between regular integral stacks with geometrically integral fibers. Up to replacing the quotient stack $ \bg{G}^{[d], \leq m}$ with an equivariant approximation as in Proposition \ref{Equi}(iii), we may assume that $\bg{G}^{[d], \leq m}$ is a regular integral algebraic space over $k$ (in particular, it is generically a scheme by \cite[\href{https://stacks.math.columbia.edu/tag/06NH}{Tag 06NH}]{stacks-project}). By Proposition \ref{P:fiber}, we have an exact sequence of Picard groups
\begin{equation}\label{E:lbI}
\Pic(\bg{G}^{[d], \leq m})\xrightarrow{\ov{j_\#}^*}\Pic(\bg{B}^d)\xrightarrow{\res_{\eta}}\Pic(\operatorname{Sect}^d(C_{\eta},E_{\eta}/B)\to 0,
\end{equation}
where the group on the right-hand side is the Picard group of the generic fiber of $\ov{j_\#}$. By \eqref{E:lbI} and Claim II, we have that 
\begin{equation}\label{E:lbL}
\ov{j_\#}^*(L)=M \quad \text{ for some } L\in  \Pic(\bg{G}^{[d], \leq m}).
\end{equation}
From \eqref{E:lbM} and \eqref{E:lbL} and using the injectivity of the maps $\ov{j_\#}^*$ in the diagram \eqref{E:diag-j-c1}, we infer that $c_1^{\bbQ}(L)=l$, which concludes the proof of Claim I.
\end{proof}

\begin{proof}[Proof of Theorem \ref{T:BunG}\eqref{BunG-1} in the case $(g,n)= (1,0)$]
Consider the commutative diagram
\begin{equation}\label{E:diagtra10}
\xymatrix{
\Sym^2 \Lambda^*(T_G) \ar[d]^{\tau_{T_G,1,1}}  \ar@/_4pc/[dd]_{\tau_{T_G,1,0}}  & (\Sym^2 \Lambda^*(T_G))^{\scr W_G} \ar@{_{(}->}[l] \ar[d]^{\tau_{G,1,1}}\\
\RPic(\mathrm{Bun}_{T_G,1,1}^d) & \RPic(\mathrm{Bun}_{G,1,1}^{[d]})  \ar@{_{(}->}[l]_{\iota_{\#,1,1}^*}    \\
\RPic(\mathrm{Bun}_{T_G,1,0}^d)   \ar@{^{(}->}[u]_{F_{T_G}^*} & \RPic(\mathrm{Bun}_{G,1,0}^{[d]}) \ar@{_{(}->}[u]_{F_G^*} \ar@{_{(}->}[l]_{\iota_{\#,1,0}^*} \\
}
\end{equation}
where the bottom square is \eqref{E:diagPic10}, the commutativity of the top square follows from Theorem \ref{T:BunG}\eqref{BunG-1} applied to $(g,n)=(1,1)$ and the equality $\tau_{T_G,1,1}=F_{T_G}^*\circ \tau_{T_G,1,0}$ follows from the fact that the transgression maps for tori  do not involve the marked sections (see Theorem \ref{BunT}\eqref{BunT1}). 

Now, since $F_G$ (resp., $F_{T_G}$) is a  family of curves, by the seesaw principle the image of the pull-back $F_G^*$ (resp.,  $F_{T_G}^*$) consists of the classes of line bundles on $\mathrm{Bun}^{[d]}_{G,1,1}$ (resp., on $\mathrm{Bun}^d_{T_G,1,1}$) that are trivial on the fibers of $F_G$ (resp., of $F_{T_G}$). Moreover, since $F_{T_G}$ is the pull-back of $F_G$ 
via the morphism $\iota_{\#, 1,0}$ (see \eqref{E:diag10}), a line bundle $L$ on $\mathrm{Bun}^{[d]}_{G,1,1}$ is trivial on the fibers of $F_G$ if and only if $\iota_{\#, 1,1}^*(L)$ is trivial on the fibers of $F_{T_G}$.  In other words, the bottom square in \eqref{E:diagtra10} is cartesian. 
Using this and the fact that $\tau_{T_G,1,1}$ factors through the inclusion $F_{T_G}^*$ (via the homomorphism $\tau_{T_G,1,0}$, see \eqref{E:diagtra10}), we deduce that $\tau_{G,1,1}$ must factor through the inclusion $F_{T_G}^*$ giving rise to a homomorphism 
$$\tau_{G,1,0}:(\Sym^2 \Lambda^*(T_G))^{\scr W_G}\to \RPic(\mathrm{Bun}^{[d]}_{G,1,0}),$$
that, by construction, satisfies the property stated in Theorem \ref{T:BunG}\eqref{BunG-1}. 
\end{proof}

\begin{rmk}\label{R:lift-tras}
The transgression map $\tau_G: \Sym^2\Lambda^*(T_G)^{\scr W_G} \to \RPic(\bg{G}^{[d]})$ admits a canonical lift to $\Pic(\bg{G}^{[d]})$. This follows from Theorem \ref{T:BunG}\eqref{BunG-1} using that $\tau_{T_G}$ admits a lifting to $\Pic(\bg{T_G}^{[d]})$ (as we observed after Theorem \ref{BunT})  and   Corollary \ref{C:inj}\eqref{C:inj1}.
\end{rmk}

\subsection{$\scr W_G$-invariant line bundles on the moduli stack of $G$-bundles}\label{SS:WGinv}

The aim of this subsection is to prove Theorem \ref{T:BunG}\eqref{BunG-2}. Hence we will assume that $g\geq 1$ throughout this subsection.

The morphisms of linear algebraic groups $T_\ab:T_G\xrightarrow{\iota} G\xrightarrow{\ab} G^{\ab}$ induce the following diagram relating the three (injective) transgression maps $\tau_{T_G}$, $\tau_G$ and $\tau_{G^{\ab}}$:
\begin{equation}\label{E:Bigamalg}
\xymatrix{
\Sym^2\Lambda^*(G^{\ab})\ar@{^{(}->}[r]_{\alpha}\ar@{^{(}->}[d]^{\tau_{G^\ab}} \ar@{^{(}->}@(ur,ul)[rrr]^{\Sym^2\Lambda^*_{\ab}}& \left(\Sym^2\Lambda^*(T_G)\right)^{\mathscr W_G}\ar@{^{(}->}[d]\ar@{^{(}->}@/^2pc/[rdd]^{\tau_G} \ar@{^{(}->}[rr]_{\beta}& & \Sym^2(\Lambda^*(T_G))\ar@{^{(}->}[dd]^{\tau_{T_G}}\\
\RPic\left(\bg{G^{\ab}}^{[d]^{\ab}}\right)\ar@{^{(}->}[r]\ar@{^{(}->}@/_2pc/[rrd]_{\ab_\#^*} &\mathrm{PO}\ar@{-->}[rd]^{\Pi}\\
&&\RPic\left(\bg{G}^{[d]}\right) \ar@{^{(}->}[r]^{\iota_\#^*} & \RPic\left(\bg{T_G}^{d}\right)
}
\end{equation}
where we have used that the injective homomorphism $\Sym^2\Lambda_{\ab}^*$ factors through the invariant subgroup $\beta: (\Sym^2\Lambda^*(T_G))^{\mathscr W_G}\hookrightarrow\Sym^2 \Lambda^*(T_G) $ and where we have also inserted the pushout 
$$
\PO:=\RPic\left(\bg{G^{\ab}}^{[d]^{\ab}}\right)\amalg_{\Sym^2 \Lambda^*(G^{\ab})}\Big(\Sym^2\Lambda^*(T_G)\Big)^{\mathscr W_G}.
$$
Observe that 
\begin{enumerate}[(a)]
\item $(T_\ab)_\#^*=  \iota_\#^*\circ \ab_\#^*$ is injective (and hence also $\ab_\#^*$ is injective) by Theorem \ref{BunT}\eqref{BunT1};
\item $\iota_\#^*$ is injective by Corollary \ref{C:inj}\eqref{C:inj1}.
\item $\tau_{T_G}\circ \beta= \iota_\#^*\circ \tau_G$ by Theorem \ref{T:BunG}\eqref{BunG-1};
\item  $\tau_{T_G}\circ \beta\circ \alpha=\tau_{T_G}\circ   \Sym^2\Lambda_{\ab}^*=(T_\ab)_\#^*\circ \tau_{G^{\ab}}=  \iota_\#^*\circ \ab_\#^*\circ \tau_{G^{\ab}}$ since the transgression map for tori is functorial by Theorem \ref{BunT}\eqref{BunT1}.
\end{enumerate}
Combining (b), (c) and (d), we  get  $\tau_G\circ \alpha=\ab_\#^*\circ \tau_{G^{\ab}}$, which implies, by the universal property of the push-out,  that there exists a morphism (indicated by a dotted arrow in the above diagram) 
$$\ab^*\coprod \tau_G:=\Pi:\PO\to  \RPic\left(\bg{G}^{[d]}\right).$$
In order to prove Theorem \ref{T:BunG}\eqref{BunG-2}, it is enough to show that:
\begin{enumerate}[(i)]
\item\label{i:po-fi} the homomorphism $\Pi$ is an inclusion of finite index;
\item\label{i:prim-lat} the inclusion
$\iota_\#^*\circ \Pi: \PO\hookrightarrow \RPic(\bg{T_G}^d)$ is a primitive sub-lattice, i.e., $$\mathrm{PO}=\mathrm{PO}_{\mathbb Q}\cap \RPic(\bg{T_G}^d)\subset \RPic(\bg{T_G}^d)_{\mathbb Q}.$$
\end{enumerate}
Indeed, property \eqref{i:po-fi} implies $\mathrm{PO}_{\mathbb Q}=\RPic(\bg{G}^{[d]})_{\mathbb Q}$. Combining with \eqref{i:prim-lat}, we have
$$
\RPic(\bg{G}^{[d]})\subset \RPic(\bg{G}^{[d]})_{\mathbb Q}\cap \RPic(\bg{T_G}^d)=\mathrm{PO}_{\mathbb Q}\cap \RPic(\bg{T_G}^d)=\mathrm{PO}.
$$
Hence, the homomorphism $\Pi$ is an isomorphism and  Theorem \ref{T:BunG}\eqref{BunG-2} follows.

\vspace{0.1cm}

The remaining of this subsection is devoted to proving  \eqref{i:po-fi} and \eqref{i:prim-lat}. Before doing this, we need to identify the push-out with a certain subgroup of $\RPic(\bg{T_G}^d)$.
\begin{defin}\label{D:weil-act}
We call the \emph{algebraic action} of the Weyl group $\scr W_G$ on the group   $\RPic(\bg{T_G}^d)$ the unique action such that, on the tautological bundles, it is defined as follows
\begin{equation}
\begin{array}{lcl}
w.\mathscr L(\chi,\zeta)&=& \mathscr L(w.\chi,\zeta),\\
w.\langle(\chi,\zeta),(\chi',\zeta')\rangle&=& \langle(w.\chi,\zeta),(w.\chi',\zeta')\rangle,
	\end{array}
\end{equation}
where $w.\chi$ is the natural action of $\mathscr W_G$ on the character lattice $\Lambda^*(T_G)$. We will denote by $\RPic(\bg{T_G}^d)^{\scr W_G}$ the subgroup in $\RPic(\bg{T_G}^d)$ of the invariant elements.
\end{defin}
Observe that the algebraic action is well-defined on $\RPic(\bg{T_G}^d)$ because of Theorem \ref{BunT}. Moreover, the exact sequences \eqref{seq-bunT} and \eqref{seq-bunTg1} for $\RPic(\bg{T_G}^d)$ are equivariant with respect to the natural action of $\scr W_G$ on $\Lambda^*(T_G)$. Furthermore, we can extend the algebraic action of $\scr W_G$ to an action on  $\Pic(\bg{T_G}^d)$ by letting $\scr W_G$ act trivially on the pull-backs of the line bundles on $\Mg$.

\begin{rmk}The Weyl group $\scr W_G$ acts naturally on the universal moduli stack $\bg{T_G}$ of $T_G$-bundles. Indeed, for to $w\in\scr N(T_G)$ and to any $T_G$-bundle $E$ on a family $(C\to S, \un{\sigma})$ of $n$-pointed curves of genus $g$, we can associate a new $T_G$-bundle $w.E$ on $(C\to S, \un{\sigma})$, whose total space is the same as $E$, but the action of the torus is twisted by $w$, i.e., the action $\sigma_{w.E}:w.E\times T_G\to w.E$ is defined as $\sigma_{w.E}(p,t):=\sigma_E(p,w(t))$, where $\sigma_E$ is the action of $T_G$ on $E$.
In general, $E$ and $w.E$ are not isomorphic as $T_G$-bundles. However, if $w\in T_G$, the morphism $E\to w.E$ sending $p$ into $w.p$ is an isomorphism of $T_G$-bundles. In particular, the group $\scr W_G$ has a \emph{natural action} on $\Pic(\bg{T_G})$. In general, $\mathscr W_G$ does not preserve the connected components of $\bg{T_G}$. Indeed, the action by $w\in\mathscr W_G$ defines an isomorphism
$$
\sigma_G(w,-):\bg{T_G}^d\cong\bg{T_G}^{w.d}
$$
where $w.d$ is the natural action of $\mathscr W_G$ on the cocharacter lattice $\Lambda(T_G)=\pi_1(T_G)$. 
It is easy to check that the algebraic action of Definition \ref{D:weil-act} coincides with the natural action restricted to $\Pic(\bg{T_G}^d)\subset \Pic(\bg{T_G})$ if (and only if) $w.d=d$.
\end{rmk}

\begin{lem}\label{L:push=inv}
The commutative diagram of abelian groups
\begin{equation}\label{E:amalg2}
\xymatrix{
\Sym^2\Lambda^*(G^{\ab})\ar@{^{(}->}[rr]^{\Sym^2 \Lambda^*_\ab}\ar@{^{(}->}[d]^{\tau_{G^\ab}}&& (\Sym^2\Lambda^*(T_G))^{\mathscr W_G}\ar@{^{(}->}[d]^{\tau_{T_G}^{\scr W_G}}\\
\RPic\left(\bg{G^{\ab}}^{[d]^{\ab}}\right)\ar@{^{(}->}[rr]^{(T_\ab)_\#^*} &&\RPic(\bg{T_G}^d)^{\scr W_G}
}
\end{equation}
is a push-out, where $\tau_{T_G}^{\scr W_G}$ is the $W_G$-invariant part of the transgression map $\tau_{T_G}$ and $(T_\ab)_\#^*$ is the pull-back map (which lands in the $\scr W_G$-invariant subgroup) induced by the morphism of tori $T_\ab:T_G\xrightarrow{\iota} G \xrightarrow{\ab} G^{\ab}$.  
\end{lem}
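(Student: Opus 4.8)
The plan is to carry the whole diagram \eqref{E:amalg2} inside the ambient group $\RPic(\bg{T_G}^d)$ and to extract the pushout from the $\scr W_G$-equivariant structure supplied by Theorem \ref{BunT}. The starting point is that the exact sequence of Theorem \ref{BunT}\eqref{BunT1} is $\scr W_G$-equivariant for the algebraic action of Definition \ref{D:weil-act}, since by construction that action on $\Lambda^*(T_G)$ commutes with $\tau_{T_G}$, $\sigma_{T_G}$ and $\rho_{T_G}$. First I would take $\scr W_G$-invariants. As $\bbZ^n$ carries the trivial action, Lemma \ref{L:inv-char} identifies the invariants of the kernel term with $\Sym^2(\Lambda^*(T_G))^{\scr W_G}\oplus(\Lambda^*(G^{\ab})\otimes\bbZ^n)$ and the invariants of the target $\Lambda^*(T_G)$ with $\Lambda^*(G^{\ab})$. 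Moreover, for $g\geq 2$ the map $\rho_{T_G}$ stays surjective on invariants: for an invariant character $\chi\in\Lambda^*(T_G)^{\scr W_G}=\Lambda^*(G^{\ab})$ the tautological bundle $\mathscr L(\chi,0)$ is itself $\scr W_G$-invariant and $\rho_{T_G}(\mathscr L(\chi,0))=\chi$. This yields a short exact sequence presenting $\RPic(\bg{T_G}^d)^{\scr W_G}$.

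Next I would compare this invariant sequence with the analogous sequence for the torus $G^{\ab}$ (whose Weyl group is trivial, so no invariants are taken), using that the entire package is functorial along the homomorphism of tori $T_\ab\colon T_G\to G^{\ab}$ (Theorem \ref{BunT}\eqref{BunT1}). The pull-back $(T_\ab)_\#^*$ then gives a morphism of short exact sequences whose right vertical arrow $\Lambda^*(G^{\ab})\to\Lambda^*(T_G)^{\scr W_G}=\Lambda^*(G^{\ab})$ is the identity and whose left vertical arrow is $\Sym^2\Lambda^*_\ab$ on the $\Sym^2$-factor and the identity on the $\Lambda^*(G^{\ab})\otimes\bbZ^n$-factor. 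A snake-lemma argument then simultaneously gives injectivity of $(T_\ab)_\#^*$ (recovering observation (a) before \eqref{E:Bigamalg}) and the identification
$$\coker\big((T_\ab)_\#^*\big)\cong \Sym^2(\Lambda^*(T_G))^{\scr W_G}\big/\Sym^2\Lambda^*_\ab\big(\Sym^2(\Lambda^*(G^{\ab}))\big),$$
the isomorphism being realized by $\tau_{T_G}^{\scr W_G}$, i.e. $\tau_{T_G}(v)\mapsto[v]$. This is precisely the assertion that $\RPic(\bg{T_G}^d)^{\scr W_G}$ is generated by the images of $(T_\ab)_\#^*$ and $\tau_{T_G}^{\scr W_G}$ with the only relations coming from $\Sym^2(\Lambda^*(G^{\ab}))$; a short diagram chase, using the injectivity of both $\tau_{T_G}^{\scr W_G}$ and $(T_\ab)_\#^*$ together with the commutativity $\tau_{T_G}^{\scr W_G}\circ\Sym^2\Lambda^*_\ab=(T_\ab)_\#^*\circ\tau_{G^{\ab}}$, then shows that the canonical map out of the pushout is an isomorphism. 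I expect this to settle the range $g\geq 2$ cleanly.

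The hard part will be the genus one case. There the target of $\rho_{T_G}$ in Theorem \ref{BunT}\eqref{BunT1} is the quotient $\Lambda^*(T_G)/2\Lambda^*(T_G)$ rather than $\Lambda^*(T_G)$, and taking $\scr W_G$-invariants no longer commutes with this quotient: the comparison of the invariant sequences is now governed by a connecting homomorphism into $H^1(\scr W_G,-)$ of the kernel term, so one must show by hand that the image of $\rho_{T_G}$ on invariants is still exactly the image of the invariant characters $\Lambda^*(G^{\ab})$. This is delicate precisely because of the relative Serre-duality relation $\langle(\chi,0),(\chi,0)\rangle\equiv 2\,\mathscr L(\chi,0)$ in $\RPic$, valid since $\omega_\pi$ is trivial in genus one (see Remark \ref{R:taut-dual}), which is the origin of the factor-of-two phenomena relating $\tau_{T_G}$ to the bundles $\mathscr L(\chi,0)$. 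I would therefore treat $g=1$ on its own, re-deriving the generators of $\RPic(\bg{T_G}^d)^{\scr W_G}$ directly from the explicit free basis of Theorem \ref{BunT}\eqref{BunT2} and tracking the sublattice $2\Lambda^*(T_G)$ carefully so as to match the cokernel computation above. Once the pushout property is established, the residual functoriality statement is formal, following from the functoriality of $\tau_{T_G}$ and of the determinant of cohomology and the Deligne pairing.
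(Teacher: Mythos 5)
Your reduction of the lemma to a comparison of the $\scr W_G$-invariants of the functorial exact sequences of Theorem \ref{BunT}\eqref{BunT1} is exactly the strategy of the paper, and for $g\geq 2$ your argument is complete and essentially identical to it: by Lemma \ref{L:inv-char} one has $\Lambda^*(T_G)^{\scr W_G}=\Lambda^*_\ab(\Lambda^*(G^{\ab}))$, the map $\rho_{T_G}$ stays surjective on invariants (your direct observation that $\mathscr L(\chi,0)$ is invariant for invariant $\chi$ and maps to $\chi$ is fine), and the snake lemma identifies $\coker\big((T_\ab)_\#^*\big)$ with $\Sym^2(\Lambda^*(T_G))^{\scr W_G}\big/\Sym^2\Lambda^*_\ab\big(\Sym^2(\Lambda^*(G^{\ab}))\big)$, which is the push-out property.

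The genus one case, however, is a genuine gap in your proposal. You correctly isolate what must be proved — that the image of $\rho_{T_G}$ restricted to $\RPic(\bg{T_G}^d)^{\scr W_G}$ equals the image of $[\Lambda^*_\ab]$ inside $\big(\Lambda^*(T_G)/2\Lambda^*(T_G)\big)^{\scr W_G}$ — but you do not prove it, and the fallback you suggest (``re-deriving the generators from the explicit free basis of Theorem \ref{BunT}\eqref{BunT2} and tracking $2\Lambda^*(T_G)$'') is not a routine computation: in genus one $\langle(\chi,0),(\chi,0)\rangle\equiv 2\,\mathscr L(\chi,0)$, so the algebraic $\scr W_G$-action does \emph{not} act monomially on that basis (expanding $w.\langle(e_i,0),(e_k,0)\rangle$ and $w.\mathscr L(e_i,0)$ mixes the two types of generators through the diagonal Deligne pairings), and the basis $\{e_i\}$ comes from an isomorphism $T_G\cong\Gm^r$ that bears no relation to the Weyl action, so invariants cannot be read off factorwise for an arbitrary reductive $G$. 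The paper closes precisely this point by a short group-cohomology argument, which you should adopt: taking $H^i(\scr W_G,-)$ of $0\to\Lambda^*(T_G)\xrightarrow{\cdot 2}\Lambda^*(T_G)\to\Lambda^*(T_G)/2\Lambda^*(T_G)\to 0$ and using Lemma \ref{L:inv-char} gives an exact sequence
\begin{equation*}
0\to\frac{\Lambda^*(G^{\ab})}{2\Lambda^*(G^{\ab})}\xrightarrow{[\Lambda^*_\ab]}\Big(\frac{\Lambda^*(T_G)}{2\Lambda^*(T_G)}\Big)^{\scr W_G}\xrightarrow{\partial}H^1(\scr W_G,\Lambda^*(T_G)),
\end{equation*}
where $\partial([\chi])$ is the crossed homomorphism $w\mapsto\tfrac{1}{2}(w.\chi-\chi)$; this simultaneously delivers the injectivity of $[\Lambda^*_\ab]$ modulo $2$ (which your snake-lemma comparison also needs) and reduces the image claim to showing $\partial\circ\rho_{T_G}^{\scr W_G}=0$. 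The latter is immediate from the $\scr W_G$-equivariance of $\rho_{T_G}$: for an invariant class $L$ one computes $\partial(\rho_{T_G}(L))(w)=\tfrac{1}{2}\big(w.\rho_{T_G}(L)-\rho_{T_G}(L)\big)=\tfrac{1}{2}\,\rho_{T_G}(w.L-L)=0$, hence $\Im(\rho_{T_G}^{\scr W_G})\subseteq\ker\partial=\Im([\Lambda^*_\ab])$, the reverse inclusion being clear from the surjectivity of $\rho_{G^{\ab}}$. With this substituted for your unexecuted genus-one computation, your proof is complete.
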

\begin{proof}
We have already observed that the exact sequences in Theorem \ref{BunT}\eqref{BunT1} for $\RPic(\bg{T_G}^d)$  are $\scr W_G$-equivariant with the action defined in Definition \ref{D:weil-act}. Moreover, since the exact sequences in Theorem \ref{BunT}\eqref{BunT1} are functorial with respect to morphisms of tori, we can pull-back the exact sequences for $G^{\ab}$ along the morphism $T_\ab:T_G\to G^{\ab}$ and we will land in the $\scr W_G$-invariant parts of the exact sequences for $T_G$ since $\Lambda_{\ab}^*(\Lambda^*(G^{\ab}))\subset \Lambda^*(T_G)^{\scr W_G}$.  In other words, we have a morphism of exact sequences
\begin{equation}\label{E:GabTG}
\xymatrix{
 \Sym^2\Lambda^*(G^\ab)\oplus\big(\Lambda^*(G^\ab)\otimes\bbZ^n\big)\ar@{^{(}->}[r]^(0.6){\tau_{G^{\ab}}+\sigma_{G^{\ab}}}\ar@{^{(}->}[d]^{\Sym^2\Lambda^*_\ab\oplus (\Lambda_\ab^*\otimes \id_{\bbZ^n})}& \RPic\Big(\bg{G^\ab}^{[d]^\ab}\Big)\ar@{->>}[r]^(0.6){\rho_{G^{\ab}}}\ar@{^{(}->}[d]^{(T_\ab)_\#^*}&\frac{\Lambda^*(G^\ab)}{m\Lambda^*(G^\ab)}\ar@{^{(}->}[d]^{[\Lambda^*_\ab]}\\
		 (\Sym^2\Lambda^*(T_G))^{\mathscr W_G}\oplus\Big(\Lambda^*(T_G)\otimes\bbZ^n\Big)^{\scr W_G}\ar@{^{(}->}[r]_(0.6){\tau_{T_G}^{\scr W_G}+\sigma_{T_G}^{\scr W_G}}& \RPic\Big(\bg{T_G}^d\Big)^{\scr W_G}\ar[r]_(0.6){\rho_{T_G}^{\scr W_G}}&\Big(\frac{\Lambda^*(T_G)}{m\Lambda^*(T_G)}\Big)^{\mathscr W_G}\\
}
\end{equation}
where $m=0$ if $g\geq 2$ and $m=2$ if $g=1$, and the injectivity of $[\Lambda^*_\ab]$ for $m=2$ follows from the equality $(\Lambda_{\ab}^*)^{-1}(2\Lambda^*(T_G))=2\Lambda^*(G^{\ab}))$ which is easily deduced from the fact that the embedding $\Lambda_{\ab}^*:\Lambda^*(G^{\ab})\hookrightarrow \Lambda^*(T_G)$ is primitive (see \eqref{E:tori-car}).  Note that the morphism $\rho_{T_G}^{\scr W_G}$  in the above diagram \eqref{E:GabTG} could be non-surjective since taking invariants with respect to the  $\scr W_G$-action  is not an exact functor (only left exact). 
We now make the following 

\un{Claim:}  $\Im(\rho_{T_G}^{\scr W_G})=\Im([\Lambda^*_\ab])$.

Indeed, if $m=0$ then the Claim follows from the fact that $\Lambda_{\ab}^*(\Lambda^*(G^{\ab}))=\Lambda^*(T_G)^{\scr W_G}$ by Lemma \ref{L:inv-char}, which implies that $[\Lambda^*_\ab]$ is surjective and hence that also $\rho_{T_G}^{\scr W_G}$ is surjective.  If $m=2$, we argue as follows. The inclusion $\Im(\rho_{T_G}^{\scr W_G})\supset\Im([\Lambda^*_\ab])$ follows from the surjectivity of $\rho_{G^{\ab}}$. In order to prove the reverse inclusion, consider the exact sequence $0\to\Lambda^*(T_G)\xrightarrow{2.} \Lambda^*(T_G)\to \frac{\Lambda^*(T_G)}{2\Lambda^*(T_G)}\to 0$. Taking the long exact sequence of cohomology  groups $H^i(\scr W_G,-)$ attached to the above sequence and using again Lemma \ref{L:inv-char}, we get a new exact sequence:
\begin{equation}
0\to \frac{\Lambda^*(G^\ab)}{2\Lambda^*(G^\ab)}\xrightarrow{[\Lambda^*_\ab]}\left(\frac{\Lambda^*(T_G)}{2\Lambda^*(T_G)}\right)^{\mathscr W_G}\xrightarrow{\partial} H^1(\scr W_G,\Lambda^*(T_G)).
\end{equation}
Hence, the Claim is equivalent to $\tau\circ\rho_{T_G}^{\scr W_G}=0$. We recall here the basic properties of the map $\partial$, the details are left to the reader. A crossed homomorphism is a function $f:\scr W_G\to \Lambda^*(T_G)$ such that $f(w_1\cdot w_2)=f(w_1)+w_1.f(w_2)$. A crossed homomorphism $f$ is called principal if there exists $m\in\Lambda^*(T_G)$ such that $f(w)=w.m-m$. The group $H^1(\scr W_G,\Lambda^*(T_G))$ is the quotient of the group of crossed homomorphisms by the subgroup of the principal ones. For any character $[\chi]\in(\Lambda^*(T_G)/2\Lambda^*(T_G))^{\scr W_G}$, the element $\partial([\chi])$ is the class of the crossed homomorphisms
\begin{equation}
\begin{array}{ccc}
\scr W_G&\to& H^1(\scr W_G,\Lambda^*(T_G))\\
w&\mapsto& \frac{1}{2}(w.\chi -\chi).
\end{array}
\end{equation}
Let $L$ be a $\scr W_G$-invariant line bundle of $\RPic(\mathrm{Bun}_{T_G,1,n}^d)$. Using the $\scr W_G$-equivariance of the homomorphism $\rho_{T_G}$, we have that
$$(\partial\circ\rho_{T_G}(L))(w)=\frac{1}{2}(w\cdot \rho_{T_G}(L)- \rho_{T_G}(L))=\frac{1}{2}(\rho_{T_G}(w.L-L))= 0 \quad \text{ for any } w\in \scr W_G,$$ 
thus concluding the proof of Claim.

Now, applying the snake lemma to the diagram \eqref{E:GabTG} and using the above Claim together with the fact that $\Lambda_{\ab}^*(\Lambda^*(G^{\ab}))=\Lambda^*(T_G)^{\scr W_G}$ by Lemma \ref{L:inv-char}, we get that 
\begin{equation*}
\frac{(\Sym^2\Lambda^*(T_G))^{\mathscr W_G}}{\Sym^2\Lambda^*(G^\ab)}=\coker \left(\Sym^2\Lambda^*_\ab\oplus (\Lambda_\ab^*\otimes \id_{\bbZ^n})\right)\xrightarrow{\cong} \coker((T_{\ab})_\#^*).
\end{equation*}
The above isomorphism implies that the commutative diagram \eqref{E:amalg2} is a push-out.
\end{proof}

We are now ready to prove

\noindent\fbox{Part \eqref{i:po-fi}.} 
From the commutative diagram \eqref{E:Bigamalg} it follows that the canonical isomorphism $\PO\cong \RPic(\bg{T_G}^d)^{\scr W_G}$ provided by Lemma \ref{L:push=inv} identifies the homomorphism $\iota_\#^*\circ \Pi$ with the inclusion $ \RPic(\bg{T_G}^d)^{\scr W_G}\subset  \RPic(\bg{T_G}^d)$. In particular,  $\Pi$ must be injective. 

In order to show that $\Pi$ is a finite index inclusion, it remains to show that the rank of the codomain of $\Pi$ is at most the rank of the domain of $\Pi$ (hence they must have the same rank). This will be shown  in the next two lemmata.

\begin{lem}\label{L:rank-pic}
We have the following inequality
\begin{equation}\label{E:rank-pic}
\rk\Pic\left(\bg{G}^{[d]}\right)\leq \rk\Pic\left(\bg{G^\ab}^{[d]^\ab}\right)+s,
\end{equation}
where $s$ is the number of simple factors in  the adjoint quotient $G^\ad$.
\end{lem}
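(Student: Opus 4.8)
The plan is to deduce the inequality from Biswas--Hoffmann's description of the Picard group over a single curve, accessed through the restriction to the geometric generic fibre. First I would reduce to relative Picard groups: since $\Phi_G^{[d]}$ and $\Phi_{G^\ab}^{[d]^\ab}$ are Stein (Proposition \ref{P:SteinG}), the pullbacks of $\Pic(\Mg)$ are injective and contribute the same summand $\rk\Pic(\Mg)$ on both sides, so it suffices to prove $\rk\RPic(\bg{G}^{[d]})\le \rk\RPic(\bg{G^\ab}^{[d]^\ab})+s$, where I set $a:=\dim G^\ab$. Recall that $\RPic(\bg{G}^{[d]})$ is finitely generated, being a subgroup of the free group $\RPic(\bg{T_G}^d)$ by Corollary \ref{C:inj}\eqref{C:inj1} (valid since $g\ge 1$) and Theorem \ref{BunT}\eqref{BunT2}.

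By Proposition \ref{P:seesaw} the restriction to the geometric generic point $\ov\eta$ of $\Mg$ yields an injection $\RPic(\bg{G}^{[d]})\hookrightarrow \Pic(\mathrm{Bun}_G^{[d]}(\cC_{\ov\eta}))$, and I would choose $\cC_{\ov\eta}$ with $\End(J_{\cC_{\ov\eta}})=\bbZ$. Biswas--Hoffmann's exact sequence \eqref{E:back-tor} presents the target as an extension of $\NS(\mathrm{Bun}_G^{[d]}(\cC_{\ov\eta}))$ by the continuous part $\un\Hom(\pi_1(G),J_{\cC_{\ov\eta}})$. Writing $q$ for the composite $\RPic(\bg{G}^{[d]})\hookrightarrow \Pic \twoheadrightarrow \NS$, one has $\rk\RPic(\bg{G}^{[d]})=\rk\ker(q)+\rk\Im(q)$, and it remains to bound the two summands separately.

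For the Néron--Severi summand I would use that $(\iota_G)^{\NS,d}$ is injective for $g\ge 1$ and, by Biswas--Hoffmann \cite{BH10}, has image of finite index in the $\scr W_G$-invariant part of $\NS(\mathrm{Bun}_{T_G}^d(\cC_{\ov\eta}))\cong \Lambda^*(T_G)\oplus\Bil^s(\Lambda(T_G))$ (Proposition \ref{P:Pic-JTC}, using $\End(J_{\cC_{\ov\eta}})=\bbZ$). Taking invariants and using Lemma \ref{L:inv-char} (so $\Lambda^*(T_G)^{\scr W_G}=\Lambda^*(G^\ab)$) together with the decomposition $\Lambda^*(T_G)_\bbQ=\Lambda^*(G^\ab)_\bbQ\oplus V_{\ss}$, in which $V_{\ss}$ carries no $\scr W_G$-invariants, gives $\Sym^2(\Lambda^*(T_G))^{\scr W_G}_\bbQ=\Sym^2(\Lambda^*(G^\ab))_\bbQ\oplus(\Sym^2 V_{\ss})^{\scr W_G}$, where $\dim(\Sym^2 V_{\ss})^{\scr W_G}=s$ by Lemma \ref{L:Sym2inv}. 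Hence $\rk\Im(q)\le \rk\NS(\mathrm{Bun}_{G}^{[d]}(\cC_{\ov\eta}))=\big(a+\tbinom{a+1}{2}\big)+s$. For the continuous summand I would pull $\ker(q)$ back along $\iota_\#^*$ into the finitely generated group $\RPic(\bg{T_G}^d)$: by Proposition \ref{P:restrPic} its image lands in $\Lambda^*(T_G)\otimes H_{g,n}=\ker(w_T^d\oplus\gamma_T^d)$, and because the continuous part $\un\Hom(\pi_1(G),J)\hookrightarrow \Lambda^*(T_G)\otimes J$ is supported, modulo torsion, on the characters killing all coroots, i.e. on $\Lambda^*(G^\ab)=\Lambda^*(T_G)^{\scr W_G}$, this image lies modulo torsion in $\Lambda^*(G^\ab)\otimes H_{g,n}$. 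Thus $\rk\ker(q)\le a\cdot\rk H_{g,n}\le an=\rk\big(\Lambda^*(G^\ab)\otimes\bbZ^n\big)$. Adding the two bounds gives $\rk\RPic(\bg{G}^{[d]})\le \tbinom{a+1}{2}+an+a+s=\rk\RPic(\bg{G^\ab}^{[d]^\ab})+s$ (and the analogous count settles $g=1$).

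The main obstacle I anticipate is the bookkeeping of the continuous (Jacobian) part: a priori $\un\Hom(\pi_1(G),J)$ is divisible of infinite rank, so the estimate $\rk\ker(q)\le an$ cannot come from a soft divisibility argument but genuinely requires transporting $\ker(q)$ into the finitely generated $\RPic(\bg{T_G}^d)$ via $\iota_\#^*$ and identifying, through Proposition \ref{P:restrPic} and the coroot-pairing criterion, exactly which characters survive. The second delicate point is aligning Biswas--Hoffmann's description of $\NS(\mathrm{Bun}_G^{[d]}(\cC_{\ov\eta}))$ with the $\scr W_G$-invariant symmetric forms precisely enough to read off the clean contribution $s$ from Lemma \ref{L:Sym2inv}; everything else is a rank count.
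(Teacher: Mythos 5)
Your architecture is viable and genuinely different in organization from the paper's proof: you restrict all of $\RPic(\bg{G}^{[d]})$ to the geometric generic fibre (Proposition \ref{P:seesaw}) and bound the continuous and N\'eron--Severi pieces of the Biswas--Hoffmann sequence \eqref{E:back-tor} separately, whereas the paper first passes to the quotients $\Pic(\bg{G}^{[d]})/\Pic(\bg{G^\ab}^{[d]^\ab})$, proves that restriction to the generic fibre is injective on these quotients, and then quotes the Biswas--Hoffmann computation that the N\'eron--Severi quotient has rank $s$. Your rank arithmetic is consistent (writing $a=\dim G^{\ab}$): $an+\tbinom{a+1}{2}+a+s$ for $g\geq 2$, and the analogous count for $g=1$. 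Nevertheless, there are genuine gaps.

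The most serious one: your bound $\rk\ker(q)\leq a\cdot\rk H_{g,n}$ silently assumes that $\iota_{\cC_{\ov\eta}}:H_{g,n}\to J_{\cC_{\ov\eta}}(\ov{k(\eta)})$ is injective, i.e.\ the weak Franchetta conjecture (Theorem \ref{franchetta}) at the geometric generic point --- precisely the ingredient the paper's proof invokes explicitly and you never name. Concretely, Proposition \ref{P:restrPic} gives $\ker\bigl(\res_{T_G}^d(\cC_{\ov\eta})\bigr)=\Lambda^*(T_G)\otimes\ker(\iota_{\cC_{\ov\eta}})$; if that kernel were nonzero, then for $0\neq h\in\ker(\iota_{\cC_{\ov\eta}})$ and any $\chi\notin\Lambda^*(G^{\ab})_{\bbQ}$ the class $j_{T_G}^d(\chi\otimes h)$ restricts to zero, hence lands in the continuous part, yet does not lie in $\Lambda^*(G^{\ab})\otimes H_{g,n}$ even modulo torsion. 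Your ``coroot-pairing'' step then only yields $\rk\ker(q)\leq\dim T_G\cdot\rk H_{g,n}$, which is too weak to close the count. A second error: the image of $(\iota_G)^{\NS,d}$ is \emph{not} contained in the $\scr W_G$-invariant part of $\NS(\mathrm{Bun}_{T_G}^{d}(\cC_{\ov\eta}))\cong\Lambda^*(T_G)\oplus\Bil^s(\Lambda(T_G))$ for general $d$: the character component of that map involves the contraction $b(d,-)$ (compare the $d$-dependence of the weight formulas in Proposition \ref{P:restr}, and the genus-zero description in Proposition \ref{P:g0pull}), which is Weyl-invariant only when $d$ is. The image lies in the $d$-twisted subgroup of pairs $(l+b(d,-),b)$ with $b$ Weyl-invariant and $l$ in a rank-$a$ lattice; this has the same rank $a+\tbinom{a+1}{2}+s$, so your numerical bound survives, but the justification must be repaired by citing Biswas--Hoffmann's actual description of $\NS(\mathrm{Bun}_G^{[d]}(\cC_{\ov\eta}))$, as the paper does.

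Two smaller points. Proposition \ref{P:seesaw} assumes $(g,n)\neq(1,0)$, so your proof does not cover $\mathrm{Bun}_{G,1,0}^{[d]}$, a case the Lemma is used for; the paper's proof avoids this because it never needs Proposition \ref{P:seesaw}, only Corollary \ref{C:inj} and Proposition \ref{P:restrPic}, which hold at $(1,0)$. And you cannot ``choose'' $\cC_{\ov\eta}$ with $\End(J_{\cC_{\ov\eta}})=\bbZ$: the geometric generic point is fixed, and you must instead argue that the generic Jacobian has endomorphism ring $\bbZ$ (via \cite{Mo76}, as in fact (ii) of the proof of Lemma \ref{RigBunT0mark}); note the paper's own route sidesteps this by working with the quotient N\'eron--Severi groups, whose rank $s$ is independent of $\End(J_{\cC_{\ov\eta}})$.
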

\begin{proof}
For any geometric point $\ov \eta\to \Mg$ corresponding to an $n$-pointed curve $(\cC_{\ov\eta}, \un \sigma)$ of genus $g$, we have a commutative diagram
\begin{equation}\label{E:res-Pic}
\xymatrix{
\frac{\Pic\left(\bg{G}^{[d]}\right)}{\Pic\left(\bg{G^\ab}^{[d]^\ab}\right)}\ar@{^{(}->}[rr]^{[\iota_\#^*]}\ar[d]_{[\res_G^{[d]}(\cC_{\ov\eta})]}&& \frac{\Pic\left(\bg{T_G}^{d}\right)}{\Pic\left(\bg{G^\ab}^{[d]^\ab}\right)}\ar[d]^{[\res_{T_G}^{[d]}(\cC_{\ov\eta})]}\\
\frac{\Pic\left(\mathrm{Bun}_G^{[d]}(C_{\ov\eta})\right)}{\Pic\left(\mathrm{Bun}_{G^\ab}^{[d]^\ab}(C_{\ov\eta})\right)}\ar@{^{(}->}[rr]^{[(\iota_\#(\cC_{\ov \eta})^*]}&& \frac{\Pic\left(\mathrm{Bun}_{T_G}^{d}(C_{\ov\eta})\right)}{\Pic\left(\mathrm{Bun}_{G^\ab}^{[d]^\ab}(C_{\ov\eta})\right)}
}
\end{equation}
where the vertical arrows are given by restriction to the fibers over $\ov \eta\to \Mg$ and the the two horizontal maps are injective by Corollary \ref{C:inj}\eqref{C:inj1}.

\un{Claim:}  If $\ov \eta\to \Mg$ is the geometric generic point then $[\res_{T_G}^{[d]}(\cC_{\ov\eta})]$ (and hence also $[\res_G^{[d]}(\cC_{\ov\eta})]$) is injective. 

Indeed, the Claim is equivalent to the assertion that, under the assumption that $\ov \eta\to \Mg$ is the geometric generic point, we have 
\begin{equation}\label{E:res-TGGab}
(\res^d_{T_G}(\cC_{\ov \eta}))^{-1}\left(\Pic\left(\mathrm{Bun}_{G^\ab}^{[d]^\ab}(C_{\ov\eta})\right)\right)=\RPic\Big(\bg{G^{\ab}}^{[d]^{\ab}}\Big). 
\end{equation}
This follows from Proposition \ref{P:restrPic} applied to the curve $\cC_{\ov\eta}$ and to the tori $T_G$ and $G^{\ab}$. More precisely, using the notation of loc. cit., we first observe that, if $\ov \eta\to \Mg$ is the geometric generic point, then the weak Franchetta conjecture (see Theorem \ref{franchetta}) implies that morphism $\iota_{\cC_{\ov \eta}}:H_{g,n}\to J_{\cC_{\ov \eta}}(\ov \eta)$ is injective. 
Then, using the exact sequence \eqref{E:restrPic}, the equality \eqref{E:res-TGGab} is a consequence of the following two easily checked equalities
$$
\begin{aligned}
&\left(\id_{\Lambda^*(T_G)}\otimes \iota_{\cC_{\ov \eta}}\right)^{-1}\left(\Lambda^*(G^{\ab})\otimes J_{\cC_{\ov \eta}}(\ov \eta) \right)=\Lambda^*(G^{\ab})\otimes H_{g,n}, \\
&\left(\id_{\Lambda^*(T_G)}\oplus (-\otimes \id_{J_{\cC_{\ov \eta}}})\right)^{-1}\left(\Lambda^*(G^{\ab})\oplus \Hom^s(\Lambda(G^{\ab})\otimes \Lambda(G^{\ab}), \End(J_{\cC_{\ov \eta}})) \right)=
\Lambda^*(G^{\ab})\oplus \Bil^s(\Lambda(G^{\ab}).
\end{aligned}
$$

\vspace{0.1cm}

Finally,  the rank of the abelian group on the bottom left corner of \eqref{E:res-Pic} can be computed using the results of Biswas-Hoffmann \cite{BH10}. Indeed, using that $\pi_1(G^{\ab})=\Lambda(G^{\ab})$ is the torsion-free quotient of $\pi_1(G)$ (see \eqref{E:seq-pi1}), we get from \cite[Theorem 5.3.1]{BH10}  an isomorphism 
\begin{equation}\label{E:Pic-NS}
\left(\frac{\Pic\left(\mathrm{Bun}_G^{[d]}(C_{\ov\eta})\right)}{\Pic\left(\mathrm{Bun}_{G^\ab}^{[d]^\ab}(C_{\ov\eta})\right)}\right)_{\mathbb Q}\xrightarrow{\cong} \left(\frac{\NS\left(\mathrm{Bun}_G^{[d]}(C_{\ov\eta})\right)}{\NS\left(\mathrm{Bun}_{G^\ab}^{[d]^\ab}(C_{\ov\eta})\right)}\right)_{\mathbb Q}
\end{equation}
where $\NS(-)$ is the group of \cite[Definition 5.2.1]{BH10}. Moreover, from the discussion at the end of \cite[\S 5.2]{BH10}, we deduce that
\begin{equation}\label{E:rkNS}
\rk\left(\frac{\NS\left(\mathrm{Bun}_G^{[d]}(C_{\ov\eta})\right)}{\NS\left(\mathrm{Bun}_{G^\ab}^{[d]^\ab}(C_{\ov\eta})\right)}\right)=s.
\end{equation}
By putting together \eqref{E:Pic-NS} and \eqref{E:rkNS} and using the injectivity of the  map $[\res_G^{[d]}(\cC_{\ov\eta})]$ (see the Claim), the inequality \eqref{E:rank-pic} follows.
\end{proof}

\begin{lem}\label{L:rank-sym}
We have the following inequality
\begin{equation}\label{E:rank-pic2}
\rk \mathrm{PO}=\rk \Pic\Big(\bg{T_G}^d\Big)^{\scr W_G}=\rk\Pic\left(\bg{G^\ab}^{[d]^\ab}\right)+s,
\end{equation}
where $s$ is the number of simple factors in  the adjoint quotient of $G^\ad$.
\end{lem}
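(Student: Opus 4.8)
The first equality is already provided by Lemma~\ref{L:push=inv}: the push-out square \eqref{E:amalg2} exhibits a canonical isomorphism $\mathrm{PO}\cong\RPic(\bg{T_G}^d)^{\scr W_G}$, whence $\rk\mathrm{PO}=\rk\RPic(\bg{T_G}^d)^{\scr W_G}$. Moreover, since $\Phi_{T_G}^d$ and $\Phi_{G^\ab}^{[d]^\ab}$ are Stein by Proposition~\ref{P:SteinG} and $\scr W_G$ acts trivially on the pulled-back classes from $\Pic(\Mg)$ (see the remark following Definition~\ref{D:weil-act}), the ranks of the invariant \emph{full} Picard groups exceed those of the \emph{relative} ones by the same quantity $\rk\Pic(\Mg)$ on both sides. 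Hence the whole statement reduces to the single rank identity $\rk\RPic(\bg{T_G}^d)^{\scr W_G}=\rk\RPic(\bg{G^\ab}^{[d]^\ab})+s$, which is what I shall establish.

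To compute the left-hand side I would tensor the $\scr W_G$-equivariant exact sequences of Theorem~\ref{BunT}\eqref{BunT1} with $\bbQ$. As $\scr W_G$ is finite, the functor of $\scr W_G$-invariants is exact over $\bbQ$, so
\begin{equation*}
\rk\RPic(\bg{T_G}^d)^{\scr W_G}=\rk\Sym^2(\Lambda^*(T_G))^{\scr W_G}+\rk\big(\Lambda^*(T_G)\otimes\bbZ^n\big)^{\scr W_G}+\varepsilon,
\end{equation*}
where $\varepsilon=\rk\Lambda^*(T_G)^{\scr W_G}$ if $g\geq 2$ and $\varepsilon=0$ if $g=1$, the genus-one cokernel $\Lambda^*(T_G)/2\Lambda^*(T_G)$ being torsion. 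Writing $a:=\dim G^\ab$ and decomposing $\Lambda^*(T_G)\otimes\bbQ=V^\ab\oplus V^\ss$ as $\scr W_G$-representations, with $V^\ab$ trivial of dimension $a$ (this is $\Lambda^*(T_G)^{\scr W_G}\otimes\bbQ$ by Lemma~\ref{L:inv-char}) and $(V^\ss)^{\scr W_G}=0$ (from the argument proving Lemma~\ref{L:inv-char}), I obtain $\rk\Lambda^*(T_G)^{\scr W_G}=a$ and $\rk(\Lambda^*(T_G)\otimes\bbZ^n)^{\scr W_G}=an$. For the symmetric square, $\Sym^2=\Sym^2 V^\ab\oplus(V^\ab\otimes V^\ss)\oplus\Sym^2 V^\ss$; the invariants of the middle summand vanish since $(V^\ss)^{\scr W_G}=0$, the first summand is $\scr W_G$-trivial of dimension $\binom{a+1}{2}$, and $(\Sym^2 V^\ss)^{\scr W_G}$ has dimension $s$ by Lemma~\ref{L:Sym2inv} (rationally $V^\ss$ is the $\scr W_G$-representation $\Lambda^*(T_{G^\sc})\otimes\bbQ$). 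Thus $\rk\Sym^2(\Lambda^*(T_G))^{\scr W_G}=\binom{a+1}{2}+s$.

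Collecting terms gives $\rk\RPic(\bg{T_G}^d)^{\scr W_G}=\binom{a+1}{2}+s+an+a$ for $g\geq 2$ and $\binom{a+1}{2}+s+an$ for $g=1$. Applying Theorem~\ref{BunT}\eqref{BunT1} to the torus $G^\ab$ of dimension $a$ yields $\rk\RPic(\bg{G^\ab}^{[d]^\ab})=\binom{a+1}{2}+an+a$ for $g\geq 2$ and $\binom{a+1}{2}+an$ for $g=1$; in either case the difference is exactly $s$, proving the required identity. The only non-formal input is the invariant-theoretic computation of $\Sym^2(\Lambda^*(T_G))^{\scr W_G}$, and the main point there—isolating the contribution $s$ of the semisimple factors while the abelian–semisimple cross term $V^\ab\otimes V^\ss$ drops out—is exactly what Lemmas~\ref{L:inv-char} and \ref{L:Sym2inv} supply; the two genus cases then differ only by the rank-$a$ term coming from the cokernel of the sequence in Theorem~\ref{BunT}\eqref{BunT1}, which is precisely what makes both sides change in step.
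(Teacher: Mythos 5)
Your proof is correct, and it rests on the same decisive computation as the paper's: decompose $\Lambda^*(T_G)_{\bbQ}$ as a $\scr W_G$-representation into $V^{\ab}\oplus V^{\ss}$ with $V^{\ab}\cong \Lambda^*(G^{\ab})_{\bbQ}$ trivial and $V^{\ss}\cong \Lambda^*(T_{G^{\sc}})_{\bbQ}$, kill the cross term in $\Sym^2$ using $\Lambda^*(T_{G^{\sc}})^{\scr W_G}=0$ (the content of the proof of Lemma \ref{L:inv-char}), and extract the rank-$s$ contribution of $(\Sym^2 V^{\ss})^{\scr W_G}$ from Lemma \ref{L:Sym2inv} — this is exactly the chain \eqref{E:iso-L*}--\eqref{E:rk-Sym-ss} in the paper. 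Where you differ is the surrounding bookkeeping. The paper never computes $\rk \RPic(\bg{T_G}^d)^{\scr W_G}$ in closed form: since all maps in the push-out diagram are injective, it uses additivity of ranks, $\rk \mathrm{PO}=\rk\RPic(\bg{G^{\ab}}^{[d]^{\ab}})+\rk\Sym^2(\Lambda^*(T_G))^{\scr W_G}-\rk\Sym^2(\Lambda^*(G^{\ab}))$, so only the \emph{difference} of the two $\Sym^2$ ranks is needed and the quantities $\binom{a+1}{2}$, $an$, $a$ never appear. You instead pass through the identification $\mathrm{PO}\cong\RPic(\bg{T_G}^d)^{\scr W_G}$ of Lemma \ref{L:push=inv} and compute both $\rk\RPic(\bg{T_G}^d)^{\scr W_G}$ and $\rk\RPic(\bg{G^{\ab}}^{[d]^{\ab}})$ explicitly from the $\scr W_G$-equivariant exact sequences of Theorem \ref{BunT}\eqref{BunT1}, invoking exactness of $\bbQ$-invariants for the finite group $\scr W_G$; this costs you the extra (easy) inputs $\rk\Lambda^*(T_G)^{\scr W_G}=a$, $\rk(\Lambda^*(T_G)\otimes\bbZ^n)^{\scr W_G}=an$ and a genus case distinction, after which the common terms cancel as they must. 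Both routes are valid; the paper's is a little shorter, while yours makes the actual ranks explicit and — a genuine merit — cleanly addresses the $\Pic$-versus-$\RPic$ discrepancy in the lemma's statement via the Stein property of the structure morphisms, a point the paper's proof glosses over.
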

\begin{proof}
From Lemma \ref{L:push=inv} and the fact that the maps that appear in the push-out diagram are all injective, it follows that 
\begin{equation}\label{E:rank-push}
\rk \mathrm{PO}=\rk\Pic\Big(\bg{T_G}^d\Big)^{\scr W_G}=\rk \Pic\left(\bg{G^\ab}^{[d]^\ab}\right)+ \rk (\Sym^2\Lambda^*(T_G))^{\mathscr W_G} - \rk \Sym^2\Lambda^*(G^{\ab}). 
\end{equation}
Recall that $G^\sc$ is the universal cover of the derived subgroup $\scr D(G)$ of $G$ and $\scr R(G)$ is the radical subgroup of $G$   (see \S \ref{red-grps}). We then have isogenies of linear algebraic groups
$$G^\sc\times \scr R(G)\twoheadrightarrow G \quad \text{ and  }  \quad \scr R(G)\twoheadrightarrow G^{\ab}$$ 
which identify their character lattices after tensoring with $\mathbb Q$:
\begin{equation}\label{E:iso-L*}
\begin{aligned}
\Lambda^*(T_G)_{\mathbb Q} & \xrightarrow{\cong} \Lambda^*(\scr R(G)\times T_{G^\sc})_{\mathbb Q}= \Lambda^*(\scr R(G))_{\mathbb Q}\oplus \Lambda^*(T_{G^\sc})_{\mathbb Q}, \\
\Lambda^*(G^{\ab})_{\mathbb Q}& \xrightarrow{\cong} \Lambda^*(\scr R(G))_{\mathbb Q}. \\
\end{aligned}
\end{equation}
From the above isomorphisms, we deduce that 
\begin{equation}\label{E:iso-Sym}
\begin{aligned}
 \rk (\Sym^2\Lambda^*(T_G))^{\mathscr W_G}= \rk (\Sym^ 2\Lambda^*(T_{G^\sc} \times \scr R(G)))^{\scr W_{G}},\\
 \rk \Sym^2\Lambda^*(G^{\ab})=\rk  \Sym^2\Lambda^*(\scr R(G)). 
\end{aligned}
\end{equation}
Using that the first isomorphism in \eqref{E:iso-L*} commutes with the action  of the Weyl group $\scr W_G\cong\scr W_{\scr R(G) \times G^\sc}\cong\scr W_{G^\sc}$ and that $\scr W_{G^\sc}$ acts trivially on $\Lambda^*(\scr R(G))$, we compute 
\begin{equation}\label{E:rk-Sym2}
\begin{aligned}
(\Sym^ 2\Lambda^*(T_{G^\sc} \times \scr R(G)))^{\scr W_{G}} & = (\Sym^2\Lambda^*(T_{G^\sc}))^{\scr W_{G^\sc}}\oplus \left[\Lambda^*(T_{G^\sc})^{\scr W_{G^\sc}}\otimes \Lambda^*(\scr R(G))\right]\oplus \Sym^2\Lambda^*(\scr R(G)) = \\
& = (\Sym^2\Lambda^*(T_{G^\sc}))^{\scr W_{G^\sc}}\oplus \Sym^2\Lambda^*(\scr R(G)),
\end{aligned}
\end{equation}
where we have used that $\Lambda^*(T_{G^\sc})^{\scr W_{G^\sc}}\cong \Lambda^*((G^\sc)^{\ab})$ by Lemma \ref{L:inv-char} and the latter group is zero because the abelianization of a semi-simple group is always trivial.
%$$
%\Lambda^*(T_{G^\sc})^{\scr W_{G^\sc}}\xrightarrow[\text{Lemma \ref{L:inv-char}}]{\cong} \Lambda^*((G^\sc)^{\ab})=0.
%$$
%The second equality holds because the abelianization of a semi-simple group is always trivial.
Finally, Lemma \ref{L:Sym2inv} implies that 
\begin{equation}\label{E:rk-Sym-ss}
\rk(\Sym^2\Lambda^*(T_{G^\sc}))^{\scr W_{G^\sc}}=s.
\end{equation}
We conclude by putting together \eqref{E:rank-push}, \eqref{E:iso-Sym}, \eqref{E:rk-Sym2} and  \eqref{E:rk-Sym-ss}. 
\end{proof}

\fbox{Part \eqref{i:prim-lat}.} 
By what we said at the beginning of the proof of part (i), we need to show that the sublattice $\RPic(\bg{T_G}^d)^{\scr W_G}\subset\RPic(\bg{T_G}^d)$ is primitive. This follows from 

\begin{lem}
Let $W$ be a group  acting $\mathbb Z$-linearly on a torsion-free abelian group $A$. Then the subgroup $A^W$ of $W$-invariants is a primitive subgroup of $A$, i.e., if $m\cdot a$ is $W$-invariant for some $a\in A$ and $m\in\mathbb Z$ then $a$ is $W$-invariant.
\end{lem}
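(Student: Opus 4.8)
The plan is to unwind the definitions directly: the assertion is a one-line consequence of $\mathbb{Z}$-linearity of the action together with the torsion-freeness of $A$, so no serious machinery is required. Recall that a subgroup $B\subseteq A$ is primitive precisely when, for every $a\in A$ and every \emph{nonzero} integer $m$, the relation $m\cdot a\in B$ forces $a\in B$; the nonvanishing of $m$ is part of the definition (for $m=0$ the hypothesis $m\cdot a=0\in A^W$ is vacuous). So I would fix $a\in A$ and a nonzero $m\in\bbZ$ with $m\cdot a\in A^W$, and aim to show $w(a)=a$ for every $w\in W$.

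First I would take an arbitrary $w\in W$ and use that $w$ acts as a group automorphism of $A$, hence commutes with multiplication by integers: $w(m\cdot a)=m\cdot w(a)$. Since $m\cdot a$ is by assumption $W$-invariant, the left-hand side equals $m\cdot a$, and therefore $m\cdot w(a)=m\cdot a$, i.e. $m\cdot\bigl(w(a)-a\bigr)=0$ in $A$. The decisive step, and the only place where a hypothesis beyond linearity enters, is to pass from this to $w(a)=a$: because $A$ is torsion-free and $m\neq 0$, the element $w(a)-a$ must vanish. As $w\in W$ was arbitrary, this yields $a\in A^W$, which is exactly the primitivity of $A^W$.

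I do not anticipate any genuine obstacle here; the statement is essentially formal. The two points deserving a word of care are that $m$ is required to be nonzero (so that torsion-freeness can be invoked) and that the $\mathbb{Z}$-linearity of the $W$-action is precisely what allows the scalar $m$ to be pulled through $w$. Notably, no assumption on $W$ itself (finiteness, or any structure) is needed, so the lemma applies verbatim with $W=\scr W_G$ acting on $A=\RPic(\bg{T_G}^d)$, giving the primitivity of $\RPic(\bg{T_G}^d)^{\scr W_G}$ used in Part \eqref{i:prim-lat}.
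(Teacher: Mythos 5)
Your proof is correct and is essentially identical to the paper's: both pull the scalar $m$ through $w$ using $\bbZ$-linearity to get $m\cdot\bigl(w(a)-a\bigr)=0$ and then conclude by torsion-freeness. Your explicit remark that $m$ must be nonzero (implicit in the paper's argument) is a fair point of care but does not change the argument.
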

\begin{proof}
Let $\in A$ and $m\in \bbZ$ such that $m\cdot a\in A^W$. Then, using the $\bbZ$-linearity of the action, we get for any $w\in W$
$$0=w\cdot (m\cdot a)-(m\cdot a)=m(w\cdot a-a).$$
Since $A$ is torsion-free, we deduce $w.a=a$ for any $w\in W$, i.e., $a\in A^W$. 
\end{proof}

\subsection{Genus zero case} \label{SS:genus0}

Here, we show Theorem \ref{T:BunGg0}. The first part is easy.

\begin{proof}[Proof of Theorem \ref{T:BunGg0}\eqref{BunGg0-1}]
The injectivity of $w_G^d$ follows from the injectivity of $w_{T_G}^d$ (see Theorem \ref{BunTg0}\eqref{BunTg0-1}) and the injectivity of $\iota^\#$, which holds  by Corollary \ref{C:inj}\eqref{C:inj2} provided that $d^{\ss}$ satisfies condition (*) of Lemma \ref{L:inj-cont}\eqref{L:inj-cont1}. The existence of a representative $d$ of a given class $[d]\in \pi_1(G)$ with the property that $d^{\ss}$ satisfies condition (*) follows from Lemma \ref{L:inj-cont}\eqref{L:inj-cont2}.

The functoriality of $w_G^d$ follows from the functoriality of $w_{T_G}^d$  (see Theorem \ref{BunTg0}\eqref{BunTg0-1}) and the functoriality of $\iota_\#$ which is clear from the definition. 
\end{proof}

The second part of  Theorem \ref{T:BunGg0} will be deduced from the following alternative description of $\RPic\left(\mathrm{Bun}_{G,0,n}^{[d]}\right)$ as a pull-back.

\begin{prop}\label{P:g0pull}
For any $d\in \Lambda(T_G)$, there exists a  homomorphism 
$$\RPic\left(\mathrm{Bun}_{G,0,n}^{[d]}\right)\xrightarrow{f_G^d} (\Sym^2\Lambda^*(T_{G^{\sc}}))^{\scr W_G},$$
such that the diagram
\begin{equation}\label{E:g0pull}
\xymatrix{
\RPic\left(\mathrm{Bun}_{G,0,n}^{[d]}\right)\ar[rr]^{\iota_\#^*}\ar[d]^{f_G^d}&&\RPic\left(\mathrm{Bun}_{T_G,0,n}^d\right)\ar[d]^{w_{\g^\ss}^d}\\
(\Sym^2\Lambda^*(T_{G^{\sc}})^{\scr W_G}\ar[rr]^{(d^\ss,-)}&& \Lambda^*(T_{G^{\sc}})
}
\end{equation}
is cartesian (i.e., it is a pull-back diagram), where $(d^\ss,-)$ is the contraction homomorphism \eqref{E:ctrc-hom} and $w_{\g^\ss}^d$ is the composition 
$$w_{\g^{\ss}}^d: \RPic(\bg{T_G}^d) \xrightarrow{w_T^d} \Lambda^*(T_G) \xrightarrow{\sc^*} \Lambda^*(T_{G^{\sc}})
$$
where the last homomorphism is induced by the morphism $\sc:G^{\sc}\to G$ (see \S\ref{red-grps}).
\end{prop}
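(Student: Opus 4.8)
The plan is to construct the homomorphism $f_G^d$ and verify the cartesian property by reducing everything to data over the geometric generic point of $\Mg$, where the computations of Biswas-Hoffmann \cite{BH10} apply. First I would define $f_G^d$ by exploiting the injectivity of $\iota_\#^*$ (Corollary \ref{C:inj}\eqref{C:inj2}, valid once $d^{\ss}$ satisfies condition (*)) together with the explicit description of $\RPic(\mathrm{Bun}_{T_G,0,n}^d)$ from Theorem \ref{BunTg0}. Concretely, a class in $\RPic(\mathrm{Bun}_{G,0,n}^{[d]})$ maps via $\iota_\#^*\circ w_{T_G}^d = w_G^d$ into $\Lambda^*(T_G)$, and I would show that its further image $\sc^*(w_G^d(-))\in \Lambda^*(T_{G^{\sc}})$ always lands in the subgroup $\{b(d^{\ss},-)\: :\: b\in \Sym^2(\Lambda^*(T_{G^{\sc}}))^{\scr W_G}\}$, i.e. equals $(d^\ss,-)$ of a \emph{unique} invariant quadratic form $b$ (uniqueness by the injectivity of the contraction map under condition (*), Lemma \ref{L:inj-cont}\eqref{L:inj-cont1}). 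Setting $f_G^d(-):=b$ defines the desired homomorphism and makes \eqref{E:g0pull} commute by construction.

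The core of the argument is that this image really does lie in $\Im((d^\ss,-))$. Here I would restrict to the geometric generic fiber $\cC_{\ov\eta}$ over $\Mg$, using Proposition \ref{P:seesaw} to embed $\RPic(\mathrm{Bun}_{G,0,n}^{[d]})\hookrightarrow \Pic(\mathrm{Bun}_G^{[d]}(\cC_{\ov\eta}))$ compatibly with the torus pullback, as in the commutative square \eqref{E:res-geofb}. Over a \emph{fixed} genus-zero curve, \cite[Thm.\ 5.3.1]{BH10} gives exactly the description of $\Pic(\mathrm{Bun}_G^{[d]}(\mathbb P^1))$ together with the weight/Neron-Severi data, and their analysis (the case $n=3$, where $\cC_{0,3}=\mathbb P^1_k$) identifies the image of the restriction-to-torus map with precisely the characters $\chi$ whose $G^{\sc}$-component is $b(d^\ss,-)$ for an invariant $b$. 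Thus the constraint cutting out $\Omega_d^*(T_G)$ is inherited from \cite{BH10}, and I would transport it back along the injective generic-fiber restriction to conclude that $\Im(w_G^d)\subseteq \Omega_d^*(T_G)$ and that the square is a pull-back.

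For the cartesianness itself, I would check the two defining properties of a fibre product: first, that a pair $(L,b)$ with $\iota_\#^*(L)$ and $b$ agreeing in $\Lambda^*(T_{G^{\sc}})$ determines $L$ uniquely — which is immediate from injectivity of $\iota_\#^*$; and second, surjectivity onto the fibre product, i.e. that every $b\in \Sym^2(\Lambda^*(T_{G^{\sc}}))^{\scr W_G}$ together with a compatible torus class actually lifts to $\mathrm{Bun}_{G,0,n}^{[d]}$. The lifting is the substantive point and is where I would again invoke \cite{BH10}: the Biswas-Hoffmann generators of $\Pic(\mathrm{Bun}_G^{[d]}(\cC_{\ov\eta}))$ surject onto the relevant invariant forms, and by Proposition \ref{P:seesaw} (generic-fiber isomorphism for $\RPic$, via Proposition \ref{P:fiber}) these lift to genuine classes on $\mathrm{Bun}_{G,0,n}^{[d]}$.

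\textbf{Main obstacle.} The delicate step is the surjectivity/lifting direction: I must match my intrinsic quadratic-form invariant $f_G^d$ with the Neron-Severi description of \cite{BH10} over $\mathbb P^1$ and ensure the identification is compatible with the contraction homomorphism $(d^\ss,-)$ and with passage from the fixed-curve statement to the universal stack via the generic fibre. Keeping track of the various lattice identifications in \eqref{E:tori-car}, the central isogenies $G^{\sc}\to \scr D(G)\to G^{\ss}$, and the precise role of condition (*) (so that $(d^\ss,-)$ is injective and $b$ is unambiguously recovered) is the part that requires genuine care rather than routine bookkeeping.
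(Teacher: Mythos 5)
There are two genuine gaps in your proposal, and both are precisely the points the paper's reduction-to-$n=3$ strategy is designed to avoid. First, your construction of $f_G^d$ only makes sense when $d^{\ss}$ satisfies condition (*) of Lemma \ref{L:inj-cont}\eqref{L:inj-cont1}: you recover $b$ from $\sc^*(w_G^d(L))$ using injectivity of the contraction $(d^{\ss},-)$, and you invoke injectivity of $\iota_\#^*$ (Corollary \ref{C:inj}\eqref{C:inj2}), both of which need (*). But Proposition \ref{P:g0pull} is asserted for \emph{every} $d\in\Lambda(T_G)$, and for $d$ violating (*) no homomorphism that factors through $\iota_\#^*$ can make \eqref{E:g0pull} cartesian: in a cartesian square of abelian groups the left vertical map restricts to an isomorphism $\ker(\iota_\#^*)\xrightarrow{\cong}\ker\big((d^{\ss},-)\big)$, and the right-hand kernel is nonzero exactly when (*) fails, whereas any map defined through $\iota_\#^*$ kills $\ker(\iota_\#^*)$. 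This is why the paper defines $f_G^d:=pr_2\circ c_G$ via the first-Chern-class/Neron-Severi isomorphism of Biswas-Hoffmann (diagram \eqref{E:cart-proof}), a map which does \emph{not} factor through restriction to the torus.

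Second, your lifting (surjectivity onto the fiber product) step misreads Proposition \ref{P:seesaw}: that proposition gives an \emph{injection} $\RPic(\mathrm{Bun}_{G,0,n}^{[d]})\hookrightarrow\Pic(\mathrm{Bun}_G^{[d]}(\cC_{\ov\eta}))$, namely an isomorphism onto the Picard group of the fiber over the \emph{generic} point $\eta$ followed by an injection into the Picard group over the \emph{geometric} generic point $\ov\eta$ — while \cite[Thm. 5.3.1]{BH10} is only available over the algebraically closed field $\ov{k(\eta)}$. So pulling a Biswas-Hoffmann class back to the universal stack requires a descent argument from $\ov\eta$ to $\eta$ that you never supply; and for $n=0$ this descent genuinely fails, because the universal genus-zero curve over $\cM_{0,0}\cong\cB\PGL_2$ has no sections (it is the generic conic), and already for the torus the restriction is not surjective: $\Im(w_{T}^d)=\{\chi\,:\,(d,\chi)\in2\bbZ\}\subsetneq\Lambda^*(T)\cong\Pic(\mathrm{Bun}_{T}^{d}(\cC_{\ov\eta}))$ by Theorem \ref{BunTg0}\eqref{BunTg0-2}. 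The paper instead proves the statement for $n=3$, where $\cM_{0,3}=\Spec k$ and the universal curve literally is $\bbP^1_k$ (so Biswas-Hoffmann applies with no generic/geometric-generic discrepancy and no condition on $d$), and then transports the cartesian square to every other $n$, including $n=0$, along the forgetful morphisms, whose effect on relative Picard groups it controls (isomorphisms for $n>0$; identification with the degree-zero-on-fibers subgroups for $n=0$). Your route could plausibly be repaired for $n\geq 1$ and $d$ satisfying (*) by adding a Galois-descent argument over $\eta$, but as written both steps are gaps, and the case of general $d$ and the case $n=0$ require a different idea.
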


Let us first show how, using the above Proposition, we can prove  Theorem \ref{T:BunGg0}.  

\begin{proof}[Proof of Theorem \ref{T:BunGg0} \eqref{BunGg0-2}]
From the cartesian diagram \eqref{E:g0pull}, it follows that an element $\chi\in \Lambda^*(T_G)$ belongs to the image of $w_G^d$ if and only if it belongs to the image of $w_{T_G}^d$ and its image in 
$\Lambda^*(T_{G^{\sc}})$ belongs to the image of $(d^{\ss},-)$. The second condition is equivalent to requiring that $\chi\in \Omega_d^*(T_G)$. We now conclude using the description of  $\Im(w_{T_G}^d)$ from Theorem \ref{BunTg0}\eqref{BunTg0-2}.  
\end{proof}

The remainder of this subsection is devoted to the proof of the above Proposition. 

\begin{proof}[Proposition \ref{P:g0pull}]
We will distinguish three cases.

\fbox{Case I: $n=3$.}
Observe that, since $\mathcal M_{0,3}=\Spec(k)$, we have canonical isomorphisms of stacks 
$$\mathrm{Bun}_{G,0,3}^{[d]}\cong \mathrm{Bun}_G^{[d]}(\mathbb P^1/k) \quad \text{ and } \quad \mathrm{Bun}_{T_G,0,3}^{d}\cong \mathrm{Bun}_{T_G}^{d}(\mathbb P^1/k).$$

By \cite[Thm. 5.3.1]{BH10}, we have a commutative diagram 
\begin{equation}\label{E:PicNS-g0}
\xymatrix{
\Pic\left(\mathrm{Bun}_G^{[d]}(\mathbb{P}^1/k)\right)\ar[rr]^{\iota_\#^*(\bbP^1/k)}\ar[d]^{\cong}_{c_G}&&\Pic\left(\mathrm{Bun}_{T_G}^{d}(\mathbb{P}^1/k)\right)\ar[d]_{\cong}^{c_{T_G}}\\
\NS(\mathrm{Bun}_G^{[d]}(\mathbb{P}^1/k))\ar[rr]^{\iota^{\NS,d}}&&\NS(\mathrm{Bun}_{T_G}^{d}(\mathbb{P}^1/k))\\
}
\end{equation}
where, by  \cite[Def. 5.2.1]{BH10},  $\NS\left(\mathrm{Bun}_{T_G}^{d}(\mathbb{P}^1/k)\right)=\Lambda^*(T_G)$ and 
$$\NS\left(\mathrm{Bun}_G^{[d]}(\mathbb{P}^1/k)\right)\subset \Lambda^*(\scr R(G))\oplus (\Sym^2\Lambda(T_{G^{\sc}}))^{\scr W_G}$$ 
is the subgroup of pairs $(l,b)$ such that the induced character $l+b(d^\ss,-)\in\Lambda^*(\scr R(G)\times T_{G^\sc})$ belongs to the subgroup $\Lambda^*(T_G)\subseteq \Lambda^*(\scr R(G)\times T_{G^\sc})$. From the discussion in \S\ref{Sec:gerbe-fib}, it follows that the above homomorphism $c_{T_G}$ coincides with the weight function $w_{T_G}^d$ of \eqref{E:Pic-gerbe}. 
Moreover, from \cite[Def. 5.2.5]{BH10} it follows that the above homomorphism $\iota^{\NS,d}$ coincide with the restriction to $\NS\left(\mathrm{Bun}_G^{[d]}(\mathbb{P}^1/k)\right)$ of the homomorphism 
$$\Lambda^*(\scr R(G))\oplus (\Sym^2\Lambda(T_{G^{\sc}}))^{\scr W_G} \xrightarrow{\id+(d^\ss,-)}\Lambda^*(\scr R(G)\times T_{G^\sc})$$
which lands in the subgroup $\Lambda^*(T_G)$ by the definition of $\NS\left(\mathrm{Bun}_G^{[d]}(\mathbb{P}^1/k)\right)$. 
By putting everything together, we obtain the following cartesian diagram 
\begin{equation}\label{E:cart-proof}
\xymatrix{
\Pic\left(\mathrm{Bun}_G^{[d]}(\mathbb{P}^1/k)\right)\ar[rr]^{\iota_\#^*(\bbP^1/k)}\ar[d]^{\cong}_{c_G}\ar@{}[drr]^{\square}&&\Pic\left(\mathrm{Bun}_{T_G}^{d}(\mathbb{P}^1/k)\right)\ar[d]_{\cong}^{w_{T_G}^d}\\
\NS(\mathrm{Bun}_G^{[d]}(\mathbb{P}^1/k))\ar[rr]^{\id+(d^\ss,-)}\ar[d]_{pr_2}\ar@{}[drr]^{\square}&&\Lambda^*(T_G)\ar[d]^{\sc^*}\\
(\Sym^2\Lambda(T_{G^{\sc}}))^{\scr W_G}\ar[rr]^{(d^\ss,-)}&& \Lambda^*(T_{G^{\sc}})
}
\end{equation}
where the square on the bottom is cartesian by the definition of $\NS(\mathrm{Bun}_G^{[d]}(\mathbb{P}^1/k))$. The outer  cartesian diagram in \eqref{E:cart-proof} gives the desired cartesian diagram \eqref{E:g0pull} for $n=3$ with $f_G^d:=pr_2\circ c_G$. 

\fbox{Case II: $n>0$.} Consider the commutative diagram 
\begin{equation}\label{E:F-forg}
\xymatrix{ 
\mathrm{Bun}_{G,0,n+1}^{[d]} \ar[d]_{F_{G,n}} & \ar[l]^{\iota_\#} \mathrm{Bun}_{T_G,0,n+1}^{d}\ar[d]^{F_{T_G,n}} \\
\mathrm{Bun}_{G,0,n}^{[d]} & \ar[l]^{\iota_\#} \mathrm{Bun}_{T_G,0,n}^{d}
} 
\end{equation}
where the morphisms $F_{G,n}$ and $F_{T_G,n}$ forget the last marked section. We get an induced commutative diagram between the relative Picard groups 
\begin{equation}\label{E:iota-forg}
\xymatrix{ 
\RPic\left(\mathrm{Bun}_{G,0,n+1}^{[d]} \right)\ar[r]^{\iota_\#^*} & \RPic\left(\mathrm{Bun}_{T_G,0,n+1}^{d} \right)\\
\RPic\left(\mathrm{Bun}_{G,0,n}^{[d]}\right)  \ar[u]^{\ov{F_{G,n}^*}} \ar[r]^{\iota_\#^*} &\RPic\left( \mathrm{Bun}_{T_G,0,n}^{d}\right)\ar[u]_{\ov{F_{T_G,n}^*}}
} 
\end{equation}

\un{Claim:}  If $n>0$ then $\ov{F_{G,n}^*}$ and $\ov{F_{T_G,n}^*}$ are isomorphisms.

\vspace{0.1cm}

The above Claim implies the statement for $n>0$. Indeed, using also that the weight morphism $w_{T_G}^d$ is compatible with the forgetful morphism $\ov{F_{T_G,n}^*}$, we deduce that we have a cartesian diagram  \eqref{E:g0pull}  for $\RPic\left(\mathrm{Bun}_{G,0,n}^{[d]}\right)$ if and only if we have a similar cartesian diagram for $\RPic\left(\mathrm{Bun}_{G,0,n+1}^{[d]}\right)$. Hence we conclude using Case I. 

\vspace{0.1cm}

It remains to prove the Claim. First of all,  the stack $\mathrm{Bun}_{T_G,0,n}^{d}$ is of finite type by Proposition \ref{P:qc=a}, hence it is a quotient stack by Proposition \ref{P:qstack-cover}. Then, using Proposition \ref{Equi}, we can assume that $F_{T_G,n}:\mathrm{Bun}_{T_G,0,n+1}^{d}\to \mathrm{Bun}_{T_G,0,n}^{d}$ is a morphism of smooth integral algebraic spaces of finite type over $k$. 
The same holds for $F_{G,n}$ after restricting to a suitable open subset of the instability exhaustion , see Proposition \ref{P:inst-cover}. 

We will prove the Claim for $\ov{F_{G,n}^*}$; the proof for $\ov{F_{T_G,n}^*}$ is analogous. To this aim, consider the cartesian diagram
\begin{equation}\label{E:F-forgMgn}
\xymatrix{ 
\mathrm{Bun}_{G,0,n+1}^{[d]} \ar[d]_{F_{G,n}}  \ar[r]^{\Phi_{G,0,n+1}} \ar@{}[dr]^{\square} &  \cM_{0,n+1}\ar[d]^{F_n} \\
\mathrm{Bun}_{G,0,n}^{[d]}  \ar[r]^{\Phi_{G,0,n}} &  \cM_{0,n}
} 
\end{equation}
where the morphism $F_n:\cM_{0,n+1}\to \cM_{0,n}$  forgets the last marked section. The above diagram induces the following commutative diagram 
\begin{equation}\label{E:Pic-F}
\xymatrix{ 
0 \ar[r] & \Pic(\cM_{0,n})\ar[r]^{\Phi_{G,0,n}^*} \ar[d]^{F_n^*}& \Pic(\mathrm{Bun}_{G,0,n}^{[d]}) \ar[r] \ar[d]^{F_{G,n}^*}&  \RPic(\mathrm{Bun}_{G,0,n}^{[d]}) \ar[d]^{\ov{F_{G,n}^*}}\ar[r] & 0\\
0 \ar[r] & \Pic(\cM_{0,n+1})\ar[r]^{\Phi_{G,0,n+1}^*} & \Pic(\mathrm{Bun}_{G,0,n+1}^{[d]}) \ar[r] &  \RPic(\mathrm{Bun}_{G,0,n+1}^{[d]}) \ar[r] & 0
}
\end{equation}
with exact rows. By the snake lemma, the fact that $\ov{F_{G,n}^*}$ is an isomorphism is a consequence of the following two facts:
\begin{enumerate}[(a)]
\item \label{Fact-a} $F_{G,n}^*$ and $F_n^*$ are surjective;
\item \label{Fact-b} $\ker(F_{G,n}^*)\subseteq \Im(\Phi_{G,0,n}^*)$. 
\end{enumerate}
 
Indeed, \eqref{Fact-a} is a consequence of Proposition \ref{P:fiber} applied (after a suitable equivariant approximation, see Proposition \ref{Equi}) to the smooth morphisms $F_{G,n}$ and $F_n$
with integral fibers, using the fact that the generic  fiber of  $F_{G,n}$ and of $F_n$ is the projective line with $n>0$ points removed, and hence it has trivial Picard group. 

In order to prove  \eqref{Fact-b}, let $L\in \ker(F_{G,n}^*)$. Since   $F_{G,n}:\mathrm{Bun}_{G,0,n+1}^{[d]}\to \mathrm{Bun}_{G,0,n}^{[d]}$ is obtained from the universal curve $\cC_{G,0,n}^{[d]}\to \mathrm{Bun}_{G,0,n}^{[d]}$ by removing the $n$ universal sections $\sigma_{G,i}$, we have that $L$ belongs to the subgroup of $\Pic(\mathrm{Bun}_{G,0,n+1}^{[d]})\subset \Pic(\cC_{G,0,n}^{[d]})$ generated by the line bundles $\langle \cO(\Im(\sigma_{G,i})-\Im(\sigma_{G,j}))\rangle$. Since the universal curve $\cC_{G,0,n}^{[d]}\to \mathrm{Bun}_{G,0,n}^{[d]}$ together with the $n$ universal sections $\sigma_{G,i}$ is a pull-back of the universal curve $\cC_{0,n}\to \cM_{0.n}$ together with the $n$ universal sections $\sigma_i$ along the morphism $\Phi_{G,0,n}$, we deduce that $\cO(\Im(\sigma_{G,i})-\Im(\sigma_{G,j}))=\Phi_{G,0,n}^*\left(\cO(\Im(\sigma_{i})-\Im(\sigma_{j}))\right)$. Hence, we conclude that $L\in \Im(\Phi_{G,0,n}^*)$.

\fbox{Case III: $n=0$.} Arguing as in Case II,  we observe that the diagram \eqref{E:F-forg} with $n=0$ is cartesian with the vertical morphisms $F_{G,0}$ and $F_{T_G,0}$ being smooth and proper with geometric fibers isomorphic to $\bbP^1$, or, in other words, they are families of curves of genus zero. This implies that the pull-back morphism $F_{G,0}^*$ (resp., $F_{T_G,0}^*$) is injective with image equal to the subgroup of line bundles in $\RPic\left(\mathrm{Bun}_{G,0,1}^{[d]} \right)$ (resp., in $\RPic\left(\mathrm{Bun}_{T_G,0,1}^{d} \right)$) that have degree zero on the geometric fibers of the family $F_{G,0}$ (resp., $F_{T_G,0}$). We deduce that the diagram \eqref{E:iota-forg} for $n=0$ is cartesian (and with injective vertical homomorphisms).  Therefore, using also that the weight morphism $w_{T_G}^d$ is compatible with the forgetful morphism $F_{T_G,0}^*$, we deduce that the existence of a cartesian diagram  \eqref{E:g0pull}  for $\RPic\left(\mathrm{Bun}_{G,0,1}^{[d]}\right)$ implies the existence of the same cartesian diagram for $\RPic\left(\mathrm{Bun}_{G,0,0}^{[d]}\right)$.
\end{proof}

\section{Non-Reductive case}

The aim of the section is to show that  the  Picard group of $\bg{G}$ for an arbitrary  connected smooth linear algebraic group $G$ is isomorphic to the Picard group of $\bg{G^{\red}}$ where $G^{\red}$ is the reductive quotient of $G$. 

More precisely,  consider the  smooth, surjective, finite type morphism (see Corollary \ref{C:red-ft})
$$
\red_\#(=\red_{\#,g,n}):\bg{G}^{\delta}\to \bg{G^{\red}}^{\delta} \quad \text{ for any } \delta\in \pi_1(G)\cong \pi_1(G^{\red}),
$$
induced by the reductive quotient morphism $\red(=\red_G): G\twoheadrightarrow G^{\red}$ of \eqref{E:red-quot},  where we have used the canonical isomorphism $\pi_1(\red):\pi_1(G)\xrightarrow{\cong} \pi_1(G^{\red})$ of \eqref{E:pired}. The main result of this section is the following

\begin{teo}\label{T:BunG-nr}
For any $\delta\in \pi_1(G)\cong\pi_1(G^{\red})$,  we have that 
	$$
	\red_\#^*:\Pic(\bg{G^{\red}}^{\delta})\xrightarrow{\cong} \Pic(\bg{G}^{\delta})
	$$
	is an isomorphism. 
The same holds true for the homomorphism $\red_\#(C/S)^*$ for any family of curves $C\to S$, provided that $S$ is an integral and regular quotient stack over $k$.
\end{teo}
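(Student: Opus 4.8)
The plan is to reduce the statement to the vanishing of the (relative) Picard group of the geometric fibres of $\red_\#$, which are moduli stacks of torsors under a vector bundle, and then to combine Proposition \ref{P:fiber} with the finite-type approximation machinery of Edidin--Graham (Proposition \ref{Equi}). I carry everything out for $\bg{G}^\delta$; the relative case over an integral regular quotient stack $S$ is identical, replacing the instability exhaustion and quotient-stack statements by their relative versions.

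First I would perform the structural reductions. By Lemma \ref{L:lin-filtr} the unipotent radical $U=G_u$ admits a linearly filtered filtration, so $\red_\#$ factors as a finite composition of morphisms of the shape $\varphi_\#\colon\bg{G'}^\delta\to\bg{G'/V}^{\delta}$, where $V\cong\Ga^m$ is a vector group with linear conjugation action; since a composition of isomorphisms is an isomorphism it suffices to treat one such atomic step. For such a $\varphi_\#$ the fibre over a point $(C\to T,\un\sigma,F)$ is, by the proof of Proposition \ref{P:red-ft} (following \cite{BK}), the moduli stack $\mathrm{Bun}_{W}(C/T)$ of torsors under the additive group of a vector bundle $W$ on $C$. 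Over a field this is the Picard stack attached to $R\Gamma(C,W)[1]$; choosing a lift (which exists since $H^2(C,W)=0$ on a curve) identifies it non-canonically with $\mathbb{A}^{h^1(C,W)}\times B\Ga^{h^0(C,W)}$, which is connected. From $\Pic(\mathbb{A}^N)=0$, $\Pic(B\Ga^r)=\Hom(\Ga^r,\Gm)=0$ and $\mathbb{A}^1$-invariance of the Picard group I would conclude $\Pic(\mathrm{Bun}_W)=0$, and from $\Gm(\mathbb{A}^N)=k^*=\Gm(B\Ga^r)$ that the only invertible global functions are constants, so that $(\varphi_\#)_*\Gm=\Gm$. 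The latter, via the Leray spectral sequence for $\Gm$ exactly as in Lemma \ref{base-change}, gives the injectivity of $\varphi_\#^*$.

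The heart of the matter is surjectivity, and the difficulty is that the base $\bg{G'/V}^{\delta}$ is neither of finite type nor generically a scheme, so Proposition \ref{P:fiber} does not apply directly. To circumvent this I would first pass to finite-type opens: pulling back the instability exhaustion of $\bg{G^{\red}}^{\delta}$ (Proposition \ref{P:inst-cover}) along the reduction morphisms to $\bg{G'/V}^{\delta}$ and $\bg{G'}^{\delta}$ produces compatible finite-type open substacks $\mathcal U_m\subset\bg{G'}^\delta$ and $\mathcal U_m'\subset\bg{G'/V}^\delta$; as these reduction morphisms are smooth, codimension is preserved, so by Proposition \ref{P:inst-cover}\eqref{P:inst-cover3} the complements have codimension $\geq g+m\geq 2$ for $m\gg0$, and Lemma \ref{restr} reduces the claim to $\mathcal U_m,\mathcal U_m'$ (if $G^{\red}$ is a torus everything is already of finite type by Proposition \ref{P:qc=a} and this step is vacuous). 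These opens are regular quotient stacks by Proposition \ref{P:qstack-cover}, so, choosing an equivariant approximation $f\colon Y\to\mathcal U_m'$ with $Y$ a regular integral algebraic space of finite type, Proposition \ref{Equi}\eqref{Equi3} identifies $\varphi_\#^*$ with the pullback along the base change $g\colon\mathcal U_m\times_{\mathcal U_m'}Y\to Y$. Now $Y$ is generically a scheme and $g$ is smooth of finite type with integral fibres of trivial Picard group, so Proposition \ref{P:fiber} yields surjectivity of $g^*$; with the injectivity above this proves $\varphi_\#^*$ is an isomorphism, and composing over the filtration gives the theorem for $(g,n)\neq(1,0)$.

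It remains to treat the excluded universal case $(g,n)=(1,0)$, where $\cM_{1,0}$ is not a quotient stack. Here I would argue exactly as in Corollary \ref{C:inj}, Case II: forgetting the marked point gives a cartesian square relating $\red_{\#,1,0}$ to $\red_{\#,1,1}$ through the universal curves $F_G\colon\bg{G}^{\delta}_{1,1}\to\bg{G}^{\delta}_{1,0}$ and $F_{G^{\red}}$, which are families of curves and hence induce injective pullbacks whose images are the classes trivial on the fibres (seesaw). Feeding in the already established isomorphism $\red_{\#,1,1}^*$ and this description of $\Im(F_G^*)$, $\Im(F_{G^{\red}}^*)$ forces $\red_{\#,1,0}^*$ to be an isomorphism. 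The main obstacle is the fibre analysis of the first paragraph: correctly identifying the geometric fibres of the atomic $\varphi_\#$ with $\mathbb{A}^N\times B\Ga^r$ up to the non-canonical choice of lift, and extracting from this both the vanishing of their Picard group and the triviality of their units; once this is in place, everything else is a matter of assembling the standard codimension, descent, and approximation inputs already available in the paper.
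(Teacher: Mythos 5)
Your overall architecture coincides with the paper's proof: the factorization of $\red_\#$ into atomic steps $\varphi_\#$ via the linearly filtered filtration of Lemma \ref{L:lin-filtr}, the surjectivity argument (pass to instability opens with complement of codimension $\geq 2$, replace the base by an equivariant approximation using Proposition \ref{Equi}\eqref{Equi3}, then apply Proposition \ref{P:fiber} to the smooth, finite type morphism whose generic fiber $[H^1/H^0]$ has trivial Picard group), and the reduction of the excluded case $(g,n)=(1,0)$ to $(1,1)$ via the forgetful families and the seesaw principle are all exactly the paper's steps. Where you genuinely depart from the paper is the injectivity of $\varphi_\#^*$, and that is where your proposal has a gap.

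You assert that, since the geometric fibers $\mathbb{A}^{h^1}\times B\Ga^{h^0}$ have only constant units, one gets $(\varphi_\#)_*\Gm=\Gm$ and hence injectivity ``exactly as in Lemma \ref{base-change}''. But Lemma \ref{base-change} (and the discussion after Definition \ref{D:Stein}) concern the \emph{fppf} sheaf $\Gm$, so the identity $(\varphi_\#)_*\Gm=\Gm$ must be tested on arbitrary, in particular non-reduced, test schemes, and there it is false: over $T=\Spec k[\epsilon]$ the affine line has the non-constant unit $1+\epsilon x$, and $\Ga$ acquires the nontrivial character $t\mapsto 1+\epsilon t$, so after non-reduced base change the fibers of $\varphi_\#$ have non-constant units (and even nontrivial line bundles). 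The mechanism by which the paper passes from fibers to the sheaf statement is the Stein property, i.e. $\cO_T\xrightarrow{\sim}(\pi_T)_*\cO$ universally; that is unavailable here, since $\varphi_\#$ is very far from Stein — the global functions on a fiber $[H^1/H^0]$ form the whole polynomial algebra on $H^1$. So ``constant units on geometric fibers'' does not upgrade to the sheaf identity your Leray argument needs. Your idea can be salvaged, but only with substantive extra work not present in the proposal: one must restrict to reduced (e.g. smooth) test objects, prove the relative statement $\Gm(\mathrm{Bun}_W(C/T))=\Gm(T)$ for reduced $T$ via a presentation $[K^1/K^0]$ of the fiber, and then glue trivializations along \'etale-local sections of the smooth surjection $\varphi_\#$. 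The paper sidesteps all of this: it proves injectivity by lifting the maximal torus $T\subset G^{\red}$ to a maximal torus of $G$, using the Levi decomposition of the solvable preimage of $T$ in $G$ (\cite[Thm 10.6]{Bo}); then $\iota_\#^*$ factors through $\red_\#^*$, and Corollary \ref{C:inj}, for a suitable representative $d$ of $\delta$, gives injectivity of $\iota_\#^*$ and hence of $\red_\#^*$. You should either adopt that argument or supply the missing reduced-base units analysis.
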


\begin{proof}
We will present the proof just for the universal case, the proof for the relative one uses the same argument. 

First of all, it  is enough to prove the Theorem for $n>0$.  Indeed, consider the following  cartesian diagrams of universal curves
\begin{equation}\label{E:redFG}
\xymatrix{
\mathrm{Bun}_{G,g,1}^{\delta}\ar@{}[dr]|\square\ar[d]_{F_G}\ar[r]^{\red_{\#,g,1}}&\ar[d]^{F_{G^{\red}}}\mathrm{Bun}_{G^{\red},g,1}^{\delta}  \ar@{}[drr]|\square \ar[rr]^{\Phi^{\delta}_{G^{\red},g,1}} && \Cg \ar[d]_{F}\\
\mathrm{Bun}_{G,g,0}^{\delta}\ar[r]_{\red_{\#,g,0}}&\mathrm{Bun}_{G^{\red},g,0}^{\delta}  \ar[rr]^{\Phi^{\delta}_{G^{\red},g,0}} && \Mg
}
\end{equation}
which, by pull-back, induces the following diagram of relative Picard groups
\begin{equation}\label{E:redFPic}
\xymatrix{
\Pic(\mathrm{Bun}_{G,g,1}^{\delta})&\ar[l]_{\red_{\#,g,1}^*}\Pic(\mathrm{Bun}_{G^{\red},g,1}^{\delta})\\
\Pic(\mathrm{Bun}_{G,g,0}^{\delta})\ar@{^{(}->}[u]^{F_G^*}&\Pic(\mathrm{Bun}_{G^{\red},g,0}^{\delta})\ar[l]^{\red_{\#,g,0}^*}\ar@{^{(}->}[u]_{F_{G^{\red}}^*}.
}
\end{equation}
By the seesaw principle, $F_G^*$  is injective and its image consists of the line bundles on $\mathrm{Bun}_{G,g,1}^{\delta}$ that are trivial on each geometric fiber of the family $F_G$, and similarly for $F_{G^{\red}}^*$. Since the family $F_G$ is a pull-back of the family $F_{G^{\red}}$ by \eqref{E:redFG}, we get that the diagram \eqref{E:redFPic} is cartesian (with injective vertical homomorphisms). Hence, if $\red_{\#,g,1}^*$ is an isomorphism, then $\red_{\#,g,0}^*$ is also an isomorphism.  Therefore, for the rest of the proof, we may assume $n>0$ and, in particular, $(g,n)\neq (1,0)$. 

\noindent\fbox{Surjectivity of $\red_\#^*$} This will follow from the more general

\underline{Claim:} Let $1\to U\to G\xrightarrow{\varphi} H\to 1$ be an exact sequence of connected smooth linear algebraic groups with $U$ unipotent. For any $\delta\in\pi_1(G)\xrightarrow[\cong]{\pi_1(\varphi)} \pi_1(H)$, the pull-back homomorphism 
$$
\varphi_\#^*:\Pic(\bg{H}^{\delta}) \to \Pic(\bg{G}^\delta) 
$$
is surjective. 

We now prove the claim. By Lemma \ref{L:lin-filtr}, the group $U$ admits a linearly filtered filtration
\begin{equation}\label{E:uni-filtr2}
\{1\}\subset U_r\subset\ldots\subset U_1\subset U_0=U.
\end{equation}
We proceed by induction on the length $r:=\text{Length}(U_\bullet)$ of the filtration. 

Assume first that $r=0$, i.e., that $U$ is vector group and the action of $G$ by conjugation is linear. 
For any connected smooth linear algebraic group $N$, we denote by $\bg{N}^{\delta,\leq m}$ the open substack in $\bg{N}^\delta$ of those $N$-bundles whose image in $\bg{N^{\red}}^{\delta}$ has instability degree less than or equal to $m$.  Observe that these open substacks are of finite type over $k$ by Proposition \ref{P:inst-cover} and Corollary \ref{C:red-ft}. Since the case $(g,n)=(1,0)$ is excluded by the assumption, $\bg{N}^{\delta,\leq m}$ is a quotient stack over $k$  by Proposition \ref{P:qstack-cover}. 

Consider the following cartesian diagrams of stacks
\begin{equation}\label{E:appro-phi}
\xymatrix{
X\times_{\bg{H}^{\delta,\leq 2}} \bg{G}^{\delta,\leq 2}:=Y\ar[d]^{\ov{\varphi}}\ar[r]\ar@{}[dr]|\square&\bg{G}^{\delta,\leq 2}\ar@{^{(}->}[r]^{\inst_G^{\leq 2}}\ar[d]^{\varphi_\#^{\leq 2}}\ar@{}[dr]|\square& \bg{G}^{\delta}\ar[d]^{\varphi_\#}\\
X\ar[r]^f&\bg{H}^{\delta,\leq 2}\ar@{^{(}->}[r]^{\inst_H^{\leq 2}}&\bg{H}^{\delta}
}
\end{equation}
where $\inst_G^{\leq 2}$ and $\inst_H^{\leq 2}$ are  open embeddings, $f$ is an equivariant approximation of the (smooth) $k$-quotient stack $\bg{H}^{\delta,\leq 2}$ (as in Proposition \ref{Equi}\eqref{Equi1}),  the square on the left is cartesian by definition and the square on the right  is cartesian since there is a  canonical isomorphism $H^{\red}\cong G^{\red}$  compatibile with the morphism $\varphi$, i.e., $\red_G=\red_H\circ \varphi$.
Observe that: 
\begin{itemize}
\item the horizontal arrows in the right square in \eqref{E:appro-phi} induce isomorphisms of  Picard groups by Proposition \ref{P:inst-cover}, Corollary \ref{C:red-ft} and Lemma \ref{restr};
\item the horizontal arrows in the left square in \eqref{E:appro-phi} induce isomorphisms of  Picard groups by Proposition \ref{Equi}\eqref{Equi3}. 
\end{itemize}
It follows that $\varphi_\#^*$ is surjective if and only if $\ov{\varphi}^*$ is surjective, i.e., 
$$
\RPic(\ov{\varphi}):=\Pic(Y)/\ov{\varphi}^*\Pic(X)=0.
$$
The morphism $\varphi_\#$ is surjective, smooth and of finite type  between integral $k$-smooth stacks by Corollary \ref{C:regint} and Proposition \ref{P:red-ft}, which then implies that  $\ov{\varphi}$ is a surjective, smooth and of finite type morphism between $k$-smooth integral algebraic spaces.
By \cite[Prop. 4.2.4]{BK}, the fiber of $\varphi_\#$ over a point $(C\to \Spec K,\un\sigma,F)\in \bg{H}^{\delta}(K)$ is the  stack $\mathrm{Bun}_{U_H^F}(C/K)$ of torsors over the curve $C\to \Spec(K)$ with respect to the vector bundle  $U_H^F:=(F\times U)/H\to C$.  Moreover, by \cite[Cor. 8.1.2]{BK}, we have an isomorphism of stacks
\begin{equation*}\label{E:fb-alm-vec}
\mathrm{Bun}_{U_H^F}(C/K)\cong [H^1(C,U_H^F)/H^0(C,U_H^F)],
\end{equation*}
where the underlying additive group of the vector space $H^0(C,U_H^F)$ acts trivially on the affine space $H^1(C,U_H^F)$. In particular, the map $\varphi_\#$ (and hence also $\ov{\varphi}$) has integral fibers. Hence, $\ov{\varphi}$ satisfies the hypothesis of Proposition \ref{P:fiber}, which then gives
$$
\RPic(\ov{\varphi})\cong \Pic(Y_\eta),
$$ with $Y_\eta$ being the fiber of $\ov{\varphi}$ over the generic point $\eta$ of $X$. By the above discussion,
$$
Y_\eta=\mathrm{Bun}_{\mt W}(\cC/\eta)\cong [H^1(\cC,\mt W)/H^0(\cC,\mt W)],
$$
for some vector bundle $\mt W$ over a curve $\cC\to\eta$. Since any line bundle over an affine space is trivial, we have
$$
\Pic(Y_\eta)\cong \Hom(H^0(\cC,\mt W),\Gm)=0,
$$
where the latter equality follows because $H^0(\cC,\mt W)\cong\mathbb G_a^{\dim H^0(\cC,\mt W)}$ as algebraic group over $\eta$. This concludes the proof for $r=0$. 

The case $r>0$ follows by splitting $\varphi_\#$ as a composition of morphism of stacks
$$
\varphi_\#: \bg{G}\to\bg{G/U_1}\to\bg{H},
$$
where $U_1$ is a proper subgroup of $U$ in the filtration \eqref{E:uni-filtr2} and using the inductive hypothesis. This concludes the proof of the claim.

\noindent\fbox{Injectivity of $\red_\#^*$} Let $T$ be a maximal torus of the reductive quotient $G^{\red}$. Then, the exact sequence $1\to G_u\to G\to G^{\red}\to 1$ restricted to the torus gives an exact sequence as follows
\begin{equation}\label{E:levi-dec}
1\to G_u\to P\to T\to 1.
\end{equation}
Since being solvable is preserved by extensions, $P$ is a connected smooth solvable group. By \cite[Thm 10.6]{Bo}, $P$ admits a Levi decomposition, i.e., $P= P_u\rtimes T_P$, where $P_u$ is the unipotent radical of $P$ and $T_P\subset P$ is a maximal torus. By \eqref{E:levi-dec}, we must have $P_u=G_u$ and $T_P=T$. In particular, the torus $\iota: T\hookrightarrow G^{\red}$ lifts to a (maximal) torus in $G$. Hence, the homomorphism 
$$
\iota_\#^*:\RPic(\bg{G^{\red}}^\delta)\to\RPic(\bg{T}^d), \text{ for any } d\in\pi_1(T)\text{ s.t. }\delta=[d]\in \pi_1(G)\cong\pi_1(G^{\red}),
$$
factors through $\red_\#^*:\RPic(\bg{G^{\red}}^\delta)\to \RPic(\bg{G}^\delta)$. By Corollary \ref{C:inj}, we can choose $d\in \pi_1(T)$ in such a way that  $\iota_\#^*$ is injective, which then implies that $\red_\#^*$ is also injective. 

\end{proof}

As a special case of Theorem \ref{T:BunG-nr}, we get the following 

\begin{cor}\label{C:BunG-nr}
Let $C$ be a (smooth, projective and connected) curve over $k=\ov k$. Then, for any $\delta\in \pi_1(G)\cong \pi_1(G^{\red})$, the homomorphism
$$
\red_\#^*: \Pic(\mathrm{Bun}_{G^{\red}}^\delta(C/k)) \xrightarrow{\cong}\Pic(\mathrm{Bun}_G^\delta(C/k))
$$
is an isomorphism. 
\end{cor}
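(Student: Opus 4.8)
The plan is to deduce the Corollary directly from the relative statement of Theorem \ref{T:BunG-nr}, applied to the single curve $C$ regarded as a family of curves over the base $S=\Spec k$. First I would observe that a (connected, smooth, projective) curve $C$ over $k$ is precisely a family of curves $\pi: C\to \Spec k$ in the sense of Notations \ref{N:famcur}, and that under this identification one has $\mathrm{Bun}_G^\delta(C/k)=\mathrm{Bun}_G^\delta(C/S)$ with $S=\Spec k$, while the morphism $\red_\#$ in the statement coincides with the morphism $\red_\#(C/S)$ appearing in Theorem \ref{T:BunG-nr}. Thus the pull-back $\red_\#^*$ of the Corollary is literally the homomorphism $\red_\#(C/S)^*$.

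It then remains only to verify that the base $S=\Spec k$ satisfies the hypotheses required by the relative part of Theorem \ref{T:BunG-nr}, namely that $S$ is an integral and regular quotient stack over $k$. This is immediate: $\Spec k$ is the spectrum of a field, hence it is integral and regular; and being a scheme of finite type over $k$, it is in particular an algebraic space of finite type over $k$, so it is a quotient stack over $k$ (for the trivial group, $\Spec k=[\Spec k/\{1\}]$). Since all the assumptions are met, the relative statement of Theorem \ref{T:BunG-nr} applies verbatim and yields the isomorphism
$$
\red_\#^*:\Pic(\mathrm{Bun}_{G^{\red}}^\delta(C/k))\xrightarrow{\cong}\Pic(\mathrm{Bun}_G^\delta(C/k)),
$$
as desired. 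As this is a straightforward specialization of an already established theorem, there is no genuine obstacle; the only point requiring attention is the (trivial) verification that $\Spec k$ falls within the class of admissible bases, which I have recorded above.
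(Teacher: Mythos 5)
Your proposal is correct and matches the paper exactly: the paper presents this Corollary as a direct specialization of the relative statement of Theorem \ref{T:BunG-nr} to the family $C\to\Spec k$, with no further argument needed. Your explicit check that $S=\Spec k$ is an integral, regular quotient stack over $k$ is the right (and only) verification required.
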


\begin{rmk}It is challenging to extend to the non-reductive case the presentations for the relative Picard group $\RPic(\bg{G})$ given in \S\ref{Sec:tori} and \S\ref{Sec:non-ab-rd}. Since any linear algebraic group contains a maximal torus, the lattices $\Lambda(-)$ and $\Lambda^*(-)$ are well-defined in this setting. 
From this, it follows immediately that if $G$ is a solvable group, the Picard group admits a presentation as in the case of the tori in \S\ref{Sec:tori}. When $G$ is not solvable, we need a definition of Weil group. For a general linear algebraic group, the Weil group is defined as the quotient 
$$
\scr W_G:=\scr N(T)/\scr C(T)
$$
of the normalizer of the a maximal torus $T\subset G$ by the centralizer of $T$. This definition should be the right candidate for the generalizations of the results in \S\ref{Sec:non-ab-rd}.
\end{rmk}

\vspace{0,3cm}

\noindent {\bf Acknowledgments.} We are  grateful to Giulio Codogni, Martina Lanini, Johan Martens and Brent Pym for useful conversations. 
%The first author is a postdoc at the University of Helsinki, supported by the Academy of Finland (project no.1322218). 

The first author acknowledges PRIN2017 CUP  E84I19000500006, and the MIUR Excellence Department Project awarded to the Department of Mathematics, University of Rome Tor Vergata, CUP E83C18000100006. 

The second author is a member of the CMUC (Centro de Matem\'atica da Universidade de Coimbra), where part of this work was carried over. 
The second author is member of the project EXPL/MAT-PUR/1162/2021 funded by FCT (Portugal), and of the projects PRIN-2017 ``Advances in Moduli Theory and Birational Classification"
and PRIN-2020 ``Curves, Ricci flat Varieties and their Interactions" funded by MIUR (Italy).

\bibliographystyle{alpha}
\bibliography{BiblioVec}
\end{document}